\newcommand{\Bc}{\mathcal{B}}
\newcommand{\Fc}{\mathcal{F}}
\newcommand{\Pc}{\mathcal{P}}
\newcommand{\Qc}{\mathcal{Q}}
\theoremstyle{plain}
\newtheorem{theorem}{Theorem}[section]
\newtheorem{corollary}[theorem]{Corollary}
\newtheorem{lemma}[theorem]{Lemma}
\newtheorem{proposition}[theorem]{Proposition}
\newtheorem*{claim}{Claim}
\theoremstyle{definition}
\newtheorem{definition}[theorem]{Definition}
\theoremstyle{remark}
\newtheorem{remark}[theorem]{Remark}
\numberwithin{equation}{section}
\renewcommand{\P}{\mathbb P}
\newcommand{\C}{\mathbb C}
\newcommand{\R}{\mathbb R}
\newcommand{\lam}{\lambda}
\newcommand{\eps}{\epsilon}
\newcommand{\abs}[1]{\left|#1\right|}
\newcommand{\pa}[1]{\left(#1\right)}
\newcommand{\norm}[1]{\left\|#1\right\|}
\newcommand{\Cc}{\mathcal C}
\newcommand{\Ll}{\mathcal L}
\newcommand{\Hh}{\mathcal H}
\newcommand{\1}{\mathbb 1}
\newcommand{\sca}[1]{\left\langle#1\right\rangle}
\newcommand{\set}[1]{\left\{#1\right\}}
\newcommand{\D}{\mathbb D}
\newcommand{\N}{\mathbb N}
\newcommand{\B}{\mathbb B}
\DeclareMathOperator{\Leb}{Leb}
\DeclareMathOperator{\dist}{dist}
\DeclareMathOperator{\supp}{supp}
\newcommand{\Ex}{\mathbb{E}}
\newcommand{\FS}{{\text{\rm \tiny FS}}}
\newcommand{\ddc}{{dd^c}}
\newcommand{\dbar}{{\overline\partial}}
\newcommand{\ddbar}{{\partial\overline\partial}}
\newcommand{\ap}{{p,\alpha}}
\newcommand{\sap}{{\langle p,\alpha\rangle}}
\newcommand{\sapg}{{\sap,\gamma}}
\newcommand{\h}{\mathbb{h}}
\DeclareMathOperator{\ent}{Ent}
\newcommand{\Ent}[2]{\ent_{#1} ({#2})}
\DeclareMathOperator{\ess}{ess}
\newcommand{\ress}{r_{\ess}}
\subjclass[2010]{%
37F80, 37D35 (primary), 32U05, 32H50 (secondary)}
\keywords{Equilibrium states, Transfer operator,
Spectral gap,
Limit theorems}
\begin{document} 

\hyphenpenalty=10000

\title[Equilibrium states of  endomorphisms of $\P^k$ II]{Equilibrium states of endomorphisms of $\P^k$ II:\\
 spectral stability and limit theorems}

\begin{author}[F.~Bianchi]{Fabrizio Bianchi}
\address{ 
CNRS, Univ. Lille, UMR 8524 - Laboratoire Paul Painlev\'e, F-59000 Lille, France}
  \email{fabrizio.bianchi$@$univ-lille.fr}
\end{author}

\begin{author}[T.C.~Dinh]{Tien-Cuong Dinh}
\address{National University of Singapore, Lower Kent Ridge Road 10,
Singapore 119076, Singapore}
\email{matdtc$@$nus.edu.sg }
\end{author}

\maketitle

\begin{abstract}
 We
  establish the existence of a spectral gap for
 the transfer operator 
induced on $\P^k = \P^k (\C)$
 by a generic holomorphic endomorphism and a suitable continuous weight 
 and its perturbations on various
  functional spaces,
which is
 new even in dimension one.
The main issue to overcome
is the rigidity of the complex objects, since the transfer operator is a 
non-holomorphic
 perturbation
of the 
operator $f_*$.
The system is moreover
non-uniformly hyperbolic
and one may have
critical points on the Julia set.
The construction of our
norm requires the introduction and study of several intermediate new norms,
and a careful combination of ideas from pluripotential and interpolation theory.
As far as we know, this is the first time that
pluripotential methods have been applied to solve a mixed real-complex problem.

Thanks to  the spectral gap,
we establish 
an exponential speed of convergence for
 the equidistribution of the backward orbits of points
 towards the
conformal measure.
Moreover, we obtain a full list
of statistical properties for the equilibrium states:
 exponential mixing, 
CLT, Berry-Esseen theorem, local CLT,
ASIP, LIL, LDP, almost sure CLT.
Many of these properties are new even in dimension one, some even in the case
of zero weight function (i.e., for the measure of maximal entropy).
\end{abstract}

\setcounter{secnumdepth}{3}
\setcounter{tocdepth}{1}
\tableofcontents

\bigskip

\noindent
{\bf Notation.} 
Throughout the paper, $\P^k$ denotes the complex projective space of dimension $k$
endowed with the standard Fubini-Study form $\omega_\FS$. This is a K\"ahler $(1,1)$-form
normalized so that $\omega_\FS^k$ is a probability measure. We will use the metric and
distance $\dist(\cdot,\cdot)$ on $\P^k$ induced by $\omega_\FS$ and the standard ones
on $\C^k$ when we work on open subsets of $\C^k$. 
We denote by $\B_{\P^k}(a,r)$ (resp.\ $\B_r^k, \D(a,r), \D_r$) 
the ball of center $a$ and radius $r$ in $\P^k$  (resp.\ the ball of center 0 and radius $r$ in $\C^k$,
the disc of center $a$ and radius $r$ in $\C$, and the disc
of center $0$ and radius $r$ in $\C$). 
$\Leb$ denotes the standard Lebesgue measure on a
 Euclidean
 space or on a sphere.

The pairing $\langle \cdot,\cdot\rangle$ is used for the integral of a function with respect to a
measure or more generally the value of a current at a test form. If $S$ and
$R$ are two $(1,1)$-currents, we will write $|R|\leq S$ when $\Re (\xi R)\leq S$
for every function $\xi\colon\P^k\to\C$ with $|\xi|\leq 1$,
 i.e., all currents $S- \Re (\xi R)$ with $\xi$ as before are positive. Notice 
that this forces $S$ to be real and positive. We also write other inequalities such
as $|R|\leq |R_1|+|R_2|$ if $|R|\leq S_1+S_2$ whenever $|R_1|\leq S_1$ and $|R_2|\leq S_2$.
Recall that $d^c={i\over 2\pi}(\dbar -\partial)$ and $\ddc={i\over\pi}\ddbar$. 
The notations $\lesssim$ and $\gtrsim$ stand for inequalities up to a multiplicative constant.
The function identically equal to 1 is denoted by $\1$.
We also use the function $\log^\star (\cdot):=1+|\log (\cdot)|$. 

Consider a holomorphic endomorphism $f\colon\P^k\to\P^k$ of
 algebraic degree $d\geq 2$ satisfying the Assumption {\bf (A)} in the Introduction. 
Denote respectively by $T$, $\mu=T^k$, $\supp(\mu)$ the Green $(1,1)$-current, 
the measure of maximal entropy (also called
the Green measure or the
equilibrium measure), and the small Julia set of $f$. 
If $S$ is a positive closed $(1,1)$-current on $\P^k$, its dynamical potential is denoted by $u_S$ and is
defined in Section \ref{ss:dyn-pot}.

We also consider a weight $\phi$ which is a
continuous function on $\P^k$. 
We often assume that $\phi$ is real.
The
transfer operator (Perron-Frobenius operator) $\Ll=\Ll_\phi$ is introduced in the
Introduction together with the scaling ratio $\lambda= \lambda_\phi$, the conformal measure $m_\phi$,
the density function $\rho=\rho_\phi$, the equilibrium state $\mu_\phi=\rho m_\phi$, the pressure $P(\phi)$.
The measures $m_\phi$ and $\mu_\phi$ are probability measures.
The operator $L$ is a suitable modification of $\Ll$
and is introduced
 at the beginning of
 Section 
 \ref{s:statistic}. 

The oscillation $\Omega(\cdot)$, the modulus of continuity $m(\cdot,\cdot)$, the
semi-norms $\norm{\cdot}_{\log^p}$ and $\norm{\cdot}_*$ of a function
are defined in Section
  \ref{ss:def}.
Other
 norms and semi-norms $\norm{\cdot}_p$, $\norm{\cdot}_{\ap}$,
$\norm{\cdot}_{\sap}$,
 $\norm{\cdot}_{\sapg}$
 for $(1,1)$-currents and functions
are introduced in Section \ref{s:norm} and the norms
$\norm{\cdot}_{\diamond_1}, \norm{\cdot}_{\diamond_2}$ 
 in Section \ref{s:proof:t:goal}.
 The semi-norms we
consider are almost norms: they vanish only on constant functions.
It is easy to make them norms by adding
a suitable functional such as
$g \mapsto  \abs{\sca{m_\phi, g}}$. However, for
simplicity, it is more convenient to work directly
with these semi-norms. The versions of these semi-norms for currents are actually norms. 
The positive numbers $q_0,q_1,q_2$ are defined in Lemmas \ref{l:alpha-p-q}, \ref{l:cpt_apmixed}, \ref{l:logq-alpha-p-gamma} and the families of 
weights $\Pc(q,M,\Omega)$
and
$\Qc_0$ are introduced
in Sections
\ref{ss:new-recall}
and \ref{ss:proofs_gap}.

\section{Introduction and results} \label{s:Intro}

Let $f\colon \P^k\to \P^k$ be a holomorphic endomorphism of 
the complex projective space $\P^k=\P^k (\C)$, with $k\geq 1$, of algebraic degree $d\geq 2$.
Denote by $\mu$ the unique measure of maximal entropy
for the dynamical system $(\P^k, f)$
\cite{lyubich1983entropy,briend2009deux,dinh2010dynamics,berteloot2001rudiments}.
The support $\supp(\mu)$ of $\mu$ is called {\it the small Julia} set of $f$.
Given a \emph{weight}, i.e., a real-valued continuous
function, $\phi$ as above, we define the \emph{pressure} of $\phi$
as
$$P(\phi) := \sup \big\{ \Ent{f}{\nu} + \langle\nu,\phi\rangle \big\},$$
where the supremum is taken over all Borel $f$-invariant probability measures $\nu$ and $\Ent{f}{\nu} $
denotes the metric entropy of $\nu$.
An \emph{equilibrium state} for $\phi$ is then an invariant probability measure $\mu_\phi$ 
realizing a maximum in the above formula, that is, 
$$P(\phi) = \Ent{f}{\mu_\phi}+ \langle\mu_\phi,\phi\rangle.$$
Define  also
 the \emph{Perron-Frobenius} (or \emph{transfer}) operator $\Ll$
with weight $\phi$ as (we often drop the index $\phi$ for simplicity)
\begin{equation} \label{e:L}
\Ll g(y):=\Ll_\phi g (y):= \sum_{x \in f^{-1}(y)} e^{\phi (x)} g(x),
\end{equation}
where $g\colon\P^k \to \R$ is a continuous test function and the
points $x$ in the sum are counted with multiplicity. 
A \emph{conformal measure} is an eigenvector
for the dual operator $\Ll^*$ acting on positive measures.

 The following
result was obtained in \cite{bd-eq-states-part1}.
We refer to 
that paper
 for references to earlier
related
results.

\begin{theorem}\label{t:main}
Let $f$ be an endomorphism of $\P^k$ of algebraic degree $d\geq 2$ and
satisfying the Assumption {\bf (A)} below. Let $\phi$ be 
a real-valued $\log^q$-continuous function on $\P^k$, for some $q>2$, 
such that  $\Omega (\phi) :=\max \phi - \min \phi < \log d$. Then
$\phi$ admits a unique equilibrium state $\mu_\phi$, whose support is equal to  
the small Julia set of $f$. This measure $\mu_\phi$ is $K$-mixing
and mixing of all orders, and 
 repelling 
periodic 
 points of period $n$
 (suitably weighted) 
 are equidistributed with respect to 
 $\mu_\phi$ as $n$ goes to infinity.
Moreover, there is a unique conformal measure $m_\phi$
associated to $\phi$. We have $\mu_\phi=\rho m_\phi$ for some strictly positive continuous function $\rho$ on  $\P^k$
and the preimages of points by $f^n$ (suitably weighted)
 are equidistributed with respect to $m_\phi$ as $n$ goes to infinity.
\end{theorem}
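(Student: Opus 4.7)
The plan is to run the classical thermodynamic formalism for the transfer operator $\Ll_\phi$, adapted to the complex-projective setting in which $f$ may carry critical points on the small Julia set. The oscillation bound $\Omega(\phi)<\log d$ is the crucial quantitative input: together with the topological degree $d^k$ of $f$, it guarantees that the branching expansion dominates the distortion of the weight, so that each step of the Perron--Frobenius machinery can be pushed through.

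I would first define the scaling ratio $\lambda:=\lim_n\|\Ll^n\1\|_\infty^{1/n}$ and show $\|\Ll^n\1\|_\infty\asymp\lambda^n$. A Schauder--Tychonoff fixed-point argument on the convex compact set of probability measures then yields a conformal measure $m_\phi$ with $\Ll^*m_\phi=\lambda m_\phi$. The core analytic step is to prove that $\lambda^{-n}\Ll^n g$ converges uniformly to $\langle m_\phi,g\rangle\rho$ for every continuous $g$, with $\rho$ a continuous strictly positive density. For this I would quantify the modulus of continuity of the iterates using the $\log^q$-regularity of $\phi$ and propagate it through a pluripotential-flavored estimate; the threshold $q>2$ is calibrated so that the logarithmic singularities along pre-critical orbits are integrable.

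Once the uniform convergence is established, I set $\mu_\phi:=\rho m_\phi$. The identities $\Ll\rho=\lambda\rho$ and $\Ll^*m_\phi=\lambda m_\phi$ yield $f_*\mu_\phi=\mu_\phi$, and the variational principle combined with a Rokhlin-type formula for the Jacobian $\lambda e^{-\phi}(\rho\circ f)/\rho$ shows $P(\phi)=\log\lambda$ and that $\mu_\phi$ is the unique equilibrium state. Mixing (including $K$-mixing and mixing of all orders) is a direct consequence of the convergence: one writes $\langle\mu_\phi,(g\circ f^n)h\rangle = \langle m_\phi,\, g\cdot\lambda^{-n}\Ll^n(\rho h)\rangle \to \langle\mu_\phi,g\rangle\langle\mu_\phi,h\rangle$. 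The support of $\mu_\phi$ equals the small Julia set because $\rho>0$ and $\Ll^n$ spreads pre-images densely in $\supp(\mu)$; equidistribution of weighted pre-images is the same uniform convergence read against point masses; and equidistribution of repelling periodic points comes from a Lefschetz-type trace argument for $\Ll^n$ on a suitable space, using that the leading eigenvalue is simple.

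The hard part will be the uniform convergence $\lambda^{-n}\Ll^n g\to\langle m_\phi,g\rangle\rho$ with $\rho$ continuous and strictly positive. Because critical points may meet the Julia set, branches of the pull-back collide and the local expansion rate degenerates there, so a naive Arzel\`a--Ascoli argument is unavailable. The calibration ``$\log^q$-continuous with $q>2$'' together with pluripotential estimates is precisely what is designed to absorb the logarithmic singularities of the multiplicity-weighted pull-back counting function; this is where the argument must depart essentially from the classical hyperbolic case, and where the real novelty of Part I lies.
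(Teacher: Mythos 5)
This theorem is not proved in the present paper: it is quoted verbatim from the companion paper \cite{bd-eq-states-part1}, and the paper at hand only imports a few of its technical lemmas (equicontinuity and uniform convergence of $\lambda^{-n}\Ll^n$, the comparison principles of Section 2 of Part I, etc.). So there is no ``paper's own proof'' here to compare against directly. That said, your proposed architecture --- build $m_\phi$ by a fixed point on probability measures, establish the uniform convergence $\lambda^{-n}\Ll^n g \to \langle m_\phi,g\rangle\rho$ by equicontinuity with regularity propagated by pluripotential estimates, then set $\mu_\phi = \rho\, m_\phi$ and read off invariance, uniqueness, and mixing --- is indeed consistent with the approach suggested by the lemmas the present paper imports (for instance the equicontinuity and uniform convergence statement of Lemma \ref{l:dependence-bis}, attributed to Part I). Your identification of the uniform convergence as the pivotal hard step, and of the $\log^q$-continuity with $q>2$ under Assumption \textbf{(A)} as the quantitative input compensating for critical points on $\supp(\mu)$, is exactly the right diagnosis.

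Two cautions on the parts you sketch. First, the fixed-point construction of the conformal measure needs a normalization: $\Ll^*$ does not preserve total mass, so one applies Schauder--Tychonoff to the map $\nu\mapsto\Ll^*\nu/\langle\Ll^*\nu,\1\rangle$ and then identifies the resulting eigenvalue with $\lambda$. Second, and more substantively, your account of the equidistribution of repelling periodic points via ``a Lefschetz-type trace argument for $\Ll^n$'' does not match the methods used in this body of work: there is no suitable trace-class structure on the relevant function spaces here, and trace formulas for transfer operators are a feature of analytic/hyperbolic settings. In the non-uniformly hyperbolic projective setting the periodic-point equidistribution is typically obtained by combining the uniform convergence with a counting argument (exponential separation of inverse branches, exclusion of a small exceptional set), not by a trace identity. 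If you intend to carry this proposal out, that step needs an essentially different mechanism. Finally, the core of the theorem lies entirely in the modulus-of-continuity estimate that you compress into one sentence (``propagate it through a pluripotential-flavored estimate''); nothing else in the outline is new or hard, so the proof stands or falls on making that estimate precise, and your sketch does not indicate how the loss of regularity caused by $f_*$ through the critical set is actually controlled.
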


Recall
that a function is $\log^q$-continuous if its oscillation
on a ball of radius $r$ is bounded by a constant times
$(\log^\star r)^{-q}$, 
see Section
\ref{ss:def}
 for details. 
We made use of the 
following technical assumption for $f$:

\medskip\noindent
{\bf (A)} \hspace{1cm} the local degree of the iterate $f^n:=f\circ\cdots\circ f$ ($n$ times) satisfies
$$\lim_{n\to\infty} {1\over n} \log\max_{a\in\P^k}\deg(f^n,a) =0.$$

\medskip\noindent
Here, $\deg(f^n,a)$ is the multiplicity of $a$ as a solution of the
equation $f^n(z)=f^n(a)$. Note that generic endomorphisms of $\P^k$ satisfy
 this condition, 
 see \cite{dinh2010equidistribution}.
We assume 
{\bf (A)} also throughout all the current paper.

\medskip

A reformulation of Theorem \ref{t:main} is the following:
given $\phi$ as in the statement, there exist a number $\lam >0$ and
a continuous function $\rho=\rho_\phi \colon \P^k \to \R$ 
such that, for every continuous function $g\colon \P^k\to \R$, 
the following uniform convergence holds:
\begin{equation}\label{e:intro-cv-rho}
\lam^{-n}\Ll^n  g (y) \to c_g  \rho
\end{equation}
for some constant $c_g$ depending on $g$. By duality,
this is equivalent to the convergence, uniform on probability measures $\nu$,
\begin{equation}\label{e:intro-cv-m}
\lam^{-n} (\Ll^*)^{n} \nu  \to m_\phi,
\end{equation}
where $m_\phi$ is a conformal measure associated to the weight
$\phi$.  The equilibrium state $\mu_\phi$ is then given by
$\mu_\phi = \rho m_\phi$, and we have $c_g = \langle m_\phi, g \rangle$. 

\medskip

 We aim here at establishing
an exponential speed of convergence in \eqref{e:intro-cv-rho},
when $g$ satisfies necessary regularity properties.
This requires to build a suitable
(semi-)norm for (or equivalently, a suitable functional space on)
which the operator $\lam^{-1} \Ll$ 
turns out to be a contraction. 
 Observe that condition
{\bf (A)} 
is necessary for the uniform convergence.

Establishing the following statement is then our main goal in the current paper. 
 As far as we know, this is the first time that the existence
 of a spectral gap for the perturbed Perron-Frobenius operator
 is proved in this context even in dimension 1, except for hyperbolic
 endomorphisms or for weights with ad-hoc
 conditions (see for instance \cite{ruelle1992spectral,makarov2000thermodynamics}).
 This is one of the most desirable properties in dynamics.

\begin{theorem}\label{t:goal} 
Let $f,q,\phi,\rho, m_\phi$ be as in  Theorem \ref{t:main} and 
$\Ll,\lam$ the above Perron-Frobenius operator and scaling factor
associated to $\phi$.  
Let $A>0$ and $0<\Omega<\log d$ be two constants. Then, for every
constant $0<\gamma\leq 1$, there exist two explicit equivalent
norms for functions 
on $\P^k$: $\norm{\cdot}_{\diamond_1}$, depending on  $f,\gamma,q$ and
 independent of $\phi$, and $\norm{\cdot}_{\diamond_2}$, depending on $f,\phi,\gamma,q$,  such that
$$\norm{\cdot}_\infty+\norm{\cdot}_{\log^q}
\lesssim \norm{\cdot}_{\diamond_1}
\simeq \norm{\cdot}_{\diamond_2}
\lesssim \norm{\cdot}_{\Cc^\gamma}.$$
Moreover, there are positive constants $c=c(f,\gamma,q,A,\Omega)$ and 
$\beta =\beta(f,\gamma,q,A,\Omega)$ with $\beta<1$, both independent of $\phi$ and $n$, such that
when $\norm{\phi}_{\diamond_1}\leq A$ and $\Omega(\phi)\leq\Omega$ we have 
$$\|\lam^{-n}\Ll^n\|_{\diamond_1}\leq c, \quad \norm{\rho}_{\diamond_1}\leq c,
 \quad \norm{1/\rho}_{\diamond_1}\leq c, \quad \text{and} \quad 
\big\|\lam^{-1} \Ll g \big\|_{\diamond_2} \leq \beta \norm{g}_{\diamond_2}$$
for every function $g\colon\P^k\to\R$ with $\sca{m_\phi, g}=0$.
Furthermore, given any constant $0<\delta<d^{\gamma/ (2 \gamma+2)}$, when $A$ is small enough,
the norm  $\norm{\cdot}_{\diamond_2}$
 can be chosen so that we can take $\beta = 1/\delta$. 
 \end{theorem}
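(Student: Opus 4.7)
The plan is to construct $\norm{\cdot}_{\diamond_1}$ as a combination
$$\norm{g}_{\diamond_1}:=\norm{g}_\infty+\kappa\,\norm{g}_\sapg$$
for a small constant $\kappa=\kappa(f,\gamma,q)$ and parameters $p,\alpha$ chosen through Lemmas \ref{l:alpha-p-q}--\ref{l:logq-alpha-p-gamma} so that three properties hold simultaneously: the unit ball of $\norm{\cdot}_{\diamond_1}$ is relatively compact in $C^0(\P^k)$; the lower bound $\norm{g}_\infty+\norm{g}_{\log^q}\lesssim\norm{g}_{\diamond_1}$ is furnished by Lemma \ref{l:logq-alpha-p-gamma}; and the upper bound $\norm{g}_{\diamond_1}\lesssim\norm{g}_{\Cc^\gamma}$ is built into the definition of the $\sapg$ semi-norm via the Hölder--pluripotential interpolation. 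Since the $\sapg$ semi-norm depends only on $f$ and on regularity data, this norm is automatically $\phi$-independent.

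The core of the proof is then to establish a Lasota--Yorke type inequality of the form
$$\norm{\lam^{-n}\Ll^n g}_{\diamond_1}\leq C_1\kappa_0^n\norm{g}_{\diamond_1}+C_2\norm{g}_\infty$$
for some $\kappa_0<1$ and constants depending only on $f,\gamma,q,A,\Omega$. To prove it, one expands $\ddc(\Ll g)$ as a pushforward by $f$ of the main term $e^\phi\ddc g$ together with correction pieces containing $\dc\phi$ and $\dbar g\wedge\partial\phi$, and estimates each contribution in the $\sapg$ semi-norm. The leading term is handled via the mass contraction of $d^{-k}f_*$ on positive closed $(1,1)$-currents which, combined with the $\Cc^\gamma$--pluripotential interpolation built into $\sapg$, yields a contraction rate of order $d^{-\gamma/(2\gamma+2)}$. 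The correction terms are absorbed using $\norm{\phi}_{\diamond_1}\leq A$ and $\Omega(\phi)<\log d$. Together with the compact embedding into $C^0(\P^k)$, this gives quasi-compactness of $\lam^{-1}\Ll$; the uniqueness statement in Theorem \ref{t:main} then reduces the peripheral spectrum to the simple eigenvalue $1$ with eigenfunction $\rho$, producing the spectral gap.

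Iterating the Lasota--Yorke inequality starting from $g=\1$ and passing to the limit in \eqref{e:intro-cv-rho} yields both $\norm{\rho}_{\diamond_1}\leq c$ and the uniform bound $\norm{\lam^{-n}\Ll^n}_{\diamond_1}\leq c$; the bound $\norm{1/\rho}_{\diamond_1}\leq c$ then follows from the strict positivity of $\rho$ and the chain rule $\ddc(1/\rho)=-\rho^{-2}\ddc\rho+2\rho^{-3}\partial\rho\wedge\dbar\rho$, whose right-hand side is controlled by $\norm{\rho}_\sapg$ and $\norm{1/\rho}_\infty$. Given the spectral gap with sub-peripheral rate $\beta<1$, one defines
$$\norm{g}_{\diamond_2}:=\abs{\sca{m_\phi,g}}\cdot\norm{\rho}_{\diamond_1}+\sup_{n\geq 0}\beta^{-n}\bigl\|\lam^{-n}\Ll^n\bigl(g-\sca{m_\phi,g}\rho\bigr)\bigr\|_{\diamond_1}.$$
The uniform bound on iterates gives $\norm{\cdot}_{\diamond_2}\lesssim\norm{\cdot}_{\diamond_1}$, while the $n=0$ term gives the reverse inequality; by construction, applying $\lam^{-1}\Ll$ to a zero-mean $g$ shifts the supremum index by one and picks up a factor $\beta$, yielding $\norm{\lam^{-1}\Ll g}_{\diamond_2}\leq\beta\norm{g}_{\diamond_2}$. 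The sharper rate $\beta=1/\delta$ with any $\delta<d^{\gamma/(2\gamma+2)}$ when $A$ is small comes from the observation that the correction terms in the Lasota--Yorke estimate vanish as $A\to 0$, so the effective contraction rate converges to that of the unperturbed operator $d^{-k}f_*$.

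The main obstacle is the Lasota--Yorke estimate itself. While for the unperturbed operator $f_*$ the pluripotential machinery of Part I applies cleanly, the nonholomorphic perturbation by $e^\phi$ produces real correction currents involving $\dc\phi$ and $\dbar g\wedge\partial\phi$ that lie outside the reach of classical pluripotential theory. Controlling them is what forces the careful chain of intermediate norms $\norm{\cdot}_p,\norm{\cdot}_\ap,\norm{\cdot}_\sap,\norm{\cdot}_\sapg$ built in Section \ref{s:norm}: each is designed to absorb one obstacle in the passage from $\Cc^\gamma$-data down to $\log^q$-data while remaining compatible both with pushforward under $f$ and with the presence of critical points of $f$ on the small Julia set.
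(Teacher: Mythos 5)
Your high-level skeleton — defining $\norm{\cdot}_{\diamond_1}$ as $\norm{\cdot}_\infty+\norm{\cdot}_\sapg$, exploiting the interpolation built into the $\sapg$ semi-norm to land between $\log^q$ and $\Cc^\gamma$, and re-norming via a weighted sup (the paper uses a weighted sum, an inessential difference) to convert an $N$-step contraction into a one-step one — tracks the paper's structure closely, and the small-$A$ rate $d^{-\gamma/(2\gamma+2)}$ is the correct outcome of the $\gamma$-interpolation. But the step you flag as the main obstacle, a Lasota--Yorke inequality obtained by expanding $\ddc(\Ll g)$ as a pushforward of $e^\phi\,\ddc g$ plus correction pieces, is precisely the $\ddc$-method the introduction says cannot be applied here, and there is a concrete reason it fails.

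The semi-norm $\norm{\cdot}_\sapg$ is built from $\norm{\cdot}_\sap$, which measures $i\partial g\wedge\dbar g$, not $\ddc g$. So an expansion of $\ddc(\Ll g)$ does not directly bound $\norm{\Ll g}_\sapg$; and if one instead tried to stay in the $\ddc$-based framework ($\norm{\cdot}_p$ or $\norm{\cdot}_*$), the cross term $\partial\phi\wedge\dbar g$ is sign-indefinite and the only way to dominate it by a positive current, Cauchy--Schwarz, produces $i\partial g\wedge\dbar g$ — a current that a bound $|\ddc g|\leq S$ does \emph{not} control. That is exactly why Section \ref{s:norm} introduces the Sobolev-type data $i\partial g\wedge\dbar g$ in Definitions \ref{def:norm_sap} and \ref{d:norm-pag}. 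The paper's key estimate (Proposition \ref{p:estimate_n_gen}) proceeds by expanding the $(1,0)$-form $\partial(\Ll_{1,n}g)$ on the graph $\Gamma_n\subset(\P^k)^{n+1}$ into $\Theta_1+\Theta_2$, then taking $i(\cdot)\wedge\overline{(\cdot)}$ and using Cauchy--Schwarz so that every resulting piece is a positive current pushed forward under $f^n$, controlled by the shift property of $\norm{\cdot}_\ap$ (Lemma \ref{l:f-alpha-p}); the positivity preserved at every stage is essential and does not survive the $\ddc$ route. A secondary point: the paper does not deduce the gap from quasi-compactness and peripheral-spectrum analysis, but obtains the contraction on $\{\sca{m_\phi,g}=0\}$ directly by absorbing the lower-order term using the uniform equidistribution of Lemma \ref{l:dependence-bis}; your Hennion-style route would likely close once the key estimate is in hand, but as proposed that estimate would not.
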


The construction of the norms $\norm{\cdot}_{\diamond_1}$ and 
$\norm{\cdot}_{\diamond_2}$ is quite 
involved. 
We use here ideas from the theory of interpolation between Banach
spaces \cite{triebel1995interpolation}
combined with techniques from pluripotential theory and complex dynamics.
Roughly speaking, an idea from 
 interpolation theory allows us to reduce the problem to the case where $\gamma=1$.
The definition of the above norms in this case requires a control of 
 the derivatives of $g$ (in the distributional sense), and this is where we use techniques from
 pluripotential theory. 
 This also explains why these norms are bounded by the $\Cc^1$ norm.
 Note that we should be able to bound the derivatives of $\Ll g$ in a similar way.
A quick expansion of the derivatives of $\Ll g$
using \eqref{e:L} gives an idea of the difficulties that one faces.
The existence of these norms is still surprising to us.

{Let us highlight two among these difficulties. First,
the objects from complex analysis and geometry are too rigid for perturbations with a non-constant weight: none 
of the operators
$f_*$, $d$, and
$dd^c$ commutes with 
the operator $\Ll$. 
In particular, the $dd^c$-method developed
by the second author and Sibony
(see for instance \cite{dinh2010dynamics}) 
cannot be applied in this context, even for
small perturbations of the weight $\phi=0$.
Moreover, 
we have critical points on the support of the measure, which
cause a loss in the regularity
of functions under the operators $f_*$ and $\Ll$
(see Section \ref{s:norm}). Notice that
 we do not assume that our potential degenerates at the critical points.
}

Our solution to these problems is to define 
a new invariant functional space and norm in this mixed real-complex setting, 
that we call the 
\emph{dynamical Sobolev space} and \emph{semi-norm}, taking into account both the regularity of the function
(to cope with the rigidity of the complex objects)
and the action of $f$
(to take into account the critical dynamics), see Definitions \ref{def:norm_sap} and  \ref{d:norm-pag}.
The construction of this norm requires the definition of several 
intermediate semi-norms and the precise study of the action of the operator $f_*$
with respect to them, and is carried out in Section \ref{s:norm}.
Some of the intermediate estimates
already give new or more precise convergence properties
for the operator $f_*$ and the equilibrium measure $\mu$,
 see for instance Theorem \ref{t:like-equi}.

\medskip

A spectral gap for the Perron-Frobenius operator and its perturbations is one of the
most desirable properties in dynamics. It allows us to obtain several
statistical properties of the equilibrium state. In the present setting, we have the following result,
see Appendix \ref{a:abstract-result} for the definitions.

\begin{theorem}\label{t:statistic}
Let $f,\phi, \mu_\phi, m_\phi, \norm{\cdot}_{\diamond_1}$
 be as in Theorems \ref{t:main} and \ref{t:goal},
  $\lam$ the scaling ratio associated to $\phi$, and
 assume that $\norm{\phi}_{\diamond_1}<\infty$.
Then the equilibrium state $\mu_\phi$ 
 is exponentially mixing
for observables with bounded 
$\norm{\cdot}_{\diamond_1}$
norm 
and the preimages of points by $f^n$ 
(suitably weighted)
equidistribute
 exponentially fast towards $m_\phi$ as $n$ goes to infinity. 
The measure $\mu_\phi$
satisfies the LDP
for all observables with finite $\norm{\cdot}_{\diamond_1}$
norm, the ASIP, CLT, almost sure CLT, LIL for  
all observables with finite $\norm{\cdot}_{\diamond_1}$
norm 
which are not coboundaries,
and the local CLT for all observables 
with finite $\norm{\cdot}_{\diamond_1}$ 
norm 
which are not $(\norm{\cdot}_{\diamond_1}, \phi)$-multiplicative cocycles.
Moreover,
 the pressure
$P(\phi)=\log \lam$
is analytic in the following sense: 
for $\|\psi\|_{\diamond_1}<\infty$
 and $t$ sufficiently small, the
  function $t \mapsto P(\phi+t\psi)$ 
is analytic.
 \end{theorem}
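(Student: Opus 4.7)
The plan is to deduce all the statistical properties from the spectral gap provided by Theorem~\ref{t:goal} via a two-step procedure: first pass to a normalized transfer operator that preserves $\mu_\phi$, then invoke the Nagaev--Guivarc'h perturbative machinery, as encoded in the abstract result of Appendix~\ref{a:abstract-result}.

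For the first step, set $L g := \lam^{-1}\rho^{-1}\Ll(\rho g)$. From $\Ll\rho=\lam\rho$ and $\Ll^* m_\phi=\lam m_\phi$ one immediately obtains $L\1=\1$ and $L^*\mu_\phi=\mu_\phi$. The bounds $\norm{\rho}_{\diamond_1}\leq c$ and $\norm{1/\rho}_{\diamond_1}\leq c$ from Theorem~\ref{t:goal}, together with a multiplicative estimate for $\norm{\cdot}_{\diamond_1}$ (which should follow from its control by $\norm{\cdot}_{\Cc^\gamma}$ and its dominance over $\norm{\cdot}_\infty$), show that $L$ inherits a spectral gap on $(\norm{\cdot}_{\diamond_1})$: there exists $0<\beta<1$ such that $\norm{L^n(g-\sca{\mu_\phi,g}\1)}_{\diamond_1}\lesssim\beta^n\norm{g}_{\diamond_1}$. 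Exponential mixing then follows from the standard duality identity
$$\sca{\mu_\phi,(g\circ f^n)\, h}-\sca{\mu_\phi,g}\sca{\mu_\phi,h}=\sca{\mu_\phi,\, g\cdot L^n\bigl(h-\sca{\mu_\phi,h}\1\bigr)},$$
and the exponential equidistribution of preimages of points towards $m_\phi$ is a direct consequence of the $\diamond_1$-estimate for $\lam^{-n}\Ll^n$ in Theorem~\ref{t:goal}, tested against arbitrary observables.

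For the second step, given $\psi$ with $\norm{\psi}_{\diamond_1}<\infty$ and a complex parameter $t$ in a small neighborhood of $0$, define the perturbed operator $L_t g:=L(e^{t\psi}g)$, so that $L_0=L$ and, for real $t$ close to $0$, the leading eigenvalue of $L_t$ is essentially $e^{P(\phi+t\psi)-P(\phi)}$. The crux is to verify that $t\mapsto L_t$ is holomorphic as a map into bounded operators on $(\norm{\cdot}_{\diamond_1})$, which reduces to checking that $e^{t\psi}$ has finite $\diamond_1$-norm for small $t$ (an algebra-type estimate that should follow from the sandwich $\norm{\cdot}_\infty+\norm{\cdot}_{\log^q}\lesssim\norm{\cdot}_{\diamond_1}\lesssim\norm{\cdot}_{\Cc^\gamma}$) and that the series $\sum_{n\geq 0} t^n L(\psi^n g)/n!$ converges in operator norm. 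Once this is in place, analytic perturbation theory delivers an analytic simple leading eigenvalue $\lam(t)$ with $\lam(0)=1$, and unwinding the definitions gives the analyticity of $t\mapsto P(\phi+t\psi)$. Standard arguments then translate the spectral picture of $L_{it}$ into asymptotics for the characteristic functions $\Ex_{\mu_\phi}[e^{itS_n\psi}]$, yielding the CLT, ASIP, LIL and almost sure CLT under the non-coboundary hypothesis; the local CLT follows from the same analysis together with the aperiodicity provided by the non-multiplicative-cocycle condition; and the LDP is obtained by applying the G\"artner--Ellis theorem to the analytic pressure.

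The hard part will be establishing the holomorphicity of $t\mapsto L_t$ together with a uniform spectral gap along this family, since $\norm{\cdot}_{\diamond_1}$ is a delicate mixed real-complex norm assembled by interpolation from several intermediate semi-norms. The decisive leverage is that Theorem~\ref{t:goal} provides estimates that are uniform in $\phi$ as long as $\norm{\phi}_{\diamond_1}\leq A$ and $\Omega(\phi)\leq\Omega$; applied to the real perturbations $\phi+s\psi$ for $s$ in a real neighborhood of $0$, this uniformity makes the perturbed operators quasi-compact with a uniformly controlled essential spectral radius, so that the isolated dominant eigenvalue persists and varies analytically in $t$ throughout a complex neighborhood of $0$, closing the argument.
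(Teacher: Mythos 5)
Your overall architecture matches the paper's: normalize to $L(g)=\lam^{-1}\rho^{-1}\Ll(\rho g)$, then apply the Nagaev--Guivarc'h perturbative machinery via the operators $\Ll_{\phi+\theta g}$ (your $L_t$ is the $\rho$-conjugate and $\lam$-rescale of the paper's $\Ll_{[\theta]}$, so these are the same spectral object). However, there is a genuine gap at the step you flag as an ``algebra-type estimate that should follow from the sandwich.'' Being sandwiched between the two Banach-algebra norms $\norm{\cdot}_\infty$ and $\norm{\cdot}_{\Cc^\gamma}$ does \emph{not} imply that $\norm{\cdot}_{\diamond_1}$ is submultiplicative: that is not a formal consequence of norm inequalities. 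What is actually needed, and what the paper proves, is the Leibniz-type estimate $\norm{gh}_\sapg\le 3(\norm{g}_\sapg\norm{h}_\infty+\norm{g}_\infty\norm{h}_\sapg)$ of Lemma~\ref{l:apg_gh}, which in turn rests on the analogous estimate for $\norm{\cdot}_\sap$ in Lemma~\ref{l:ap_gh} and the specific decomposition structure in Definition~\ref{d:norm-pag}; combined with $\norm{\cdot}_\infty\lesssim\norm{\cdot}_{\diamond_1}$ this gives $\norm{gh}_{\diamond_1}\le 3\norm{g}_{\diamond_1}\norm{h}_{\diamond_1}$ (Lemma~\ref{l:diamond}). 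Without this Leibniz input, the convergence of $\sum_n \theta^n L(\psi^n g)/n!$ in operator norm --- and hence the holomorphy of $\theta\mapsto L_\theta$ --- is unjustified, and your whole second step collapses.

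Two further points deserve more than the passing mention you give them. For the LCLT, the non-cocycle hypothesis must be converted into the spectral statement that $r(\lam^{-1}\Ll_{[it]})<1$ uniformly for $t$ in compact subsets of $\R\setminus\{0\}$; this requires the chain of claims in Lemma~\ref{l:cocycle}, the contraction estimate of Lemma~\ref{l:non-cocycle}, and the characterizations in Proposition~\ref{p:mul_cocycle}, and does not come for free from ``aperiodicity.'' For the ASIP, Gou\"ezel's criterion (Theorem~\ref{t:asip-spectral}) requires a \emph{strong coding} condition with mixed parameters $t_0,\dots,t_{n-1}$, i.e.\ an identity $\langle\mu_\phi, e^{i\sum t_j g\circ f^j}\rangle=\langle m_\phi,\lam^{-n}\Ll_{[it_{n-1}]}\circ\cdots\circ\Ll_{[it_0]}\rho\rangle$, which is strictly stronger than the single-parameter coding used for the CLT; you do not verify it. These are the points you would have to supply before the proposal becomes a proof.
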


In particular, all the properties in Theorem \ref{t:statistic} hold when the weight $\phi$ and 
the observable are H\"older continuous and satisfy
 the necessary
coboundary/cocycles requirements.
In this assumption, some of the above properties were previously
obtained with ad hoc arguments, see
\cite{przytycki1989harmonic,denker1991ergodic,denker1991existence,denker1996transfer, haydn1999convergence, dinh2007thermodynamics, szostakiewicz2015fine}
 when $k=1$, 
 \cite{urbanski2013equilibrium,szostakiewicz2014stochastics}
for mixing, CLT, LIL
when $k\geq 1$,
and
\cite{dupont2010bernoulli}
 for the ASIP when 
 $k\geq 1$ and $\phi=0$. 
Our results are more general, sharper, and with
better error control.
Note that the LDP
and the local CLT are new even for $\phi=0$ (for all $k\geq 1$ and for all $k >1$, respectively).
Our method of proof of 
Theorem \ref{t:statistic} 
exploits the spectral gap established in Theorem \ref{t:goal}
and is based on
the theory of perturbed operators. This approach was first developed
by Nagaev \cite{nagaev1957some} in the context of Markov chains, 
see also
\cite{rousseau1983theoreme,broise1996transformations,gouezel2015limit},
 and provides a unified treatment for all
the statistical study.
Observe in particular that the fine control given for instance
by the local CLT is simply impossible to prove using weaker arguments, such as martingales, see, e.g.,
\cite[p.\ 163]{gouezel2015limit}.

For the reader's convenience, the above statistic properties
(exponential mixing, LDP, ASIP, CLT, almost sure CLT, LIL, local CLT) 
and the notions of coboundary and multiplicative cocycle
will be recalled in Section \ref{s:statistic} and Appendix \ref{a:abstract-result}
at the end of the paper.

\medskip\noindent
{\bf Outline of the organization of the paper.} 
In Section \ref{s:preliminary}, we
 introduce some notations and 
 establish comparison principles for currents and potentials that 
 will be the technical key 
in the construction of our norms.
   In Section \ref{s:norm}, we introduce the main (semi-)norms
 that we will need, and study the action of the operator $f_*$ with respect to these (semi-)norms.
Section \ref{s:speed} is dedicated to the proof of Theorem
 \ref{t:goal}.
Finally, in Section \ref{s:statistic}, we develop the
statistical study of the equilibrium states. This section contains 
the proof of Theorem \ref{t:statistic} and more precise statements.
In Appendix \ref{a:abstract-result},
we recall statistical properties and criteria in abstract settings that we 
use to prove results in Section \ref{s:statistic}.

\medskip\noindent 
\textbf{Acknowledgements.}
The first author would like to thank the National University
of Singapore (NUS) for its support and hospitality during the visits where this project started and developed, and 
Imperial College London were he was based during the first part of this work.
The paper was partially written during the visit of the authors to the University of Cologne. They would like
to thank this university, Alexander von Humboldt Foundation, and George Marinescu for their support and hospitality.

This project has received funding from the European Union’s Horizon 2020
Research and Innovation Programme under the Marie Skłodowska-Curie grant agreement No
796004,
 the French government through the Programme Investissement d'Avenir (I-SITE ULNE / ANR-16-IDEX-0004 ULNE and 
LabEx CEMPI /ANR-11-LABX-0007-01) managed by the Agence Nationale de la Recherche,
the CNRS  through the program PEPS JCJC 2019,
and the NUS 
and MOE
through the grants
R-146-000-248-114
and
MOE-T2EP20120-0010.

\section{Preliminaries and comparison principles}\label{s:preliminary}

\subsection{Some definitions}\label{ss:def}

We collect here some notions that we will use throughout the paper.

\begin{definition}
Given a subset $U$ of $\P^k$ or $\C^k$ and a real-valued function $g\colon U \to \R$,
define the \emph{oscillation} $\Omega_U (g)$ of $g$ as
\[\Omega_U (g) := \sup g - \inf g\]
and  its continuity modulus $m_U(g,r)$ at distance $r$ as
\[m_U(g,r) := \sup_{x,y\in U\colon \dist(x,y)\leq r} |g(x)-g(y)|.\]
We may drop the index $U$ when there is no possible confusion. 
\end{definition}

The following semi-norms  will be the
main building blocks
for all the semi-norms that we will construct  and study in the sequel.

\begin{definition}\label{defi_logp_cont}
The semi-norm  $\norm{\cdot}_{\log^p}$
is defined for every $p>0$ and $g\colon \P^k \to \R$ as
\[\norm{g}_{\log^p} := \sup_{a,b\in \P^k} |g(a)-g(b)| \cdot (\log^\star \dist(a,b))^p=
\sup_{r>0, a\in \P^k} \Omega_{\B_{\P^k}(a,r)}(g)\cdot (1+|\log r|)^p,\]
where $\B_{\P^k}(a,r)$ denotes the ball of center $a$ and radius $r$ in $\P^k$.
The definition can be extended to functions on any metric space.
\end{definition}

\begin{definition}
The norm $\norm{\cdot}_*$ 
of a $(1,1)$-current $R$ is given by
\[
\norm{R}_* := \inf \norm{S}
\]
where the infimum is taken over all positive
closed $(1,1)$-currents $S$ such that $\abs{R}\leq S$, see the
 Notation at the beginning of the paper.
When such a current $S$ does not exist, we put $\norm{R}_* :=+\infty$.
The semi-norm $\norm{\cdot}_*$ of
 an integrable function $g\colon \P^k \to \R$ is given by
\[\norm{g}_* :=
\norm{dd^c g}_*.\]
\end{definition}
Note that
when $R$ is a real closed $(1,1)$-current the above norm is equivalent
to the usual one defined as
\[
\norm{R}_* :=  \inf ( \|S^+\| + \|S^-\| )
\]
where  the infimum is taken over all positive
closed $(1,1)$-currents $S^{\pm}$ on $\P^k$ such that $R = S^+ - S^-$.
In particular, for $R=dd^c g$, the currents
 $S^+$ and $S^-$ are cohomologous and thus
  have the same mass, i.e., $\|S^+\|=\|S^-\|$,
see \cite[App.\ A.4]{dinh2010dynamics} for details.

In this paper,
we only consider continuous functions $g$. So the above semi-norms (and the others that we will
 introduce later) are almost norms: 
they only vanish when $g$ is constant. In particular, they are norms on the space of functions
$g$ satisfying $\langle \nu,g\rangle=0$ for some fixed probability measure $\nu$.
For convenience,
 we will 
use later 
 $\nu=m_\phi$ or $\nu= \mu_\phi$
to obtain a spectral gap for the Perron-Frobenius operator
and to study the statistical
properties of $\mu_\phi$.

\subsection{Approximations for H\"older continuous functions}
We will need the following
property for H\"older
continuous functions, see
 Definition \ref{d:norm-pag} and
Remark \ref{rem:holder-pag}.

\begin{lemma}\label{l:alpha-p-gamma-Holder}
Let $0<\gamma\leq 1$ be a constant. Then, for every $\Cc^\gamma$ function $g\colon \P^k \to \R$,
$s\geq 1$, and $0<\eps\leq 1$, there exist a $\Cc^s$ function $g_\eps^{(1)}$ and
a continuous function $g_\eps^{(2)}$ such that
$$g = g_\eps^{(1)} + g_\eps^{(2)}, \qquad  \|g_\eps^{(1)}\|_{\Cc^s} \leq 
c \norm{g}_{\infty}  (1/\eps)^{s/\gamma} \qquad \text{and} \qquad
\|g_\eps^{(2)}\|_{\infty} \leq c \norm{g}_{\Cc^\gamma} \eps,$$
where $c=c(\gamma,s)$  is a positive constant independent of $g$ and $\epsilon$.
\end{lemma}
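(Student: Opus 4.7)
The natural approach is convolution with a smooth mollifier at a suitably chosen scale. Given the form of the two target bounds, the scale $\delta$ must balance the two error terms: a $\delta^\gamma$ loss from approximation versus a $\delta^{-s}$ blow-up of the $\Cc^s$ norm of the regularization. The correct choice is $\delta:=\eps^{1/\gamma}$, which gives exactly $\delta^\gamma=\eps$ and $\delta^{-s}=\eps^{-s/\gamma}$.

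Since $\P^k$ is compact, I first fix once and for all a finite atlas $\{(U_j,\varphi_j)\}$ with each $\varphi_j(U_j)\subset\C^k$ an open ball, together with a subordinate partition of unity $\{\chi_j\}$. It suffices to establish the decomposition for each $\chi_j g$, regarded as a compactly supported function on $\C^k$ through the chart $\varphi_j$, since summing the local decompositions gives the global one. The finitely many constants arising from the choice of atlas and partition of unity will be absorbed into the final constant $c=c(\gamma,s)$.

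The bulk of the proof is then the local statement on $\C^k$: for a compactly supported $\Cc^\gamma$ function $h$, fix a nonnegative $\psi\in\Cc^\infty_c(\C^k)$ with $\int\psi=1$ and $\supp\psi\subset\B_1^k$, set $\psi_\delta(x):=\delta^{-2k}\psi(x/\delta)$, and define $h_\delta:=h\ast\psi_\delta$. Two standard computations then yield the needed estimates. For the approximation error, one writes
$$h(x)-h_\delta(x)=\int\bigl(h(x)-h(x-y)\bigr)\psi_\delta(y)\,dy,$$
and uses $|h(x)-h(x-y)|\leq \|h\|_{\Cc^\gamma}|y|^\gamma$ together with $\supp\psi_\delta\subset\B_\delta^k$ to get $\|h-h_\delta\|_\infty\lesssim \|h\|_{\Cc^\gamma}\,\delta^\gamma$. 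For the $\Cc^s$ bound, differentiating under the integral gives $\partial^\alpha h_\delta = h\ast\partial^\alpha\psi_\delta$, whence
$$\|\partial^\alpha h_\delta\|_\infty\leq \|h\|_\infty\,\|\partial^\alpha\psi_\delta\|_{L^1}\lesssim \|h\|_\infty\,\delta^{-|\alpha|}.$$
Summing over $|\alpha|\leq\lceil s\rceil$ and recalling that $\delta\leq 1$ gives $\|h_\delta\|_{\Cc^s}\lesssim \|h\|_\infty\,\delta^{-s}$. Setting $g_\eps^{(1)}$ to be the mollification (reassembled via the partition of unity) and $g_\eps^{(2)}:=g-g_\eps^{(1)}$, and choosing $\delta=\eps^{1/\gamma}$, yields exactly the two inequalities in the statement.

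There is no genuine obstacle here; the lemma is a standard mollification estimate adapted to the compact manifold $\P^k$. The only mildly delicate point is handling non-integer $s$, but since $\psi_\delta$ is genuinely smooth the derivative estimate above holds for every multi-index, so one simply works with $\lceil s\rceil$-th derivatives and, if needed, interpolates between integer levels. No pluripotential or dynamical input is used, which is why this lemma fits naturally as a preliminary before the main construction in Section \ref{s:norm}.
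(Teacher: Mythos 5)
Your proposal is correct and follows essentially the same route as the paper: reduce to a chart via a partition of unity, mollify with a bump function at scale $\eps^{1/\gamma}$, and read off the two bounds from the standard convolution estimates. The paper's proof is nearly identical (it writes $\nu$ for your $\delta$ and estimates $\|g_\nu\|_{\Cc^s}$ via $\|g\|_\infty\|\chi_\nu\|_{\Cc^s}\Leb(\B_\nu^k)$ rather than through $\|\partial^\alpha\psi_\delta\|_{L^1}$, but these are the same computation), so there is nothing further to reconcile.
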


\begin{proof}
Using a partition of unity, we can reduce the problem to the case where $g$ is
supported by the unit ball of an affine chart $\C^k\subset\P^k$.
Consider a smooth non-negative function $\chi$ with support in
 the unit ball of $\C^k$ whose integral with respect to the Lebesgue measure is 1. 
For $\nu>0$, consider the function $\chi_\nu(z):=\nu^{-2k}\chi(z/\nu)$
which has integral 1 and tends to the Dirac mass at 0 when $\nu$ tends to 0. 
Define an approximation of $g$ using the standard convolution operator $g_\nu:=g*\chi_\nu$,
and define $g_\eps^{(1)}:=g_\nu$ and $g_\eps^{(2)}:=g-g_\nu$.
We consider 
$\nu:=\epsilon^{1/\gamma}$.
It remains to bound $\|g^{(1)}_\eps\|_{\Cc^s}$ and $\|g^{(2)}_\eps\|_\infty$. 

By standard properties of the convolution we have, for some constant $\kappa>0$,
\[
\|g_\eps^{(2)}\|_{\infty} 
\lesssim m(g, \kappa\nu) \lesssim \norm{g}_{\Cc^\gamma} (\kappa \nu)^\gamma
\lesssim \norm{g}_{\Cc^\gamma} \epsilon.\]
and, by definition of $g_\nu$,
 \[
\|g^{(1)}_\eps\|_{\Cc^s}
 \lesssim \norm{g}_\infty \norm{\chi_\nu}_{\Cc^s} \Leb(\B_\nu^k)\lesssim
\norm{g}_\infty \nu^{-s} \lesssim \norm{g}_\infty 
\eps^{-s/\gamma}.
\]
The lemma follows.
\end{proof}

\subsection{Dynamical potentials}\label{ss:dyn-pot}
Let $T$ denote the Green $(1,1)$-current
of $f$. It is positive closed and of unit mass. Let $S$ be any positive closed
$(1,1)$-current of mass $m$ on $\P^k$. There is a unique function $u_S\colon \P^k \to \R \cup \{-\infty\}$ 
which is p.s.h.\ modulo $mT$
 and such that
\[S = mT + dd^c u_S \qquad \text{and} \qquad \langle \mu, u_S\rangle=0.\]
Locally, $u_S$ is the difference between 
a potential of $S$ and a potential of $mT$. We call it {\it the dynamical potential} of $S$.
Observe that the dynamical potential of $T$ is zero, i.e., $u_T=0$. 

Recall that 
$T$ has H\"older continuous potentials. 
So, $u_S$ is locally the difference
between a p.s.h.\ function and a H\"older continuous one.
We refer the reader to
\cite{dinh2010dynamics, bd-eq-states-part1} for details. 
In this paper, we 
only need currents $S$ such that $u_S$ is continuous.

\subsection{Complex Sobolev functions}
In
 our study,
 we will be naturally lead to
  consider currents of the form
$i\partial u \wedge \bar \partial u$. These currents
are always positive.
In this section, we
study the regularity of $u$ under 
 the assumption
  that
$i\partial u \wedge \bar \partial u \leq dd^c v$ for some $v$ of
given regularity.
 Recall that, given a smooth bounded open set $\Omega \subset \C^k$,
 the Sobolev space $W^{1,2}(\Omega)$
 is defined  as the space of functions $u\colon \Omega \to \R$
such that $\|u\|_{W^{1,2} (\Omega)} := \|u\|_{L^2 (\Omega)} + \|\partial u\|_{L^2 (\Omega)}<\infty$, 
where the reference measure is the standard Lebesgue measure on $\Omega$.
The Poincar\'e-Wirtinger's 
inequality implies 
that
$\norm{u}_{W^{1,2}(\Omega)}\lesssim \norm{\partial u}_{L^2 (\Omega)} +\big| \int_\Omega u\  d\Leb\big| $.
 We will 
 need the following 
lemmas.

\begin{lemma}\label{l:mt}
There is a 
universal positive constant $c$ 
such that
\[\int_K |u| d\Leb \leq c \Leb(K)(\log^\star\Leb(K))^{1/2}.\]
for every 
compact set $K \subset \D_1$
with $\Leb (K)>0$
and function $u\colon\D_1\to \R$ 
such that $\norm{u}_{W^{1,2} (\D_1)}\leq 1$.
\end{lemma}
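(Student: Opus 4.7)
The plan is to combine the sharp Moser-Trudinger exponential integrability, which is the critical Sobolev embedding for $W^{1,2}$ in dimension two, with a H\"older interpolation whose exponent is optimized in terms of $\Leb(K)$. The starting point is the classical fact that there exist universal constants $\alpha,C>0$ such that
\[\int_{\D_1} \exp(\alpha u^2)\,d\Leb\leq C\]
for every $u\in W^{1,2}(\D_1)$ with $\|u\|_{W^{1,2}(\D_1)}\leq 1$. Applying this to $W^{1,2}(\D_1)$ rather than $W^{1,2}_0(\D_1)$ is a standard minor adjustment: either extend $u$ by a fixed cutoff to a slightly larger disc and invoke the usual Moser-Trudinger, or use the form with $(u-\bar u)$ together with the bound $|\bar u|\lesssim \|u\|_{L^2(\D_1)}\leq 1$. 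Expanding the exponential as a power series and using Stirling's formula immediately converts this exponential integrability into the $L^p$-bound
\[\|u\|_{L^p(\D_1)}\leq c_0\sqrt{p}\qquad\text{for every }p\geq 2,\]
with $c_0$ universal; this is exactly the borderline $\sqrt{\log}$-type growth that matches the right-hand side of the lemma.

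Given this $L^p$-bound, H\"older's inequality yields, for every $p\geq 2$,
\[\int_K |u|\,d\Leb\leq \Leb(K)^{1-1/p}\,\|u\|_{L^p(\D_1)}\leq c_0\sqrt{p}\cdot\Leb(K)\cdot \Leb(K)^{-1/p}.\]
It remains to choose $p$ so that the factor $\Leb(K)^{-1/p}$ becomes a universal constant while $\sqrt{p}$ is comparable to $(\log^\star\Leb(K))^{1/2}$. The natural choice is $p:=2+|\log\Leb(K)|$: since $|\log\Leb(K)|/p\leq 1$ we get $\Leb(K)^{-1/p}\leq e$ when $\Leb(K)\leq 1$, and trivially $\Leb(K)^{-1/p}\leq 1$ when $\Leb(K)>1$ (recall $\Leb(K)\leq \Leb(\D_1)=\pi$). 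By construction $\sqrt{p}\lesssim (\log^\star\Leb(K))^{1/2}$. Substituting back yields the desired inequality with a universal constant.

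The only real ingredient is the Moser-Trudinger exponential integrability on $\D_1$; everything else is optimization of H\"older. Consequently I do not expect a significant obstacle: the argument is short and entirely standard, and it is tailored to produce an estimate that interfaces cleanly with the pluripotential constructions later in the paper, where an integral of the form $\int_K |u|\,d\Leb$ with $u$ a $W^{1,2}$-bounded potential needs to be controlled by $\Leb(K)$ up to a logarithmic loss.
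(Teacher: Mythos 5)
Your proof is correct, and it takes a genuinely different (though closely related) route from the paper. Both arguments hinge on the Moser--Trudinger exponential integrability $\int_{\D_1}e^{\alpha u^2}\,d\Leb\leq C$ for $\|u\|_{W^{1,2}(\D_1)}\leq 1$, but they extract the $\sqrt{\log}$-bound differently. The paper normalizes the restricted measure $m:=\Leb|_K/\Leb(K)$, uses Cauchy--Schwarz to pass from $\int_{\D_1}e^{2\alpha u^2}\,d\Leb$ to the bound $\int e^{\alpha u^2}\,dm\lesssim\Leb(K)^{-1/2}$, and then applies Jensen's inequality for the convex function $t\mapsto e^{\alpha t^2}$ to conclude $\int|u|\,dm\lesssim(\log^\star\Leb(K))^{1/2}$ in one stroke. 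You instead expand the exponential to get the quantitative family of bounds $\|u\|_{L^p(\D_1)}\lesssim\sqrt{p}$, then use H\"older with exponent $p$ against $\Leb(K)^{1-1/p}$, and optimize $p\sim 2+|\log\Leb(K)|$. The paper's route is shorter and avoids Stirling and the optimization step; your route has the mild bonus of making explicit the $L^p$-growth rate $\|u\|_{L^p}\lesssim\sqrt p$, which is sometimes a more reusable intermediate statement. Both yield the lemma with a universal constant, and both correctly handle the passage from $W^{1,2}_0$ to $W^{1,2}$ (you note this explicitly; the paper cites the same Moser reference and implicitly relies on the same standard adjustment).
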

\proof
By Trudinger-Moser's inequality \cite{moser1971sharp},
 there are positive constants $c_0$ and $\alpha$ such that 
$$\int_{\D_1} e^{2\alpha |u|^2} d\Leb \leq c_0.$$
Let $m$ denote the
restriction of the measure
$\Leb$ to $K$ multiplied by $1/\Leb(K)$. This is a probability measure. 
It follows from Cauchy-Schwarz's inequality that 
$$\int  e^{\alpha |u|^2} dm \leq \Big(\int  e^{2\alpha |u|^2} dm\Big)^{1/2} \lesssim \Leb(K)^{-1/2}.$$

Observe that the function $t\mapsto e^{\alpha t^2}$ is
convex on $\R^+$ and its inverse is the function $t\mapsto \alpha^{-1/2} (\log t)^{1/2}$.
By Jensen's inequality, we obtain
$$\int |u| dm \leq \alpha^{-1/2}\Big[\log \int  e^{\alpha |u|^2} dm\Big]^{1/2}\lesssim (\log^\star  \Leb(K))^{1/2}.$$
The lemma follows.
\endproof

 \begin{lemma}\label{l:u_chi}
Let $u\colon \D_2 \to \R$ 
be a continuous function 
 and
$\chi\colon \D_2 \to \R$
 a smooth function with compact support in $\D_2$ and 
equal to $1$ on $\overline\D_1$. 
Set $\chi_z := \partial \chi / \partial z$. Then we have, for all $0<r<s<1$,
\begin{equation}\label{eq_u_chi}
u(0)-u(r) = {1\over 2\pi} \Big\langle {i} \partial u, \chi(s^{-1}z) {r\over \overline z(\overline z-r)} d\overline z \Big\rangle
+ {1\over 2\pi} \Big \langle u,   \chi_z(s^{-1}z){r\over s \overline z (\overline z-r)} {i} dz\wedge d\overline z \Big\rangle.
\end{equation}
 \end{lemma}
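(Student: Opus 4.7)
The proof will rely on the Cauchy--Pompeiu (Cauchy--Green) representation formula, applied so as to produce the two point values $u(0)$ and $u(r)$ simultaneously. The key algebraic input is the partial fraction decomposition
\[
\frac{r}{\overline{z}(\overline{z}-r)} = -\frac{1}{\overline{z}} + \frac{1}{\overline{z}-r},
\]
whose two summands are (up to the factor $\pi$) the fundamental solutions of $\partial_z$ centered at $0$ and at $r$, since $\partial_z(1/\overline{z}) = \pi\delta_0$.

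First I would treat the smooth case $u \in C^\infty(\overline{\D_2})$. The function $\varphi := \chi(s^{-1}\cdot)\,u$ is smooth and compactly supported in $\D_2$, and satisfies $\varphi(0) = u(0)$ and $\varphi(r) = u(r)$, since $0<r<s<1$ forces $\chi(0) = \chi(s^{-1}r) = 1$. Applying the conjugate Cauchy--Pompeiu identity
\[
\varphi(w) = -\tfrac{1}{\pi}\int \partial_z\varphi(z)/(\overline{z}-\overline{w})\,d\Leb(z)
\]
at $w=0$ and $w=r$ and subtracting, together with the partial fraction above, gives
\[
u(0) - u(r) \;=\; \frac{1}{\pi}\int \partial_z\bigl(\chi(s^{-1}z)\,u(z)\bigr)\cdot \frac{r}{\overline{z}(\overline{z}-r)}\,d\Leb.
\]
Leibniz's rule then splits this into a $\chi(s^{-1}z)\,\partial_z u$ term and an $s^{-1}\chi_z(s^{-1}z)\,u$ term; rewriting the Lebesgue integrals as current pairings via $i\,dz\wedge d\overline{z} = 2\,dx\wedge dy$ yields exactly the two terms on the right-hand side of \eqref{eq_u_chi}.

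For $u$ merely continuous, I would mollify, $u_\epsilon := u \ast \rho_\epsilon$, apply the smooth formula to each $u_\epsilon$, and pass to the limit as $\epsilon \to 0$. The second term converges directly, since the kernel $\chi_z(s^{-1}z)\,r/[s\,\overline{z}(\overline{z}-r)]$ is in $L^1_{\mathrm{loc}}$ and $u_\epsilon \to u$ uniformly on $\overline{\D_2}$. The first term is interpreted distributionally as
\[
\bigl\langle i\partial u,\,\alpha\,d\overline{z}\bigr\rangle
\;=\; -\bigl\langle u,\, i\,\partial_z\alpha\,dz\wedge d\overline{z}\bigr\rangle,
\qquad \alpha := \chi(s^{-1}z)\,\tfrac{r}{\overline{z}(\overline{z}-r)},
\]
where $\partial_z\alpha$ is computed with Leibniz and $\partial_z(1/\overline{z}) = \pi\delta_0$, producing a locally $L^1$ piece plus atomic masses $\pi\chi(s^{-1}z)(\delta_r-\delta_0)$; these atomic contributions are precisely what reconstitutes $u(0)-u(r)$ on the left and make both sides converge to the same limit. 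The only genuinely delicate point in the proof is careful bookkeeping of signs and $2\pi$-factors when passing between Lebesgue integrals, Dirac masses at $\{0,r\}$, and the $(1,1)$-current pairings.
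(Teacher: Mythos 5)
Your proposal is correct and takes essentially the same route as the paper: both rest on the identity $\partial_z(1/\bar z)=\pi\delta_0$ (equivalently the Cauchy--Pompeiu formula) together with the partial fraction decomposition of $r/[\bar z(\bar z-r)]$. The paper works directly at the level of currents, writing $u(0)-u(r)=\langle u,\delta_0-\delta_r\rangle$, representing each $\delta_\xi$ as ${i\over 2\pi}\partial\big[\chi(s^{-1}z)\,d\bar z/(\bar z-\bar\xi)\big]$ minus a smooth term, and integrating by parts once, which sidesteps your mollification step but is otherwise the same computation.
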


\begin{proof}
Denote by $\delta_\xi$ the Dirac mass at $\xi\in\C$. 
Observe that 
\[{{i}\over 2\pi} \partial {d\overline z\over \overline z- \overline \xi} = \ddc \log|z-\xi|=\delta_\xi ,\]
where the equalities are
in the sense of currents on $\C$.
Hence, for $|\xi|< s$,
\[{{i}\over 2\pi} \partial \Big[{\chi(s^{-1}z)d\overline z\over \overline z-\overline \xi}\Big] = 
{\chi_z(s^{-1}z) {i} dz\wedge d \overline z\over 2\pi s (\overline z-\overline \xi)} +
\chi (s^{-1} z)\delta_\xi
=
{\chi_z(s^{-1}z) i dz\wedge d \overline z\over 2\pi s (\overline z-\overline \xi)} +\delta_\xi.
\]
Applying this identity for $\xi=0$ and $\xi=r$, and
 since $u(0)-u(r)=\langle u, \delta_0-\delta_r\rangle$, we obtain
\begin{eqnarray*}
u(0)-u(r) &= &
\frac{1}{2\pi} \Big\langle u, {i}\partial  \Big[\chi(s^{-1}z)\Big({1\over \overline z}-{1\over \overline z-r}\Big) d\overline z \Big]\Big\rangle
- \frac{1}{2\pi s} \Big\langle u,   \chi_z(s^{-1}z)\Big({1\over \overline z}-{1\over \overline z-r}\Big) {i}dz\wedge d\overline z \Big\rangle \\
& = & 
 \frac{1}{2\pi} \Big\langle {i} \partial u, \chi(s^{-1}z) {r\over \overline z(\overline z-r)} d\overline z \Big\rangle
+ \frac{1}{2\pi} \Big \langle u,   \chi_z(s^{-1}z){r\over s\overline z (\overline z-r)} {i} dz\wedge d\overline z \Big\rangle.
\end{eqnarray*}
The assertion is proved.
\end{proof}

The following is a main result in this section. It will be a crucial technical
tool in the construction of the norms with respect to which
the transfer operator has a spectral gap.

\begin{proposition}\label{p:mod-cont-mixed}
Let $u\colon \B_5^k\to \R$ 
be continuous and such that $\norm{\partial u}_{L^2(\B^k_5)}<\infty$. 
Assume that
 $i\partial u\wedge \dbar u\leq \ddc v$ where $v\colon \B_5^k \to \R$ 
 is  continuous, p.s.h., and
  such that 
\begin{equation}\label{e:hp-mixed}
\int_0^1 m_{\B_4^k}(v,t)  (\log \log^\star t)^4t^{-1} dt <+\infty.
\end{equation}
Then there is a positive constant $c$ such that, for all
$0<r\leq 1/2$, we have
\begin{align}\label{e:omega-mixed}
m_{\B_1^k}(u,r) & \leq c\Big(\int_0^{r^{1/2}} m_{\B_4^k}(v,t) (\log \log^\star t)^2t^{-1} dt\Big)^{1/2} \\ 
& \qquad  + c m_{\B_4^k}(v,r)^{1/3} \Omega_{\B_4^k}(v)^{1/6} (\log^\star r)^{1/2} 
+ c\Omega_{\B_4^k}(v)^{1/2}r^{1/2} (\log^\star r)^{1/2}.  \nonumber
\end{align}
\end{proposition}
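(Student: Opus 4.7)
My plan is to prove the estimate by first reducing to a one-dimensional problem via slicing, then applying the integral representation of Lemma \ref{l:u_chi}, and finally estimating the two resulting integrals using the hypothesis $i\partial u \wedge \bar\partial u \leq dd^c v$ combined with the Trudinger-Moser bound from Lemma \ref{l:mt}. Fix two points $p, q \in \B_1^k$ with $\dist(p,q) = r \leq 1/2$ and let $\ell$ be the complex line through them, parametrized so that $p \leftrightarrow 0$ and $q \leftrightarrow r$ inside a disc $\D_4 \subset \ell \cap \B_4^k$ (after an affine change). Since $\|\partial u\|_{L^2(\B_5^k)} < \infty$, a Fubini argument combined with the continuity of $u$ and $v$ allows us to assume that $u|_\ell$ lies in $W^{1,2}(\D_4)$, that $v|_\ell$ is continuous subharmonic with modulus of continuity dominated by $m_{\B_4^k}(v, \cdot)$, and that the slice of the inequality gives $|\partial(u|_\ell)|^2\, d\Leb \leq c\, dd^c(v|_\ell)$ as measures on $\D_4$.

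Choose the cutoff scale $s := r^{1/2}$ and take $\chi$ as in Lemma \ref{l:u_chi}. Applying that lemma to $u|_\ell$ gives
\[
u(0) - u(r) = \tfrac{1}{2\pi} I_1 + \tfrac{1}{2\pi} I_2,
\]
where $I_1 = \langle i\partial u, \chi(z/s)\tfrac{r}{\bar z(\bar z - r)}\, d\bar z\rangle$ and $I_2 = \langle u, \chi_z(z/s)\tfrac{r}{s\bar z(\bar z - r)}\, i\,dz\wedge d\bar z\rangle$. The kernel in $I_1$ is the difference of two Cauchy kernels with poles at $z=0$ and $z=r$, supported in $\D_{2s}$; the kernel in $I_2$ is smooth, supported on the annulus $\{s \leq |z| \leq 2s\}$, bounded in magnitude by $O(r/s^3)$, and integrates to zero over $\C$ (this last property is consistent with the fact that the formula is invariant under $u \mapsto u+c$).

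For $I_1$, I would decompose $\D_{2s}$ into dyadic annuli around each of the two poles and bound each piece by Cauchy-Schwarz. The $L^2$-mass of $\partial u$ on an annulus of scale $2^{-j}$ is controlled by the Riesz mass of $v|_\ell$ on that annulus, which for a continuous subharmonic function is $\lesssim m_{\B_4^k}(v, c\cdot 2^{-j})$. The kernel, however, is at the borderline of $L^2$-integrability: a plain Cauchy-Schwarz on each annulus would produce a logarithmic divergence when summing over all scales down to $0$. The remedy is to apply Lemma \ref{l:mt}, which via Trudinger-Moser exponential integrability converts the divergent $\log$ into a $(\log\log^\star\cdot)^2$ factor. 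Summing the dyadic contributions then yields the Dini-type integral $(\int_0^{r^{1/2}} m_{\B_4^k}(v,t)(\log\log^\star t)^2 t^{-1}dt)^{1/2}$, which is the first term of \eqref{e:omega-mixed}.

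For $I_2$, since the kernel has zero integral, we may replace $u$ by $u - u(z_0)$ for any $z_0$ in the annulus $\{s \leq |z| \leq 2s\}$, giving $|I_2| \lesssim (r/s)\cdot(\text{mean fluctuation of }u\text{ on this annulus})$. I would bound this fluctuation by splitting $u$ on the annulus into a piece whose oscillation is directly controlled by $m_{\B_4^k}(v, s)$ (the ``$v$-regular'' part) and a $W^{1,2}$-residual of energy bounded by $\Omega_{\B_4^k}(v)$; applying Lemma \ref{l:mt} to the residual produces a fluctuation bound $\lesssim \Omega_{\B_4^k}(v)^{1/2}(\log^\star s)^{1/2}$, which with $s = r^{1/2}$ yields the third term of \eqref{e:omega-mixed}. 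Optimizing the threshold between the two parts of the split then produces the interpolation exponents $1/3$ and $1/6$ in the middle term. The main obstacle is the sharp log-log bookkeeping for $I_1$: since the Cauchy kernel sits exactly at the critical $L^2$ growth, extracting the precise $(\log\log^\star t)^2$ weight — rather than a cruder $(\log^\star t)^2$ — requires that the Trudinger-Moser refinement of Lemma \ref{l:mt} be used at exactly the right dyadic scale, and carefully matched with the Dini hypothesis \eqref{e:hp-mixed} after passage to the slice.
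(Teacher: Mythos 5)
Your overall architecture --- slice to a complex line, apply Lemma \ref{l:u_chi}, estimate the two resulting integrals $I_1$ (the $i\partial u$ term) and $I_2$ (the $\chi_z$ term) separately --- matches the paper's, but there are two genuine gaps.

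First, the fixed choice $s = r^{1/2}$ does not yield the stated bound. The paper takes $s := r\min\{r^{-1/2},\, m_{\D_3}(v,r)^{-1/3}\}$, so that $s$ depends on the modulus of continuity of $v$ as well as on $r$; this is precisely what produces the exponents $1/3$ and $1/6$. The mechanism is a balance between $I_1$ and $I_2$, not an internal optimization inside $I_2$: the $I_2$ estimate gives $\lesssim (r/s)(\log^\star s)^{1/2}$, while the contribution of the pole $z=r$ to $I_1$ gives $\lesssim \big(m(v,r)\,r^{-1}s\big)^{1/2}(\log\log^\star s)$; equating $r/s$ with $\big(m(v,r)r^{-1}s\big)^{1/2}$ forces $s^3 = r^3/m(v,r)$. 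If one insists on $s=r^{1/2}$, the pole contribution to $I_1$ becomes $\sim \big(m(v,r)/r^{1/2}\big)^{1/2}$, which dominates $m(v,r)^{1/3}$ whenever $m(v,r)\gtrsim r^{3/2}$ --- e.g., for $v$ merely Lipschitz, $m(v,r)\sim r$ gives $r^{1/4}$ versus the claimed $r^{1/3}$, and the estimate fails. Your plan to recover the $1/3$--$1/6$ exponents by optimizing a split of $u$ inside $I_2$ is not substantiated and does not reflect where these exponents actually come from.

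Second, the dyadic Cauchy--Schwarz scheme for $I_1$ cannot be rescued by Lemma \ref{l:mt}. That lemma controls $\int_K|u|\,d\Leb$ for $u$ in $W^{1,2}$, not the $L^2$-mass of $\partial u$ on a dyadic annulus, so it does not apply to the factor $\|\partial u\|_{L^2}$ appearing in your decomposition. Moreover, the dyadic scheme naturally produces a bound of the form $\sum_j m(v,2^{-j})^{1/2}$ (a sum of square roots), whereas the stated estimate requires $\big(\sum_j m(v,2^{-j})(\log\log 2^j)^2\big)^{1/2}$ (a square root of a weighted sum), and these differ: take $m(v,t)\sim(\log 1/t)^{-2}(\log\log 1/t)^{-2}$; then \eqref{e:hp-mixed} holds and the square-root-of-sum is finite, but $\sum_j m(v,2^{-j})^{1/2}\sim\sum_j j^{-1}(\log j)^{-1}$ diverges. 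The paper avoids this by a single weighted Cauchy--Schwarz with weight $\eta(z)=h(|z|)+h(|z-1|)$, $h(t)=(-\log t)(\log(-\log t))^2$: one factor is the finite constant $\int_\C \eta(z)^{-1}|z|^{-2}|z-1|^{-2}\,i\,dz\wedge d\bar z$, the other is $\langle dd^c\tilde v,\chi(z/s)\eta(z/r)\rangle$ with $\tilde v:=v-r^{-1}v(r)\Re z$, chosen so that $\tilde v(0)=\tilde v(r)=0$ handles both poles at once (a normalization you do not mention). This pairing is then transformed by integration by parts --- the Claim in the proof, which is exactly where \eqref{e:hp-mixed} enters --- into $\int|\tilde v(z)|\,|z|^{-2}(\log\log^\star|z|)^2$, producing the correct Dini integral; Lemma \ref{l:mt} is used only for $I_2$.
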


\proof
Let $x,y \in \B_1^k$ be such that $\dist(x,y)\leq r\leq 1/2$. We need
to bound $|u(x)-u(y)|$ by the RHS of \eqref{e:omega-mixed}. We can assume without loss of generality that $\dist(x,y)=r$. 
By a change of coordinates and restricting
  to the complex line through $x$ and $y$
   we can reduce the problem to the case of dimension 1. More precisely, we can assume 
  that $x=0$ and $y=r$ in $\C$ and that $u,v$ are defined on $\D_4$.
  By subtracting constants, we can assume that $v(0)=0$ and $\int_{\D_3} u(z) idz\wedge d\overline z=0$. 
By multiplying $u$ and $v$ by suitable constants $\gamma$ and $\gamma^2$,
 we can assume that  $m_{\D_3}(v,1) = 1/8$, which implies that $|v|\leq 1$ on $\D_3$. 
In order to establish \eqref{e:omega-mixed} 
it is enough to show that $|u(0)-u(r)|$ is bounded by a constant times
\begin{equation} \label{e:good-bound}
\Big(\int_0^{r^{1/2}} m_{\D_3}(v,t)  (\log \log^\star t)^2t^{-1} dt\Big)^{1/2}+  m_{\D_3}(v,r)^{1/3}  (\log^\star r)^{1/2} 
+ r^{1/2} (\log^\star r)^{1/2}.
\end{equation}

Since $v$
is bounded, 
by Chern-Levine-Nirenberg's inequality \cite{chern1969intrinsic} the mass 
of $\ddc v$ on $\D_2$ is bounded by a constant.
Thus, by the hypotheses on $u$ and $v$, 
the 
$L^2$-norm of $\partial u$ on $\D_2$ is bounded by a constant and therefore,
by Poincar\'e-Wirtinger's inequality,
$\norm{u}_{W^{1,2}(\D_2)}$ is also bounded by a constant.

Fix a smooth function $0\leq \chi(z)\leq 1$ with compact support in $\D_2$ and
such that $\chi=1$ on $\overline\D_{1}$. Define $\chi_z:={\partial \chi /\partial z}$ and set
$$s:=r \min\big\{r^{-1/2}, m_{\D_3}(v,r)^{-1/3}\big\}.$$
We have
\begin{equation}\label{e:bound-s}
\sqrt{2} r\leq s \leq r^{1/2} < 1
\end{equation} 
because $m_{\D_3}(v,r)\leq m_{\D_3}(v,1) = 1/8$ and $0<r\leq 1/2$. 
The functions $u$ and $\chi$ satisfy the assumptions of
Lemma \ref{l:u_chi}. Thus, \eqref{eq_u_chi} holds for the above $s$ and $r$.
The second term in the 
RHS of \eqref{eq_u_chi}
 is an integral over $\D_{2s}\setminus \D_{s}$ because
$\chi_z$ has support in $\D_2\setminus \D_{1}$. 
Moreover, for $z \in \D_{2s}\setminus \D_{s}$, we have 
${r \over s \overline z(z-r)} = O(rs^{-3})$ because of \eqref{e:bound-s}. 
Using that 
$\norm{u}_{W^{1,2} (\D_2)}\lesssim 1$ and that $\Leb (\D_{2s})\lesssim s^{2}$,
Lemma \ref{l:mt}
implies that the considered term has modulus bounded by a constant times 
$$rs^{-1}(\log^\star s)^{1/2}\lesssim \max\big\{r^{1/2}, m_{\D_3}(v,r)^{1/3}\big\} (\log^\star r)^{1/2}.$$
The last expression is bounded by the sum in \eqref{e:good-bound}. 

In order to conclude, it remains to bound the first term in the RHS of \eqref{eq_u_chi}. 
Choose a smooth decreasing function $h(t)$ defined for $t>0$ and such that
$h(t):=(-\log t) (\log(-\log t))^2$ for $t$ small enough and $h(t)=1$
for $t$ large enough.  Define  $\eta(z):=  h(|z|)+h(|z-1|)$.
We will also use the function $\tilde v(z):= v(z)- r^{-1}v(r)\Re(z)$. 
This function satisfies $\ddc \tilde v=\ddc v$ and $\tilde v(0)=\tilde v(r)=0$. 
By Cauchy-Schwarz's inequality we have for the first term in the RHS of \eqref{eq_u_chi}
\[\Big|\Big\langle {i} \partial u, \chi(s^{-1}z) {r\over \overline z(\overline z-r)} d\overline z \Big\rangle\Big|^2
\leq 
\big\langle i\partial u \wedge \dbar u, \chi(s^{-1}z)\eta(r^{-1}z) \big\rangle
\int {\chi(s^{-1}z) \over \eta(r^{-1}z)}{r^2\over |z^2(z-r)^2|} idz\wedge d\overline z.\]
Using the change of variable $z\mapsto rz$, the fact that $0\leq\chi\leq 1$,
and the definition of $\eta$ we see that the last integral is bounded by 
$$\int _\C{ idz\wedge d\overline z \over \big[h(|z|)+h(|z-1|)\big] |z^2(z-1)^2|}\cdot$$
Using polar coordinates for $z$ and for $z-1$ and the definition of $h$
 it is not difficult to see that the last integral is finite.
Therefore, since $i\partial u\wedge\dbar u\leq \ddc v=\ddc\tilde v$, we get 
\[\Big|\Big\langle i\partial u, \chi(s^{-1}z) {r\over \overline z(\overline z-r)} d\overline z \Big\rangle\Big|^2 \lesssim 
\big\langle \ddc \tilde v , \chi(s^{-1}z)\eta(r^{-1}z) \big\rangle.\]

Define $\hat v(z):=\tilde v(sz)$. The RHS in the last expression is then equal to
\[\big\langle \ddc \hat v , \chi(z)\eta(r^{-1}sz) \big\rangle= 
\big\langle \ddc \hat v , \chi(z)h(r^{-1}s|z|) \big\rangle + \big\langle \ddc \hat v , \chi(z)h(|r^{-1}sz-1|) \big\rangle.\]
In order to conclude the proof of
 the proposition, it is enough to show that each term in the last sum is bounded by a constant times
\begin{equation}\label{e:mixed-step}
\int_0^{s} m_{\D_3}(v,t) (\log^\star |\log t|)^2t^{-1} dt + m_{\D_3}(v,r)^{2/3} \log^\star r.
\end{equation}

We will only consider the first term. The second term can be
treated in a similar way using the coordinate
$z':= z-rs^{-1}$. Since $h$ is decreasing, the first term we consider is bounded by
$$\big\langle \ddc \hat v , \chi(z)h(|z|) \big\rangle.$$

\begin{claim}
 We have 
\begin{equation}\label{e:claim_dwbd} 
\big\langle \ddc \hat v , \chi(z)h(|z|) \big\rangle 
= \int_{\D_2\setminus\{0\}} \hat v(z) \ddc [\chi(z) h(|z|)]. 
\end{equation}
\end{claim}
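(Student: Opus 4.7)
The identity is Stokes / integration-by-parts on $\D_2\setminus\{0\}$, and the only difficulty is the singularity of $\chi(z)h(|z|)$ at the origin. I would exploit three facts about $\hat v$ throughout. First, $\hat v$ is continuous and plurisubharmonic on $\D_2$, since $\tilde v = v - r^{-1}v(r)\Re z$ differs from the continuous p.s.h.\ function $v$ by a pluri-harmonic term, and $\hat v(z) = \tilde v(sz)$. Second, $\hat v(0)=0$, because $\tilde v(0)=v(0)=0$. Third, the modulus of continuity satisfies $\omega_{\hat v}(t)\le m_{\D_3}(v,st)+O(t)$, and comparing \eqref{e:hp-mixed} on a sub-interval of the form $[t_0^{1/2},t_0^{1/4}]$ (and using monotonicity of $m(v,\cdot)$) extracts the pointwise decay $m(v,t)\,|\log t|\,(\log|\log t|)^4 \to 0$ as $t \to 0$, which will dominate the singular factors arising below.

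My strategy is a cutoff limit. Let $\psi_\epsilon\colon [0,\infty)\to[0,1]$ be smooth with $\psi_\epsilon\equiv 0$ on $[0,\epsilon]$, $\psi_\epsilon\equiv 1$ on $[2\epsilon,\infty)$, and $|\psi_\epsilon^{(j)}|\lesssim\epsilon^{-j}$ for $j=1,2$. Set $\Phi_\epsilon(z):=\chi(z)h(|z|)\psi_\epsilon(|z|)$; this is a smooth nonnegative function with compact support in $\D_2\setminus\{0\}$. Since $\hat v$ is a distribution of order zero, the distributional definition of $\ddc$ gives
\[
\int \Phi_\epsilon\,d(\ddc\hat v) \;=\; \int \hat v\,\ddc\Phi_\epsilon.
\]
As $\epsilon\to 0$, $\Phi_\epsilon\uparrow \chi h$ pointwise on $\D_2\setminus\{0\}$, so the monotone convergence theorem (with $\ddc\hat v\ge 0$ and $\chi h\ge 0$) yields LHS $\to \langle\ddc\hat v,\chi h\rangle$.

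For the right-hand side I would expand
\[
\ddc\Phi_\epsilon \;=\; \psi_\epsilon\,\ddc(\chi h) + (\chi h)\,\ddc\psi_\epsilon + d(\chi h)\wedge d^c\psi_\epsilon + d\psi_\epsilon\wedge d^c(\chi h),
\]
and handle the first (main) term and the three error terms separately. The main term, paired with $\hat v$, converges by dominated convergence to $\int_{\D_2\setminus\{0\}}\hat v\,\ddc(\chi h)$; absolute integrability of the limit follows from the radial computation $|\ddc(\chi h)|\sim\log|\log|z||/(|z|^2|\log|z||)$ near $0$, the bound $|\hat v(z)|\le\omega_{\hat v}(|z|)$, and the change of variables $u=\log|\log r|$ applied in \eqref{e:hp-mixed}.

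The main obstacle is to show that the three remaining terms---all supported on the annulus $A_\epsilon=\{\epsilon\le|z|\le 2\epsilon\}$---contribute a vanishing error as $\epsilon\to 0$. The dominant one is $(\chi h)\,\ddc\psi_\epsilon$: using $|\chi h|\le h(\epsilon)$ and $\int_{A_\epsilon}|\ddc\psi_\epsilon|\lesssim 1$, it is bounded by a constant times $\sup_{A_\epsilon}|\hat v|\cdot h(\epsilon)$, while the two mixed terms $d(\chi h)\wedge d^c\psi_\epsilon$ and $d\psi_\epsilon\wedge d^c(\chi h)$ are bounded by the smaller quantity $\sup_{A_\epsilon}|\hat v|\cdot(\log|\log\epsilon|)^2$ using $|h'(\epsilon)|\sim(\log|\log\epsilon|)^2/\epsilon$ and $|\psi_\epsilon^{(j)}|\lesssim\epsilon^{-j}$. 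Since $h(\epsilon)\sim|\log\epsilon|(\log|\log\epsilon|)^2$ and $\sup_{A_\epsilon}|\hat v|\le m_{\D_3}(v,2s\epsilon)+O(\epsilon)$, the decay $m(v,t)|\log t|(\log|\log t|)^4\to 0$ beats $h(\epsilon)$ with room $(\log|\log\epsilon|)^{-2}$ to spare, so the error vanishes. Combining the four limits yields the claimed identity.
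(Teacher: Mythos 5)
Your proof is correct but takes a genuinely different route. The paper avoids a spatial excision around the singularity: instead of cutting out a small disk, it caps the \emph{value} of the unbounded test function $l:=\chi(z)h(|z|)$ at height $n+2$ via a smooth concave ramp $\vartheta_n(t)=\vartheta(t-n)+n$ increasing to the identity, so that integration by parts is exact for each bounded $\vartheta_n(l)$. Expanding $\ddc\vartheta_n(l)=\vartheta_n'(l)\,\ddc l+\vartheta_n''(l)\,dl\wedge d^c l$, the first piece converges to the target by dominated convergence, while the second, supported on the shrinking set $\{h>n\}$, vanishes by dominated convergence once one checks that $|\hat v|\,i\partial h(|z|)\wedge\dbar h(|z|)$ is integrable near $0$ (this is where the exponent $4$ in \eqref{e:hp-mixed} enters at full strength). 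Your spatial cutoff $\psi_\epsilon$ produces annular boundary terms instead, and you pay for that cleaner error structure with the extra step of extracting the \emph{pointwise} decay $m(v,t)\,|\log t|\,(\log|\log t|)^4\to 0$ from the integral hypothesis; your comparison over dyadic-in-$\log$ intervals does this correctly, and your accounting of the three annular terms against $h(\epsilon)$ is sound. Both routes ultimately cash in \eqref{e:hp-mixed} and are valid: the paper's avoids the pointwise-decay lemma, yours avoids the $dl\wedge d^c l$ term and the choice of concave ramp. One technicality that both treatments leave implicit: equating $\langle\ddc\hat v,\chi h\rangle$ with $\int_{\D_2\setminus\{0\}}\chi h\,d(\ddc\hat v)$ (which is what your monotone convergence step actually yields) requires that $\ddc\hat v$ give no mass to the origin, which holds because $\hat v$ is bounded near $0$.
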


\medskip

We assume the claim for now and 
conclude the proof of the proposition.  Notice that the assumption
\eqref{e:hp-mixed} will be used in the proof of this claim.

Using the definitions of $h$ and $\chi$, 
 we can bound the RHS of \eqref{e:claim_dwbd}
 by a constant times
\[\begin{aligned}
\int_{\D_2\setminus\{0\}}  |\hat v(z) z^{-2}| (\log\log^\star |z|)^2 idz\wedge d\overline z
&
=   \int_{\D_{2s}\setminus\{0\}}  |\tilde v(z) z^{-2}| (\log\log^\star |z/s|)^2 idz\wedge d\overline z \\
& \lesssim  \int_{\D_{2s}\setminus\{0\}}  |\tilde v(z) z^{-2}| (\log\log^\star |z|)^2 idz\wedge d\overline z,
\end{aligned}
 \]
where we used the change of variable $z\mapsto sz$
and the fact that
$\log^\star |z/s|\lesssim \log^\star |z|$ for\break $0 < |z|< 2s<2$.
Moreover, by the definition of $\tilde v$ and using that $v(0)=\tilde v(0)=0$, we have for $|z|<2s$
\[|\tilde v(z)|\leq m_{\D_3}(v,|z|)+ r^{-1} |v(r)\Re(z)|\leq m_{\D_3}(v,|z|)+ m_{\D_3}(v,r)r^{-1}|z|.\]
Therefore, using 
polar coordinates, we see that the last integral is bounded by a constant times
\[\int_0^{2s}   m_{\D_3}(v,t) (\log\log^\star t)^2 t^{-1} dt + m_{\D_3}(v,r) r^{-1}s (\log \log^\star (2s))^2.\]
The first term in this sum is bounded by a constant times the integral in
\eqref{e:mixed-step}
because $m_{\D_3}(v,t') \leq 4 m_{\D_3}(v,t)$ for 
$s/2\leq t\leq s\leq t'\leq 2s$. The second one is bounded by a constant
times the second term in \eqref{e:mixed-step} by the definition of $s$
and \eqref{e:bound-s}.
The proposition follows.
\endproof

\begin{proof}[Proof of the claim]
Observe that $h(|z|)$ tends to infinity when $z$ tends to 0. 
Let $\vartheta:\R\to\R$ be a smooth increasing concave
function such that $\vartheta(t)=t$ for $t\leq 0$ and $\vartheta(t)=1$ for $t\geq 2$.
Define $\vartheta_n(t):=\vartheta(t-n)+n$. This is a sequence of
smooth functions increasing to the 
identity. 
Define $l(z):=\chi(z)h(|z|)$. 
Using an integration by parts, we see that the LHS of \eqref{e:claim_dwbd} 
 is equal to
\begin{eqnarray*}
\lim_{n\to\infty}  \big\langle \ddc \hat v , \vartheta_n(l(z)) \big\rangle  & = &   \lim_{n\to\infty}   \int_{\D_3} \hat v(z) 
\ddc \vartheta_n(l(z)) \\
& = &    \lim_{n\to\infty}   \int_{\D_3} \hat v(z) \vartheta_n'(l(z)) \ddc l(z) 
  +    \lim_{n\to\infty}   \int_{\D_3} \hat v(z) \vartheta_n''(l(z)) dl(z)\wedge d^cl(z).
\end{eqnarray*}
The first term in the last sum converges to the RHS of the identity in the claim using Lebesgue's dominated convergence theorem and \eqref{e:hp-mixed}. We need to show that the second term tends to 0.
Since $\chi(z)=1$ for $z$ near 0, for $n$ large enough, the considered term has an absolute value bounded by a constant times
\[\lim_{n\to\infty}   \int_{\{h(|z|)>n\}} |\hat v(z)|  i \partial h(|z|)\wedge \dbar h(|z|)\lesssim \lim_{n\to\infty}   \int_{\{h(|z|)>n\}} |\hat v(z)
  z^{-2}| (\log\log^\star |z|))^4 idz\wedge d\overline z.\]
Using the arguments as at the end of the proof of 
Proposition \ref{p:mod-cont-mixed} 
and the 
assumption \eqref{e:hp-mixed} on $v$,
 we see that the last integrand is an integrable function on $\D_1$.
Since the set $\{h(|z|)>n\}$ decreases to $\{0\}$ when
 $n$ tends to infinity, the last limit is zero according to Lebesgue's dominated convergence theorem. This ends the proof of the claim.
\end{proof}

\begin{corollary} \label{c:mod-cont-bis-mixed}
Let $S_0$ be a positive closed $(1,1)$-current on $\P^k$
 of unit mass,
whose dynamical potential $u_S$
satisfies $\norm{u_S}_{\log^p}\leq 1$ for some $p>3/2$.
 Let $\Fc(S_0)$ denote the set of all continuous
 functions $g\colon \P^k\to \R$ 
 such that $i\partial  g\wedge \bar \partial g \leq S_0$.
  Then for any positive number $q< \frac{p}{3}-\frac{1}{2}$
  we have $\norm{g}_{\log^{q}}\leq c$ for some positive constant $c=c(p,q)$
  independent of $S_0$. 
 In particular, 
the family   $\Fc(S_0)$ is equicontinuous.
\end{corollary}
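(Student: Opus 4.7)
The plan is to reduce the corollary to a direct application of Proposition \ref{p:mod-cont-mixed} by producing local potentials for $S_0$ of the required regularity, and then to estimate the three terms in \eqref{e:omega-mixed} in terms of $p$.

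Fix any point $a\in\P^k$ and work in an affine chart centered at $a$; after a dilation we may assume we are on $\B_5^k$. Write $T=\omega_{\FS}+\ddc\tau$ with $\tau$ H\"older continuous, so that locally $S_0=\ddc v$ where $v:=\varphi_T+u_S$ is the sum of a local potential $\varphi_T$ of $T$ (H\"older continuous) and the dynamical potential $u_S$. By assumption $\|u_S\|_{\log^p}\leq 1$, and since H\"older contributions decay polynomially (hence much faster than any $(\log^\star t)^{-p}$), we obtain
\[
m_{\B_4^k}(v,t)\;\lesssim\;(\log^\star t)^{-p}
\quad\text{and}\quad
\Omega_{\B_4^k}(v)\;\lesssim\;1.
\]
The function $v$ is continuous and p.s.h.\ on $\B_5^k$. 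Moreover, since $\ddc v=S_0$ has bounded mass and $i\partial g\wedge\dbar g\leq S_0$, the function $g$ lies in $W^{1,2}_{\rm loc}$. Thus the hypotheses of Proposition \ref{p:mod-cont-mixed} are satisfied as soon as \eqref{e:hp-mixed} holds for $v$; but with $s=1+|\log t|$ the integrand is $\lesssim s^{-p}(\log s)^4\,ds$, which is integrable at infinity because $p>3/2>1$.

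It remains to estimate, for $0<r\leq 1/2$, each of the three terms in the bound \eqref{e:omega-mixed}. For the first one, a change of variables $s=1+|\log t|$ gives
\[
\int_0^{r^{1/2}}m_{\B_4^k}(v,t)(\log\log^\star t)^2 t^{-1}\,dt
\;\lesssim\;\int_{c\log^\star r}^{\infty} s^{-p}(\log s)^2\,ds
\;\lesssim\;(\log^\star r)^{1-p}(\log\log^\star r)^2,
\]
so its square root is $\lesssim (\log^\star r)^{(1-p)/2}\log\log^\star r$. The second term contributes $\lesssim (\log^\star r)^{-p/3}(\log^\star r)^{1/2}=(\log^\star r)^{1/2-p/3}$, and the third is $\lesssim r^{1/2}(\log^\star r)^{1/2}$, which is negligible. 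Since $p>0$ we have $(1-p)/2<1/2-p/3$, so the second term dominates up to a harmless log factor, and
\[
m_{\B_1^k}(g,r)\;\lesssim\;(\log^\star r)^{1/2-p/3}\cdot \bigl(\log\log^\star r\bigr)^{O(1)}.
\]

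Therefore, for any $q<p/3-1/2$, the logarithmic loss is absorbed and we obtain $(\log^\star r)^q\, m_{\B_1^k}(g,r)\lesssim 1$ with a constant depending only on $p$ and $q$. Covering $\P^k$ by finitely many such charts yields $\|g\|_{\log^q}\leq c(p,q)$ uniformly in $g\in\Fc(S_0)$ and in $S_0$, and equicontinuity of $\Fc(S_0)$ follows at once. The main subtlety is the bookkeeping of the three competing terms in \eqref{e:omega-mixed}: one must keep track of the $(\log\log^\star r)$ factors to see that the second term, and not the first, controls the final rate, which forces the strict inequality $q<p/3-1/2$.
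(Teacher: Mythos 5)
Your proof is correct and follows essentially the same route as the paper: reduce to local coordinates, write $S_0=\ddc v$ with $v$ a sum of a H\"older potential of $T$ and the $\log^p$-continuous dynamical potential $u_S$, check \eqref{e:hp-mixed}, and then feed the three terms of \eqref{e:omega-mixed} into the bound. One small imprecision at the end: the strict inequality $q<p/3-1/2$ is not actually forced by the $(\log\log^\star r)$ factors, since those occur only on the first term, which is subdominant by a margin of $(\log^\star r)^{-p/6}$ and therefore absorbs its own double-log correction; the dominant second term has no such correction, so the argument in fact yields $m(g,r)\lesssim(\log^\star r)^{1/2-p/3}$ cleanly, which is more than enough for the stated conclusion.
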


\begin{proof}
Notice that
\eqref{e:hp-mixed} is satisfied for all $v$
such that $\norm{v}_{\log^p}<\infty$ for some $p>1$. It follows
that if $u$ and $v$ are as in
Proposition \ref{p:mod-cont-mixed} and $v$ is $\log^p$-continuous for some $p> 3/2$
then
$u$ is $\log^{q}$-continuous on $\B_1^k$
for all $q$ as in the statement, 
with $\norm{u}_{\B_1^k,\log^q} \leq c \norm{v}_{\B_5^k,\log^p}^{1/2}$
for some positive constant $c$ independent of $u,v$.
The result is thus deduced from Proposition \ref{p:mod-cont-mixed}
by means of a finite cover of $\P^k$.
\end{proof}

\section{Some semi-norms and equidistribution properties} \label{s:norm}

In this section,
we
consider the action of the operator
 $(f^n)_*$ on functions and currents. We 
 also introduce the
 semi-norms which are crucial in our study. Some results and ideas here are of independent interest.
 Recall that we always assume that $f$ satisfies the Assumption {\bf (A)} in the Introduction.

\subsection{Bounds with respect to the semi-norm $\norm{\cdot}_{\log^p}$}\label{subsection_logp_prelim}

In this section,
 we study the action of 
the operator $f_*$ on 
functions with bounded semi-norm $\norm{\cdot}_{\log^p}$.
We first prove that, with respect to this
semi-norm, the operator  $f_*$ is Lipschitz.

\begin{lemma}\label{l:log-Lipschitz}
For every constant $A>1$, there exists a positive constant $c=c(A)$
such that
for every $n\geq 0$, $p>0$, 
and
continuous 
function 
$g\colon \P^k \to \R$,
 we have
$$\big\| d^{-kn} (f^n)_* g\big\|_{\log^p}\leq c^p A^{pn} \norm{g}_{\log^p}.$$
\end{lemma}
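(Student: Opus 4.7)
The plan is to combine Assumption \textbf{(A)} with a pairing of preimages controlled by local degrees. First I would fix $A>1$ and extract from Assumption \textbf{(A)} a constant $C_A>0$ such that $D_n := \max_{z\in\P^k} \deg(f^n,z) \leq C_A A^n$ for all $n\geq 0$. For $a,b\in\P^k$ with $\dist(a,b)$ smaller than a threshold independent of $n$, I would list the preimages of $b$ under $f^n$ as distinct points $z_1,\dots,z_N$ with multiplicities $d_1,\dots,d_N$ (so $\sum_j d_j = d^{kn}$ and $\max_j d_j \leq D_n$), and attach to each $z_j$ the $d_j$ preimages $w_{j,1},\dots,w_{j,d_j}$ of $a$ clustered in a small neighborhood of $z_j$. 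The key geometric input, and the principal obstacle, is a uniform Puiseux-type local distance bound
\[
\dist(w_{j,l},z_j) \leq c_1 \dist(a,b)^{1/d_j}
\]
with $c_1$ independent of $n$ and $j$, valid once $\dist(a,b)$ lies below the threshold; this follows from the local structure of the finite holomorphic map $f^n$ near $z_j$, with uniformity resting on the compactness of $\P^k$ and the global degree bound above.

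Given this pairing, I would expand
\[
(f^n)_* g(a) - (f^n)_* g(b) = \sum_{j=1}^N \sum_{l=1}^{d_j} \bigl(g(w_{j,l})-g(z_j)\bigr),
\]
bound each summand by $\|g\|_{\log^p}(\log^\star\dist(w_{j,l},z_j))^{-p}$, and use the elementary inequality $\log^\star\dist(w_{j,l},z_j) \geq (2d_j)^{-1}\log^\star\dist(a,b)$ (valid in the small-distance regime, where the multiplicative constant $c_1$ is absorbed) to turn this into $(2d_j)^p(\log^\star\dist(a,b))^{-p}\|g\|_{\log^p}$. Summing with the inequality $\sum_j d_j^{p+1} \leq D_n^p \sum_j d_j = D_n^p\, d^{kn}$ and dividing by $d^{kn}$ then yields
\[
\bigl|d^{-kn}(f^n)_* g(a) - d^{-kn}(f^n)_* g(b)\bigr| \cdot (\log^\star\dist(a,b))^p \leq (2C_A)^p A^{pn}\|g\|_{\log^p}.
\]

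For $\dist(a,b)$ above the threshold, $(\log^\star\dist(a,b))^p$ is bounded by a constant $M^p$ depending only on the threshold and the diameter of $\P^k$, while the trivial estimate $|g(x)-g(y)|\leq\|g\|_{\log^p}$ (valid because $\log^\star\geq 1$) propagates through the average to give $|d^{-kn}(f^n)_* g(a) - d^{-kn}(f^n)_* g(b)|\leq\|g\|_{\log^p}$. Setting $c(A):=\max(2C_A,M)$ absorbs both regimes into the single bound $c(A)^p A^{pn}\|g\|_{\log^p}$, completing the proof.
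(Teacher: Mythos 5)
The key claim on which your proof rests --- the uniform Puiseux-type bound $\dist(w_{j,l},z_j)\leq c_1\dist(a,b)^{1/d_j}$ with $c_1$ independent of $n$, $j$, and the points $a,b$, where $d_j=\deg(f^n,z_j)$ --- is false, and this is precisely the nontrivial part of the lemma. The exponent $1/d_j$ accounts for the local degree of $f^n$ \emph{at} $z_j$, but it cannot account for the distortion that occurs when $z_j$ sits very close to (while not on) the critical set of $f^n$, in which case the inverse branch can be arbitrarily expanding even though $d_j=1$. A concrete counterexample already occurs for $n=1$: take $f(z)=z^2$ (so $k=1$, $d=2$), $b=\epsilon^2$, $a=2\epsilon^2$, so $\dist(a,b)=\epsilon^2$ and the preimages $\pm\epsilon$ of $b$ all have $d_j=1$. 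The paired preimages $\pm\epsilon\sqrt{2}$ of $a$ satisfy $\dist=(\sqrt{2}-1)\epsilon$, while $c_1\dist(a,b)^{1/d_j}=c_1\epsilon^2$; forcing the bound requires $c_1\gtrsim 1/\epsilon$, which blows up as $\epsilon\to 0$ for any threshold on $\dist(a,b)$. Compactness of $\P^k$ and the global degree bound $D_n\leq C_A A^n$ do not repair this, since the failure is already present for the single map $f$.

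The paper circumvents this by invoking \cite[Cor.\,4.4]{dinh2010equidistribution}, which yields a pairing with a \emph{uniform} exponent that also accounts for proximity to the critical set rather than the pointwise local degree: for every $\kappa>1$ there are $n_\kappa$ and $c_\kappa$ such that for $n\geq n_\kappa$ the paired preimages satisfy $\dist(x_j,y_j)\leq c_\kappa\dist(x,y)^{1/\kappa^n}$ (this is where Assumption {\bf (A)} enters); and, unconditionally, $\dist(x_j,y_j)\leq c_0\dist(x,y)^{1/d^{kn}}$ for all $n$, which handles the finitely many $n<n_\kappa$. With the exponent uniform in $j$, no per-preimage local-degree bookkeeping such as $\sum_j d_j^{p+1}\leq D_n^p d^{kn}$ is needed: one simply bounds each $|g(x_j)-g(y_j)|$ by $\|g\|_{\log^p}(2\kappa^n)^p(\log^\star\dist(x,y))^{-p}$, takes the maximum over $j$, and uses $\kappa<A$ to absorb the factor $\kappa^{pn}$ into $c^pA^{pn}$. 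Your computation in the large-distance regime and your algebraic manipulations after the Puiseux bound are fine; the gap is entirely in the geometric input.
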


\begin{proof}
We have
$$\big\| d^{-kn} (f^n)_* g\big\|_{\log^p} =
\sup_{x,y\in \P^k} d^{-kn}\abs{(f^n)_* g (x)- (f^n)_* g (y)} \cdot (\log^\star \dist(x,y))^p.$$
We need to bound the RHS by $c^p A^{pn} \norm{g}_{\log^p}$. 

Applying \cite[Cor.\,4.4]{dinh2010equidistribution} inductively to some iterate of $f$,  we see
that the Assumption {\bf (A)} implies:

\medskip\noindent
{\bf (A')} for every constant $\kappa>1$, there 
are an integer $n_\kappa \geq 0$ and 
a constant $c_\kappa>0$
independent of $n$
 such that for all $x,y\in\P^k$ and $n\geq n_\kappa$ 
 we can write
$f^{-n}(x)=\{x_1,\ldots, x_{d^{kn}}\}$ and $f^{-n}(y)=\{y_1,\ldots, y_{d^{kn}}\}$
(counting multiplicity) with the property 
that
$$\dist(x_j,y_j)\leq c_\kappa \dist(x,y)^{1/\kappa^n} \quad \text{for } \ j=1,\ldots, d^{kn}.$$

Fix $\kappa <A$.
We have, for $n\geq n_\kappa$,
\begin{eqnarray*}
d^{-kn} \abs{(f^n)_* g (x)- (f^n)_* g (y)} (\log^\star \dist(x,y))^p
&\leq &  \max_j \abs{g (x_j)- g (y_j)} (\log^\star \dist(x,y))^p\\
&\leq &
\max_j \frac{\norm{g}_{\log^p}}{(\log^\star \dist(x_j,y_j))^p} (\log^\star \dist(x,y))^p \\
&=& \max_j \Big({\log^\star \dist(x,y) \over \kappa^n \log^\star \dist(x_j,y_j)}\Big)^p\norm{g}_{\log^p}\kappa^{pn} .
\end{eqnarray*}

We need
 to check that the expression in the last parentheses is bounded by a constant. Fix a large constant $M>0$. 
Since $\log^\star \dist(x_j,y_j)$ is bounded from
below by 1,  when $\log^\star \dist(x,y)$ is bounded by $2M\kappa^n$
 the considered 
expression is bounded by some constant $c$ as desired. Assume
now that   $\log^\star \dist(x,y)\geq 2M\kappa^n$. Since $M$ is large, we deduce that 
$\log\dist(x,y) \leq -2M\kappa^n+1\leq -M\kappa^n$. Hence, by 
{\bf (A')},
since $M$ is large, we have 
$$\log\dist(x_j,y_j) \leq \log c_\kappa +\kappa^{-n}\log\dist(x,y)\leq {1\over 2} \kappa^{-n}\log\dist(x,y).$$
It is now clear that $\kappa^n \log^\star \dist(x_j,y_j)\geq {1\over 2} \log^\star \dist(x,y)$
which implies that the considered expression is bounded,
 as desired. This implies the lemma for $n\geq n_\kappa$.

As the multiplicity
of $f^n$ at a point is at most $d^{kn}$, we also have (see again \cite[Cor.\,4.4]{dinh2010equidistribution}):

\medskip\noindent
{\bf (A'')}
there is a constant $c_0>0$ such that for every $n\geq 0$,
for all $x,y\in\P^k$,
 we can write
$f^{-n}(x)=\{x_1,\ldots, x_{d^{kn}}\}$ and $f^{-n}(y)=\{y_1,\ldots, y_{d^{kn}}\}$
(counting multiplicity) with the property 
that
$$\dist(x_j,y_j)\leq c_0 \dist(x,y)^{1/d^{kn}} \quad \text{for } \ j=1,\ldots, d^{kn}.$$

 Hence, when $n\leq n_\kappa$, it is enough to use
 {\bf (A'')} instead of {\bf (A')}. Since 
$n_\kappa$ is fixed 
 it is clear that 
$\frac{\log^\star \dist(x,y)}{\log^\star \dist(x_j,y_j)} \lesssim d^{k n_\kappa}$,
which is bounded.
 The proof is complete.
\end{proof}

We will need the following result which is an improvement of 
\cite[Th.\,1.1]{dinh2010equidistribution}
in the case where $f$ satisfies the  Assumption {\bf (A)}.
By duality,
this result implies an exponential equidistribution
of $d^{-kn} (f^{n})^*\nu$
towards $\mu$ for every probability measure $\nu$.
 The assumption 
 {\bf (A)}
is
 necessary here
to get the estimate in the norm $\norm{\cdot}_\infty$. 

\begin{theorem}\label{t:like-equi}
Let $f$ be an endomorphism of $\P^k$ of algebraic degree $d\geq 2$
and satisfying the Assumption {\bf (A)}. Consider a real number $p >0$.
Let $g\colon \P^k \to \R$  be such that $\norm{dd^c g}_*\leq 1$,
$\langle \mu, g \rangle=0$ and $\norm{g}_{\log^p}\leq 1$. 
Then, for every constant $\eta>d^{-p/(p+1)}$, there is a positive
constant $c$ independent of $g$ such that for every $n\geq 0$
\[
\big\| d^{-kn} (f^n)_* g\big\|_\infty \leq c\eta^n.	
\]
\end{theorem}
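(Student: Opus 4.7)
The plan is to combine a quantitative decay of $\|dd^c h_n\|_*$ with Lemma \ref{l:log-Lipschitz} and a sharp $L^q$-integrability estimate, then optimize over $q$. I would set $h_n := d^{-kn}(f^n)_* g$ and first note that, since $dd^c$ commutes with $f_*$ on functions and $(f^n)_*$ multiplies the mass of positive closed $(1,1)$-currents by $d^{(k-1)n}$, the hypothesis $\|dd^c g\|_*\leq 1$ yields $\|dd^c h_n\|_* \leq C d^{-n}$. Invariance of $\mu$ gives $\langle\mu, h_n\rangle = \langle\mu, g\rangle = 0$, and for any fixed $A > 1$, Lemma \ref{l:log-Lipschitz} gives $\|h_n\|_{\log^p} \leq c_A A^{pn}$.

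The main technical input, which I expect to be the hardest step, is an $L^q(\omega_\FS^k)$-bound with linear dependence on $q$. To establish it I would write $dd^c h_n = S_n^+ - S_n^-$ with $S_n^\pm$ positive closed of mass $\leq Cd^{-n}$, consider the associated $\omega_\FS^k$-mean-zero quasi-psh potentials $v_n^\pm$ (so that $h_n = v_n^+ - v_n^- + c_n$ for a constant $c_n$), and invoke Skoda--Zeriahi's uniform exponential integrability for normalized quasi-psh functions to obtain $\|v_n^\pm\|_{L^q(\omega_\FS^k)} \leq C q d^{-n}$ for every $q \geq 1$. The constant $c_n$ is then controlled using $\langle\mu, h_n\rangle = 0$ and the H\"older regularity of the Green potentials of $T$, yielding $|c_n| \leq C d^{-n}$, so $\|h_n\|_{L^q(\omega_\FS^k)} \leq C' q d^{-n}$.

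Finally, I would run a volumetric argument. Let $L_n := \|h_n\|_\infty$, attained at some $x_n \in \P^k$. The bound on $\|h_n\|_{\log^p}$ forces $|h_n| \geq L_n/2$ on the small ball $B_{\P^k}(x_n, r_n)$ where $r_n := \exp\bigl(-(2c_A A^{pn}/L_n)^{1/p}\bigr)$, and since $r_n \ll 1$ its volume is comparable to $r_n^{2k}$. Comparing with the $L^q$-estimate gives $L_n \leq 2C' q d^{-n} r_n^{-2k/q}$. Optimizing in $q$ at $q_\ast := 2k(2c_A A^{pn}/L_n)^{1/p}$ makes $r_n^{-2k/q_\ast} = e$ and yields $L_n^{(p+1)/p} \leq C'' A^n d^{-n}$, hence $L_n \leq C''' (A/d)^{np/(p+1)}$. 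Given any $\eta > d^{-p/(p+1)}$, the inequality $d\,\eta^{(p+1)/p} > 1$ allows one to pick $A \in (1, d\,\eta^{(p+1)/p}]$, which gives $(A/d)^{p/(p+1)} \leq \eta$ and concludes $L_n \leq c\eta^n$.
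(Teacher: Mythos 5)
Your proposal is correct and follows essentially the same strategy as the paper: a Skoda-type exponential integrability estimate for $g_n := d^{-kn}(f^n)_*g$ combined with the $\log^p$-oscillation bound from Lemma \ref{l:log-Lipschitz} and a volume comparison. The paper directly invokes $\int_{\P^k} e^{d^n|g_n|}\,\omega_\FS^k\leq c_0$ and runs a proof by contradiction, whereas you re-derive the equivalent $L^q$-bound $\|g_n\|_{L^q}\lesssim qd^{-n}$ from the quasi-psh decomposition and then optimize explicitly over $q$; these differences are purely presentational, and the optimization over $q$ recovers exactly the same threshold $\eta>d^{-p/(p+1)}$ and admissible range $1<A\le d\eta^{(p+1)/p}$.
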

\begin{proof}
Set $g_n:=d^{-kn} (f^n)_* g$. Recall (see, e.g.,
 \cite{skoda1972sous,dinh2010exponential})
that there exists a positive
constant $c_0$ 
independent of $g$ and $n$ such that
\begin{equation}\label{eq-unif-integral}
\int_{\P^k} e^{d^n |g_n|}  \leq c_0,
\end{equation}
where the integral above is taken with respect to the Lebesgue measure
associated to the volume form $\omega_\FS^k$ on $\P^k$.

Fix a constant $A>1$ such that  $\eta > \pa{A/d}^{p/(p+1)}$.
Suppose by contradiction
that for infinitely many $n$ there exists a point $a_n\in \P^k$ 
such that $|g_n (a_n)|\geq 3 \eta^n$ for some $g$ as above. Choose
 $r:=e^{- c_A A^n\eta^{-n/p}}$ with $c_A$ the constant given by
Lemma \ref{l:log-Lipschitz}
(we write $c_A$ instead of $c$ in order to
avoid confusion). 
By that lemma, when $\dist(z,a_n)<r$, we have
\[
|g_n(z)| \geq |g_n(a_n)| -|g_n(z)-g_n(a_n)| \geq 3\eta^n -  c_A^pA^{pn}(1+|\log r|)^{-p} \geq
\eta^n.
 \]
This implies that
\[
c_0\geq \int_{\P^k} e^{d^n |g_n|} 
\geq \int_{\dist(z,a_n)<r} e^{d^n |g_n(z)|}  
 \gtrsim r^{2k}  e^{d^n \eta^n} \gtrsim e^{-2kc_A A^n\eta^{-n/p}+d^n\eta^n}.
\] 
By the choice of $A$, the last expression diverges when $n$ tends to infinity. 
This is a  contradiction.
The theorem follows.
\end{proof}

\subsection{The semi-norm $\norm{\cdot}_p$}
In this section, we combine the semi-norms  $\norm{\cdot}_{\log^p}$ and $\norm{\cdot}_*$
to build a new semi-norm $\norm{\cdot}_p$
 and study its first properties. For our convenience, we will use dynamical potentials of currents, but this is avoidable. 

For every positive closed $(1,1)$-current $S$ on $\P^k$ we first define
$$\|S\|'_p:=\|S\|+\|u_S\|_{\log^p},$$
where $u_S$ is
the dynamical potential of $S$.
When 
$R$ is any $(1,1)$-current
we define
\[
\norm{R}_p = \min \norm{S}'_p,
\]
where the minimum is taken over all positive closed $(1,1)$-currents $S$
such that  
$\abs{R}\leq S$, and we set $\norm{R}_p:=\infty$ when no such $S$ exists. 
Finally, for all $g\colon \P^k \to \R$, define
$$\|g\|_p:=\|\ddc g\|_p.$$
The following lemma shows in particular that the norm $\norm{\cdot}_p$ is equivalent to the norm $\norm{\cdot}'_p$
when both are defined. We will thus 
just consider the norm $\norm{\cdot}_p$ 
 in the sequel.
 
\begin{lemma} \label{l:current-norm-p}
Let $S$ be a positive closed $(1,1)$-current on $\P^k$
and let $g\colon \P^k\to \R$ be a continuous function.
Then
\[\|u_S\|_p\leq 2\|S\|'_p, \qquad 
\norm{S}_p \leq \norm{S}'_p \leq c \norm{S}_p,
\qquad \text{and} 
\qquad
\norm{g}_{\log^p}\leq c \norm{g}_p
\]
for some positive constant $c=c(p)$ independent of $S$ and $g$. 
\end{lemma}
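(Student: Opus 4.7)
My plan is to prove the three displayed inequalities in the order first, third, then middle, since the middle will follow by combining the other two.

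For $\|u_S\|_p \leq 2\|S\|'_p$, I argue directly from the definitions. The identity $\ddc u_S = S - \|S\|T$ gives $|\ddc u_S| \leq S + \|S\|T$ (on real currents this reduces to the obvious bounds $\pm(S-\|S\|T) \leq S+\|S\|T$). Thus $S'' := S+\|S\|T$ is an admissible dominator in the infimum defining $\|u_S\|_p = \|\ddc u_S\|_p$. It is positive closed of mass $2\|S\|$, and a direct check (matching $\ddc$'s and using the normalization $\sca{\mu,\cdot}=0$) yields $u_{S''}=u_S$. Therefore $\|u_S\|_p \leq \|S''\|'_p = 2\|S\| + \|u_S\|_{\log^p} \leq 2\|S\|'_p$.

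For $\|g\|_{\log^p} \leq c\|g\|_p$, I would use a localized mollification argument. Choose a near-optimal positive closed $S'$ with $|\ddc g| \leq S'$. In any affine chart, $\pm \ddc g \leq S'$ gives $v \pm g$ p.s.h., where $v$ is a local potential of $S'$. Writing $v$ as $u_{S'}$ plus $\|S'\|$ times a local potential of $T$ and using the H\"older regularity of $u_T$ (Section \ref{ss:dyn-pot}), one sees that $v$ is $\log^p$-continuous with $\|v\|_{\log^p} \lesssim \|S'\|'_p$. After normalizing $\sca{\mu, g} = 0$ (which leaves both semi-norms unchanged), the identity $g = (u_{S'}+g) - u_{S'}$ together with upper-boundedness of p.s.h.-modulo-$\|S'\|T$ functions gives $\|g\|_\infty \lesssim \|S'\|'_p$. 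Now mollify: for $g_\epsilon:=g*\chi_\epsilon$ and similarly $v_\epsilon$, the p.s.h.\ property of $v \pm g$ yields $|g-g_\epsilon| \leq |v_\epsilon - v| \lesssim \|v\|_{\log^p}(\log^\star \epsilon)^{-p}$. Using $\int \nabla\chi_\epsilon=0$, one also has $|\nabla g_\epsilon(x)| \lesssim \omega_g(\epsilon)/\epsilon$ with $\omega_g(r):=\sup_x \mathrm{osc}(g,B_r(x))$. Choosing $\epsilon = Kr$ for $K$ large enough produces the recursion $\omega_g(r) \leq 2\|v\|_{\log^p}(\log^\star Kr)^{-p} + (C/K)\,\omega_g(Kr)$, which iterates to $\omega_g(r) \lesssim \|S'\|'_p(\log^\star r)^{-p}$, the desired bound.

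For $\|S\|_p \leq \|S\|'_p \leq c\|S\|_p$, the left inequality is immediate since $S$ itself dominates $|S|$ when $S$ is positive closed. For the right, I combine the first and third inequalities. A variant of the first-inequality argument shows that for any admissible $S''\geq S$ in the infimum for $\|S\|_p$, the current $S''+\|S\|T$ dominates $|\ddc u_S|$ with $u_{S''+\|S\|T}=u_{S''}$, so $\|u_S\|_p \leq 2\|S\|_p$. The function $u_S$ is continuous (upper semicontinuous as p.s.h.-mod-$\|S\|T$, and lower semicontinuous via $u_S = u_{S''} - u_{S''-S}$ as the difference of the continuous $u_{S''}$ and the u.s.c.\ $u_{S''-S}$), so the third inequality applied to $g = u_S$ gives $\|u_S\|_{\log^p} \leq c\|u_S\|_p \lesssim \|S\|_p$. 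Combined with the trivial bound $\|S\| \leq \|S\|_p$, this yields $\|S\|'_p \lesssim \|S\|_p$. The main technical obstacle is the mollification step in the third inequality: the naive estimate $\|\nabla g_\epsilon\|_\infty \lesssim \|g\|_\infty/\epsilon$ is too crude to extract a $\log^p$-modulus and must be refined to the local-oscillation bound $\omega_g(\epsilon)/\epsilon$ using $\int\nabla\chi_\epsilon=0$, converting the two-scale balance into a genuine recursion; the geometric decay $(C/K)^k$ combined with the slow decay of $(\log^\star \cdot)^{-p}$ preserves the exact exponent $p$, rather than producing a weaker $\log^q$ bound with $q<p$.
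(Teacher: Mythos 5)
Your proof is correct, but it takes a genuinely different route for the third inequality, which in turn reverses the logical order of the second and third. The paper proves $\norm{S}'_p \lesssim \norm{S}_p$ first by invoking the comparison principle of \cite[Corollary 2.7]{bd-eq-states-part1} (combined with \cite[Corollary 2.5]{bd-eq-states-part1}) to transfer the $\log^p$ modulus from $u_{\tilde S}$ to $u_S$ at the cost of a harmless $\sqrt{r}$ deterioration of the radius; it then deduces $\norm{g}_{\log^p} \lesssim \norm{g}_p$ by applying the same comparison to the quasi-p.s.h.\ function $g + u_{\tilde S}$. You instead prove the third inequality directly and self-containedly: the two-sided p.s.h.\ sandwich $v\pm g$ plus a mollification recursion $\omega_g(r)\lesssim m(v,Kr) + (C/K)\,\omega_g(Kr)$, driven by the refined bound $\norm{\nabla g_\epsilon}_\infty \lesssim \omega_g(\epsilon)/\epsilon$, yields the exact exponent $p$ because the geometric factor $(C/K)^j$ dominates the slow growth of $(\log^\star r/\log^\star K^j r)^p$; you then deduce the second inequality by applying the third to $g=u_S$, after the short argument that $u_S$ is continuous and $\norm{u_S}_p\leq 2\norm{S}_p$. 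The trade-off is that your version replaces the external comparison principle with an elementary, essentially hands-on argument (a nice gain in self-containment), whereas the paper's route is shorter by leaning on already-established machinery; note, however, that you still implicitly rely on the compactness-type bound for $\Omega(u_{\ddc g + S'})$ (i.e., \cite[Corollary 2.5]{bd-eq-states-part1}) to terminate the recursion, so the dependence on Part I is reduced but not eliminated. Your first inequality, via exhibiting the dominator $S''=S+\norm{S}T$ with $u_{S''}=u_S$, is essentially the paper's argument (which applies the triangle inequality to $S-mT$ and uses $\norm{T}_p=1$) in a slightly different packaging; both give the same constant $2$.
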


\proof
Define $m:= \|S\|$. We have
$\|S\|'_p\geq m$.
Since $u_T=0$, we have $\|T\|_p = \|T\|'_{p} =1$ and
$$\|u_S\|_p
=\|\ddc u_S\|_p
=\|S-mT\|_p\leq \|S\|_p+m\|T\|_p \leq \|S\|'_p+m\leq 2\|S\|'_p.$$
This proves the first assertion in the lemma.

We prove now the second assertion.
The first inequality is true by definition. For the second one,
it is enough
to prove that
if
 $\tilde S$ is also positive closed, and such that $S\leq \tilde S$,
 then $\norm{S}'_p\leq c \big\|\tilde S\big\|'_p$
for some constant $c$ independent of
 $S,\tilde S$.
It is clear that $\norm{S}\leq \big\|\tilde S\big\|$.
 So we can assume that $\big\|\tilde S\big\| =1$ and we only need to check that 
 $\|u_S\|_{\log^p}$
  is bounded by $c(1+\|u_{\tilde S}\|_{\log^p})$ for some constant $c$.

 We cover $\P^k$ with
 a finite family of open sets of the
form $\Phi_j(\B_{1/2}^k)$ where $\Phi_j$ is an injective holomorphic map from $\B_4^k$ to $\P^k$.
Let $h_j$ denote a potential of $T$ on $\Phi_j(\B_4^k)$. This is a H\"older continuous function.
By definition of dynamical potential, $v_j:=u_{\tilde S}+h_j$ is a potential
of $\tilde S$ on $\Phi_j(\B_4^k)$. 
By   \cite[Corollary 2.5]{bd-eq-states-part1},
 we have that $\Omega(u_S)$ is bounded by a constant.
 Taking into account the distortion
of the maps $\Phi_j$,
{a comparison principle}
\cite[Corollary 2.7]{bd-eq-states-part1} implies 
 that for all $r$ smaller than some constant $r_0>0$
\[
m_{\P^k}(u_S, r)
\lesssim \max_j m_{V_j} (v_j, c\sqrt r) + \sqrt{r} \lesssim (1+\|u_{\tilde S}\|_{\log^p})|\log r|^{-p}.
\]
This proves the second assertion.

Finally, let us consider the last inequality in the statement. By linearity, we can assume that
there exists a positive closed $(1,1)$-current $\tilde S$ of mass $\big\|\tilde S\big\|=1$
such that $\abs{dd^c g}\leq \tilde S$ and prove that $\norm{g}_{\log^p}\leq c (1+\big\|u_{\tilde S}\big\|_{\log^p})$
for some positive constant $c$ independent of $g$ and $\tilde S$. Observe that
$dd^c g + \tilde S$ is a positive closed current and $dd^c g + \tilde S \leq 2 \tilde S$. So we can apply the arguments
in the previous paragraph to $g+u_{\tilde S}, 2u_{\tilde S}$ instead of $u_S, u_{\tilde S}$. 
We obtain $\|g+u_{\tilde S}\|_{\log^p}\lesssim 1+\|u_{\tilde S}\|_{\log^p}$. 
This implies the last assertion, and completes
the proof of the lemma.
\endproof

The following lemma can be applied to a non-convex $\Cc^2$ function $\chi$ as 
it can be written as the difference of two convex functions. 
We can also apply it to $\chi$ Lipschitz and convex because such a function can be approximated by smooth convex functions with bounded first derivatives.
The statement for the smooth convex case that we need is however simpler.

\begin{lemma} \label{l:norm-p-compose}
There is a positive constant $c=c(p)$ such that for all
continuous
 functions $g,h\colon \P^k\to \R$ with finite $\norm{\cdot}_p$ 
 semi-norms and any $\Cc^2$ 
convex function $\chi:\R\to\R$ we have 
$$\norm{\partial g \wedge \bar \partial h}_p
\leq c (\Omega(g)  
 \|h\|_p  + \|g\|_p 
 \Omega (h))
\quad 
\text{and} 
\quad 
\|\chi(g)\|_p\leq c
\|\chi'(g)\|_\infty  \|g\|_p.
$$
\end{lemma}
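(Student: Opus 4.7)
I would establish the convex composition estimate first and then bootstrap to the first inequality via a Cauchy--Schwarz-type pointwise bound for $(1,1)$-currents.

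\emph{Second inequality.} The starting point is the chain-rule identity $dd^c\chi(g)=\chi'(g)\,dd^cg+\chi''(g)\,dg\wedge d^cg$, valid for smooth $g$ and extended to continuous $g$ with $\norm{g}_p<\infty$ by convolution and passage to the limit. Fix a positive closed current $S_g$ with $|dd^cg|\leq S_g$ and $\norm{S_g}'_p$ close to $\norm{g}_p$, and set $C:=\|\chi'(g)\|_\infty$. The key observation is that the current $dd^c\chi(g)+C\,S_g$ is \emph{simultaneously positive and closed}: closedness is immediate, while positivity follows from the decomposition
\[dd^c\chi(g)+C\,S_g=\bigl[\chi'(g)\,dd^cg+C\,S_g\bigr]+\chi''(g)\,dg\wedge d^cg,\]
whose first bracket is nonnegative thanks to $|\chi'(g)\,dd^cg|\leq C\,S_g$ (a direct consequence of $|dd^cg|\leq S_g$ and $|\chi'(g)|\leq C$), and whose second summand is nonnegative since $\chi''\geq 0$ by convexity and $dg\wedge d^cg=\pi^{-1}i\partial g\wedge\bar\partial g\geq 0$. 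Writing $dd^c\chi(g)=[dd^c\chi(g)+C\,S_g]-C\,S_g$ as a difference of positive closed currents yields the positive closed upper bound $|dd^c\chi(g)|\leq dd^c\chi(g)+2C\,S_g$, which has mass $2C\norm{S_g}$ (since $dd^c\chi(g)$ is exact) and dynamical potential $\chi(g)+2C\,u_{S_g}$ up to an additive constant. The mean value theorem on the range of $g$ gives $\norm{\chi(g)}_{\log^p}\leq\|\chi'(g)\|_\infty\norm{g}_{\log^p}$, and combining with Lemma~\ref{l:current-norm-p} to pass between $\norm{\cdot}_{\log^p}$ and $\norm{\cdot}_p$ yields $\norm{\chi(g)}_p\leq c\,\|\chi'(g)\|_\infty\norm{g}_p$.

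\emph{First inequality.} Positivity of $i\partial(ag+bh)\wedge\bar\partial(\overline{ag+bh})$ for arbitrary $a,b\in\C$ produces, upon expansion and a rescaling in $(a,b)$, the pointwise Cauchy--Schwarz-type bound $|i\partial g\wedge\bar\partial h|\leq\lambda\,i\partial g\wedge\bar\partial g+\lambda^{-1}\,i\partial h\wedge\bar\partial h$ for every $\lambda>0$. It therefore suffices to show $\norm{i\partial g\wedge\bar\partial g}_p\lesssim\Omega(g)\norm{g}_p$ (and the symmetric estimate for $h$). Setting $\tilde g:=g-\tfrac12(\inf g+\sup g)$ so that $\|\tilde g\|_\infty=\Omega(g)/2$ and $dd^c\tilde g=dd^cg$, the identity $i\partial g\wedge\bar\partial g=\tfrac{\pi}{2}\,dd^c(\tilde g^2)-\pi\,\tilde g\,dd^c\tilde g$, combined with the second inequality applied to $\chi(t)=t^2$ (which yields $\norm{dd^c(\tilde g^2)}_p\lesssim\Omega(g)\norm{g}_p$) and the trivial bound $|\tilde g\,dd^c\tilde g|\leq \tfrac{\Omega(g)}{2}\,S_g$, delivers this estimate. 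Optimizing $\lambda$ and applying AM--GM then produces the desired $\norm{\partial g\wedge\bar\partial h}_p\leq c(\Omega(g)\norm{h}_p+\Omega(h)\norm{g}_p)$.

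The main subtlety I anticipate is the simultaneous positivity-and-closedness observation: the chain-rule identity naturally splits $dd^c\chi(g)$ into a sign-changing closed piece $\chi'(g)\,dd^cg$ and a nonnegative non-closed piece $\chi''(g)\,dg\wedge d^cg$, and the single addition of $C\,S_g$ cancels both defects at once. Ensuring that the chain rule passes rigorously from smooth to continuous $g$ via regularization, in a way compatible with the semi-norm $\norm{\cdot}_p$ (which mixes a mass term with a $\norm{\cdot}_{\log^p}$ term on dynamical potentials), should be routine but deserves some care.
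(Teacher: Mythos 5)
Your proposal is correct and follows essentially the same strategy as the paper. For the convex composition, the paper writes $dd^c\chi(g)=R^+-R^-$ with $R^-:=\|\chi'(g)\|_\infty S_g$ and $R^+:=dd^c\chi(g)+\|\chi'(g)\|_\infty S_g$, which is exactly your pair $P^\pm$; your single dominating current $dd^c\chi(g)+2CS_g$ is $R^++R^-$, and both arguments then compare masses (via exactness of $dd^c\chi(g)$) and dynamical potentials (via $u_{R^+}=\chi(g)+u_{R^-}$ up to a constant, i.e., your $\chi(g)+2Cu_{S_g}$) with the same use of the mean value theorem and Lemma~\ref{l:current-norm-p}. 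For the mixed term, the paper also reduces to the diagonal case $g=h$ via the same $dd^c(g^2)$ identity and $\chi(t)=t^2$, then closes with a Cauchy--Schwarz bound using the fixed weights $\|h\|_\infty/\|g\|_\infty$ and its reciprocal rather than optimizing a free parameter $\lambda$ and invoking AM--GM; your version is a cosmetically cleaner route to the same inequality. The remaining differences — centering $\tilde g$ versus normalizing $\min g=0$, and your explicit parametrized Cauchy--Schwarz — are minor presentational choices that do not change the substance of the argument.
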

\proof
We can write 
$\abs{dd^c g}\leq S$
with $\|S\|\lesssim 
\|g\|_p$ and $\|u_{S}\|_{\log^p}\lesssim \|g\|_p$.
We first prove the second inequality.  Set
$A:= \|\chi'(g)\|_\infty $.
We have
\[
\ddc \chi (g) =
\chi'(g) \ddc g + {1\over \pi} \chi''(g) i \partial g\wedge \dbar g 
= \Big[
\chi'(g)
dd^c g 
 +AS+{1\over \pi} \chi''(g) i \partial g\wedge \dbar g\Big]
 -[AS].
\]
Write the last expression as $R^+-R^-$ where $R^+$ (resp.\ $R^-$) is the expression 
in the
first (resp.\ second) brackets.
Using the definition of $A$, the 
inequality $\abs{\ddc g } \leq S$, the convexity of $\chi$,
and the fact that $i \partial g\wedge \dbar g$ is always positive,
we deduce that both $R^+$ and $R^-$ are positive currents.
Clearly, 
the current $R^-$ is closed and its mass is
 $\lesssim A\norm{g}_p$. 
The current $R^+$ is cohomologous to $R^-$ because $\ddc \chi (g)$ is an exact current.
It follows that $R^+$ is also a positive closed current of mass $\lesssim A\norm{g}_p$. 

We have $\|u_{R^-}\|_{\log^p}=A\|u_S\|_{\log^p}\lesssim A\norm{g}_p$. This and 
the above estimate for the mass
imply that  $\|R^-\|_p\lesssim A\norm{g}_p$. 
On the other hand, the above identities imply that $\chi(g)+u_{R^-}$
differs from $u_{R^+}$ by a constant.
We deduce that 
\[\|u_{R^+}\|_{\log^p} \leq \|u_{R^-}\|_{\log^p} + \|\chi(g)\|_{\log^p}  \lesssim \|u_{R^-}\|_{\log^p} 
+ A \|g\|_{\log^p}     \lesssim A\norm{g}_p.\]
Therefore, we also have $\|R^+\|_p\lesssim A\norm{g}_p$. It is now
clear that $\|\ddc\chi(g)\|_p\lesssim A\norm{g}_p$. Hence, we get the second assertion in the lemma.

For the first assertion, we first consider the case where $g=h$. We can replace $g$ by $g-\min g$ in order to
assume that $\min g=0$ and hence $\|g\|_\infty=\Omega(g)$.
The above computation gives
\[0\leq i \partial g \wedge \bar \partial g =\pi\big( \ddc g^2 -2g \ddc g \big) \lesssim \ddc g^2 + 2\|g\|_\infty S.\]
We thus have
\[\|i\partial g \wedge \bar \partial g\|_p
\lesssim \|\ddc g^2\|_p+\norm{g}_\infty\|S\|_p\lesssim \|\ddc g^2\|_p+\norm{g}_\infty \norm{g}_p.\]
We obtain the desired estimate by applying the second assertion in the lemma to $\|\ddc g^2\|_p$,
 using the function 
$\chi(t):=t^2$.

Finally, let us consider the first inequality for $g$ and $h$.
As above, we can assume that $\min h=0$ and hence $\|h\|_\infty=\Omega(h)$.
It follows from
Cauchy-Schwarz's 
inequality that 
\[
\abs{\partial g \wedge \bar \partial h \pm \partial h \wedge \bar \partial g}
\lesssim \frac{\norm{h}_{\infty}}{\norm{g}_{\infty}} i \partial g \wedge \bar \partial g +
\frac{\norm{g}_{\infty}}{\norm{h}_{\infty}} i \partial h \wedge \bar \partial h.
\]
The assertion thus follows from the particular case considered above.
This completes the proof of the lemma.
\endproof

\subsection{The dynamical norm $\norm{\cdot}_{\ap}$}  \label{ss:norm-alpha-p}
In this section, we define the main norms
$\norm{\cdot}_{\ap}$ for $(1,1)$-currents
that we will use to quantify the convergence \eqref{e:intro-cv-rho}. Based on the results
 in the previous sections,
 we will see later that  these 
 norms satisfy the
 inequalities
\[\norm{\cdot}_q \lesssim  \norm{\cdot}_{\ap} \lesssim \norm{\cdot}_p
\]
for some explicit
$q$ depending on $p,\alpha$, and $d$. 
In particular, the new norms are at the same
time weaker than the previous 
norm $\norm{\cdot}_p$, but still inherit
the main properties of a similar norm 
 $\norm{\cdot}_{q}$ which are obtained in the previous section.

\begin{definition}
Given a positive closed $(1,1)$-current $S$ on $\P^k$ and a real number $\alpha$ such that 
$d^{-1}\leq \alpha<1$, we define the current $S_\alpha$ by
\[
S_{\alpha} :=\sum_{n=0}^\infty  \alpha^n  
\frac{(f^n)_* (S)}{d^{(k-1)n}} \cdot
\]
For any $(1,1)$-current $R$ on $\P^k$ and real number $p>0$, we define 
\begin{equation}\label{e:def_ap_current}
\norm{R}_{\ap} := \inf \set{c\in\R \colon \exists S \mbox{ positive closed}
\colon \norm{S}_p \leq 1, |R|\leq c  S_\alpha }
\end{equation}
and we set $\norm{R}_{\ap} :=\infty$ if such a number $c$ does not exist. 
\end{definition}

Note 
 that when $\norm{R}_{\ap}$ is finite, by compactness, the infimum in 
\eqref{e:def_ap_current}
 is actually a minimum.
We have the following lemma where the assumption $d^{-1}\leq \alpha< d^{-1/(p+1)}$ is equivalent to  $0<q_0\leq p$.

\begin{lemma}\label{l:alpha-p-q}
Let $\alpha$ and $p$ be positive and such that $d^{-1}\leq \alpha<d^{-1/(p+1)}$. 
Then, for every $0<q<q_0 := {\abs{\log \alpha}\over \log d}(p+1)-1$, there are positive constants $c_1=c_1(p,\alpha)$ and $c_2=c_2(p,\alpha,q)$
such that, for every $(1,1)$-current $R$,
\[\norm{R}_{\ap}\leq c_1\norm{R}_p \qquad 
\mbox{ and } \qquad
\norm{R}_q\leq c_2\norm{R}_{\ap}.
\]
\end{lemma}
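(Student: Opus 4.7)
The first inequality, with $c_1=1$, is immediate. Since $d^{-1}\leq\alpha$, the $n=0$ term in the definition of $S_\alpha$ is $S$ itself, so $S_\alpha\geq S$ for every positive closed $(1,1)$-current $S$. Given $|R|\leq S$ with $\norm{S}_p\leq\norm{R}_p+\epsilon$, I would rescale by setting $\tilde S:=S/(\norm{R}_p+\epsilon)$: this has $\|\tilde S\|_p\leq 1$ and satisfies $|R|\leq(\norm{R}_p+\epsilon)\tilde S\leq(\norm{R}_p+\epsilon)\tilde S_\alpha$, so letting $\epsilon\to 0$ gives the bound.

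For the second inequality, the plan is to reduce to showing $\norm{S_\alpha}_q\leq c_2$ for every positive closed $S$ with $\norm{S}_p\leq 1$: combined with $|R|\leq\norm{R}_{\ap}S_\alpha$ from the definition and the positivity of $S_\alpha$, this yields $\norm{R}_q\leq\norm{R}_{\ap}\norm{S_\alpha}_q\leq c_2\norm{R}_{\ap}$. By Lemma \ref{l:current-norm-p} it suffices to bound $\norm{S_\alpha}'_q=\norm{S_\alpha}+\norm{u_{S_\alpha}}_{\log^q}$. The mass part is easy: each summand $\alpha^n(f^n)_*S/d^{(k-1)n}$ has mass $\alpha^n\norm{S}$, so $\norm{S_\alpha}\leq 1/(1-\alpha)$. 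For the dynamical potential, using $f_*T=d^{k-1}T$ and the fact that $\langle\mu,(f^n)_*u_S/d^{kn}\rangle=\langle\mu,u_S\rangle=0$, one verifies that $u_{S_\alpha}=\sum_n v_n$ with
\[v_n:=\alpha^n\,\frac{(f^n)_*u_S}{d^{(k-1)n}}.\]

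The heart of the argument is an interpolation between two complementary estimates on the $v_n$. First, Theorem \ref{t:like-equi} applied to a suitable fixed multiple of $u_S$ (whose hypotheses are satisfied because $\langle\mu,u_S\rangle=0$, $\norm{dd^c u_S}_*\lesssim\norm{S}\leq 1$, and $\norm{u_S}_{\log^p}\lesssim 1$) yields, for any $\eta>d^{-p/(p+1)}$, the decay $\norm{d^{-kn}(f^n)_*u_S}_\infty\leq c\eta^n$, hence $\norm{v_n}_\infty\leq c\beta^n$ with $\beta:=\alpha d\eta$. The hypothesis $\alpha<d^{-1/(p+1)}$ is exactly what forces $\beta<1$ once $\eta$ is close enough to $d^{-p/(p+1)}$. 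Second, Lemma \ref{l:log-Lipschitz} applied with any $A>1$ gives $\norm{v_n}_{\log^p}\leq c_A\tau^n$ with $\tau:=\alpha dA^p$. For $a,b\in\P^k$ with $\dist(a,b)\leq r$ I would then bound
\[|v_n(a)-v_n(b)|\leq\min\bigl(2\norm{v_n}_\infty,\ \norm{v_n}_{\log^p}(\log^\star r)^{-p}\bigr),\]
split the sum $\sum_n|v_n(a)-v_n(b)|$ at the optimal index $N\approx\log((\log^\star r)^p)/\log(\tau/\beta)$, and sum the two resulting geometric series. A routine computation produces
\[|u_{S_\alpha}(a)-u_{S_\alpha}(b)|\lesssim(\log^\star r)^{-q'},\qquad q':=\frac{p|\log\beta|}{\log(\tau/\beta)}.\]

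The main subtlety is the sharpness of the exponent $q'$. As $A\to 1^+$ and $\eta\to d^{-p/(p+1)}$, one has $\beta\to\alpha d^{1/(p+1)}$ and $\tau\to\alpha d$, and a direct calculation shows $q'\to(p+1)|\log\alpha|/\log d-1=q_0$. Hence, given any $q<q_0$, one selects $\eta$ and $A$ close enough to their limit values so that $q'>q$, which yields $\norm{u_{S_\alpha}}_{\log^q}\leq c_2(p,\alpha,q)$ and completes the proof. The delicate point is precisely this trade-off: the mixing rate $\beta$ furnished by Theorem \ref{t:like-equi} is only usable when it is strictly less than $1$, and the Lipschitz growth rate $\tau$ of Lemma \ref{l:log-Lipschitz} cannot be pushed below $\alpha d$; the balance between these two rates is what pins down the critical threshold $\alpha<d^{-1/(p+1)}$ in the hypothesis.
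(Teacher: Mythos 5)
Your proposal is correct and follows essentially the same route as the paper: for the second inequality you use the same decomposition $u_{S_\alpha}=\sum_n v_n$, the same two key inputs (the $L^\infty$ decay from Theorem \ref{t:like-equi} and the $\log^p$ growth from Lemma \ref{l:log-Lipschitz}), and the same balancing of a truncated sum at a threshold $N$; your optimization of $N$ and the paper's explicit choice $N=\frac{p+1}{\log d}\log\log^\star r$ yield the same limiting exponent $q_0$ as $A\to 1^+$ and $\eta\to d^{-p/(p+1)}$, so the two presentations differ only cosmetically.
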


\begin{proof}
The first inequality holds by the definition of $\norm{\cdot}_\ap$ and Lemma \ref{l:current-norm-p}.
We prove the second  inequality. 
Consider a current $R$ such that $\|R\|_{\ap}=1$. We have to show that $\|R\|_q$ is bounded by a constant.

From the definition of $\norm{\cdot}_{\ap}$, we can find a
positive closed current $S$ such that $\|S\|_p=1$ and $|R|\leq S_\alpha$. 
By the definition of the norm $\norm{\cdot}_q$ and Lemma \ref{l:current-norm-p}
applied to $S_\alpha$,
  it is enough to show that $\|S_\alpha\|'_q$ is bounded. 
Denote by $u_\alpha$ the dynamical potential of $S_\alpha$. Since the mass of $S_\alpha$ is bounded,
we only need to show that $\|u_\alpha\|_{\log^q}$ is bounded. 
By definition of $S_\alpha$, we have 
\[
u_{\alpha} =\sum_{n=0}^\infty  \alpha^n  
\frac{(f^n)_* u_S}{d^{(k-1)n}} \cdot
\]
It follows that,  for every positive number $N$,
$$m(u_\alpha,r) \leq \sum_{n\leq N}  \alpha^n d^{-(k-1)n} \|(f^n)_* u_S\|_{\log^p} (\log^\star r)^{-p}+2\sum_{n>N}  \alpha^n d^{-(k-1)n} 
\norm{(f^n)_* u_S}_\infty.$$
Fix constants $A>1$ close enough to 1, $\eta>d^{-p/(p+1)}$ close
enough to $d^{-p/(p+1)}$ and $\alpha'>\alpha A^p$ close enough
to $\alpha$. In particular, by
the assumption on $\alpha$ and the choice of $\eta$, 
 we have
 that $\alpha d\eta$ is close to $\alpha d^{1/(p+1)}$ and smaller than 1.
By Lemma \ref{l:log-Lipschitz}
and Theorem \ref{t:like-equi}
we know that $\norm{(f^n)_* u_S}_{\log^p} \lesssim d^{kn} A^{pn}$ and 
$\norm{(f^n)_* u_S}_{\infty} \lesssim  d^{kn}\eta^n$.
This,  the above estimate on $m(u_\alpha,r)$, and the fact that $\alpha'd>\alpha d\geq 1$ imply that
\[
m (u_\alpha, r) \lesssim \sum_{n\leq N}
 \alpha^n d^n  A^{pn} (\log^\star r)^{-p} +  \sum_{n>N} \alpha^n d^n \eta^n \lesssim (\alpha'd)^N (\log^\star r)^{-p}  + (\alpha d \eta)^N.
\]
Finally,  choose $N =  \frac{p+1}{\log d} \log \log^\star r$. 
Observe that if we replace $\alpha'$ by $\alpha$ and  $\alpha d\eta$ by $\alpha d^{1/(p+1)}$, the last sum is equal to $2(\log^\star r)^{-q_0}$. So,
this sum is bounded by a constant times $(\log^\star r)^{-q}$ for $q<q_0$ because
$\alpha'$ is chosen close to $\alpha$ and $\alpha d\eta$ is close to $\alpha d^{1/(p+1)}$. This concludes the proof 
of the lemma.
\end{proof}

The following
shifting property of the norm 
$\norm{\cdot}_{\ap}$
is very useful when we work with the action of $f$, and is the key property that 
we need of
this norm.

 \begin{lemma}\label{l:f-alpha-p}
For every $n\geq 0$ and every 
$(1,1)$-current $R$
on $\P^k$, we have
\[\big\|d^{-kn} \pa{f^n}_* 
R\big\|_{\ap} \leq \frac{1}{d^n \alpha^n} 
\norm{
R}_{\ap}.\]
\end{lemma}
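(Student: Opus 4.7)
The plan is to unwind the definition of $\norm{\cdot}_\ap$ and use the semi-group structure of $(f^n)_\ast$ acting on the geometric-series expression for $S_\alpha$. The key observation is that the definition of $|R|\leq S$ behaves well under pushforward, so we can transport any witness $S$ of $\norm{R}_\ap$ via $f^n$ with a computable cost.

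First, I would fix $c > \norm{R}_\ap$ and, by the compactness remark following the definition of $\norm{\cdot}_\ap$, choose a positive closed $(1,1)$-current $S$ with $\norm{S}_p\leq 1$ and $|R|\leq c\, S_\alpha$. Next I would verify that the relation $|\cdot|\leq\cdot$ passes through $(f^n)_\ast$: if $\xi\colon\P^k\to\C$ satisfies $|\xi|\leq 1$, then $\xi\circ f^n$ has the same property, and the projection formula gives $(f^n)_\ast\bigl((\xi\circ f^n)\,R\bigr)=\xi\,(f^n)_\ast R$. Since $\Re$ commutes with $(f^n)_\ast$ and pushforward preserves positivity, $S_\alpha - \Re((\xi\circ f^n) R)\geq 0$ implies $(f^n)_\ast S_\alpha - \Re(\xi (f^n)_\ast R)\geq 0$, i.e.\ $|(f^n)_\ast R|\leq (f^n)_\ast S_\alpha$. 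Multiplying by $d^{-kn}$ yields
\[
\bigl|d^{-kn}(f^n)_\ast R\bigr|\leq c\, d^{-kn}(f^n)_\ast S_\alpha.
\]

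The heart of the proof is then the identity
\[
d^{-kn}(f^n)_\ast S_\alpha
=\sum_{m=0}^\infty \alpha^m \frac{(f^{m+n})_\ast S}{d^{(k-1)m+kn}}
=\frac{1}{d^n\alpha^n}\sum_{\ell=n}^\infty \alpha^\ell\frac{(f^\ell)_\ast S}{d^{(k-1)\ell}},
\]
obtained by linearity of $(f^n)_\ast$ on the series defining $S_\alpha$, followed by the reindexing $\ell=m+n$ which absorbs $n$ factors of $\alpha^{-1}$ and $n$ factors of $d^{-1}$. Since every term $\alpha^\ell d^{-(k-1)\ell}(f^\ell)_\ast S$ is a positive current, truncating from $\ell=n$ back to $\ell=0$ only increases the sum, so the right-hand side is dominated by $\frac{1}{d^n\alpha^n}\,S_\alpha$.

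Combining the two inequalities gives $\bigl|d^{-kn}(f^n)_\ast R\bigr|\leq \frac{c}{d^n\alpha^n}\, S_\alpha$ with the same witness $S$ of unit $\norm{\cdot}_p$-norm, hence $\bigl\|d^{-kn}(f^n)_\ast R\bigr\|_\ap\leq \frac{c}{d^n\alpha^n}$; letting $c\searrow\norm{R}_\ap$ concludes. No step looks technically delicate; the only thing to be careful about is justifying that $|\cdot|\leq\cdot$ is preserved under $(f^n)_\ast$, which I would do explicitly as above to ensure that the scalar multiplier $\xi$ is pulled back correctly before pushing forward.
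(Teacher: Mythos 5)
Your proof is correct and follows essentially the same route as the paper's: both pull the scalar $\xi$ back via $f^n$, apply the projection formula, push the witness inequality $|R|\leq c\,S_\alpha$ forward, and observe that $d^{-kn}(f^n)_*S_\alpha$ is the tail of the series defining $S_\alpha$ after the scaling $(d\alpha)^{-n}$, hence dominated by $(d\alpha)^{-n}S_\alpha$. The only cosmetic difference is that the paper normalizes $\norm{R}_\ap=1$ and invokes the compactness remark to get a minimizing witness directly, whereas you approximate from above with $c>\norm{R}_\ap$ and then let $c\searrow\norm{R}_\ap$.
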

 
 \begin{proof}
 We can assume that $ \norm{R}_{\ap}=1$, so that there
 is a positive closed current $S$ with $\|S\|_p=1$ and
$ |R|\leq S_\alpha$, see the Notation at the beginning of the paper.
Consider any function $\xi\colon\P^k\to \C$ such that $|\xi|\leq 1$ and define $\xi_n:=\xi\circ f^n$.
Since $|\xi_n|\leq 1$, we have
\[
\Re \Big(\xi d^{-kn}\pa{f^n}_* R\Big)
=
\frac{1}{d^n}
\Re \Big(\frac{\pa{f^n}_* (\xi_n R)}{d^{(k-1)n}}\Big)
\leq
\frac{1}{d^n\alpha^n}
\sum_{j=0}^{\infty}
\alpha^{n+j} \frac{(f^n)_*}{d^{(k-1)n} } \frac{(f^j)_*}{d^{(k-1)j}}S  \leq \frac{1}{d^n\alpha^n} S_\alpha.
\]
The lemma follows.
\end{proof}

\subsection{The dynamical Sobolev semi-norm $\norm{\cdot}_{\sap}$}\label{ss:apmixed}
We can now define the 
first semi-norm
 for functions $g\colon \P^k \to \R$
with respect to which we 
will be able to prove the existence of
a spectral gap for the transfer operator. We can also define this norm for $1$-forms.

\begin{definition}\label{def:norm_sap}
Let $\alpha$ be a real number 
 such that 
$d^{-1}\leq \alpha<1$.
For any function $g\colon\P^k \to \R$ we set
\begin{equation*}
\norm{g}_{\sap} :=
\norm{i\partial g \wedge \bar \partial g}_{\ap}^{1/2}.
\end{equation*}
\end{definition}

The following two lemmas give
 the main 
properties of the semi-norm $\norm{\cdot}_{\sap}$
that we will need in Section 5, together with Lemma \ref{l:f-alpha-p}.
Recall that $q_0$ is defined in Lemma \ref{l:alpha-p-q}.
Note that the hypothesis $p>3/2$ ensures that $d^{-1}<d^{-5/(2p+2)}$ and the hypothesis
on $\alpha$ ensures that 
$q_0>3/2$, and hence that
$q_1$ is positive.

\begin{lemma}\label{l:cpt_apmixed}
Let $\alpha$ and $p$ be positive numbers 
such that $p>3/2$ and $d^{-1}\leq \alpha<d^{-5/(2p+2)}$. 
Then, for every $0<q<q_1:= \frac{q_0}{3} - \frac{1}{2}$,
 there are
 positive constants $c_1=c_1(p,\alpha,q)$ and $c_2= c_2(p,\alpha)$ 
such that for every $g\colon \P^k \to \R$ we have
\[
\norm{g}_{\log^q} \leq c_1 \norm{g}_{\sap},
\quad
\norm{g}_{\sap}\leq c_2 \norm{g}_p,
\quad
\mbox{ and }
\quad
\norm{g}_{\sap}\leq c_2 \norm{g}_{\Cc^1}.
\]
\end{lemma}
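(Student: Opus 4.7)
The plan is to prove the three inequalities separately, in order of increasing difficulty.

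For the rightmost inequality $\norm{g}_{\sap}\leq c_2\norm{g}_{\Cc^1}$: when $g$ is $\Cc^1$, the positive $(1,1)$-form $i\partial g\wedge\bar\partial g$ is pointwise dominated by $c\norm{g}_{\Cc^1}^2\,\omega_\FS$ for a universal constant $c$. Since $\omega_\FS$ is positive closed with smooth (hence H\"older continuous) dynamical potential, $\norm{\omega_\FS}_p<\infty$. Using the trivial inequality $\omega_\FS\leq(\omega_\FS)_\alpha$ (from the $n=0$ summand), the definition of $\norm{\cdot}_{\ap}$ with $S$ proportional to $\omega_\FS$ yields $\norm{i\partial g\wedge\bar\partial g}_{\ap}\lesssim\norm{g}_{\Cc^1}^2$, and taking square roots gives the bound. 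For the middle inequality $\norm{g}_{\sap}\leq c_2\norm{g}_p$, I would combine Lemma \ref{l:current-norm-p} (giving $\Omega(g)\leq\norm{g}_{\log^p}\lesssim\norm{g}_p$), Lemma \ref{l:norm-p-compose} applied with $h=g$ (giving $\norm{i\partial g\wedge\bar\partial g}_p\lesssim\Omega(g)\norm{g}_p\lesssim\norm{g}_p^2$), and Lemma \ref{l:alpha-p-q} (giving $\norm{i\partial g\wedge\bar\partial g}_{\ap}\lesssim\norm{i\partial g\wedge\bar\partial g}_p$).

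For the main (leftmost) inequality $\norm{g}_{\log^q}\leq c_1\norm{g}_{\sap}$, the key tool is Corollary \ref{c:mod-cont-bis-mixed}. By the definition of $\norm{\cdot}_{\sap}$ I would extract a positive closed $(1,1)$-current $S$ with $\norm{S}_p\leq 1$ and $i\partial g\wedge\bar\partial g\leq\norm{g}_{\sap}^2\,S_\alpha$. Repeating the computation in the proof of Lemma \ref{l:alpha-p-q} shows that $S_\alpha$ has mass bounded by $(1-\alpha)^{-1}$ and dynamical potential satisfying $\norm{u_{S_\alpha}}_{\log^{\tilde q}}\leq C(p,\alpha,\tilde q)$ for every $\tilde q<q_0$. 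The hypothesis $\alpha<d^{-5/(2p+2)}$ guarantees $q_0>3/2$, so I can fix $\tilde q\in(3/2,q_0)$ with $q<\tilde q/3-1/2$, which is possible since $q<q_1=q_0/3-1/2$. After a standard normalization of $S_\alpha$ (e.g.\ taking an appropriate convex combination with $\omega_\FS$ to reach unit mass, which preserves the $\log^{\tilde q}$-bound up to a universal constant), I apply Corollary \ref{c:mod-cont-bis-mixed} to the rescaled function $g/(\lambda\norm{g}_{\sap})$ for a suitable $\lambda=\lambda(p,\alpha,\tilde q)$ to conclude $\norm{g}_{\log^q}\lesssim\norm{g}_{\sap}$.

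The main obstacle is the bookkeeping in this last step: Corollary \ref{c:mod-cont-bis-mixed} requires the input current to have unit mass \emph{and} a dynamical potential of $\log^{\tilde q}$-norm at most $1$, and one must simultaneously absorb both normalizations into a scalar of size $\norm{g}_{\sap}$ while keeping the final constant $c_1$ dependent only on $p,\alpha,q$. A secondary technical point is the choice of $\tilde q$: the hypothesis on $\alpha$ is tailored precisely so that $q_0>3/2$, leaving room to pick $\tilde q\in(3/2,q_0)$ with $\tilde q/3-1/2$ arbitrarily close to $q_1$, thus covering the full range $q<q_1$ stated in the lemma.
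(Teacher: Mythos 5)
Your argument is correct and follows essentially the same route as the paper: the third inequality from the definition of $\norm{\cdot}_{\sap}$ using $\omega_\FS$ as the dominating current, the second from Lemmas \ref{l:current-norm-p}, \ref{l:norm-p-compose}, and \ref{l:alpha-p-q}, and the first by producing a positive closed current dominating $i\partial g\wedge\bar\partial g$ with controlled $\log^{q'}$-potential for $q'<q_0$ (i.e.\ Lemma \ref{l:alpha-p-q}) and then invoking Corollary \ref{c:mod-cont-bis-mixed}. One small slip: the dynamical potential $u_{\omega_\FS}$ is only H\"older continuous (it involves a potential of the Green current $T$), not smooth, but since H\"older regularity is all you actually use, the argument is unaffected.
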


\begin{proof}
We can assume that $\norm{g}_{\sap}\leq 1$. By the definition
of the norm $\norm{\cdot}_{\sap}$ and Lemma \ref{l:alpha-p-q}, $\norm{i\partial g \wedge \bar \partial g}_{q'}$ 
is bounded by a constant for 
 any $q' <  q_0$. Therefore, we have 
 $i\partial g \wedge \bar \partial g\leq R$ for some positive closed current
$R$ such that $\norm{R}$ and 
$\norm{u_R}_{\log^{q'}}$ are bounded by a constant.
The first inequality follows from Corollary \ref{c:mod-cont-bis-mixed}. The second assertion follows from Lemmas \ref{l:alpha-p-q} 
and \ref{l:norm-p-compose}. The last assertion follows from Definition
\ref{def:norm_sap}.
\end{proof}

\begin{lemma}\label{l:ap_gh}
Let $\alpha$ and $p$ be positive numbers such that $d^{-1}\leq \alpha<1$. 
Then   for all
functions $g,h\colon\P^k\to\R$ we have
\[
\norm{gh}_{\sap} \leq 
\sqrt{2} \big( \norm{g}_{\sap} \norm{h}_\infty + \norm{g}_\infty
\norm{h}_{\sap} \big).
\]
\end{lemma}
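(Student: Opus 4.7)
The plan is to reduce the estimate on $\norm{gh}_{\sap}$ to an estimate on $\norm{i\partial(gh)\wedge\bar\partial(gh)}_{\ap}$ via the definition, and then bound the latter by means of a Leibniz-type expansion.

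First I would expand
\begin{equation*}
i\partial(gh)\wedge\bar\partial(gh) = h^2\,i\partial g\wedge\bar\partial g + g^2\,i\partial h\wedge\bar\partial h + gh\bigl(i\partial g\wedge\bar\partial h + i\partial h\wedge\bar\partial g\bigr).
\end{equation*}
The diagonal terms are already positive $(1,1)$-forms, dominated pointwise by $\norm{h}_\infty^2\,i\partial g\wedge\bar\partial g$ and $\norm{g}_\infty^2\,i\partial h\wedge\bar\partial h$ respectively. The cross terms I would handle by a Cauchy-Schwarz inequality in the current sense: from the positivity of $i(\mu^{1/2}\partial g \pm \mu^{-1/2}\partial h)\wedge(\mu^{1/2}\bar\partial g \pm \mu^{-1/2}\bar\partial h)$ for every $\mu>0$, one obtains
\begin{equation*}
\bigl|\,i\partial g\wedge\bar\partial h + i\partial h\wedge\bar\partial g\,\bigr| \leq \mu\,i\partial g\wedge\bar\partial g + \mu^{-1}\,i\partial h\wedge\bar\partial h,
\end{equation*}
in the sense of the Notation. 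Choosing $\mu=\norm{h}_\infty/\norm{g}_\infty$ (the degenerate cases are trivial), multiplying by $|gh|\leq\norm{g}_\infty\norm{h}_\infty$ and combining with the diagonal estimate gives
\begin{equation*}
i\partial(gh)\wedge\bar\partial(gh) \leq 2\norm{h}_\infty^2\,i\partial g\wedge\bar\partial g + 2\norm{g}_\infty^2\,i\partial h\wedge\bar\partial h.
\end{equation*}

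Next I would invoke the definition of $\norm{\cdot}_{\sap}$ together with the fact that the infimum in the definition of $\norm{\cdot}_{\ap}$ is attained, to find positive closed currents $S_g,S_h$ on $\P^k$ with $\norm{S_g}_p\leq 1$, $\norm{S_h}_p\leq 1$, and
\begin{equation*}
i\partial g\wedge\bar\partial g \leq \norm{g}_{\sap}^2\,(S_g)_\alpha, \qquad i\partial h\wedge\bar\partial h \leq \norm{h}_{\sap}^2\,(S_h)_\alpha.
\end{equation*}
Writing $A:=\norm{h}_\infty\norm{g}_{\sap}$ and $B:=\norm{g}_\infty\norm{h}_{\sap}$, I would form the convex combination $S:=(A^2 S_g+B^2 S_h)/(A^2+B^2)$, which is again positive closed. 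Because the norm $\norm{\cdot}_p$ for positive closed currents is subadditive under convex combinations (as one checks by combining bounding currents given by the definition), $\norm{S}_p\leq 1$. Since the assignment $S\mapsto S_\alpha$ is linear, $S_\alpha=(A^2(S_g)_\alpha+B^2(S_h)_\alpha)/(A^2+B^2)$, and the previous display gives
\begin{equation*}
i\partial(gh)\wedge\bar\partial(gh) \leq 2A^2 (S_g)_\alpha + 2B^2 (S_h)_\alpha = 2(A^2+B^2)\,S_\alpha.
\end{equation*}

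By the definition of $\norm{\cdot}_{\ap}$ this yields $\norm{i\partial(gh)\wedge\bar\partial(gh)}_{\ap}\leq 2(A^2+B^2)$, and taking square roots and using $\sqrt{A^2+B^2}\leq A+B$ gives the stated bound $\norm{gh}_{\sap}\leq\sqrt{2}(\norm{g}_{\sap}\norm{h}_\infty+\norm{g}_\infty\norm{h}_{\sap})$. No real obstacle is expected: the only delicate point is the Cauchy-Schwarz step for currents, which must be interpreted in the sense of the Notation, and the remark that convex combinations of positive closed currents are controlled by $\norm{\cdot}_p$, which follows directly from unpacking the definition.
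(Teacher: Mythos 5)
Your proof is correct and follows essentially the same route as the paper: expand $i\partial(gh)\wedge\bar\partial(gh)$, apply Cauchy-Schwarz to the cross terms with $\mu=\norm{h}_\infty/\norm{g}_\infty$, and conclude. The paper performs the final step by invoking the triangle inequality for the $\norm{\cdot}_{\ap}$ norm term by term; your explicit convex-combination construction of the bounding current $S$ is precisely the argument that justifies that subadditivity, so you are spelling out the same step rather than taking a different path.
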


\begin{proof}
Using an expansion of 
$i\partial (gh) \wedge \bar \partial (gh)$
and Cauchy-Schwarz's inequality,
 we have
\begin{eqnarray*}
\norm{i \partial (gh) \wedge \bar \partial (gh)}_{\ap}
 & \leq &
\norm{h}_\infty^2 \norm{i\partial g \wedge \bar \partial g}_{\ap} + \norm{g}_\infty^2 \norm{i\partial h \wedge \bar \partial h}_{\ap}\\
& &+ \norm{g}_\infty \norm{h}_\infty \norm{i\partial g \wedge \bar \partial h + i\partial h \wedge \bar \partial g}_{\ap}\\
 &\leq&
\norm{h}_\infty^2 \norm{i\partial g \wedge \bar \partial g}_{\ap} + \norm{g}_\infty^2 \norm{i\partial h \wedge \bar \partial h}_{\ap}\\
& &+ \norm{g}_\infty \norm{h}_\infty  \Big(  \frac{\norm{h}_\infty}{\norm{g}_\infty}\norm{i\partial g \wedge \bar \partial g}_{\ap} 
+\frac{\norm{g}_\infty}{\norm{h}_\infty} \norm{i\partial h \wedge \bar \partial h}_{\ap} \Big) \\
 &\leq &
2\norm{h}_\infty^2 \norm{i\partial g \wedge \bar \partial g}_{\ap} + 2\norm{g}_\infty^2 \norm{i\partial h \wedge \bar \partial h}_{\ap}.
\end{eqnarray*}
The assertion follows from Definition \ref{def:norm_sap}.
\end{proof}

\subsection{The semi-norm $\norm{\cdot}_{\sapg}$} \label{ss:alpha-p-gamma}

The following semi-norm defines the
final space of functions that we will use 
in our study of the transfer operator.
 We use here some ideas
 from the theory of interpolation between Banach spaces, see also \cite{triebel1995interpolation}.

 \begin{definition}\label{d:norm-pag}
For all real numbers $d^{-1}\leq \alpha <1$, $\gamma>0$ and $p>0$, we define for a continuous function $g\colon\P^k\to \R$
\begin{equation}\label{e:def:apg}
\begin{aligned}
\norm{g}_{\sapg} := \inf\Big\{ &c\geq 0\colon \forall \, 0<\eps  \leq {1} \ \exists\, g_\eps^{(1)}, g_\eps^{(2)}\colon \\
&g= g_\eps^{(1)} + g_\eps^{(2)},
\big\|g_\eps^{(1)}\big\|_{\sap}\leq c (1/\eps)^{1/\gamma},
 \big\|g_\eps^{(2)}\big\|_\infty \leq  c \eps
\Big\}.
\end{aligned}
\end{equation}
When such a number $c$ does not exist, we set $\norm{g}_{\sapg} :=\infty$. 
\end{definition}

\begin{remark}\label{rem:holder-pag}
Lemma 
\ref{l:alpha-p-gamma-Holder}
applied
for $s=1$  implies that 
$\norm{\cdot}_{\sapg}
\lesssim \norm{\cdot}_{\Cc^\gamma}$
because
$\norm{\cdot}_{\sap}
\lesssim \norm{\cdot}_{\Cc^1}$, see Lemma 
\ref{l:cpt_apmixed}.
\end{remark}

The following two 
lemmas are the counterparts of Lemmas
 \ref{l:cpt_apmixed} and \ref{l:ap_gh} 
for the semi-norm $\norm{\cdot}_{\sapg}$. Recall that $q_1$ is defined in Lemma \ref{l:cpt_apmixed}.

\begin{lemma}\label{l:logq-alpha-p-gamma}
For all positive numbers 
$p, \alpha,\gamma, q$
satisfying $p>3/2$, $d^{-1}\leq \alpha<d^{-5/(2p+2)}$ and
$q<q_2:= \frac{\gamma}{\gamma+ 1}q_1$, 
there is a positive constant $c =c (p,\alpha, \gamma, q)$ such that
\[
\norm{g}_{\log^q} \leq c \norm{g}_{\sapg} \quad \mbox{ and }
\quad \norm{g}_{\sapg} \leq  \norm{g}_{\sap}
\]
for every continuous function $g\colon \P^k \to \R$. Moreover, if $\chi:I\to\R$
is a Lipschitz function with Lipschitz constant $\kappa$ on an interval $I\subset\R$
containing the image 
of $g$, then we have 
$$\|\chi(g)\|_\sapg \leq \kappa\|g\|_\sapg.$$  
\end{lemma}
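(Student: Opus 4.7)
The plan is to prove the three assertions of the lemma separately, in the order: easy upper bound by $\|\cdot\|_\sap$, modulus-of-continuity bound, composition bound.

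\medskip

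\textbf{For the inequality $\|g\|_\sapg \leq \|g\|_\sap$}, I would simply take the trivial decomposition $g_\eps^{(1)} := g$, $g_\eps^{(2)} := 0$ in Definition \ref{d:norm-pag}: both conditions are satisfied with $c = \|g\|_\sap$ because $\eps^{1/\gamma} \leq 1$ for $\eps \in (0,1]$.

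\medskip

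\textbf{For the modulus-of-continuity bound} $\|g\|_{\log^q} \lesssim \|g\|_\sapg$, the strategy is a standard interpolation trade-off between the two pieces of the decomposition. I would first fix an auxiliary exponent $q'$ with $q < q'\gamma/(\gamma+1)$ and $q' < q_1$ (possible since $q < q_2$). For $a,b \in \P^k$ at distance $r$ and any $\eps \leq 1$, a decomposition $g = g_\eps^{(1)} + g_\eps^{(2)}$ realizing some $c$ close to $\|g\|_\sapg$ combined with Lemma \ref{l:cpt_apmixed} yields
$$|g(a) - g(b)| \leq \|g_\eps^{(1)}\|_{\log^{q'}}(\log^\star r)^{-q'} + 2\|g_\eps^{(2)}\|_\infty \lesssim c\bigl[(1/\eps)^{1/\gamma}(\log^\star r)^{-q'} + \eps\bigr].$$
I would then balance the two summands by choosing $\eps := (\log^\star r)^{-q'\gamma/(\gamma+1)}$, which lies in $(0,1]$ since $\log^\star r \geq 1$. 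Both terms then become $(\log^\star r)^{-q'\gamma/(\gamma+1)}$, and this is dominated by $(\log^\star r)^{-q}$ by the choice of $q'$.

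\medskip

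\textbf{For the composition bound}, the auxiliary fact I would first establish is that for any $\Cc^1$ function $\tilde\chi \colon \R \to \R$ with $\|\tilde\chi'\|_\infty \leq \kappa$ and any $h$,
$$i\partial\tilde\chi(h) \wedge \bar\partial\tilde\chi(h) = \tilde\chi'(h)^2\, i\partial h \wedge \bar\partial h \leq \kappa^2\, i\partial h \wedge \bar\partial h,$$
which, via the definition of $\|\cdot\|_\sap$, gives $\|\tilde\chi(h)\|_\sap \leq \kappa\|h\|_\sap$. For a general Lipschitz $\chi$ on $I$ with constant $\kappa$, I would extend $\chi$ to $\R$ preserving the Lipschitz constant (McShane's extension) and mollify it to obtain $\Cc^1$ approximations $\chi_\delta$ satisfying $\|\chi_\delta'\|_\infty \leq \kappa$ and $\|\chi - \chi_\delta\|_\infty \leq \kappa\delta$. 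Given $c > \|g\|_\sapg$ and an auxiliary $\eta > 0$, for each $\eps \leq 1$ I would pick a decomposition $g = g_\eps^{(1)} + g_\eps^{(2)}$ witnessing $c$ and define
$$h_\eps^{(1)} := \chi_{\eta\eps}(g_\eps^{(1)}), \qquad h_\eps^{(2)} := \chi(g) - \chi_{\eta\eps}(g_\eps^{(1)}).$$
The first piece then satisfies $\|h_\eps^{(1)}\|_\sap \leq \kappa\|g_\eps^{(1)}\|_\sap \leq \kappa c (1/\eps)^{1/\gamma}$, and a triangle inequality gives $\|h_\eps^{(2)}\|_\infty \leq \kappa\|g_\eps^{(2)}\|_\infty + \|\chi - \chi_{\eta\eps}\|_\infty \leq \kappa(c+\eta)\eps$. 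Sending $c \to \|g\|_\sapg$ and $\eta \to 0$ yields the sharp constant $\kappa$.

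\medskip

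\textbf{Main obstacle.} The delicate point is the composition bound: accommodating a merely Lipschitz $\chi$ while still obtaining the sharp multiplicative constant $\kappa$. The crucial idea is to couple the mollification scale to the decomposition scale via $\delta = \eta\eps$, so that the smoothing error $\kappa\delta$ is absorbed into the ``small'' piece $\|h_\eps^{(2)}\|_\infty$ and disappears in the limit $\eta \to 0$.
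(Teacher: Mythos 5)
Your proposal is correct, and the first two parts (the trivial decomposition for $\|\cdot\|_\sapg\leq\|\cdot\|_\sap$, and the interpolation trade-off $\eps=(\log^\star r)^{-q'\gamma/(\gamma+1)}$ for the $\log^q$-bound) coincide exactly with the paper's argument. For the composition bound, the core idea is also the same as the paper's: compose the smooth part $g_\eps^{(1)}$ with a $\Cc^1$ approximant of $\chi$ having derivative bounded by $\kappa$, and put the error into the small piece $h_\eps^{(2)}$. The difference is purely in how the reduction from Lipschitz to $\Cc^1$ is organized. The paper says ``we can assume $\chi$ is smooth'' up front, which implicitly requires that $\|\cdot\|_\sapg$ behave well under the uniform limit $\chi_n(g)\to\chi(g)$ — a small point the paper does not spell out. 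Your version sidesteps this by mollifying $\chi$ at scale $\delta=\eta\eps$ tied to the decomposition parameter, so the mollification error $\kappa\eta\eps$ is absorbed directly into the $O(\eps)$ bound on $\|h_\eps^{(2)}\|_\infty$ and then killed by letting $\eta\to 0$; the argument is thus slightly more self-contained and still yields the sharp multiplicative constant $\kappa$. Both routes buy the same conclusion; yours just makes the limiting step explicit. (One small implicit point you handle and the paper does not: since $g_\eps^{(1)}=g-g_\eps^{(2)}$ may take values slightly outside $I$, one does need to extend $\chi$ beyond $I$ before applying it to $g_\eps^{(1)}$, and your McShane extension accounts for this.)
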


\begin{proof}
Let us prove the first inequality. We can assume that
$\norm{g}_{\sapg}\leq 1$.
Lemma \ref{l:cpt_apmixed}
 implies that 
$g_\eps^{(1)}$ has 
$\norm{\cdot}_{\log^{q'}}$
 semi-norm bounded by a constant times $(1/\eps)^{1/\gamma}$
when $q' < q_1$.
Therefore, we have for $r>0$
\[
m(g,r) \leq m(g_\eps^{(1)},r)+ m(g_\eps^{(2)},r)
\lesssim \frac{(1/\eps)^{1/\gamma}}{(\log^\star r)^{q'}} + \eps.
\]
Choosing  
$\eps = (\log^\star r)^{-K}$ with $K = q' / (1+ 1/\gamma)$
gives
\[
m(g,r)\lesssim (\log^\star r)^{-q' + K/\gamma}+ (\log^\star r)^{-K} = 2(\log^\star r)^{- q' / (1+ 1/\gamma)}.
\]
The first assertion of the lemma follows by choosing $q'$ close enough to $q_1$.

The second inequality follows from the definition of the semi-norm
$\norm{\cdot}_{\sapg}$, by taking
$g_{\eps}^{(2)}=0$ in the decomposition $g= g_\eps^{(1)} + g_\eps^{(2)}$ for every $\eps$.

We prove now the last assertion. Since we can approximate $\chi$ uniformly
by smooth functions $\chi_n$ with $|\chi_n'|\leq 1$, we can assume for simplicity
that $\chi$ is smooth.  Define $h:=\chi(g)$ and recall that we are
assuming that
$\norm{g}_{\sapg}\leq 1$. For every $0<\eps\leq 1$, we
have the decomposition
$$g= g_\eps^{(1)} + g_\eps^{(2)} \quad \text{with} \quad \big\|g_\eps^{(1)}\big\|_{\sap}\leq  (1/\eps)^{1/\gamma} \quad \text{and} \quad 
 \big\|g_\eps^{(2)}\big\|_\infty \leq   \eps.$$
Write
$$h= h_\eps^{(1)} + h_\eps^{(2)} \quad \text{with} \quad h_\eps^{(1)}:=\chi(g_\eps^{(1)}) \quad \text{and} \quad h_\eps^{(2)}:=h- h_\eps^{(1)}.$$
We have 
$$\|h_\eps^{(2)}\|_\infty = \|\chi(g)-\chi(g_\eps^{(1)})\|_\infty
\lesssim \kappa \|g-g_\eps^{(1)}\|_\infty = \kappa \|g_\eps^{(2)}\|_\infty \leq \kappa \eps$$
and also
$$\|h_\eps^{(1)}\|_\sap = \|i \partial \chi(g_\eps^{(1)})\wedge \dbar \chi(g_\eps^{(1)})\|_\ap^{1/2} \leq \kappa \|i \partial g_\eps^{(1)}\wedge \dbar g_\eps^{(1)}\|_\ap^{1/2} =\kappa \|g_\eps^{(1)}\|_\sap \leq \kappa  (1/\eps)^{1/\gamma}.$$
It follows that $\|h\|_\sapg \leq \kappa$. This completes the proof of the lemma.
\end{proof}

\begin{lemma}\label{l:apg_gh}
For all positive numbers
$p, \alpha,\gamma$ 
 such that $d^{-1}\leq \alpha<1$
we have
\[
\norm{gh }_{\sap, \gamma} \leq 3
 \big(
 \norm{g}_{\sap, \gamma}\norm{h}_\infty + \norm{g}_\infty \norm{h}_{\sap, \gamma}
 \big)
\]
for every continuous functions $g,h\colon \P^k \to \R$. 
\end{lemma}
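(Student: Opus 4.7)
Set $a := \norm{g}_{\sapg}$, $b := \norm{h}_{\sapg}$, $A := \norm{g}_\infty$, $B := \norm{h}_\infty$, and $c := aB + Ab$. Following the strategy of Lemma \ref{l:ap_gh}, the plan is to produce, for every $\eps \in (0,1]$, a decomposition $gh = u_\eps + v_\eps$ with $\norm{u_\eps}_{\sap} \leq 3c\,\eps^{-1/\gamma}$ and $\norm{v_\eps}_\infty \leq 3c\,\eps$; by Definition \ref{d:norm-pag} this yields $\norm{gh}_{\sapg} \leq 3c$. The edge cases where one of $a,b,A,B$ vanishes can be treated directly (either the inequality is trivial or one factor is a constant and reduces to Lemma \ref{l:ap_gh}), so I assume them all positive.

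For $\eps$ in the small range, I take decompositions $g = g^{(1)}_\eps + g^{(2)}_\eps$ and $h = h^{(1)}_\eps + h^{(2)}_\eps$ from Definition \ref{d:norm-pag} (with an arbitrarily small multiplicative slack eventually sent to $0$), and set $u_\eps := g^{(1)}_\eps h^{(1)}_\eps$ and $v_\eps := g^{(1)}_\eps h^{(2)}_\eps + g^{(2)}_\eps h$, so that $u_\eps + v_\eps = gh$. Using $\norm{g^{(1)}_\eps}_\infty \leq A + a\eps$, $\norm{h^{(1)}_\eps}_\infty \leq B + b\eps$ together with Lemma \ref{l:ap_gh}, a direct computation yields
\[
\norm{v_\eps}_\infty \leq c\eps + ab\eps^2, \qquad \norm{u_\eps}_{\sap} \leq \sqrt{2}\,(c + 2ab\eps)\,\eps^{-1/\gamma}.
\]
Both bounds are then dominated by $3c\eps$ and $3c\,\eps^{-1/\gamma}$ respectively exactly when $\eps \leq \eps_0 := (3-\sqrt{2})c/(2\sqrt{2}\,ab)$.

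For $\eps \in (\eps_0,1]$, I switch to the trivial decomposition $u_\eps := 0$, $v_\eps := gh$: then $\norm{u_\eps}_{\sap} = 0$ and $\norm{v_\eps}_\infty \leq AB$, and it remains to verify $AB \leq 3c\eps$. This follows from the AM-GM inequality applied to $c = aB + Ab \geq 2\sqrt{aA\cdot bB}$, which gives $c^2 \geq 4\,ab\,AB$, together with the numerical check $3(3-\sqrt{2})/(2\sqrt{2}) > 1/4$, so that $AB \leq c^2/(4ab) \leq 3c\eps_0 \leq 3c\eps$. The only essential difficulty is the cross term $ab\eps^2$ in $\norm{v_\eps}_\infty$ (and the matching $ab\eps$ in $\norm{u_\eps}_{\sap}$), which is not uniformly controlled by $c = aB + Ab$; the case split at $\eps_0$, whose consistency with the trivial decomposition is guaranteed precisely by the AM-GM bound $c^2\geq 4abAB$, is what bypasses this obstruction and produces the constant $3$.
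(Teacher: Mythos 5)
Your proof is correct and follows essentially the same two-regime strategy as the paper's: for small $\eps$ you take the product $g^{(1)}_\eps h^{(1)}_\eps$ of the two decompositions, and for large $\eps$ you fall back on the trivial decomposition $gh = 0 + gh$. The only cosmetic difference is that the paper first normalizes $\norm{g}_{\sapg} = \norm{h}_{\sapg} = 1$ and splits on whether $\norm{g}_\infty$ and $\norm{h}_\infty$ are both $\leq 3\eps$, which makes the large-$\eps$ verification immediate, whereas you work with unnormalized $a,b,A,B$ and therefore need the AM-GM inequality $c^2 \geq 4abAB$ to close the trivial case.
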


\begin{proof}
 We can assume that 
 $\norm{g}_{\sap, \gamma} = \norm{h}_{\sap, \gamma}=1$.
For every $0<\eps\leq 1$, we
 need to find a decomposition $gh = L_\eps^{(1)} +  L^{(2)}_\eps$ with $L^{(1)}_\eps, L^{(2)}_\eps$
 such that
 $$\|L^{(1)}_\eps\|_{\sap} \leq 3(\norm{g}_\infty + \norm{h}_\infty) (1/\eps)^{1/\gamma} \quad 
 \text{and} \quad \|L^{(2)}_\eps\|_\infty \leq 3 (\norm{g}_\infty + \norm{h}_\infty) \eps.$$
  
 \noindent 
{\bf Case 1.} Assume that $\|g\|_\infty\leq 3\eps$ and $\|h\|_\infty\leq 3\eps$. Choose $L^{(1)}_\eps=0$ and $L^{(2)}_\eps=gh$. 
Clearly, these functions satisfy the desired estimates.

  \medskip\noindent 
{\bf Case 2.} Assume now that $\|g\|_\infty+\|h\|_\infty\geq 3\eps$. 
  By the definition of the semi-norm
   $\norm{\cdot}_{\sapg}$,
  we have the 
  decompositions $g = g^{(1)}_\eps + g^{(2)}_\eps$ and $h = h^{(1)}_\eps + h^{(2)}_\eps$ with
  \[
\|g^{(1)}_\eps\|_{\sap}\leq (1/\eps)^{1/\gamma},
\quad
 \|g^{(2)}_\eps\|_\infty \leq \epsilon,
 \quad
\|h^{(1)}_\eps\|_{\sap}\leq (1/\eps)^{1/\gamma},
\quad
 \|h^{(2)}_\eps\|_\infty \leq \epsilon.
  \]
 Observe that  $\big\|g_\eps^{(1)}\big\|_\infty \leq \norm{g}_\infty +\eps$ 
and $\big\|h_\eps^{(1)}\big\|_\infty \leq \norm{h}_\infty+\eps$,
 which imply that 
$$\big\|g_\eps^{(1)}\big\|_\infty+\big\|h_\eps^{(1)}\big\|_\infty\leq 2(\|g\|_\infty+\|h\|_\infty).$$
Set
 \[L^{(1)}_\eps := g^{(1)}_\eps h^{(1)}_\eps \qquad \text{and} \qquad 
 L^{(2)}_\eps := g^{(1)}_\eps h^{(2)}_\eps+g^{(2)}_\eps h^{(1)}_\eps+g^{(2)}_\eps h^{(2)}_\eps.\]
The desired estimate for $\|L^{(1)}_\eps\|_{\sap}$ follows
from Lemma \ref{l:ap_gh} and the one for $\|L^{(2)}_\eps\|_\infty$ is obtained by a direct computation.
This ends the proof of the lemma.
\end{proof}

\section{Spectral gap for the transfer operator}\label{s:speed}

In this section we prove our main Theorem \ref{t:goal}.
Theorem \ref{t:main}
and \eqref{e:intro-cv-rho}
 give
the scaling ratio $\lam$, the density function $\rho$ as an
eigenfunction for the operator $\Ll = \Ll_\phi$, and the probability measures
$m_\phi$ and $\mu_\phi$, all under the hypothesis that $\norm{\phi}_{\log^q}<\infty$ for some $q>2$.
The semi-norms  $\norm{\cdot}_{\sap}$ and  $\norm{\cdot}_{\sapg}$ were introduced in Sections 
\ref{ss:apmixed} and \ref{ss:alpha-p-gamma}, respectively.

\subsection{Some preliminary results}\label{ss:new-recall}

For positive real numbers $q,M$, and $\Omega$ with $q>2$ and $\Omega<\log d$, consider the following set of weights
$$\Pc(q,M,\Omega):=\big\{\phi\colon\P^k\to\R \ :   \	\|\phi\|_{\log^q}\leq M,\ \Omega(\phi)\leq \Omega\big\}$$  
and the uniform topology induced by the sup norm. Observe that this family is equicontinuous. 
 The following two lemmas
were obtained in \cite{bd-eq-states-part1}, see Lemmas 4.6 and 4.7 therein.
Since we will use them several times in this section, we restate them here.
We 
 use the index $\phi$ or parameter $\phi$ for objects which depend on $\phi$, e.g., we
write $\lambda_\phi, \Ll_\phi, \rho_\phi$
 instead of $\lambda, \Ll, \rho$.

\begin{lemma} \label{l:dependence}
Let $q,M$, and $\Omega$ be positive real numbers 
such that $q>2$ and $\Omega<\log d$.
The maps $\phi\mapsto\lambda_\phi$, $\phi\mapsto m_\phi$, $\phi\mapsto \mu_\phi$,
and $\phi\mapsto \rho_\phi$ are continuous on $\phi\in \Pc(q,M,\Omega)$ with
respect to the standard topology on $\R$, the
weak topology on measures, and the uniform topology on functions. 
In particular, $\rho_\phi$ is bounded from above and below by positive
constants which are independent of $\phi\in \Pc(q,M,\Omega)$.
Moreover, $\|\lambda_\phi^{-n}\Ll_\phi^n\|_\infty$ is bounded by a
constant which is independent of $n$ and of $\phi\in \Pc(q,M,\Omega)$.
\end{lemma}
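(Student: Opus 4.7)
My plan is to reduce to a compact family of weights by normalization, obtain continuity via a subsequential-limit argument combined with the uniqueness assertions of Theorem~\ref{t:main}, and finally read off the uniform bounds from continuity on a compact set.

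The first step is to observe that $\Ll_{\phi+c}=e^c\Ll_\phi$ for any constant $c$, so $\lambda_{\phi+c}=e^c\lambda_\phi$ while $m_\phi,\mu_\phi,\rho_\phi$ are invariant under such shifts. It therefore suffices to work on the subfamily $\tilde\Pc\subset\Pc(q,M,\Omega)$ of those $\phi$ satisfying the normalization $\max\phi=0$, for which $-\Omega\leq\phi\leq 0$ and $\norm{\phi}_{\log^q}\leq M$. This subfamily is uniformly bounded and equicontinuous, and $\log^q$-continuity is preserved under uniform limits, so by Arzel\`a--Ascoli $\tilde\Pc$ is compact in $\Cc^0(\P^k)$.

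Next, given a convergent sequence $\phi_n\to\phi$ in $\tilde\Pc$, the operators $\Ll_{\phi_n}\to\Ll_\phi$ converge uniformly on $\Cc^0(\P^k)$ and $\lambda_{\phi_n}\in[e^{-\Omega}d^k, d^k]$. By the construction of $\rho_\phi$ in \cite{bd-eq-states-part1}, the densities $\rho_{\phi_n}$ are bounded from above and below by positive constants and inherit a uniform modulus of continuity; similarly, $m_{\phi_n}$ are probability measures on the compact space $\P^k$. Along any subsequence I extract further subsequential limits $\lambda_{\phi_n}\to\lambda^\star>0$, $\rho_{\phi_n}\to\rho^\star$ uniformly, and $m_{\phi_n}\to m^\star$ weakly. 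Passing to the limit in the eigen-equations $\Ll_{\phi_n}\rho_{\phi_n}=\lambda_{\phi_n}\rho_{\phi_n}$ and $\Ll^*_{\phi_n} m_{\phi_n}=\lambda_{\phi_n} m_{\phi_n}$ yields $\Ll_\phi \rho^\star=\lambda^\star\rho^\star$ with $\rho^\star>0$ continuous and $\Ll_\phi^* m^\star=\lambda^\star m^\star$ with $m^\star$ a probability measure. The uniqueness of the conformal measure, the eigenfunction, and the equilibrium state in Theorem~\ref{t:main} then forces $\lambda^\star=\lambda_\phi$, $m^\star=m_\phi$, $\rho^\star=\rho_\phi$, and $\rho^\star m^\star=\mu_\phi$. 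Since every subsequence has a convergent sub-subsequence with the same limits, the original sequences converge, establishing continuity of each of the four maps.

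Continuity on the compact set $\tilde\Pc$ yields the uniform bounds immediately: $\phi\mapsto\min_{\P^k}\rho_\phi$ and $\phi\mapsto\max_{\P^k}\rho_\phi$ are continuous and strictly positive, hence there is $C=C(q,M,\Omega)$ with $1/C\leq\rho_\phi\leq C$ throughout $\tilde\Pc$, and this extends to $\Pc(q,M,\Omega)$ because $\rho_\phi$ is invariant under additive shifts of $\phi$. For the final assertion, from $\1\leq C\rho_\phi$ I obtain
\[
\Ll_\phi^n\1\leq C\,\Ll_\phi^n\rho_\phi=C\lambda_\phi^n\rho_\phi\leq C^2\lambda_\phi^n,
\]
and hence $|\Ll_\phi^n g|\leq\norm{g}_\infty\Ll_\phi^n\1\leq C^2\norm{g}_\infty\lambda_\phi^n$ for every continuous $g$, giving $\|\lambda_\phi^{-n}\Ll_\phi^n\|_\infty\leq C^2$ uniformly in $n$ and $\phi$. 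The main obstacle in this approach is verifying the uniform modulus of continuity of the family $\{\rho_\phi\}_{\phi\in\tilde\Pc}$ needed for the Arzel\`a--Ascoli extraction: one must revisit the construction of $\rho_\phi$ as a uniform limit of $\lambda_\phi^{-n}\Ll_\phi^n\1$ in \cite{bd-eq-states-part1} and check that the estimates on the convergence rate and continuity modulus there are genuinely uniform over $\phi\in\Pc(q,M,\Omega)$.
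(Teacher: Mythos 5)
The paper does not prove this lemma; it is restated verbatim from \cite{bd-eq-states-part1} (Lemma 4.6 there) and used as a black box, so there is no in-text proof to compare against. Evaluating your sketch on its own terms: the normalization to $\max\phi=0$, the compactness of $\tilde\Pc$ via Arzel\`a--Ascoli and lower semicontinuity of $\|\cdot\|_{\log^q}$, the passage to the limit in the eigenequations, the identification of the limits via the uniqueness clauses of Theorem~\ref{t:main}, the subsequence principle, and the final derivation of $\|\lambda_\phi^{-n}\Ll_\phi^n\|_\infty\leq C^2$ from $\1\leq C\rho_\phi$ are all correct and cleanly executed.

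The gap you flag at the end, however, is not a loose end to be tidied up but the entire content of the ``in particular'' and ``moreover'' clauses. To run Arzel\`a--Ascoli on $\{\rho_{\phi_n}\}$ and to guarantee that the limit $\rho^\star$ is strictly positive, you need three things simultaneously: a uniform upper bound, a uniform positive lower bound, and uniform equicontinuity of the family $\{\rho_\phi\}_{\phi\in\tilde\Pc}$. You state these in the middle of the argument as consequences of ``the construction in \cite{bd-eq-states-part1}'' but only single out the modulus of continuity as requiring verification; the uniform lower bound is just as delicate (without it $\rho^\star$ could vanish somewhere, and the uniqueness argument would break), and none of these follows from elementary estimates --- the crude bound $\lambda_\phi^{-n}\Ll_\phi^n\1\leq e^{n\Omega(\phi)}$ diverges, so the control genuinely requires the machinery of that paper. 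Once those uniform estimates are granted, the continuity-plus-compactness route to the uniform bounds on $\rho_\phi$ is redundant (you already have them in hand before you start), though your subsequence argument is still the right mechanism for continuity of $\lambda_\phi$, $m_\phi$, $\mu_\phi$. In short: your sketch correctly reduces the lemma to ``the estimates of \cite{bd-eq-states-part1} are uniform over $\Pc(q,M,\Omega)$,'' and is a sound organization of the final derivation, but the reduction does not shorten the path because that uniformity statement is essentially the lemma itself.
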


\begin{lemma} \label{l:dependence-bis}
Let $q,M$, and $\Omega$ be positive real numbers 
such that $q>2$ and $\Omega<\log d$.
Let $\Fc$ be a uniformly bounded and equicontinuous family of
real-valued functions on $\P^k$. Then the family
$$\big\{\lambda_\phi^{-n} \Ll_\phi^n(g) \ : \  n\geq 0, \ \phi\in \Pc(p,M,\Omega), \ g\in\Fc\big\}$$
is equicontinuous. Moreover,
$\big\|\lambda_\phi^{-n} \Ll_\phi^n(g)-\langle m_\phi,g\rangle\big\|_\infty$
tends to $0$ uniformly on $\phi\in \Pc(p,M,\Omega)$ and $g\in\Fc$ when $n$ goes to infinity.
\end{lemma}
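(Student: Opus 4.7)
The plan is to prove the two assertions in order, using Assumption (A) (together with its consequence (A') from the proof of Lemma~\ref{l:log-Lipschitz}) to pair preimages, and exploiting the uniform bounds from Lemma~\ref{l:dependence}.

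For the first assertion, I want to show that for each $\varepsilon>0$ there is $\delta>0$ such that $\dist(y_1,y_2)\leq\delta$ forces
\[|\lambda_\phi^{-n}\Ll_\phi^n g(y_1)-\lambda_\phi^{-n}\Ll_\phi^n g(y_2)|\leq\varepsilon\]
for \emph{every} $n\geq 0$, $\phi\in\Pc(q,M,\Omega)$, and $g\in\Fc$. Apply (A') at depth $n$ with parameter $\kappa$ close to $1$ to pair preimages $\{x_i\}=f^{-n}(y_1)$ with $\{\widetilde x_i\}=f^{-n}(y_2)$ satisfying $\dist(x_i,\widetilde x_i)\leq c_\kappa\dist(y_1,y_2)^{1/\kappa^n}$. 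By iterating (A') level by level we can moreover arrange the pairing to be \emph{tree-compatible}, so that $f^j x_i$ and $f^j\widetilde x_i$ are themselves paired preimages of $y_1,y_2$ at depth $n-j$. Then decompose
\[\lambda^{-n}\Ll_\phi^n g(y_1)-\lambda^{-n}\Ll_\phi^n g(y_2)=\lambda^{-n}\sum_i e^{S_n\phi(x_i)}\bigl[g(x_i)-g(\widetilde x_i)\bigr]+\lambda^{-n}\sum_i\bigl[e^{S_n\phi(x_i)}-e^{S_n\phi(\widetilde x_i)}\bigr]g(\widetilde x_i).\]
The first sum is bounded by $\|\lambda^{-n}\Ll_\phi^n\1\|_\infty$ times the uniform modulus of continuity of $\Fc$ evaluated at the paired distance; the factor is uniformly bounded by Lemma~\ref{l:dependence}. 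For the second sum, the mean value theorem reduces the task to bounding $|S_n\phi(x_i)-S_n\phi(\widetilde x_i)|$ uniformly using the $\log^q$-continuity of $\phi\in\Pc(q,M,\Omega)$ at each level $j$ of the tree-compatible pairing.

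For the second assertion, once equicontinuity is in hand, the family $\{\lambda_\phi^{-n}\Ll_\phi^n g\}$ is also uniformly bounded (Lemma~\ref{l:dependence}), so by Arzelà–Ascoli it is relatively compact in $C(\P^k)$. The pointwise limit is identified by Theorem~\ref{t:main} and \eqref{e:intro-cv-rho}, and depends continuously on $(\phi,g)$ through the continuity of $m_\phi$ and $\rho_\phi$ in $\phi$ (Lemma~\ref{l:dependence}) and linearity in $g$. A standard compactness-and-contradiction argument, observing that $\Pc(q,M,\Omega)$ is a compact subset of $C(\P^k)$ (equicontinuous and uniformly bounded) and that $\overline{\Fc}$ is compact, then upgrades the pointwise-in-$(\phi,g)$ convergence to uniform convergence over $\phi\in\Pc$ and $g\in\Fc$.

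The main obstacle is the first assertion: obtaining an equicontinuity modulus that does \emph{not} depend on $n$. A naive iterative bound of the form $\omega_n(r)\leq C_1\omega_{n-1}(r^{1/\kappa})+C_2(\log^\star r)^{-q}$ fails because $C_1$ can be as large as $e^{\Omega(\phi)}$, and its $n$-fold product diverges. The resolution is to work in one shot at depth $n$ with tree-compatible pairings, where the control of $|S_n\phi(x_i)-S_n\phi(\widetilde x_i)|$ must be obtained by a delicate balance between the parameter $\kappa$ in (A') (chosen close to $1$) and the exponent $q>2$ in the $\log^q$-continuity of $\phi$, so that the series of contributions across the $n$ levels telescopes effectively rather than accumulating exponentially. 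Getting the quantitative matching between these parameters right, uniformly over $\phi\in\Pc(q,M,\Omega)$, is where most of the work lies; the remainder of the proof is then a relatively standard application of Arzelà–Ascoli and the continuity results of Lemma~\ref{l:dependence}.
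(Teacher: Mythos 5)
The paper does not prove this lemma; it is restated verbatim from the companion paper \cite{bd-eq-states-part1} (Lemma~4.7 there, with Lemma~3.9 therein being the key ingredient for the equicontinuity claim). So I can only judge your proposal on its own merits, and unfortunately both estimates in the first part have a real gap.

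The decomposition into the two sums is fine, but the bound you give for the first sum does not yield equicontinuity uniformly in $n$. You bound
$\bigl|\lambda^{-n}\sum_i e^{S_n\phi(x_i)}[g(x_i)-g(\widetilde x_i)]\bigr|$
by $\|\lambda^{-n}\Ll^n\1\|_\infty$ times the modulus of continuity of $g$ at the paired distance $c_\kappa\,\dist(y_1,y_2)^{1/\kappa^n}$. But for fixed small $r=\dist(y_1,y_2)$ and $n\to\infty$, one has $r^{1/\kappa^n}\to 1$, so the paired distance tends to $c_\kappa\geq 1$ and the modulus of continuity of $g$ at that distance does \emph{not} tend to zero when $r\to 0$; it saturates at a quantity comparable to $\Omega(g)$. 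So this term cannot be made small uniformly in $n$ by shrinking $r$, and the one-shot pairing does not avoid this — it only avoids the $C_1^n$ blow-up you correctly identify for the naive iterated scheme (because $\|\lambda^{-n}\Ll^n\1\|_\infty$ is directly bounded by Lemma~\ref{l:dependence}), not the degeneration of $r^{1/\kappa^n}$.

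The same degeneration sinks the proposed control of the second sum. With a tree-compatible pairing you would bound $|S_n\phi(x_i)-S_n\phi(\widetilde x_i)|$ by $\sum_{j=0}^{n-1}m\bigl(\phi,\,c_\kappa r^{1/\kappa^{n-j}}\bigr)$. Using $\|\phi\|_{\log^q}\leq M$, the term at level $n-j=m$ is roughly $\min\bigl(M\kappa^{qm}(\log^\star r)^{-q},\,M\bigr)$, and once $m$ exceeds roughly $(\log\log^\star r)/\log\kappa$ each term contributes order $1$. The sum therefore grows like $n$ for $n$ large relative to $|\log r|$; it does not ``telescope.'' So neither sum in your decomposition is controlled uniformly in $n$, and the ``delicate balance between $\kappa$ and $q$'' you invoke does not exist at this level of generality — this is exactly why the equicontinuity of $\lambda^{-n}\Ll^n g$ is a substantive result and not a two-line consequence of~(A').

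The scheme for the second assertion (Arzel\`a--Ascoli on the family, identification of limits via Theorem~\ref{t:main}, compactness of $\Pc\times\overline{\Fc}$) is a reasonable way to upgrade pointwise-in-$(\phi,g)$ convergence to uniform convergence, but it requires the equicontinuity assertion as input, so as written your argument is circular on the hard point. A small additional concern: the pointwise limit supplied by \eqref{e:intro-cv-rho} is $\langle m_\phi,g\rangle\,\rho_\phi$, not the constant $\langle m_\phi,g\rangle$ appearing in the statement; whichever normalization the companion paper actually uses in Lemma~4.7 should be matched before running the compactness argument.
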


\subsection{Main result and first step of the proof.} 
The following is the main result of this section.
We will use it in order to prove Theorem \ref{t:goal}
with a suitable norm and another value of $\beta$.

\begin{theorem}\label{t:spectral_gap_apg}
Let $f,\phi, \lambda, m_\phi, \rho$ 
be as in Theorem \ref{t:main} and $\Ll$ the Perron-Frobenius operator associated to $\phi$.
Let $p, \alpha,\gamma, A,\Omega$ be positive constants
and $q_2$ as in Lemma \ref{l:logq-alpha-p-gamma} such that $p>3/2$, 
$d^{-1} \leq \alpha < d^{-5/(2p+2)}$,
 $\Omega < \log (d\alpha)$, and $q_2>2$.
Assume that $\norm{\phi}_{\sap,\gamma}\leq A$ and $\Omega (\phi)\leq \Omega$.
Then we have 
$$\|\lambda^{-n}\Ll^n\|_\sapg \leq c, \quad \norm{\rho}_{\sapg}\leq c, \quad \text{and} \quad
\norm{1/\rho}_{\sapg}\leq c$$
for some positive constant $c=c(p, \alpha,\gamma, A,\Omega)$
independent of $\phi$ and $n$. Moreover, for every constant $0<\beta<1$ there is 
a positive integer $N = N(p,\alpha,\gamma, A,\Omega, \beta)$ independent of $\phi$  such that
\begin{equation}\label{e:contraction_apg}
\big\|  \lam^{-N}{\Ll^{N}}g \big\|_{\sap,\gamma}
\leq
\beta \norm{g}_{\sap,\gamma}
\end{equation}
for every function $g\colon\P^k \to \R$ with $\sca{m_\phi, g}=0$.
Furthermore, we can take
$\beta=\delta^{-N}$ for every 
 given
constant $0<\delta < (d \alpha)^{\gamma/(2\gamma+2)}$, provided that $A$ is small enough.
\end{theorem}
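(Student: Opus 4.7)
The semi-norm $\|\cdot\|_{\sapg}$ is defined by interpolation between $\|\cdot\|_{\sap}$ (strong) and $\|\cdot\|_\infty$ (weak) via Definition \ref{d:norm-pag}. My plan is to prove: (i) a Lasota--Yorke-type inequality for $\lambda^{-n}\Ll^n$ on $\|\cdot\|_\sap$ that exploits the shifting property of the dynamical $\ap$-norm (Lemma \ref{l:f-alpha-p}); (ii) uniform $L^\infty$ control and uniform convergence from Lemmas \ref{l:dependence}--\ref{l:dependence-bis}; (iii) a recombination of (i) and (ii) via the decomposition \eqref{e:def:apg}. All three conclusions of the theorem --- the uniform $\sapg$-bound, the bounds on $\rho$ and $1/\rho$, and the spectral gap \eqref{e:contraction_apg} --- follow this way, with the sharp rate in the small-$A$ regime reflecting the optimal interpolation exponent.

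\textbf{The Lasota--Yorke step.} Writing $\lambda^{-n}\Ll^n g=\lambda^{-n}(f^n)_*\bigl(e^{S_n\phi}g\bigr)$ with $S_n\phi:=\sum_{j<n}\phi\circ f^j$, the derivatives of $\Ll^n g$ enter through the current-level Cauchy--Schwarz bound
\begin{equation*}
i\partial\bigl((f^n)_*h\bigr)\wedge\bar\partial\bigl((f^n)_*h\bigr)\;\leq\;d^{kn}\,(f^n)_*\bigl(i\partial h\wedge\bar\partial h\bigr),
\end{equation*}
which follows from $\sum_{i,j}i\partial v_i\wedge\bar\partial v_j\leq d^{kn}\sum_i i\partial v_i\wedge\bar\partial v_i$ applied to the $d^{kn}$ local inverse branches $v_i=h\circ f^{-n}_i$. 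Expanding $i\partial(e^{S_n\phi}g)\wedge\bar\partial(e^{S_n\phi}g)$ and dispatching the cross terms by Cauchy--Schwarz reduces the problem to the $\|\cdot\|_\ap$-norm of pushforwards of $i\partial g\wedge\bar\partial g$ and $g^2\,i\partial S_n\phi\wedge\bar\partial S_n\phi$ twisted by $e^{2S_n\phi}$. Lemma \ref{l:f-alpha-p} converts each factor $d^{-kn}(f^n)_*$ into a decay $(d\alpha)^{-n}$; Lemmas \ref{l:ap_gh} and \ref{l:norm-p-compose} absorb the multiplicative weights $e^{S_n\phi}$ and their derivatives, with constants controlled by $\|\phi\|_{\sapg}\leq A$ and $\Omega(\phi)\leq\Omega$; and the eigenfunction identity $\Ll^n\rho=\lambda^n\rho$ together with the two-sided bound on $\rho$ from Lemma \ref{l:dependence} cancels the factor $e^{2\max S_n\phi}/\lambda^{2n}$ against $d^{-2kn}$, using the hypothesis $\Omega<\log(d\alpha)$. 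The outcome has the shape
\begin{equation*}
\|\lambda^{-n}\Ll^n g\|_\sap^{2}\;\leq\;C\,\theta^{n}\,\|g\|_\sap^{2}\;+\;C_n\,\|g\|_\infty^{2},
\end{equation*}
with $\theta$ bounded (and essentially $(d\alpha)^{-1}$ once $A$ is small) and $C_n$ bounded uniformly in $\phi\in\Pc(q,M,\Omega)$.

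\textbf{From L--Y to $\sapg$ and to the spectral gap.} Applying the decomposition $g=g^{(1)}_\eps+g^{(2)}_\eps$ of Definition \ref{d:norm-pag}, the L--Y estimate yields $\|\lambda^{-n}\Ll^n g^{(1)}_\eps\|_\sap\leq c\eps^{-1/\gamma}$ while Lemma \ref{l:dependence} yields $\|\lambda^{-n}\Ll^n g^{(2)}_\eps\|_\infty\leq c\eps$, so that $\lambda^{-n}\Ll^n g$ admits a valid $\sapg$-decomposition with the same $c$, proving $\|\lambda^{-n}\Ll^n\|_{\sapg}\leq c$. Passing $\lambda^{-n}\Ll^n\mathbb 1\to\rho$ to the limit with lower-semicontinuity of $\|\cdot\|_{\sapg}$ then gives $\|\rho\|_{\sapg}\leq c$; the bound $\|1/\rho\|_{\sapg}\leq c$ follows from the Lipschitz composition estimate in Lemma \ref{l:logq-alpha-p-gamma} applied to $t\mapsto 1/t$ on $[\inf\rho,\sup\rho]\subset(0,\infty)$, using $\inf\rho>0$ from Lemma \ref{l:dependence}. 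For the spectral gap on a mean-zero $g$ with $\|g\|_{\sapg}\leq1$, the cancellation $\langle m_\phi,g^{(1)}_\eps\rangle+\langle m_\phi,g^{(2)}_\eps\rangle=0$ gives
\begin{equation*}
\lambda^{-N}\Ll^N g\;=\;\sum_{j=1,2}\bigl(\lambda^{-N}\Ll^N g^{(j)}_\eps-\rho\,\langle m_\phi,g^{(j)}_\eps\rangle\bigr).
\end{equation*}
The family $\{g^{(1)}_\eps\}$ is equicontinuous by Lemma \ref{l:cpt_apmixed}, so Lemma \ref{l:dependence-bis} sends the $j=1$ summand to $0$ in $\|\cdot\|_\infty$ uniformly as $N\to\infty$, while its $\sap$-norm remains controlled by the L--Y; the $j=2$ summand is $O(\eps)$ in $\|\cdot\|_\infty$ by Lemma \ref{l:dependence} and its $\sap$-norm is controlled by the L--Y applied to $g^{(2)}_\eps$. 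Recomposing through \eqref{e:def:apg} and tuning $\eps$ and then $N$ yields $\|\lambda^{-N}\Ll^N g\|_{\sapg}\leq\beta$ for any prescribed $\beta<1$.

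\textbf{Sharp rate and main obstacle.} When $A$ is small, the compact-type term $C_n\|g\|_\infty^{2}$ in L--Y is small and $\theta$ approaches $(d\alpha)^{-1}$, so the $\sap$-contraction rate of $\lambda^{-N}\Ll^N$ on mean-zero functions is essentially $(d\alpha)^{-N/2}$. Interpolating this against the $O(1)$ $L^\infty$-bound via the exponent $\gamma/(\gamma+1)$ built into \eqref{e:def:apg} produces the rate $(d\alpha)^{-N\gamma/(2\gamma+2)}=\delta^{-N}$ for any $\delta<(d\alpha)^{\gamma/(2\gamma+2)}$, which is the claimed sharp bound. The main obstacle is the L--Y inequality itself: $\Ll$ does not commute with $\partial$, $\bar\partial$ or $\ddc$, critical points on $\supp\mu$ preclude any pointwise derivative bound, and the only available entry point for derivatives is the current-level Cauchy--Schwarz displayed above. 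This dovetails by design with the shifting property of $\|\cdot\|_\ap$ constructed in Section \ref{ss:norm-alpha-p}, which was crafted precisely to absorb the action of $f_*$ while remaining sensitive to the moduli of continuity of potentials. Keeping all constants uniform in $\phi\in\Pc(q,M,\Omega)$ --- in particular, $C_n$ bounded or at worst polynomial in $n$ --- demands the careful combination of Lemmas \ref{l:ap_gh}, \ref{l:apg_gh}, \ref{l:norm-p-compose}, and \ref{l:dependence}.
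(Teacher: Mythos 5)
There is a genuine gap in the Lasota--Yorke step, which is the heart of the proof. You claim that constants in an estimate of the form
\[
\|\lambda^{-n}\Ll^n g\|_\sap^{2}\;\leq\;C\,\theta^{n}\,\|g\|_\sap^{2}\;+\;C_n\,\|g\|_\infty^{2}
\]
can be "controlled by $\|\phi\|_{\sapg}\leq A$", but the expansion of $i\partial(e^{S_n\phi}g)\wedge\bar\partial(e^{S_n\phi}g)$ forces terms of the form $i\partial\phi\wedge\bar\partial\phi$ pushed forward under $(f^m)_*$, and bounding their $\|\cdot\|_\ap$-norm requires $\|\phi\|_\sap$ to be finite. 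The hypothesis of the theorem gives only $\|\phi\|_\sapg\leq A$, which is a strictly weaker interpolated regularity; none of Lemmas \ref{l:ap_gh}, \ref{l:norm-p-compose}, or \ref{l:logq-alpha-p-gamma} converts a $\sapg$-bound on $\phi$ into a $\sap$-bound on $\phi$ or on $e^{S_n\phi}$. So the L--Y inequality as stated does not follow from the tools you cite, and the rest of your argument collapses at that point. (Your outline would essentially prove Theorem \ref{t:spectral_gap_ap}, where $\|\phi\|_\sap\leq A$ is assumed, but not Theorem \ref{t:spectral_gap_apg}.)

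The missing idea in the paper is to decompose the \emph{weight} itself, for each time index $j$, as $\phi=\phi^{(j)}+\psi^{(j)}$ with $\|\phi^{(j)}\|_\sap\lesssim A(Kj^2/\eps)^{1/\gamma}$ and $\|\psi^{(j)}\|_\infty\lesssim AK^{-1}j^{-2}\eps$, and then to replace $\Ll^n=\Ll_\phi^n$ by the non-autonomous product $\Ll_{1,n}=\Ll_{\phi^{(1)}}\circ\cdots\circ\Ll_{\phi^{(n)}}$. This yields a three-term decomposition $\lam^{-n}\Ll^ng=G^{(a)}+G^{(b)}+G^{(c)}$: the $\sap$-norm of $G^{(a)}=\lam^{-n}\Ll_{1,n}g^{(1)}_\eps$ is controlled by Proposition \ref{p:estimate_n_gen} (the quantitative, non-autonomous version of what you call the L--Y step, whose statement indeed depends on $\|\phi^{(m)}\|_\sap$, not $\|\phi\|_\sapg$); the $\infty$-norm of $G^{(b)}=\lam^{-n}(\Ll^n-\Ll_{1,n})g^{(1)}_\eps$ is controlled by Lemma \ref{l:app-norm-infinity} using the summability of $\|\psi^{(j)}\|_\infty$; and $G^{(c)}=\lam^{-n}\Ll^ng^{(2)}_\eps$ is handled as you describe. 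This double decomposition --- of $g$ and of $\phi$, with a mismatched operator --- is exactly what makes the hypotheses on $\phi$ usable, and it is absent from your proposal.

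Your pieces (ii) and (iii) --- uniform $L^\infty$ control via Lemmas \ref{l:dependence}--\ref{l:dependence-bis}, passage to the limit for $\rho$, Lipschitz composition for $1/\rho$, and the $\gamma/(\gamma+1)$ interpolation exponent giving the sharp small-$A$ rate --- are all correct in spirit and do match the paper's use of those lemmas. But without the non-autonomous-product device, the $\sap$-side of the estimate cannot be closed.
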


Notice that 
Lemma \ref{l:logq-alpha-p-gamma} 
and the assumption $q_2>2$ imply that $\norm{\phi}_{\log^q}<\infty$ 
for some $q>2$. Hence, the scaling ratio $\lam$, the
density function $\rho$, and the measures $m_\phi$ and $\mu_\phi$ are well defined by Theorem \ref{t:main}.
Notice also that  $q_2>2$ implies  that the condition $\alpha<d^{-5/(2p+2)}$
is automatically satisfied.

\smallskip

The proof of Theorem \ref{t:spectral_gap_apg} will be reduced to
a comparison between suitable currents and their norms. 
Theorem \ref{t:spectral_gap_apg}
will then follow from some interpolation techniques (see Section \ref{s:proof:t:goal}).
A crucial estimate that we will need here is the following.

\begin{proposition}\label{p:estimate_n_gen}
Let $f$ be  as in Theorem \ref{t:main}. 
Take $0<\alpha<1$ and $p>0$.
Given $n$ functions $\phi^{(j)}\colon \P^k  \to \R$ for $j=1,\dots ,n$, set
$$\Phi_m :=\alpha^{-m} d^{(k-1)m} e^{\sum_{j=1}^m \max(\phi^{(j)})} \quad 
\text{and} \quad \Ll_{m,n}:= \Ll_{\phi^{(m)}}\circ \dots \circ \Ll_{\phi^{(n)}}.$$ 
Then there
exists a positive constant $c=c(p,\alpha)$, independent of $\phi^{(j)}$,  such that 
\begin{equation*}
\norm{\Ll_{1,n} g}_{\sap} \leq 
c \norm{\Ll_{1,n}\1}_\infty^{1/2} \Phi_n^{1/2} \norm{g}_{\sap}
  +c\sum_{m=1}^{n} \big\|\phi^{(m)}\big\|_{\sap} 
m^{3/2} \Phi_m^{1/2}  
\norm{\Ll_{1,m}\1}_{\infty}^{1/2}
\norm{\Ll_{m+1,n}g}_\infty  
\end{equation*}
for every function $g\colon \P^k \to \R$.
\end{proposition}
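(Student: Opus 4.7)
The proof applies a Cauchy--Schwarz inequality once to the full $n$-step composition. Observing that $\Ll_{1,n}g = (f^n)_*(e^\Psi g)$ with the Birkhoff sum $\Psi(x) := \sum_{j=1}^n \phi^{(j)}(f^{n-j}(x))$, and denoting by $G_i$ the local branches of $(f^n)^{-1}$, one has $\partial\Ll_{1,n}g = \sum_i G_i^*\partial(e^\Psi g)$. The weighted Cauchy--Schwarz on $(1,0)$-forms with weights $w_i := e^{\Psi\circ G_i}$ (whose sum equals $\Ll_{1,n}\1$) yields the master pointwise inequality
\[
i\partial \Ll_{1,n}g \wedge \bar\partial \Ll_{1,n}g \;\leq\; \Ll_{1,n}\1 \cdot (f^n)_*\!\bigl( e^{-\Psi}\,i\partial(e^\Psi g)\wedge \bar\partial(e^\Psi g) \bigr),
\]
which accounts for the factor $\|\Ll_{1,n}\1\|_\infty^{1/2}$ appearing in the main term.

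The inner current is then expanded via $\partial(e^\Psi g) = e^\Psi(\partial g + g\,\partial \Psi)$ and a second Cauchy--Schwarz with free parameter $\eta>0$ separates it into a diagonal piece $(1+\eta)\,e^\Psi\, i\partial g\wedge \bar\partial g$ and a cross piece $(1+1/\eta)\, e^\Psi g^2\, i\partial\Psi\wedge \bar\partial \Psi$. For the diagonal, Lemma \ref{l:f-alpha-p} together with $e^\Psi \leq e^{\sum \max \phi^{(j)}}$ gives $\|(f^n)_*(e^\Psi\, i\partial g\wedge \bar\partial g)\|_{\ap}\leq \Phi_n \|g\|_{\sap}^2$, producing the first term of the statement. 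For the cross piece, writing $\partial\Psi = \sum_{l=1}^n (f^{n-l})^*\partial\phi^{(l)}$ and applying a weighted pointwise Cauchy--Schwarz on $(1,1)$-forms with weights $l^{-3}$ gives
\[
i\partial\Psi\wedge \bar\partial \Psi \;\leq\; \zeta(3) \sum_{l=1}^n l^3\, (f^{n-l})^*\!\bigl( i\partial\phi^{(l)}\wedge \bar\partial\phi^{(l)} \bigr),
\]
which is responsible for the $m^{3/2}$ factor after taking square roots. The factorization $e^\Psi = e^{\Psi_l^+}\cdot (f^{n-l})^*e^{\Psi_l^-}$, with $\Psi_l^- := \sum_{j\leq l}\phi^{(j)}\circ f^{l-j}$, combined with the projection formula rewrites each level-$l$ contribution as $(f^l)_*\bigl(e^{\Psi_l^-}\omega_l\,\Ll_{l+1,n}(g^2)\bigr)$ where $\omega_l := i\partial\phi^{(l)}\wedge \bar\partial\phi^{(l)}$, and Lemma \ref{l:f-alpha-p} then bounds its $\ap$-norm by $\Phi_l\, \|\phi^{(l)}\|_{\sap}^2\,\|\Ll_{l+1,n}(g^2)\|_\infty$.

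The main obstacle is refining the naive $\|\Ll_{1,n}\1\|_\infty^{1/2}\,\|\Ll_{l+1,n}(g^2)\|_\infty^{1/2}$ estimate into the sharper $\|\Ll_{1,m}\1\|_\infty^{1/2}\,\|\Ll_{m+1,n}g\|_\infty$ appearing in the statement. This requires combining the factorization $\Ll_{1,n}\1 = \Ll_{1,m}(\Ll_{m+1,n}\1)$ with a pointwise Cauchy--Schwarz for the transfer operator itself, which compares $\Ll_{l+1,n}(g^2)$ to $(\Ll_{l+1,n}g)^2$ up to a variance remainder absorbable into the main estimate. Taking square roots using $\sqrt{a+b}\leq \sqrt a+\sqrt b$ and optimizing $\eta$ then yields the stated bound with an explicit constant $c=c(p,\alpha)$ independent of the $\phi^{(j)}$.
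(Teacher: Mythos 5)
Your global Cauchy--Schwarz applied once to the full $n$-step composition produces a master inequality with $\Ll_{1,n}\1$ as a prefactor on every term, and this is too lossy for the stated bound. You identify the obstruction correctly: the level-$m$ cross term then reads $\|\Ll_{1,n}\1\|_\infty^{1/2}\,\|\Ll_{m+1,n}(g^2)\|_\infty^{1/2}$, while the proposition requires $\|\Ll_{1,m}\1\|_\infty^{1/2}\,\|\Ll_{m+1,n}g\|_\infty$. The repair you propose does not work. Pointwise Cauchy--Schwarz for the transfer operator gives $(\Ll_{m+1,n}g)^2\leq\Ll_{m+1,n}\1\cdot\Ll_{m+1,n}(g^2)$, which bounds $\Ll_{m+1,n}(g^2)$ from \emph{below} by $(\Ll_{m+1,n}g)^2/\Ll_{m+1,n}\1$, not from above; the ``variance remainder'' $\Ll_{m+1,n}(g^2)-(\Ll_{m+1,n}g)^2/\Ll_{m+1,n}\1\geq 0$ is not small in general. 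When $\sca{m_\phi,g}=0$ (the case of interest in Lemma \ref{l:Ga-beta}), one has $\|\lam^{-(n-m)}\Ll^{n-m}g\|_\infty\to 0$ while $\lam^{-(n-m)}\Ll^{n-m}(g^2)$ stays bounded away from zero, so the variance term dominates. The subsequent contraction argument depends precisely on the factor $\|\lam^{-n+m}\Ll^{n-m}g\|_\infty$ tending to zero; your version would only deliver a bounded factor and would not produce the spectral gap.

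The paper circumvents this by reversing the order of operations. It works on the graph $\Gamma_n\subset(\P^k)^{n+1}$, decomposes $\partial\Ll_{1,n}g=(\pi_n)_*(\Theta_1+\Theta_2)$, and for the cross term $\Theta_2$ factors $\pi_n=\pi''\circ\pi'$. The first pushforward $\pi'_*$ sums over $x_0,\ldots,x_{m-1}$ and absorbs $g(x_0)$ into $\Ll_{n-m+1,n}g(x_m)$, whose sup norm is pulled out \emph{before} the Cauchy--Schwarz for $i\eta\wedge\bar\eta$ is applied with the weight $\h_m$ that involves only the remaining $n-m$ steps, so that $\pi''_*(\h_m)=\Ll_{1,n-m}\1$ rather than $\Ll_{1,n}\1$. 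This two-stage pushforward --- convert $g$ into $\Ll_{m+1,n}g$, extract its sup norm, only then Cauchy--Schwarz the rest --- cannot be recovered after a single global estimate; it must be built into the decomposition from the start.
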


\proof
By Definition 
\ref{def:norm_sap}
of the semi-norm $\norm{\cdot}_{\sap}$,
we need to  bound the  current  $i\partial \Ll_{1,n} g \wedge \bar \partial \Ll_{1,n} g$.

Consider the product space 
$(\P^k)^{n+1}$ and denote by $(x_0, \dots, x_n)$ its elements.
Define the manifold $\Gamma_n \subset (\P^k)^{n+1}$ by
\[
\Gamma_n :=  \big\{(x,f(x), \dots , f^{n} (x)) : x \in \P^k \big\},
\]
which can also be seen as the graph of the map $(f,f^2, \dots , f^n)$
in the product space $(\P^k)^{n+1}$.
Denote by $\pi_n$ the restriction 
to $\Gamma_n$
of the projection
of $(\P^k)^{n+1}$ to its last component.

We have, using a direct computation,
$$\partial \big( e^{\phi^{(n)} (x_0) + \dots + \phi^{(1)}(x_{n-1})} g(x_0) \big) = \Theta_1+\Theta_2$$
with
$$\Theta_1:=\h'' \partial g(x_0), \quad \Theta_2:= \h'' g(x_0) \sum_{m=0}^{n-1} \partial \phi^{(n-m)} (x_m), \quad \text{and} 
\quad    \h'':=e^{\phi^{(n)} (x_0) + \dots + \phi^{(1)} (x_{n-1})}.$$ 
Using Cauchy-Schwarz's inequality, we obtain
\begin{eqnarray*}
i\partial \Ll_{1,n} g \wedge  \bar \partial \Ll_{1,n} g &=& i(\pi_n)_*(\Theta_1+\Theta_2)\wedge (\pi_n)_*(\overline\Theta_1+\overline\Theta_2) \\
&\leq& 2i (\pi_n)_*(\Theta_1)\wedge (\pi_n)_*(\overline \Theta_1) +  2i (\pi_n)_*(\Theta_2)\wedge (\pi_n)_*(\overline \Theta_2).
\end{eqnarray*}
We need to bound the norm
$\norm{\cdot}_\ap$
 of the two terms in the last sum by the square of the RHS of the inequality in the proposition.

For the first term, using again Cauchy-Schwarz's inequality, the definition of $\Phi_m$ as
in the statement,
and Lemma \ref{l:f-alpha-p}, we get
(notice that $(\pi_n)_*(\h'')=\Ll_{1,n}\1$)
\begin{eqnarray*}
\norm{i (\pi_n)_*(\Theta_1)\wedge (\pi_n)_*(\overline \Theta_1)}_{\ap} &\leq & \norm{  (\pi_n)_*(\h'')(\pi_n)_*\big(\h'' i \partial g(x_0)\wedge \dbar g(x_0)\big)}_\ap \\
&\leq & \norm{\Ll_{1,n}\1}_\infty  e^{\sum_{j=1}^n \max \phi^{(j)}} \norm{(f^n)_*  (i \partial g \wedge \bar \partial g )}_\ap \\
&\leq& \norm{\Ll_{1,n}\1}_\infty \Phi_n \norm{i\partial g \wedge \bar \partial g }_{\ap}.
\end{eqnarray*}
This gives the desired estimate for the first term.

For the second term, observe that $i (\pi_n)_*(\Theta_2)\wedge (\pi_n)_*(\overline \Theta_2)$ is equal to
\begin{eqnarray*}
\lefteqn{ \sum_{0\leq m,m'< n}    
i(\pi_n)_*\big(\h'' g(x_0)\partial \phi^{(n-m)}(x_m)\big)\wedge (\pi_n)_*\big(\h'' g(x_0)\dbar \phi^{(n-m')}(x_{m'})\big)}\\
&\leq&
2\sum_{0\leq m'\leq m< n}    
\big|i(\pi_n)_*\big(\h'' g(x_0)\partial \phi^{(n-m)}(x_m)\big)\wedge (\pi_n)_*\big(\h'' g(x_0)\dbar \phi^{(n-m')}(x_{m'})\big)\big|.
\end{eqnarray*}
Using Cauchy-Schwarz's inequality,
 we can
bound the current in the absolute value signs by 
$$(m-m'+1)^{-2} i(\pi_n)_*\big(\h'' g(x_0)\partial \phi^{(n-m)}(x_m)\big)\wedge (\pi_n)_*\big(\h'' g(x_0)\dbar \phi^{(n-m)}(x_m)\big)$$
$$\qquad \qquad +(m-m'+1)^{2} i(\pi_n)_*\big(\h'' g(x_0)\partial \phi^{(n-m')}(x_m')\big)\wedge (\pi_n)_*\big(\h'' g(x_0)\dbar \phi^{(n-m')}(x_m')\big)$$
and deduce that $i (\pi_n)_*(\Theta_2)\wedge (\pi_n)_*(\overline \Theta_2)$ is
bounded by a constant times
$$\sum_{0\leq m<n} (n-m)^3 
i(\pi_n)_*\big(\h'' g(x_0)\partial \phi^{(n-m)}(x_m)\big)\wedge (\pi_n)_*\big(\h'' g(x_0)\dbar \phi^{(n-m)}(x_m)\big).$$
Therefore, in order to get the proposition, 
 setting $\eta:=(\pi_n)_*\big(\h'' g(x_0)\partial \phi^{(n-m)}(x_m)\big)$
we only need to show that 
$$\norm{i\eta\wedge\overline\eta}_\ap \leq
\big\|\phi^{(n-m)}\big\|_{\sap}^2  \Phi_{n-m} \norm{\Ll_{1,n-m}\1 }_{\infty} \norm{\Ll_{n-m+1,n}g}_\infty^2.$$

 Consider the map $\pi':\Gamma_n\to(\P^k)^{n-m+1}$ defined by $\pi'(x):=x':=(x_m,\ldots,x_n)$.
Denote by $\Gamma'$ the image of $\Gamma_n$ by $\pi'$.
Consider also
  the map $\pi'':\Gamma'\to\P^k$ defined by $\pi''(x'):=x_n$.
Both
$\pi':\Gamma_n\to\Gamma'$
and 
 $\pi'':\Gamma'\to\P^k$ are
 ramified coverings,  respectively of degrees $d^{km}$ and $d^{k(n-m)}$, and
we have
  $\pi_n=\pi''\circ\pi'$.

 With the notation as above, we see that 
$$\eta = \pi''_*\big(\Ll_{n-m+1,n}g(x_m)\h_m\partial \phi^{(n-m)}(x_m)\big) \quad \text{with} \quad \h_m:=e^{\phi^{(n-m)} (x_m) + \dots + \phi^{(1)} (x_{n-1})}.$$
It follows from Cauchy-Schwarz's inequality  that
\begin{eqnarray*}
i\eta\wedge\overline\eta &\leq & \|\Ll_{n-m+1,n}g\|_\infty^2 \pi''_*(\h_m) \pi''_*\big(\h_m i \partial \phi^{(n-m)}(x_m)\wedge \dbar \phi^{(n-m)}(x_m)\big) \\
&\leq& \|\Ll_{n-m+1,n}g\|_\infty^2 \|\pi''_*(\h_m)\|_\infty \|\h_m\|_\infty (f^{n-m})_*\big(i\partial \phi^{(n-m)}\wedge \dbar \phi^{(n-m)}\big).
\end{eqnarray*}
Thus, by Lemma \ref{l:f-alpha-p} and the definition of $\pi''$,
we get
$$\|i\eta\wedge\overline\eta\|_\ap \leq  \|\Ll_{n-m+1,n}g\|_\infty^2 \|\Ll_{1,n-m}\1\|_\infty \Phi_{n-m} \|i\partial \phi^{(n-m)}\wedge \dbar \phi^{(n-m)}\|_\ap.$$
This ends the proof of the proposition.
\endproof


\subsection{Proof of Theorem \ref{t:spectral_gap_apg}} \label{ss:proofs_gap}

We will need the following elementary lemma.

\begin{lemma}\label{l:app-norm-infinity}
Let $\phi, \phi^{(j)},\psi\colon \P^k \to \R$ be 
such that $\sum_{j=1}^\infty  
 \norm{\phi-\phi^{(j)}}_\infty \leq a$ for some positive
 constant $a$.
 Define $\vartheta:\R^+\to
 \R^+$ by $\vartheta(t):=t^{-1}(e^{t}-1)$ and $\vartheta(0)=1$, which is a smooth increasing function. Then we have
\begin{enumerate}
\item[{\rm(i)}] \label{item_approx_1}  $\norm{\Ll_\phi  - \Ll_\psi}_\infty \leq \vartheta(\norm{\phi-\psi}_{\infty})
  \norm{\Ll_\phi}_\infty \norm{\phi-\psi}_{\infty}$;
 \item[{\rm(ii)}]  for every $n\geq 1$,
 $\big\|\Ll^n - \Ll_{\phi^{(1)}} \circ \dots  \circ \Ll_{\phi^{(n)}} \big\|_\infty \leq \vartheta(a) a \norm{\Ll^n}_\infty $.
 \end{enumerate}
\end{lemma}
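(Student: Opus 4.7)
The plan is to prove (i) by a direct pointwise computation based on the elementary inequality $|1 - e^u| \leq e^{|u|} - 1$, and then obtain (ii) from a telescoping identity whose key feature is an exact cancellation producing the tight constant $a\vartheta(a)$ rather than the cruder $e^a \cdot a\vartheta(a)$. Before anything else, I would record that $\vartheta$ is smooth and increasing on $\R^+$: the Taylor series $\vartheta(t) = \sum_{n=0}^\infty t^n/(n+1)!$ makes both properties transparent since all coefficients are positive and the series has infinite radius of convergence.

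For (i), I would start from the definition
\[
(\Ll_\phi - \Ll_\psi) g(y) = \sum_{x \in f^{-1}(y)} e^{\phi(x)} \bigl(1 - e^{\psi(x) - \phi(x)}\bigr) g(x)
\]
and apply $|1 - e^u| \leq e^{|u|} - 1$ pointwise in $x$ to obtain
\[
\bigl|(\Ll_\phi - \Ll_\psi)g(y)\bigr| \leq \bigl(e^{\|\phi - \psi\|_\infty} - 1\bigr)\, \Ll_\phi |g|(y).
\]
Taking the supremum in $y$, dividing by $\|g\|_\infty$, and using $e^t - 1 = t\vartheta(t)$ yields (i).

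For (ii), I would set $a_j := \|\phi - \phi^{(j)}\|_\infty$, so $\sum_{j=1}^\infty a_j \leq a$, and use the telescoping identity
\[
\Ll^n - \Ll_{\phi^{(1)}} \circ \dots \circ \Ll_{\phi^{(n)}} = \sum_{j=1}^n \Ll_{\phi^{(1)}} \circ \dots \circ \Ll_{\phi^{(j-1)}} \circ (\Ll_\phi - \Ll_{\phi^{(j)}}) \circ \Ll^{n-j}.
\]
The pointwise bound $|\Ll_{\phi^{(i)}} h| \leq e^{a_i} \Ll_\phi|h|$ follows at once from $e^{\phi^{(i)}(x)} \leq e^{a_i} e^{\phi(x)}$, and iterating it gives $|\Ll_{\phi^{(1)}} \circ \dots \circ \Ll_{\phi^{(j-1)}} h| \leq e^{a_1 + \dots + a_{j-1}}\, \Ll^{j-1}|h|$. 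Combining this with the pointwise version of (i) applied to $\Ll_\phi - \Ll_{\phi^{(j)}}$ and then taking suprema yields
\[
\bigl\|\Ll^n - \Ll_{\phi^{(1)}} \circ \dots \circ \Ll_{\phi^{(n)}}\bigr\|_\infty \leq \|\Ll^n\|_\infty \sum_{j=1}^n e^{a_1 + \dots + a_{j-1}}\bigl(e^{a_j} - 1\bigr).
\]

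The only nontrivial observation — and the whole reason the bound comes out as claimed — is that the right-hand sum telescopes: the $j$-th summand equals $e^{a_1 + \dots + a_j} - e^{a_1 + \dots + a_{j-1}}$, so the total collapses to $e^{a_1 + \dots + a_n} - 1 \leq e^a - 1 = a\vartheta(a)$. Without spotting this cancellation, one would only obtain the weaker bound $e^a\cdot a\vartheta(a)$, with an unwanted extra factor $e^a$; this is exactly the loss the statement of (ii) is designed to avoid. I do not expect any other obstacle beyond recognizing this telescoping.
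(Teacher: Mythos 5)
Your proof is correct. Part (i) is essentially the same as the paper's (pointwise bound with $|1-e^u|\le e^{|u|}-1$ and $e^t-1 = t\vartheta(t)$). Part (ii), however, takes a genuinely different route, and the comparison is worth spelling out.

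The paper observes that $\Ll^n$ and $\Ll_{\phi^{(1)}}\circ\cdots\circ\Ll_{\phi^{(n)}}$ are themselves transfer operators for $f^n$, with composite weights $\phi+\phi\circ f+\cdots+\phi\circ f^{n-1}$ and $\phi^{(n)}+\phi^{(n-1)}\circ f+\cdots+\phi^{(1)}\circ f^{n-1}$ respectively; the sup-norm of the difference of these two weights is at most $\sum_j\|\phi-\phi^{(j)}\|_\infty\le a$, so (ii) is just (i) applied once at level $n$. There is no cumulative loss of constants at all. Your argument instead decomposes $\Ll^n - \Ll_{\phi^{(1)}}\circ\cdots\circ\Ll_{\phi^{(n)}}$ via the operator telescoping $\sum_{j=1}^n A_1\cdots A_{j-1}(B-A_j)B^{n-j}$, dominates each factor $\Ll_{\phi^{(i)}}$ pointwise by $e^{a_i}\Ll_\phi$, and then recovers the sharp constant through the second, numerical telescoping $\sum_j e^{a_1+\cdots+a_{j-1}}(e^{a_j}-1)=e^{a_1+\cdots+a_n}-1$. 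Both proofs are valid and give the same constant. The paper's one-line observation is shorter and exposes the structural reason the bound is tight (the $n$-fold composition is a single perturbed operator), whereas yours is more mechanical and works even if one does not notice that $\Ll^n$ is itself a transfer operator for $f^n$; the price is that you must spot the exact cancellation in the exponential sum, without which you would lose a factor $e^a$.

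One very small point of rigor: in the chain of pointwise bounds you should note (as you implicitly do) that after bounding $|(\Ll_\phi-\Ll_{\phi^{(j)}})\Ll^{n-j}g|\le (e^{a_j}-1)\Ll_\phi|\Ll^{n-j}g|\le (e^{a_j}-1)\Ll_\phi^{n-j+1}|g|$ and composing with $\Ll_\phi^{j-1}$, you arrive at $\Ll_\phi^n|g|$, whose sup-norm equals $\|\Ll^n\|_\infty\|g\|_\infty$ because $\Ll_\phi$ is a positive operator. This is what lets you put $\|\Ll^n\|_\infty$ outside the sum over $j$.
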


\begin{proof}
Observe that for $\phi, \psi\colon \P^k \to \R$ we have, using the definition of $\Ll_\phi$ and $\Ll_\psi$,\
 \begin{equation*}
\norm{\Ll_\phi (g) - \Ll_\psi (g)}_\infty
 \leq \big\|(1- e^{\psi-\phi} )\|g\|_\infty \big\|_\infty \norm{\Ll_\phi (\1)}_\infty
= \big\|1- e^{\psi-\phi} \big\|_\infty \norm{\Ll_\phi}_\infty
\norm{g}_\infty.
\end{equation*}
The first item in the lemma
 follows.
 For the second item,   notice that
\[\norm{\pa{\phi + \phi \circ f + \dots + \phi \circ f^{n-1}}
-
\pa{
\phi^{(n)} + \phi^{(n-1)} \circ f + \dots + \phi^{(1)} \circ f^{n-1}
}
}_\infty
\leq
\sum_{j=1}^n \big\|\phi - \phi^{(j)}\big\|_\infty.
\]
Therefore, using this estimate and the expansions of
$\Ll^n(g)$ and $\Ll_{\phi^{(1)}} \circ \dots  \circ \Ll_{\phi^{(n)}}(g)$, we obtain the result in the same way as
the first item.
\end{proof}

We continue the proof of Theorem \ref{t:spectral_gap_apg}
We first prove the following result.

\begin{proposition} \label{p:G12}
Under the hypotheses of Theorem \ref{t:spectral_gap_apg}, there exists a positive
integer $N_0= N_0(p,\alpha,\gamma, A,\Omega, \beta)$ 
independent of $\phi$ and $g$
 and such that \eqref{e:contraction_apg}
holds for all $N\geq N_0$.
\end{proposition}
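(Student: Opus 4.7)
Normalize so that $\|g\|_{\sap,\gamma}=1$ and $\langle m_\phi,g\rangle=0$; since $\|\cdot\|_{\sap,\gamma}$ controls $\|\cdot\|_{\log^{q_2}}$ with $q_2>2$ and $g$ has mean zero, this gives $\|g\|_\infty\leq C_3$. For each $\tau\in(0,1]$ the goal is to produce a decomposition $\lam^{-N}\Ll^N g = G^{(1)}_\tau + G^{(2)}_\tau$ with $\|G^{(1)}_\tau\|_{\sap}\leq\beta\tau^{-1/\gamma}$ and $\|G^{(2)}_\tau\|_\infty\leq\beta\tau$. I will choose an auxiliary parameter $\eta=\eta(\tau)\in(0,1]$, take a $\sapg$-decomposition $g=g^{(1)}_\eta+g^{(2)}_\eta$ with $\|g^{(1)}_\eta\|_{\sap}\leq\eta^{-1/\gamma}$ and $\|g^{(2)}_\eta\|_\infty\leq\eta$, and set (essentially) $G^{(1)}_\tau:=\lam^{-N}\Ll^N g^{(1)}_\eta$ and $G^{(2)}_\tau:=\lam^{-N}\Ll^N g^{(2)}_\eta$, absorbing an approximation error into $G^{(2)}_\tau$.

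Since Proposition \ref{p:estimate_n_gen} needs finite $\|\phi^{(m)}\|_{\sap}$ but we only assume $\|\phi\|_{\sap,\gamma}\leq A$, I will approximate $\phi$ in the following way: take $\theta_m:=\theta_0 r^m$ with $r\in(\delta_1^{\gamma/2},1)$, write $\phi=\varphi^{(m)}+\psi^{(m)}$ from the $\sapg$-decomposition, so that $\|\varphi^{(m)}\|_{\sap}\leq A\theta_m^{-1/\gamma}$ and $\|\phi-\varphi^{(m)}\|_\infty=\|\psi^{(m)}\|_\infty\leq A\theta_m$. Then $a:=\sum_m A\theta_m\leq A\theta_0/(1-r)$ and Lemma \ref{l:app-norm-infinity}(ii) gives $\|\lam^{-N}\Ll^N-\lam^{-N}\Ll_{\varphi^{(1)}}\circ\cdots\circ\Ll_{\varphi^{(N)}}\|_\infty\leq\vartheta(a)a\,C_0$. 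Using the upper bound $\lam\geq d^k e^{\min\phi}$ (via the pressure lower bound by the entropy of $\mu$) together with $\Omega(\phi)\leq\Omega<\log(d\alpha)$, we get $\Phi_m/\lam^m\leq\delta_1^m$ with $\delta_1<1$, so that Proposition \ref{p:estimate_n_gen} applied to $g^{(1)}_\eta$ yields
\[
\|\lam^{-N}\tilde\Ll^N g^{(1)}_\eta\|_{\sap}\lesssim \delta_1^{N/2}\eta^{-1/\gamma}+A\theta_0^{-1/\gamma}\sum_{m=1}^N m^{3/2}\mu^m\,\|\lam^{-(N-m)}\tilde\Ll_{m+1,N}\,g^{(1)}_\eta\|_\infty,
\]
where $\mu:=(\delta_1 r^{-2/\gamma})^{1/2}<1$ and $\tilde\Ll$ denotes the approximated composition.

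The key control on the inner sup norms comes from writing $g^{(1)}_\eta=g-g^{(2)}_\eta$: Lemma \ref{l:dependence-bis} applied to the equicontinuous family $\{g:\|g\|_{\sap,\gamma}\leq 1,\langle m_\phi,g\rangle=0\}$ yields $\|\lam^{-n}\Ll^n g\|_\infty\leq\psi_n$ with $\psi_n\to 0$ uniformly; combined with $\|\lam^{-n}\Ll^n g^{(2)}_\eta\|_\infty\leq C_0\eta$ (Lemma \ref{l:dependence}) and the approximation bound, I get $\|\lam^{-(N-m)}\tilde\Ll_{m+1,N}g^{(1)}_\eta\|_\infty\leq\psi_{N-m}+C_0\eta+\vartheta(a)a\,C_0(C_3+1)$. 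Splitting the resulting sum at $m=N/2$ and using exponential decay of $\mu^m$ reduces the contribution of $\psi_{N-m}$ to a quantity tending to $0$ with $N$. I then choose $\eta$ depending on both $\tau$ and $A$, essentially $\eta\asymp\min(\beta\tau/C_0,\beta(3AK)^{-1}\tau^{-1/\gamma})$ with $K$ the value of the geometric sum, to simultaneously force $\|G^{(2)}_\tau\|_\infty\leq\beta\tau$ and to absorb the non-decaying $A\theta_0^{-1/\gamma}\eta\sum m^{3/2}\mu^m$ term into $\beta\tau^{-1/\gamma}$, and finally select $\theta_0$ small enough (depending on $A,\beta$) so that $\vartheta(a)a$ is small.

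\textbf{Main obstacle.} The genuinely delicate point is the cross term $A\theta_0^{-1/\gamma}\eta\sum m^{3/2}\mu^m$, which does not decay with $N$: it must be controlled purely by the choice of the parameters $\eta,\theta_0,r$. For arbitrary $A$ this forces a case analysis on $\tau$ (two regimes separated by $\tau_\ast\sim(C_0/A)^{\gamma/(1+\gamma)}$), and a careful scaling $\eta\sim\tau$ in one regime versus $\eta\sim\tau^{-1/\gamma}$ in the other; verifying that the remaining decaying terms ($\delta_1^{N/2}\eta^{-1/\gamma}$ and $A\theta_0^{-1/\gamma}\psi_{N/2}$) can be brought below $\beta$ for some uniform $N_0$ depending only on $p,\alpha,\gamma,A,\Omega,\beta$ is then a bookkeeping exercise. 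The sharpened ``furthermore'' part of Theorem \ref{t:spectral_gap_apg} will follow by keeping track of how the estimates improve when $A\to 0$: the bad cross term becomes negligible uniformly, $\delta_1\to 1/(d\alpha)$, and a single application (rather than iteration) of the above decomposition with $\beta=\delta_1^{N\gamma/(2(1+\gamma))}$ gives the claimed rate $\beta=\delta^{-N}$ for $\delta<(d\alpha)^{\gamma/(2\gamma+2)}$.
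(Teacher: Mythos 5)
Your proposal follows the same overall strategy as the paper — split $g$ via a $\norm{\cdot}_{\sapg}$-decomposition, approximate $\phi$ by functions with finite $\norm{\cdot}_{\sap}$ norm so that Proposition \ref{p:estimate_n_gen} applies, control the replacement error with Lemma \ref{l:app-norm-infinity}, and drive the remaining sum down using the uniform decay from Lemma \ref{l:dependence-bis}. However, there is a genuine gap in the way you set up the $\phi$-approximation, and it is exactly the tension you flag as the ``main obstacle.''

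You take $\theta_m = \theta_0 r^m$ with $\theta_0$ and $r$ depending only on $A,\beta,\gamma,\delta_1$. Then the cumulative replacement error $a = \sum_m A\theta_m \le A\theta_0/(1-r)$ is a \emph{fixed} positive number, independent of the decomposition scale $\tau$. The term you absorb into $G^{(2)}_\tau$ is therefore bounded below by a constant (of order $\vartheta(a)\,a\,C_0\,\|g^{(1)}_\eta\|_\infty$, with $\|g^{(1)}_\eta\|_\infty$ bounded), and no choice of $\eta(\tau)$ can make $\|G^{(2)}_\tau\|_\infty \le \beta\tau$ hold as $\tau\to 0$. Shrinking $\theta_0$ to compensate makes $\theta_0^{-1/\gamma}$, and hence the coefficient of the cross term, blow up; the two constraints cannot be met simultaneously with a $\tau$-independent $\theta_0$. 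Your ``case analysis in $\tau$'' does not escape this: in the regime of small $\tau$ the fixed error dominates.

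The paper resolves this by tying the $\phi$-approximation tolerance to the \emph{same} parameter $\eps$ used for $g$: it takes $\phi = \phi^{(j)} + \psi^{(j)}$ with $\|\psi^{(j)}\|_\infty \le A K^{-1} j^{-2}\eps$, so $\sum_j\|\psi^{(j)}\|_\infty \lesssim A K^{-1}\eps$, and the replacement error $G^{(b)}_{n,\eps}$ automatically scales like $\eps$; one then takes $K$ (and $K'$) large but fixed so that $\|G^{(b)}\|_\infty + \|G^{(c)}\|_\infty \le \beta\eps$, and handles $\eps > \eps_0$ by the trivial decomposition $G^{(1)}=0$, $G^{(2)}=\lam^{-N}\Ll^N g$. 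The corresponding price is that $\|\phi^{(j)}\|_\sap$ grows polynomially in $j$ (as $j^{2/\gamma}$), but that is harmless against the geometric factor $(e^\Omega/(d\alpha))^{m/2}$ in Proposition \ref{p:estimate_n_gen}. If you replace your $\theta_m = \theta_0 r^m$ by something like $\theta_m = c\, m^{-2}\eta$ (or keep the geometric form but multiply by $\eta$), the non-scaling error disappears and the rest of your argument goes through essentially as in the paper.

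One smaller point: since you build $G^{(1)}_\tau$ with the approximated operator $\tilde\Ll$ rather than $\Ll$, you must also carry the replacement error for the inner sup norms $\|\tilde\Ll_{m+1,N} g^{(1)}_\eta\|_\infty$; this is what the paper does when it writes $\|\Ll_{m+1,n} g^{(1)}_\eps\|_\infty \lesssim \|\Ll^{n-m} g\|_\infty + \lam^{n-m}(K'^{-1}+AK^{-1})\eps$, and again the $\eps$-scaling of both $\psi^{(j)}$ and $g^{(2)}_\eps$ is what makes the residual term controllable by $\eps_0$.
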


By subtracting from $\phi$
a constant,
we can assume that $\phi$ belongs to the family of weights
$$\Qc_0:=\big\{\phi\colon\P^k\to \R \ : \ \min \phi=0,\ \|\phi\|_{\sap,\gamma} \leq A,\ \Omega(\phi)\leq\Omega\big\}.$$
Observe that we can apply
 Lemmas \ref{l:dependence} and \ref{l:dependence-bis} because, by
Lemma \ref{l:logq-alpha-p-gamma}
and the assumptions on $\alpha$ and $p$,
 the family $\Qc_0$ is contained in $\Pc_0(q,M,\Omega)$
 for suitable $q>2$ and $M$. 
Observe also that
$\norm{\phi}_\infty =\Omega(\phi) \leq \Omega$ and $\norm{\phi}_\infty\lesssim \norm{\phi}_{{\sap},\gamma}\leq A$.

 Consider two constants $K\geq 1$ and $K'\geq 1$
whose values depend on $\beta$ and will be specialised later.
We will not fix $A$ but we assume $A\leq A_0$ for some fixed constant $A_0>0$.
For a large part of this section we can take $A=A_0$, but at the end of the
proof of Theorem \ref{t:spectral_gap_apg} we will consider $A\to 0$. This is the
reason why we will keep the constant $A$ in the estimates below. Note that
the constants hidden in the signs $\lesssim$ below are independent of the
parameters $A, \beta, K, K',n$ and also of 
the constant $0<\eps\leq 1$ and the integer $j$ that we consider now.

Since $\|\phi\|_\sapg\leq A$,  for every  $j\geq 1$
 there are functions 
 $\phi^{(j)}$ and $\psi^{(j)}$ such that
\[\phi = \phi^{(j)}+ \psi^{(j)}, \quad \big\|\phi^{(j)}\big\|_{\sap} 
\leq A  (Kj^2)^{1/\gamma}(1/\eps)^{1/\gamma},
 \quad \text{and} \quad  \big\|\psi^{(j)}\|_{\infty} \leq A K^{-1} j^{-2} \eps. \]
Observe that $\|\phi^{(j)}\|_\infty$ is bounded by a constant since 
$\|\phi\|_\infty$ is bounded by a constant.
 
We can assume for simplicity that $\norm{g}_{\sap,\gamma}\leq 1$,
which implies that $\Omega(g)$ is bounded by a constant.
Since $\langle m_\phi,g\rangle=0$ by hypothesis,
we deduce that $\|g\|_\infty$ is bounded by a constant. 
By the definition of the semi-norm $\norm{\cdot}_{\sap, \gamma}$,  we can find two
 functions $g^{(1)}_\eps$ and $g^{(2)}_\eps$ satisfying
 \begin{equation*}
 \begin{aligned}
 g= g^{(1)}_\eps + g^{(2)}_\eps, 
 \ \  
  \big\|g^{(1)}_\eps\big\|_{\sap}\leq K'^{1/\gamma} (1/\eps)^{1/\gamma},	
  \ \ 
  \big\|g^{(2)}_\eps\big\|_\infty\leq 2K'^{-1}\eps , \ \ 
 \langle{m_\phi, g_\eps^{(1)}\rangle}=\langle{m_\phi, g_\eps^{(2)}\rangle}=0.
  \end{aligned}
 \end{equation*}
Notice that without the condition $\langle{m_\phi, g_\eps^{(2)}\rangle}=0$
we would 
not need the coefficient 2 in the above estimate of $\|g^{(2)}_\eps\|_\infty$. 
We obtain this condition by adding to $g^{(2)}_\eps$ a suitable constant and
subtracting the same constant from $g^{(1)}_\eps$. The condition 
$\langle{m_\phi, g_\eps^{(1)}\rangle}=0$ is deduced from the hypothesis
$\langle{m_\phi, g\rangle}=0$ when we  have
$\langle{m_\phi, g_\eps^{(2)}\rangle}=0$. Since $\|g\|_\infty$ is bounded by a constant, 
$\|g^{(1)}_\eps\|_\infty$ is also bounded by a constant.

Define as above $\Ll_{m,n}:=\Ll_{\phi^{(m)}} \circ \dots \circ \Ll_{\phi^{(n)}}$ and write
\begin{equation}\label{e:def:g-abc}
 \lam^{-n} \Ll^n g = \lam^{-n} \Ll_{1,n}  g^{(1)}_\eps +  \lam^{-n}
\big(\Ll^n g^{(1)}_\epsilon - \Ll_{1,n}  g^{(1)}_\eps
\big) + \lam^{-n} \Ll^n g^{(2)}_\eps
 =: G^{(a)}_{n,\eps} + G^{(b)}_{n,\eps} +G^{(c)}_{n,\eps}.
 \end{equation}

\begin{lemma} \label{l:Gbc-beta}
When $K$ and $K'$ are large enough, we have for every $n\geq 1$
$$\big\|G^{(b)}_{n,\eps}\big\|_\infty \leq {1\over 2} \beta \eps
 \quad \text{and} \quad \big\|G^{(c)}_{n,\eps}\big\|_\infty \leq {1\over 2}\beta \eps.$$
\end{lemma}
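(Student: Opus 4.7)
Both estimates will follow rather directly from Lemmas \ref{l:dependence} and \ref{l:app-norm-infinity} once we track how the parameters $K, K', \eps$ enter. The key point is that $\|\lam^{-n}\Ll^n\|_\infty$ is bounded by a constant independent of $n$ and of $\phi\in\Qc_0$, as recalled in Lemma \ref{l:dependence} (applicable since the assumptions on $p,\alpha$ together with Lemma \ref{l:logq-alpha-p-gamma} place $\Qc_0$ inside some $\Pc(q,M,\Omega)$ with $q>2$).

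For $G^{(c)}_{n,\eps}=\lam^{-n}\Ll^n g^{(2)}_\eps$, I would simply estimate
$$\big\|G^{(c)}_{n,\eps}\big\|_\infty \leq \big\|\lam^{-n}\Ll^n\big\|_\infty \cdot \big\|g^{(2)}_\eps\big\|_\infty \lesssim K'^{-1}\eps,$$
using that $\|g^{(2)}_\eps\|_\infty\leq 2K'^{-1}\eps$. Taking $K'$ large enough (depending only on $\beta$ and on the absolute constant from Lemma \ref{l:dependence}) yields $\|G^{(c)}_{n,\eps}\|_\infty\leq \tfrac12\beta\eps$.

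For $G^{(b)}_{n,\eps}=\lam^{-n}\big(\Ll^n g^{(1)}_\eps - \Ll_{1,n} g^{(1)}_\eps\big)$, I would apply Lemma \ref{l:app-norm-infinity}(ii) with the functions $\phi^{(j)}$ coming from the decomposition of $\phi$. The hypothesis of that lemma requires controlling
$$a:=\sum_{j=1}^\infty \big\|\phi-\phi^{(j)}\big\|_\infty = \sum_{j=1}^\infty \big\|\psi^{(j)}\big\|_\infty \leq AK^{-1}\eps \sum_{j=1}^\infty j^{-2} \lesssim AK^{-1}\eps,$$
which is bounded (say by $1$) once $K\geq K_0(A)$, so that $\vartheta(a)\lesssim 1$. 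The lemma then gives
$$\big\|\Ll^n g^{(1)}_\eps - \Ll_{1,n} g^{(1)}_\eps\big\|_\infty \leq \vartheta(a)\, a\, \|\Ll^n\|_\infty \cdot \big\|g^{(1)}_\eps\big\|_\infty.$$
Dividing by $\lam^n$ and using that $\|\lam^{-n}\Ll^n\|_\infty$ and $\|g^{(1)}_\eps\|_\infty$ are uniformly bounded (the latter because $\|g\|_\infty\lesssim 1$ and $\|g^{(2)}_\eps\|_\infty\leq 2K'^{-1}\eps\leq 2K'^{-1}$), we obtain
$$\big\|G^{(b)}_{n,\eps}\big\|_\infty \lesssim AK^{-1}\eps.$$
Enlarging $K$ once more so that the implicit constant times $AK^{-1}$ is below $\beta/2$ concludes this estimate.

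The entire argument is uniform in $n\geq 1$ and in $\phi\in\Qc_0$, which is what we need; there is no serious obstacle since the two preparatory lemmas were designed precisely for this step. The only subtle point is to check that the thresholds for $K$ and $K'$ can indeed be chosen independently of $\phi$, $n$, and $\eps$; this is immediate because all the constants from Lemmas \ref{l:dependence} and \ref{l:app-norm-infinity} are uniform on $\Qc_0$.
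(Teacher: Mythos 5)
Your proof is correct and follows the same route as the paper: for $G^{(c)}_{n,\eps}$ you combine the uniform bound on $\lam^{-n}\Ll^n$ from Lemma \ref{l:dependence} with the estimate $\|g^{(2)}_\eps\|_\infty\leq 2K'^{-1}\eps$, and for $G^{(b)}_{n,\eps}$ you apply Lemma \ref{l:app-norm-infinity}(ii) with $a\lesssim AK^{-1}\eps$ together with the boundedness of $\|g^{(1)}_\eps\|_\infty$. The only slight difference is that you spell out the role of $\vartheta(a)$ explicitly and make the uniformity of the thresholds for $K,K'$ a point of emphasis, which the paper leaves implicit; the substance is identical.
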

\proof
The above estimate on $\|\psi^{(j)}\|_\infty$ 
implies that $\sum \|\psi^{(j)}\|_\infty \lesssim A K^{-1}\eps$. 
Therefore, using Lemma \ref{l:app-norm-infinity} and the fact that the sequence
$\lambda^{-n}\Ll^n\1$ is bounded uniformly on $n$ and $\phi$ (see Lemma \ref{l:dependence}), 
we get
$$\big\|G^{(b)}_{n,\eps}\big\|_\infty \lesssim AK^{-1}\eps \|g^{(1)}_\eps\|_\infty \lesssim AK^{-1}\eps \leq A_0K^{-1}\eps$$
because $\|g^{(1)}_\eps\|_\infty$ is bounded by a constant. So we get the first estimate in the lemma when $K$ is large enough
(depending on $A_0$ and $\beta$).

For the second estimate, using again that the sequence $\lambda^{-n}\Ll^n\1$ is uniformly bounded, we obtain
$$\big\|G^{(c)}_{n,\eps}\big\|_\infty \lesssim \|g^{(2)}_\eps\|_\infty \leq K'^{-1}\eps.$$
The result follows provided that $K'$ is large enough.
\endproof

\begin{lemma} \label{l:Ga-beta}
When $K\geq 1$ and $K'$ are fixed, there is a constant $0<\epsilon_0\leq 1$ independent of $\phi$ and $g$ such that,
for 
 all $0<\epsilon\leq \epsilon_0$
and all $n$ large enough, also independent of $\phi$ and $g$,
$$\big\|G^{(a)}_{n,\eps}\big\|_{\sap}\leq  \beta (1/\eps)^{1/\gamma}.$$
\end{lemma}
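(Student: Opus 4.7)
The plan is to apply Proposition \ref{p:estimate_n_gen} with the weights $\phi^{(j)}$ and the function $g^{(1)}_\eps$, observing that $\|G^{(a)}_{n,\eps}\|_\sap = \lambda^{-n}\|\Ll_{1,n}g^{(1)}_\eps\|_\sap$. The first step is to control the auxiliary quantities: since $\phi\in\Qc_0$ has $\min\phi=0$ and $\max\phi\leq\Omega$ while $\|\psi^{(j)}\|_\infty\lesssim AK^{-1}j^{-2}\eps$, one obtains $\Phi_m\lesssim\tau^m$ for $\tau:=\alpha^{-1}d^{k-1}e^\Omega$. The assumption $\Omega<\log(d\alpha)$ then gives $\tau<d^k$, while the bound $\Ll^n\1\geq d^{kn}$ combined with the convergence $\lambda^{-n}\Ll^n\1\to\rho$ from Theorem \ref{t:main} forces $\lambda\geq d^k$, so $\tau/\lambda<1$ uniformly in $\phi\in\Qc_0$. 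Lemma \ref{l:app-norm-infinity}(ii) and Lemma \ref{l:dependence} further give $\lambda^{-m}\|\Ll_{1,m}\1\|_\infty\lesssim 1$ uniformly. Multiplied by $\lambda^{-n}$, the first term of Proposition \ref{p:estimate_n_gen} is then bounded by $c(\tau/\lambda)^{n/2}\|g^{(1)}_\eps\|_\sap\lesssim K'^{1/\gamma}(\tau/\lambda)^{n/2}(1/\eps)^{1/\gamma}$, which is $\leq(\beta/2)(1/\eps)^{1/\gamma}$ for $n$ large enough.

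For the sum term, the factor $\|\phi^{(m)}\|_\sap\leq A(Km^2)^{1/\gamma}(1/\eps)^{1/\gamma}$ already extracts one $(1/\eps)^{1/\gamma}$, so the target is to bound $\lambda^{-(n-m)}\|\Ll_{m+1,n}g^{(1)}_\eps\|_\infty$ uniformly in $\eps$. I would use the decomposition $g^{(1)}_\eps=g-g^{(2)}_\eps$: for the $g$ part, Lemma \ref{l:app-norm-infinity}(ii) allows replacing $\Ll_{m+1,n}$ by $\Ll^{n-m}$ up to an error $\lesssim AK^{-1}\eps$, and since $\{g:\|g\|_\sapg\leq 1,\,\langle m_\phi,g\rangle=0\}$ is equicontinuous by Lemma \ref{l:logq-alpha-p-gamma}, Lemma \ref{l:dependence-bis} yields $\lambda^{-(n-m)}\|\Ll^{n-m}g\|_\infty\leq\eta_{n-m}$ with $\eta_N\to0$ uniformly in $\phi$; the $g^{(2)}_\eps$ part contributes $\lesssim K'^{-1}\eps$ directly. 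Substituting gives a bound
$$cAK^{1/\gamma}(1/\eps)^{1/\gamma}\sum_{m=1}^n m^{3/2+2/\gamma}(\tau/\lambda)^{m/2}\bigl(\eta_{n-m}+K'^{-1}\eps\bigr).$$
The convergent series $\sum_m m^{3/2+2/\gamma}(\tau/\lambda)^{m/2}$ makes the $K'^{-1}\eps$ part $\leq(\beta/4)(1/\eps)^{1/\gamma}$ provided $K'$ is chosen large enough depending on $A,K,\beta$, while the $\eta_{n-m}$ part tends to $0$ as $n\to\infty$ by splitting the sum at $m=n/2$ and using respectively $\eta_{n-m}\leq\eta_{n/2}$ on the head and tail convergence of the series on the remainder.

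The hard part is to avoid accumulating a second factor of $(1/\eps)^{1/\gamma}$ in the sum term: the naive bound using $\|g^{(1)}_\eps\|_\sap$ and Lemma \ref{l:dependence-bis} applied to the normalized family $g^{(1)}_\eps/\|g^{(1)}_\eps\|_\sap$ would produce $(1/\eps)^{2/\gamma}$, which is fatal. The resolution is precisely the decomposition $g^{(1)}_\eps=g-g^{(2)}_\eps$, which shifts the equidistribution argument onto the single $\eps$-independent function $g$ sitting in a fixed equicontinuous family, leaving only an $L^\infty$-small error from $g^{(2)}_\eps$ controlled by taking $K'$ large. Combining the estimates yields $\|G^{(a)}_{n,\eps}\|_\sap\leq\beta(1/\eps)^{1/\gamma}$ for $n$ large enough, uniformly in $\phi$ and $0<\eps\leq\eps_0$; the argument does not require $\eps$ to be small beyond $\eps_0=1$.
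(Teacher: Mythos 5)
Your proof follows essentially the same route as the paper's: you apply Proposition \ref{p:estimate_n_gen}, control $\Phi_m$, $\|\Ll_{1,m}\1\|_\infty$, and $\|\Ll_{m+1,n}g^{(1)}_\eps\|_\infty$ exactly as the paper does, and the crucial idea of rewriting $g^{(1)}_\eps = g - g^{(2)}_\eps$ to route the equidistribution argument (Lemma \ref{l:dependence-bis}) through the single $\eps$-independent function $g$ — thereby avoiding the fatal $(1/\eps)^{2/\gamma}$ — is precisely the paper's key step.

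There is one quantifier slip at the end. You absorb the residual $\eps$-linear contributions (the $K'^{-1}\eps$ term, and the dropped $AK^{-1}\eps$ error from comparing $\Ll_{m+1,n}$ with $\Ll^{n-m}$) by asking that $K'$ be ``chosen large enough depending on $A,K,\beta$,'' and then conclude that $\eps_0 = 1$ works. But in Lemma \ref{l:Ga-beta} the constants $K\geq 1$ and $K'\geq 1$ are given \emph{first} and must be treated as arbitrary; the free parameter produced by the lemma is $\eps_0$. If $K'$ is small your bound on the $K'^{-1}\eps$ term fails for $\eps$ near $1$. The paper's proof handles this correctly: it keeps $K,K'$ fixed and shrinks $\eps_0$ so that the limiting sum $\sum_m AK^{1/\gamma} m^{2/\gamma+3/2}(e^\Omega/(d\alpha))^{m/2}\,(K'^{-1}+AK^{-1})\eps_0$ falls below $\beta$. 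Your argument is easily repaired by the same substitution — take $\eps_0$ small rather than $K'$ large — and then everything else you wrote goes through.
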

\proof
Fix an $\epsilon_0>0$ small enough.
We will apply Proposition \ref{p:estimate_n_gen}. First, using the estimates $\sum \|\psi^{(j)}\|_\infty \lesssim A K^{-1}\eps\leq A_0$ and $\lambda\geq d^ke^{\min\phi}$, we obtain
$$\Phi_m\lesssim \alpha^{-m}d^{(k-1)m} e^{m\max \phi} \lesssim  \alpha^{-m}d^{-m} \lambda^m e^{m\Omega(\phi)}  \leq  \alpha^{-m}d^{-m} \lambda^m e^{m\Omega}.$$
By Lemmas \ref{l:app-norm-infinity} and  \ref{l:dependence}, we have 
$$\|\Ll_{1,m}\1\|_\infty \lesssim \|\Ll^m\1\|_\infty\lesssim \lambda^m 
\quad \text{and} \quad
 \|\Ll_{m+1,n}\1\|_\infty \lesssim \|\Ll^{n-m} \1\|_\infty \lesssim \lambda^{n-m}$$
and also, again by Lemma \ref{l:app-norm-infinity},
\begin{eqnarray*}
\|\Ll_{m+1,n}g^{(1)}_\eps\|_\infty 
&
\leq & \|\Ll^{n-m}g^{(1)}_\eps\|_\infty + \|\Ll_{m+1,n}g^{(1)}_\eps-\Ll^{n-m}g^{(1)}_\eps\|_\infty \\
&\lesssim& \|\Ll^{n-m}g^{(1)}_\eps\|_\infty + \|\Ll^{n-m}\1\|_\infty \|g^{(1)}_\eps\|_\infty AK^{-1}\eps \\
&\lesssim& \|\Ll^{n-m}g\|_\infty + \|\Ll^{n-m}g^{(2)}_\eps\|_\infty  + \lambda^{n-m} AK^{-1}\eps \\
&\lesssim& \|\Ll^{n-m}g\|_\infty +  \lambda^{n-m} K'^{-1}\eps+  \lambda^{n-m} AK^{-1}\eps.
\end{eqnarray*}
This, Proposition \ref{p:estimate_n_gen},
and the estimates in the definitions of $g^{(1)}_\eps$ and $\phi^{(j)}$
 allow us to bound  
$\big\|G^{(a)}_{n,\eps}\big\|_{\sap}$ by a constant times
\begin{equation}\label{e:gane}
\Big[K'^{1/\gamma} \big({e^\Omega\over d\alpha}\big)^{n/2}   
+ \sum_{m=1}^{n} AK^{1/\gamma}m^{2/\gamma+3/2} \big({e^\Omega\over d\alpha}\big)^{m/2} \Big(\|\lambda^{-n+m}\Ll^{n-m}g\|_\infty + (K'^{-1}+AK^{-1})\eps_0\Big) \Big] \eps^{1/\gamma}.
\end{equation}

Recall that $g$ belongs to a uniformly bounded and equicontinuous family of functions, see Lemma \ref{l:logq-alpha-p-gamma}. 
It follows from 
 Lemma \ref{l:dependence-bis}
 that
$\|\lambda^{-n+m}\Ll^{n-m}g\|_\infty$ tends to 0, uniformly on $\phi$ and $g$, when $n-m$ tends to infinity.
This and  the fact that $e^\Omega< d\alpha$
 imply that, when $n$ tends to infinity, the sum between brackets in \eqref{e:gane} 
 converges to
$$\sum_{m=1}^\infty AK^{1/\gamma}m^{2/\gamma+3/2} \big({e^\Omega\over d\alpha}\big)^{m/2} (K'^{-1}+AK^{-1})\eps_0.$$
This sum is smaller than $\beta$ because $\epsilon_0$ is chosen small enough.
Therefore, we get the estimate in the lemma for $n$ large enough, independently of $\phi$ and $g$, because the last convergence is uniform on $\phi$ and $g$.
\endproof
 
\proof[Proof of Proposition \ref{p:G12}]
Take $N$ large enough, independent of $\phi$. It suffices to show that
we can write
$$\lam^{-N} \Ll^N g = G^{(1)}_{N,\eps} + G^{(2)}_{N,\eps} \quad 
\text{with} \quad \big\|G^{(1)}_{N,\eps}\big\|_{\sap}\leq  \beta (1/\eps)^{1/\gamma} \quad
\text{and} \quad \big\|G^{(2)}_{N,\eps}\big\|_\infty \leq \beta \eps.$$
We apply Lemmas \ref{l:Gbc-beta} and \ref{l:Ga-beta} to $n:=N$. When $\epsilon\leq \epsilon_0$, it
is enough to choose $G^{(1)}_{N,\eps}:=G^{(a)}_{N,\eps}$ and $G^{(2)}_{N,\eps}:=G^{(b)}_{N,\eps}+G^{(c)}_{N,\eps}$.
Assume now that $\epsilon_0\leq \epsilon\leq 1$ and choose $G^{(1)}_{N,\eps}:=0$
and $G^{(2)}_{N,\eps}:=\lambda^{-N}\Ll^Ng$. With $N$ large 
enough,
we have $\|G^{(2)}_{N,\eps}\|_\infty\leq \beta\eps_0 \leq \beta \eps$ 
because 
$\|\lambda^{-n}\Ll^ng\|_\infty$ tends to 0 uniformly on $\phi$ and $g$ when $n$
goes to infinity, see
 Lemma \ref{l:dependence-bis}. 
Thus, we have the desired decomposition of 
$\lam^{-N} \Ll^N g$ and hence the property \eqref{e:contraction_apg} for all
$N$ large enough.
\endproof

\begin{proposition} \label{p:rho-inverse}
Under the hypotheses of Theorem \ref{t:spectral_gap_apg}, there is
a positive constant $c= c(p,\alpha,\gamma, A,\Omega)$ independent of $\phi$ and $n$ such that
$$\|\lambda^{-n}\Ll^n\|_\sapg \leq c, \quad \norm{\rho}_{\sapg}\leq c, \quad \text{and} \quad
\norm{1/\rho}_{\sapg}\leq c.$$
\end{proposition}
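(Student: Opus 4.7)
The plan is to re-use the decomposition technique from the proof of Proposition \ref{p:G12} in three different settings. The key observation is that the estimates in Lemmas \ref{l:Ga-beta} and \ref{l:Gbc-beta}, if one only asks for \emph{boundedness} rather than contraction, remain valid uniformly in $n$ thanks to the convergence of the geometric series $\sum_m m^{2/\gamma + 3/2}(e^\Omega/(d\alpha))^{m/2}$ (which is precisely where the hypothesis $\Omega<\log(d\alpha)$ enters), and they do not require the mean-zero assumption on $g$. That assumption was used in Lemma \ref{l:Ga-beta} only to invoke Lemma \ref{l:dependence-bis} and send $\norm{\lambda^{-n+m}\Ll^{n-m}g}_\infty\to 0$; for boundedness it suffices to use Lemma \ref{l:dependence} to control this quantity by a constant times $\norm{g}_\infty$.

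First I would apply this observation to $g=\1$ with the trivial splitting $g_\eps^{(1)}=\1$, $g_\eps^{(2)}=0$, so that $G^{(c)}_{n,\eps}=0$; in Proposition \ref{p:estimate_n_gen} the first summand vanishes because $\norm{\1}_\sap=0$, and the second yields $\norm{G^{(a)}_{n,\eps}}_\sap\leq C(1/\eps)^{1/\gamma}$ uniformly in $n$, while Lemma \ref{l:app-norm-infinity} gives $\norm{G^{(b)}_{n,\eps}}_\infty\leq C\eps$ for $\eps\leq \eps_0$. For $\eps\in(\eps_0,1]$ the trivial splitting $(0,V_n)$ with $V_n:=\lambda^{-n}\Ll^n\1$ suffices since $\norm{V_n}_\infty$ is uniformly bounded by Lemma \ref{l:dependence}. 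This yields $\norm{V_n}_\sapg\leq c$ uniformly in $n$. The bound on $\norm{\rho}_\sapg$ is then obtained by a direct limit argument: since $V_n\to\rho$ uniformly by Lemma \ref{l:dependence-bis}, for each $\eps$ I pick $n(\eps)$ with $\norm{\rho-V_{n(\eps)}}_\infty\leq\eps$ and set $\rho^{(1)}_\eps:=V^{(1)}_{n(\eps),\eps}$ and $\rho^{(2)}_\eps:=V^{(2)}_{n(\eps),\eps}+(\rho-V_{n(\eps)})$; the $\sap$ and $L^\infty$ bounds transfer with a slightly larger constant, giving $\norm{\rho}_\sapg\leq c+1$.

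For the operator bound $\norm{\lambda^{-n}\Ll^n}_\sapg\leq c$, the same decomposition applied to a general function $g$, with the uniform $L^\infty$ control from Lemma \ref{l:dependence} replacing the use of the mean-zero hypothesis, yields $\norm{\lambda^{-n}\Ll^n g}_\sapg\leq C(\norm{g}_\sapg+\norm{g}_\infty)$ uniformly in $n$, which is the appropriate interpretation in view of the augmented norm $\norm{\cdot}_{\diamond_1}$ of Theorem \ref{t:goal}. Finally, for $\norm{1/\rho}_\sapg\leq c$, Lemma \ref{l:dependence} provides a uniform positive lower bound $\rho\geq\rho_0$ on $\phi\in\Qc_0\subset \Pc(q,M,\Omega)$, so the function $t\mapsto 1/t$ is Lipschitz with constant $\rho_0^{-2}$ on the range of $\rho$, and the last assertion of Lemma \ref{l:logq-alpha-p-gamma} delivers $\norm{1/\rho}_\sapg\leq\rho_0^{-2}\norm{\rho}_\sapg\leq c$. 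The main technical hurdle is the careful verification that each estimate in the proofs of Lemmas \ref{l:Ga-beta} and \ref{l:Gbc-beta} survives uniformly in $n$ once the mean-zero hypothesis is dropped; the limit passage $V_n\to\rho$, by contrast, is resolved cleanly by the ``for each $\eps$'' structure of the seminorm $\norm{\cdot}_\sapg$ combined with the uniform convergence $V_n\to\rho$.
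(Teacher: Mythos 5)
Your proposal is correct and the overall strategy — re-running the proofs of Lemmas \ref{l:Gbc-beta} and \ref{l:Ga-beta} with $K=K'=\eps_0=1$ and dropping the mean-zero hypothesis — is the same as the paper's for the first and third bounds. Your treatment of $\norm{\rho}_\sapg$ is genuinely different and worth comparing. The paper writes $\rho = \1 + \sum_{n\geq 0}\lambda^{-n}\Ll^n g$ with $g:=\lambda^{-1}\Ll\1-\1$ (a telescoping of $\lambda^{-n}\Ll^n\1$), notes that $\langle m_\phi,g\rangle=0$, and invokes Proposition \ref{p:G12} to get $\norm{\lambda^{-n}\Ll^n g}_\sapg\lesssim\beta^{n/N}$, hence convergence of the series in $\norm{\cdot}_\sapg$. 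Your route instead uses only the uniform-in-$n$ bound $\norm{V_n}_\sapg\leq c$ together with the uniform convergence $V_n\to\rho$, transporting the $\eps$-decomposition of $V_{n(\eps)}$ to a decomposition of $\rho$ by absorbing $\rho-V_{n(\eps)}$ into the $L^\infty$ part. This is cleaner in one respect: it does not require the contraction result of Proposition \ref{p:G12} at all, only the boundedness estimates, so the three assertions of the proposition become logically independent of the spectral-gap step. The argument is sound because the defining inequalities of $\norm{\cdot}_\sapg$ are "for each $\eps$", so you are free to let the approximation index $n(\eps)$ depend on $\eps$.

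One additional point in your favor: you correctly observe that dropping the condition $\sca{m_\phi,g}=0$ means $\norm{g}_\infty$ is no longer controlled by $\norm{g}_\sapg$, so the honest estimate is $\norm{\lambda^{-n}\Ll^ng}_\sapg\lesssim\norm{g}_\sapg+\norm{g}_\infty$; this is indeed what is needed for the assembly of the norm $\norm{\cdot}_{\diamond_1}=\norm{\cdot}_\infty+\norm{\cdot}_\sapg$ in the proof of Theorem \ref{t:goal}, and it is the implicit meaning of the statement $\norm{\lambda^{-n}\Ll^n}_\sapg\leq c$ (which, read literally as an operator seminorm, would fail on constants). The bound on $\norm{1/\rho}_\sapg$ via the uniform lower bound on $\rho$ (Lemma \ref{l:dependence}) and the Lipschitz composition property (Lemma \ref{l:logq-alpha-p-gamma}) is exactly the paper's argument.
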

\proof
We prove the first inequality. We will use the above computations for $K=K'=\epsilon_0=1$. 
Consider any function $g\colon\P^k\to\R$ such that $\|g\|_\sapg\leq 1$. We do
not assume  that $\langle m_\phi,g\rangle=0$. As before,  for any $0<\eps\leq 1$ we can write 
$$g= g^{(1)}_\eps + g^{(2)}_\eps \quad  \text{with} \quad 
  \big\|g^{(1)}_\eps\big\|_{\sap}\leq  (1/\eps)^{1/\gamma} \quad \text{and} \quad
  \big\|g^{(2)}_\eps\big\|_\infty\leq \eps.$$
We also consider as above the decomposition 
$$\lam^{-n} \Ll^n g = G^{(1)}_{n,\eps} + G^{(2)}_{n,\eps} \quad \text{with} \quad
G_{n,\epsilon}^{(1)}:=G_{n,\epsilon}^{(a)} \quad  \text{and} \quad G_{n,\epsilon}^{(2)}:=G_{n,\epsilon}^{(b)}+G_{n,\epsilon}^{(c)},$$
see \eqref{e:def:g-abc}. 
The computations in Lemmas \ref{l:Gbc-beta} and \ref{l:Ga-beta} give 
$\|G_{n,\epsilon}^{(2)}\|_\infty\lesssim \epsilon$ and 
$\|G_{n,\epsilon}^{(1)}\|_\sap\lesssim \epsilon^{1/\gamma}$. 
We use here the fact that $\|\lambda^{-n+m}\Ll^{n-m}g\|_\infty$
is bounded by a constant,
see 
 Lemma \ref{l:dependence-bis}. 
Therefore, $\|\lambda^{-n}\Ll^n g\|_\sapg$ is bounded by a constant.
Thus, the first inequality in the proposition holds.

Consider now the second inequality. Observe that 
$$\rho=\lim_{n\to\infty} \lambda^{-n}\Ll^n \1 = \1+\sum_{n=0}^\infty \lambda^{-n}\Ll^n g \quad \text{with} \quad g:=\lambda^{-1}\Ll\1-\1.$$
The 
$\lam^{-1}\Ll^*$-invariance of $m_\phi$ implies that $\langle m_\phi,g\rangle=0$. Therefore,
Proposition \ref{p:G12} and the first inequality in the present proposition imply
that $\|\lambda^{-n}\Ll^n g\|_\sapg\lesssim \beta^{n/N}$. We deduce that $\|\rho\|_\sapg$ is bounded by a constant.

For the last inequality in the lemma, observe that $\rho$ is bounded from above and
below by positive constants which are independent of $\phi$, see
 Lemma \ref{l:dependence}.
The result is then a consequence of Lemma \ref{l:logq-alpha-p-gamma} applied to the function $\chi(t):=1/t$. 
The proof is complete.
\endproof 
 
\proof[End of the proof of Theorem \ref{t:spectral_gap_apg}] 
By Propositions \ref{p:G12} and \ref{p:rho-inverse}, it only remains to prove the last assertion in this theorem.
We continue to use the computations in Lemmas \ref{l:Gbc-beta}
and \ref{l:Ga-beta} and take $K=1$, $K'=\delta'^{N}$, and
$\epsilon_0=1$ for some 
$\delta'$ and $d'$ such that $\delta<\delta'=d'^{\gamma/(2\gamma+2)}$ and $d'<d\alpha$.
As above, we consider the decomposition
$$\lam^{-N} \Ll^N g = G^{(1)}_{N,\eps} + G^{(2)}_{N,\eps} \quad \text{with} \quad
G_{N,\epsilon}^{(1)}:=G_{N,\epsilon}^{(a)} \quad  \text{and} \quad G_{N,\epsilon}^{(2)}:=G_{N,\epsilon}^{(b)}+G_{N,\epsilon}^{(c)}.$$
Take $A\to 0$, which also implies that $\Omega(\phi)\to 0$. So we can fix an $\Omega$ as small as needed
and assume that 
$d' < e^{-\Omega}d \alpha$.
Then, the estimates in Lemmas \ref{l:Gbc-beta} and \ref{l:Ga-beta} give
$$\|G^{(1)}_{N,\eps}\|_\sap \leq c \big(K'^{1/\gamma} d'^{-N/2} +O(A)\big)\eps^{1/\gamma}
 \quad \text{and} \quad 
\|G^{(2)}_{N,\eps}\|_\infty \leq c(K'^{-1}+O(A))\epsilon,$$
where $c$ is a positive constant. With 
$N$ large enough and $A$ small enough, since $\delta<\delta'$ and $K'=\delta'^{N}$, we get
$$\|G^{(2)}_{N,\eps}\|_\infty \leq \delta^{-N}\epsilon \quad \text{and} \quad 
\|G^{(1)}_{N,\eps}\|_\sap \leq \delta^{-N}\eps^{1/\gamma}.$$
In other words, we can take $\beta=\delta^{-N}$. This completes the proof of the theorem.
\endproof


\subsection{Proof of Theorem \ref{t:goal}}\label{s:proof:t:goal}

The statement is a consequence of Theorem \ref{t:spectral_gap_apg}, namely, 
of the estimate \eqref{e:contraction_apg}. Note that the constant $\beta$ in Theorem
\ref{t:goal} is not the one in \eqref{e:contraction_apg}. 
Given $\phi$ as in the statement, we first choose 
$\alpha$ sufficiently close to $1$ so
 that $\Omega (\phi)< \log (\alpha d)$. Then, we choose
$p$ large enough  so that $q< q_2$,
where $q$ and $\gamma$ are as in the statement and $q_2$
is defined in Lemma \ref{l:logq-alpha-p-gamma}
(this also implies that $\alpha < d^{-5/(2p+2)}$ since $q>2$).
Recall that the semi-norm $\norm{\cdot}_\sapg$ is almost a norm. Define
$$\norm{\cdot}_{\diamond_1}:=\norm{\cdot}_\infty+\norm{\cdot}_\sapg.$$
This is now a norm, which is independent of $\phi$.
By Lemmas \ref{l:logq-alpha-p-gamma} and \ref{l:alpha-p-gamma-Holder}, we have
$$\norm{\cdot}_\infty+\norm{\cdot}_{\log^q}
\lesssim \norm{\cdot}_{\diamond_1} \lesssim \norm{\cdot}_{\Cc^\gamma}.$$
By Lemma \ref{l:dependence}, the quantities $\|\lam^{-n}\Ll^n\|_\infty$, $\|\rho\|_\infty$, and $\|1/\rho\|_\infty$
are bounded by a constant when $\norm{\phi}_{\diamond_1}\leq A$. 
By Theorem \ref{t:spectral_gap_apg}, 
$\|\lam^{-n}\Ll^n\|_\sapg$, $\|\rho\|_\sapg$, and $\|1/\rho\|_\sapg$ are also
bounded by a constant. We deduce that $\|\lam^{-n}\Ll^n\|_{\diamond_1}$, $\|\rho\|_{\diamond_1}$,
and $\|1/\rho\|_{\diamond_1}$ satisfy the same property. 

Let $N$ and $\beta_0$ be as in Theorem \ref{t:spectral_gap_apg}
(we write $\beta_0$ instead of $\beta$
to distinguish it from 
the constant that we use now for Theorem \ref{t:goal}). Fix
a constant $\beta$ such that  $\beta_0^{1/N}<\beta<1$ and consider the following norms
$$\norm{g}_\diamond:=|c_g|+\norm{g'}_\sapg \quad \text{and} \quad 
\norm{g}_{\diamond_2}:= |c_g|+ \sum_{n=0}^{\infty} \beta^{-n}\norm{\lam^{-n}\Ll^n g'}_\sapg$$
for every function $g\colon\P^k\to\R$, where $c_g:=\langle m_\phi,g\rangle$ and $g':=g-c_g\rho$. 

\begin{lemma} \label{l:diamond}
We have $\|gh\|_{\diamond_1}\leq 3 \|g\|_{\diamond_1}\|h\|_{\diamond_1}$ for all functions $g,h\colon\P^k\to\R$. Moreover, 
both of the 
norms $\norm{\cdot}_{\diamond}$ and $\norm{\cdot}_{\diamond_2}$ are equivalent to $\norm{\cdot}_{\diamond_1}$. 
\end{lemma}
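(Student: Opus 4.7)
The plan is to handle the three assertions in order, reducing each to bookkeeping with estimates already established in Sections \ref{s:norm} and \ref{s:speed}.

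First I would dispatch the submultiplicativity inequality. Since $\norm{gh}_\infty \leq \norm{g}_\infty \norm{h}_\infty$, and Lemma \ref{l:apg_gh} gives $\norm{gh}_{\sap,\gamma}\leq 3(\norm{g}_{\sap,\gamma}\norm{h}_\infty + \norm{g}_\infty\norm{h}_{\sap,\gamma})$, adding these and using that each cross product on the right-hand side is dominated by $\norm{g}_{\diamond_1}\norm{h}_{\diamond_1}$ yields $\norm{gh}_{\diamond_1}\leq 3 \norm{g}_{\diamond_1}\norm{h}_{\diamond_1}$ after a trivial expansion.

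Next I would prove $\norm{\cdot}_\diamond \simeq \norm{\cdot}_{\diamond_1}$. Write $g = g' + c_g\rho$, recalling that $\langle m_\phi,\rho\rangle = \mu_\phi(\P^k) = 1$, so $\langle m_\phi, g'\rangle = 0$. For the easy direction: $|c_g|\leq \norm{g}_\infty\leq \norm{g}_{\diamond_1}$ and $\norm{g'}_{\sap,\gamma}\leq \norm{g}_{\sap,\gamma} + |c_g|\norm{\rho}_{\sap,\gamma}$, where $\norm{\rho}_{\sap,\gamma}$ is bounded by Theorem \ref{t:spectral_gap_apg}; this gives $\norm{g}_\diamond\lesssim\norm{g}_{\diamond_1}$. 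For the reverse, $\norm{g}_{\sap,\gamma}\leq \norm{g'}_{\sap,\gamma}+|c_g|\norm{\rho}_{\sap,\gamma}\lesssim \norm{g}_\diamond$ similarly, while the key step for $\norm{g}_\infty$ uses that $g'$ has $m_\phi$-mean zero. The identity
\[
|g'(x)|
=\Big|\int\bigl(g'(x)-g'(y)\bigr)\,dm_\phi(y)\Big|
\leq \Omega(g')
\]
combined with $\Omega(g')\leq \norm{g'}_{\log^q}\lesssim \norm{g'}_{\sap,\gamma}$ (from Definition \ref{defi_logp_cont} and Lemma \ref{l:logq-alpha-p-gamma}) gives $\norm{g'}_\infty\lesssim \norm{g'}_{\sap,\gamma}$. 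Then $\norm{g}_\infty\leq \norm{g'}_\infty+|c_g|\norm{\rho}_\infty\lesssim \norm{g}_\diamond$, since $\norm{\rho}_\infty$ is uniformly bounded (Lemma \ref{l:dependence}).

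Finally I would establish $\norm{\cdot}_{\diamond_2}\simeq\norm{\cdot}_\diamond$. The lower bound $\norm{g}_\diamond\leq \norm{g}_{\diamond_2}$ is immediate, as $\norm{g'}_{\sap,\gamma}$ is the $n=0$ term of the series. For the reverse, observe that the $\lam^{-1}\Ll^*$-invariance of $m_\phi$ propagates the mean-zero condition: $\langle m_\phi,\lam^{-n}\Ll^n g'\rangle = 0$ for every $n$. Iterating the contraction \eqref{e:contraction_apg} of Theorem \ref{t:spectral_gap_apg} gives $\norm{\lam^{-jN}\Ll^{jN}g'}_{\sap,\gamma}\leq \beta_0^j\norm{g'}_{\sap,\gamma}$, where $\beta_0$ denotes the contraction factor there. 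For remaining $0\leq r<N$, the uniform bound $\norm{\lam^{-r}\Ll^r}_{\sap,\gamma}\leq c$ from the same theorem, applied to the mean-zero function $\lam^{-jN}\Ll^{jN}g'$, yields $\norm{\lam^{-(jN+r)}\Ll^{jN+r}g'}_{\sap,\gamma}\lesssim \beta_0^{\lfloor n/N\rfloor}\norm{g'}_{\sap,\gamma}$. The series then splits as
\[
\sum_{n\geq 0}\beta^{-n}\norm{\lam^{-n}\Ll^n g'}_{\sap,\gamma}
\lesssim \norm{g'}_{\sap,\gamma}\sum_{n\geq 0}\bigl(\beta_0^{1/N}/\beta\bigr)^{n},
\]
which converges precisely because $\beta$ was chosen to satisfy $\beta_0^{1/N}<\beta<1$. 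Hence $\norm{g}_{\diamond_2}\lesssim \norm{g}_\diamond$.

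The only delicate point is matching the geometric rate in the series defining $\norm{\cdot}_{\diamond_2}$ against the spectral-gap rate; the choice $\beta>\beta_0^{1/N}$ is exactly what makes the geometric series summable. One also checks that both $\norm{\cdot}_\diamond$ and $\norm{\cdot}_{\diamond_2}$ are genuine norms and not just seminorms: if they vanish then $c_g=0$ and $\norm{g'}_{\sap,\gamma}=0$ force $g'$ to be a constant whose $m_\phi$-integral is zero, hence $g'\equiv 0$ and so $g\equiv 0$.
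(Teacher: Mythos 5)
Your proof is correct and follows essentially the same path as the paper's: submultiplicativity from Lemma \ref{l:apg_gh}, equivalence of $\norm{\cdot}_\diamond$ with $\norm{\cdot}_{\diamond_1}$ by passing through the decomposition $g=g'+c_g\rho$ and bounding the sup-norm of the mean-zero part via the oscillation and Lemma \ref{l:logq-alpha-p-gamma}, and equivalence of $\norm{\cdot}_{\diamond_2}$ with $\norm{\cdot}_\diamond$ by iterating the contraction of Theorem \ref{t:spectral_gap_apg} so that the geometric series converges since $\beta>\beta_0^{1/N}$. The only cosmetic difference is that you bound $\norm{g'}_\infty$ first and then recover $\norm{g}_\infty$, whereas the paper directly bounds $\Omega(g)$ and combines it with $|c_g|\leq 1$; these are trivially interchangeable.
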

\proof
The first assertion is a direct consequence of Lemma \ref{l:apg_gh}. We prove now the second assertion.
Since $m_\phi$ is a probability measure
and $\norm{\rho}_{\sapg}$ is bounded, we have 
$$\norm{g}_{\diamond}= |c_g|+\|g-c_g\rho\|_\sapg  \leq |c_g| + \|g\|_\sapg+ |c_g| \|\rho\|_\sapg \lesssim \|g\|_\infty+\|g\|_\sapg =\|g\|_{\diamond_1}.$$ 
Conversely, assume that 
 $\norm{g}_{\diamond}\leq 1$, then $|c_g|\leq 1$ and $\norm{g'}_\sapg\leq 1$. It follows that
 $\norm{g}_\sapg$ is bounded by a constant because it is bounded by $\norm{g'}_\sapg+|c_g|\norm{\rho}_\sapg$.
 By Lemma \ref{l:logq-alpha-p-gamma}, $\Omega(g)$ is also bounded by a constant. This and
  the inequality $|\langle m_\phi,g\rangle|= |c_g|\leq 1$ 
  imply that $\|g\|_\infty$ is bounded by a constant. 
  We deduce that $\norm{\cdot}_{\diamond}$ is equivalent to $\norm{\cdot}_{\diamond_1}$.

Observe that $\norm{\cdot}_{\diamond}\leq \norm{\cdot}_{\diamond_2}$. 
To complete the proof, it is enough to show that $\|g\|_{\diamond_2}\lesssim \|g\|_{\diamond}$ for every function $g$. 
Recall that $\rho$ is invariant by $\lambda^{-1}\Ll$ and $\langle m_\phi,\rho\rangle=1$.
Therefore, we have $\langle m_\phi,g'\rangle =0$. Theorem \ref{t:spectral_gap_apg} and Proposition \ref{p:rho-inverse} imply
that $\|\lambda^{-n}\Ll^n g'\|_\sapg\lesssim \beta_0^{n/N}\|g'\|_\sapg$.
 Hence
$$\|g\|_{\diamond_2} \lesssim |c_g| + \|g'\|_\sapg\sum_{n=0}^\infty\beta^{-n}\beta_0^{n/N} \lesssim |c_g| + \|g'\|_\sapg =\|g\|_{\diamond}.$$
The last infinite sum is finite because $\beta>\beta_0^{1/N}$. This ends the proof of the lemma.
\endproof

Consider now a function $g$ with $c_g=\langle m_\phi,g\rangle=0$,
which implies $g=g'$. We also have $\langle m_\phi,\lambda^{-1}\Ll g\rangle =0$ because $m_\phi$ is invariant by $\lambda^{-1}\Ll^*$. 
From the definition of $\norm{\cdot}_{\diamond_2}$, we get
\[
\norm{\lam^{-1}\Ll g }_{\diamond_2}
 = \beta  ( \norm{g}_{\diamond_2}  - \norm{g}_{\diamond}  )
\leq \beta \norm{g}_{\diamond_2}.
\]
This is the desired contraction. Finally, the last assertion in Theorem \ref{t:goal} is a direct consequence of the last assertion
in Theorem \ref{t:spectral_gap_apg} by taking $\alpha$ close enough to 1. The proof of Theorem \ref{t:goal} is now complete.

\subsection{Spectral gap in the limit case}\label{subsection_contraction}
The semi-norm $\norm{\cdot}_\sap$ can be seen as the limit of the semi-norm
$\norm{\cdot}_\sapg$ as $\gamma$ goes to infinity.
In order to complete our study, we will prove here a spectral gap with respect to this limit norm.
The following is an analogue of Theorem \ref{t:spectral_gap_apg}.

\begin{theorem}\label{t:spectral_gap_ap}
Let $f,\phi, \lambda, m_\phi, \rho$ 
be as in Theorem \ref{t:main} and $\Ll$ the Perron-Frobenius operator associated to $\phi$.
Let $p, \alpha,A,\Omega$ be positive constants and $q_1$ as in Lemma \ref{l:cpt_apmixed} such
that $p>3/2$, $d^{-1} \leq \alpha < d^{-5/(2p+2)}$,
 $\Omega < \log (d\alpha)$, and $q_1>2$.
Assume that $\norm{\phi}_{\sap}\leq A$ and $\Omega (\phi)\leq \Omega$.
Then we have 
$$\|\lambda^{-n}\Ll^n\|_\sap \leq c, \quad \norm{\rho}_{\sap}\leq c, \quad \text{and} \quad
\norm{1/\rho}_{\sap}\leq c$$
for some positive constant $c=c(p, \alpha, A,\Omega)$ independent of $\phi$ and $n$. Moreover, for every constant $0<\beta<1$ there is 
a positive integer $N = N(p,\alpha, A,\Omega, \beta)$ independent of $\phi$  such that
\begin{equation*}
\big\|  \lam^{-N}{\Ll^{N}}g \big\|_{\sap}
\leq
\beta \norm{g}_{\sap}
\end{equation*}
for every function $g\colon\P^k \to \R$ with $\sca{m_\phi, g}=0$.
Furthermore, we can take
$\beta=\delta^{-N}$ for every
 given constant $0<\delta < (d \alpha)^{1/2}$, provided that $A$ is small enough.
\end{theorem}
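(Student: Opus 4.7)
The plan is to follow the strategy of Theorem \ref{t:spectral_gap_apg}, but in a much simpler form: since $\phi$ already belongs to $\sap$ we do not need the decomposition $\phi=\phi^{(j)}+\psi^{(j)}$ nor the splitting of test functions used in Section \ref{ss:proofs_gap}. The first step is to apply Proposition \ref{p:estimate_n_gen} with the constant choice $\phi^{(j)}:=\phi$ for every $j$, so that $\Ll_{1,n}=\Ll^n$. Using Lemma \ref{l:dependence} (whose hypothesis is met because $q_1>2$ places the relevant family of weights inside some $\Pc(q,M,\Omega)$ with $q>2$) we have $\|\Ll^n\1\|_\infty\lesssim \lam^n$, and from $\lam\geq d^k e^{\min\phi}$ we obtain $\Phi_m\lesssim \lam^m\bigl(e^\Omega/(d\alpha)\bigr)^m$. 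Dividing the resulting inequality by $\lam^n$ yields the key estimate
\[
\bigl\|\lam^{-n}\Ll^n g\bigr\|_{\sap}\lesssim \eta^n\|g\|_{\sap}+\|\phi\|_{\sap}\sum_{m=1}^n m^{3/2}\eta^m\,\bigl\|\lam^{-(n-m)}\Ll^{n-m}g\bigr\|_\infty,
\]
where $\eta:=\bigl(e^\Omega/(d\alpha)\bigr)^{1/2}<1$ thanks to the assumption $\Omega<\log(d\alpha)$.

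The uniform boundedness $\|\lam^{-n}\Ll^n\|_\sap\leq c$ then follows from the uniform bound on $\|\lam^{-j}\Ll^j g\|_\infty$ provided by Lemma \ref{l:dependence}, since the sum is dominated by the convergent series $\sum m^{3/2}\eta^m$. The bound on $\|\rho\|_\sap$ is obtained, exactly as in Proposition \ref{p:rho-inverse}, by writing $\rho-\1=\sum_{n\geq 0}\lam^{-n}\Ll^n g$ with $g:=\lam^{-1}\Ll\1-\1$, noting that $\langle m_\phi,g\rangle=0$ by $\lam^{-1}\Ll^*$-invariance of $m_\phi$, and then applying the contraction proved below to make this series summable. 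For $\|1/\rho\|_\sap$ it suffices to use that $\rho$ is bounded away from zero (Lemma \ref{l:dependence}) and
\[
i\partial(1/\rho)\wedge\bar\partial(1/\rho)=\rho^{-4}\,i\partial\rho\wedge\bar\partial\rho\leq \|1/\rho\|_\infty^4\,i\partial\rho\wedge\bar\partial\rho,
\]
so that monotonicity of $\|\cdot\|_\ap$ in the ordering $|\cdot|\leq\cdot$ transfers the bound on $\|\rho\|_\sap$.

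For the contraction under $\langle m_\phi,g\rangle=0$, Lemma \ref{l:dependence-bis} gives $\|\lam^{-j}\Ll^j g\|_\infty\to 0$ uniformly over $\phi$ and over $g$ of bounded $\sap$ semi-norm (recalling from Lemma \ref{l:cpt_apmixed} that such a family is equicontinuous and uniformly bounded after normalization). Splitting the sum at a large index, using the geometric decay of $\eta^m$ on the tail and this uniform vanishing on the head, we deduce $\|\lam^{-N}\Ll^N g\|_\sap\leq \beta\|g\|_\sap$ for $N$ large. Finally, to achieve the sharp rate $\beta=\delta^{-N}$ for any $\delta<(d\alpha)^{1/2}$ when $A$ is small, we exploit that $\|\phi\|_\sap\leq A$ forces $\Omega(\phi)\to 0$ as $A\to 0$ (via the embedding $\|\phi\|_{\log^q}\lesssim\|\phi\|_\sap$ from Lemma \ref{l:cpt_apmixed}), hence $\eta\to(d\alpha)^{-1/2}$, while the prefactor $\|\phi\|_\sap\leq A$ on the sum vanishes. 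The polynomial factor $m^{3/2}$ is absorbed by the strict inequality $\delta<(d\alpha)^{1/2}$; this quantitative bookkeeping, carried out along the lines of the end of Section \ref{ss:proofs_gap}, is the only delicate point, but since no interpolation decomposition needs to be tracked here it is significantly simpler than in the $\sapg$ case and is expected to be the only genuine obstacle.
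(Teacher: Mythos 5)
Your proposal is correct and follows the same approach as the paper: the paper's own proof of this theorem is extremely brief (apply Proposition~\ref{p:estimate_n_gen} with $\phi^{(j)}:=\phi$, obtain the same key estimate $\norm{\lam^{-n}\Ll^n g}_{\sap}\lesssim (e^\Omega/(d\alpha))^{n/2}\norm{g}_\sap + \norm{\phi}_\sap\sum_m m^{3/2}(e^\Omega/(d\alpha))^{m/2}\norm{\lam^{-n+m}\Ll^{n-m}g}_\infty$, then "the rest is the same as Theorem~\ref{t:spectral_gap_apg}"), and what you have written is precisely a fleshed-out version of that outline without the interpolation decomposition. Your only deviation from the template is the treatment of $\norm{1/\rho}_\sap$: the $\sapg$-proof cites the composition lemma (Lemma~\ref{l:logq-alpha-p-gamma}) with $\chi(t)=1/t$, whereas you instead compute $i\partial(1/\rho)\wedge\bar\partial(1/\rho)=\rho^{-4}\,i\partial\rho\wedge\bar\partial\rho$ and use the monotonicity of $\norm{\cdot}_\ap$; these are the same estimate (the chain rule hidden in the composition lemma), so the choice is cosmetic. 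All the ingredients you invoke — $\lam\geq d^k e^{\min\phi}$ for $\Phi_m$, Lemmas~\ref{l:dependence} and~\ref{l:dependence-bis} for the uniform sup-norm control and vanishing, the embedding $\Omega(\phi)\lesssim\norm{\phi}_{\log^q}\lesssim\norm{\phi}_\sap$ for the sharp-rate claim — are exactly those the paper uses implicitly, so the argument goes through.
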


Notice that 
Lemma \ref{l:cpt_apmixed} and the assumption $q_1>2$ imply that $\norm{\phi}_{\log^q}$ is finite 
for some  $q>2$. Hence, the scaling ratio $\lam$, the density function $\rho$, and
the measures $m_\phi$ and $\mu_\phi$ are well defined by Theorem \ref{t:main}. Notice also that $q_1 >2$ implies that the condition
$\alpha <d^{-5/(2p+2)}$ is automatically satisfied.

\begin{proof}
The proof follows the same lines as the one of
Theorem \ref{t:spectral_gap_apg}. It is however simpler
because the definition of the semi-norm 
$\norm{\cdot}_{\sap}$ is simpler than the one of $\norm{\cdot}_{\sapg}$.
In particular, we do not need any decomposition of $\lambda^{-n}\Ll^ng$.
Applying directly Proposition  \ref{p:estimate_n_gen} with $\phi^{(j)}:= \phi$ for
all $j\geq 1$ and recalling that $\|\1_n \|_\infty\lesssim \lam^n$
we obtain
\begin{equation*}
\norm{\lam^{-n}\Ll^n g}_{\sap}
 \lesssim
 \norm{g}_{\sap} 
\big(\frac{e^{\Omega}}{d\alpha}\big)^{n/2}
+ 
\norm{\phi}_{\sap}
\sum_{m=1}^n  
  m^{3/2} 
  \big(\frac{ e^{\Omega}  }{d \alpha}\big)^{m/2}
  \|\lam^{-n+m}\Ll^{n-m}g\|_\infty.
\end{equation*}
With this estimate, the rest of the proof is the same as that of
Theorem \ref{t:spectral_gap_apg}.
\end{proof}

As in the last section, we obtain the following counterpart of Theorem \ref{t:goal} as a consequence of the last result. 

\begin{theorem}\label{t:goal-bis} 
Let $f,p,\alpha,A,\Omega,\phi,\rho,  \lambda, m_\phi$ and $\Ll$ be as in Theorem \ref{t:spectral_gap_ap}.  
Then there is an explicit norm $\norm{\cdot}_{\diamond_0}$, depending on $f,p,\alpha,\phi$ and equivalent to 
$\norm{\cdot}_\infty+\norm{\cdot}_\sap$, such that
when $\norm{\phi}_\sap\leq A$ and $\Omega(\phi)\leq\Omega$ we have 
$$\|\lam^{-n}\Ll^n\|_\sap\leq c, \quad \norm{\rho}_\sap\leq c, \quad \norm{1/\rho}_\sap\leq c, \quad \text{and} \quad 
\big\|\lam^{-1} \Ll g \big\|_{\diamond_0} \leq \beta \norm{g}_{\diamond_0}$$
for every  $g\colon\P^k\to\R$ with $\sca{m_\phi, g}=0$, and for
some positive constants $c=c(f,p,\alpha,A,\Omega)$ and  $\beta =\beta(f,p,\alpha,A,\Omega)$
with $\beta<1$, both independent of $\phi,n,$ and $g$.
Furthermore, given any constant $0<\delta<(d\alpha)^{1/2}$, when $A$ is small enough,
the norm  $\norm{\cdot}_{\diamond_0}$
 can be chosen so that we can take $\beta = 1/\delta$. 
 \end{theorem}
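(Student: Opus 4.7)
The plan is to mimic the argument of Section \ref{s:proof:t:goal}, systematically replacing $\|\cdot\|_\sapg$ by $\|\cdot\|_\sap$ and invoking Theorem \ref{t:spectral_gap_ap} in place of Theorem \ref{t:spectral_gap_apg}. Three of the conclusions of the statement---$\|\lam^{-n}\Ll^n\|_\sap\le c$, $\|\rho\|_\sap\le c$ and $\|1/\rho\|_\sap\le c$---are supplied directly by Theorem \ref{t:spectral_gap_ap}, so the substance of the statement is the construction of an equivalent norm $\|\cdot\|_{\diamond_0}$ in which $\lam^{-1}\Ll$ becomes a strict contraction on the zero-mean subspace, together with the contraction estimate itself.

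Let $N$ and $\beta_0<1$ be as in Theorem \ref{t:spectral_gap_ap} and fix any $\beta\in(\beta_0^{1/N},1)$. Writing $c_g:=\langle m_\phi,g\rangle$ and $g':=g-c_g\rho$, I would define
$$\|g\|_{\diamond_0} := |c_g| + \sum_{n=0}^{\infty}\beta^{-n}\,\|\lam^{-n}\Ll^n g'\|_\sap.$$
The $\lam^{-1}\Ll^*$-invariance of $m_\phi$ combined with $\langle m_\phi,\rho\rangle=1$ yields $\langle m_\phi,g'\rangle=0$, so iterating the spectral gap of Theorem \ref{t:spectral_gap_ap} on multiples of $N$ and using the uniform bound on $\|\lam^{-n}\Ll^n\|_\sap$ for the remainder gives $\|\lam^{-n}\Ll^n g'\|_\sap\lesssim \beta_0^{n/N}\|g'\|_\sap$; since $\beta>\beta_0^{1/N}$ the series converges and $\|g\|_{\diamond_0}\lesssim |c_g|+\|g'\|_\sap$, while the reverse bound $|c_g|+\|g'\|_\sap\le\|g\|_{\diamond_0}$ is just the $n=0$ term. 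Equivalence of $\|\cdot\|_{\diamond_0}$ with $\|\cdot\|_\infty+\|\cdot\|_\sap$ then follows by translating between $g$ and $g'$ using $\|\rho\|_\sap\le c$ from Theorem \ref{t:spectral_gap_ap} and $\|\rho\|_\infty\le c$ from Lemma \ref{l:dependence}, together with the elementary observation that $\langle m_\phi,g'\rangle=0$ forces $\|g'\|_\infty\le\Omega(g')\lesssim\|g'\|_{\log^q}\lesssim\|g'\|_\sap$ via Lemma \ref{l:cpt_apmixed}.

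For the contraction, let $g$ satisfy $c_g=0$, so $g'=g$; the $\lam^{-1}\Ll^*$-invariance of $m_\phi$ gives $c_{\lam^{-1}\Ll g}=0$ as well, and a direct index shift produces the telescoping identity
$$\|\lam^{-1}\Ll g\|_{\diamond_0} = \beta\sum_{m=1}^{\infty}\beta^{-m}\|\lam^{-m}\Ll^m g\|_\sap = \beta\bigl(\|g\|_{\diamond_0}-\|g\|_\sap\bigr) \le \beta\|g\|_{\diamond_0}.$$
For the sharp case, given $0<\delta<(d\alpha)^{1/2}$, pick any $\delta'\in(\delta,(d\alpha)^{1/2})$; for $A$ small enough the last assertion of Theorem \ref{t:spectral_gap_ap} allows us to take $\beta_0=(\delta')^{-N}$, after which $\beta:=1/\delta>1/\delta'=\beta_0^{1/N}$ is admissible in the construction above and produces the contraction ratio $1/\delta$. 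I do not anticipate any significant obstacle: all the hard analytic content has been absorbed into Theorem \ref{t:spectral_gap_ap}, and the present argument is a purely algebraic repackaging of that spectral gap into an adapted norm, in fact somewhat simpler than the $\|\cdot\|_\sapg$ counterpart in Section \ref{s:proof:t:goal} since no interpolation decomposition $g=g_\eps^{(1)}+g_\eps^{(2)}$ of the observable is needed.
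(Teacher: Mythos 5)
Your proposal is correct and is exactly the argument the paper intends: the paper gives no separate proof of Theorem \ref{t:goal-bis}, stating only that it follows ``as in the last section'' from Theorem \ref{t:spectral_gap_ap}, and your definition of $\|\cdot\|_{\diamond_0}$ is the verbatim analogue of the paper's $\|\cdot\|_{\diamond_2}$ with $\|\cdot\|_\sapg$ replaced by $\|\cdot\|_\sap$, your equivalence and telescoping arguments mirror Lemma \ref{l:diamond} and the display that follows it, and your handling of the sharp rate via $\delta<\delta'<(d\alpha)^{1/2}$ is correct (and even slightly simpler than the $\diamond_2$ case, since here no additional tuning of $\alpha$ is needed). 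One small inaccuracy in your closing remark: the interpolation decomposition $g=g_\eps^{(1)}+g_\eps^{(2)}$ is used in the proof of Theorem \ref{t:spectral_gap_apg}, not in the derivation of Theorem \ref{t:goal} from it, so that step is equally absent from both derivations; the genuine simplification is only that the norm $\|\cdot\|_\sap$ is structurally simpler than $\|\cdot\|_\sapg$.
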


Note that Lipschitz functions have finite $\norm{\cdot}_\sap$ semi-norm
(this follows from Lemma \ref{l:cpt_apmixed}, since Lipschitz functions can be uniformly
approximated by $\Cc^1$ ones whose norm is dominated
by the Lipschitz constant, see also the proof of Lemma \ref{l:logq-alpha-p-gamma}). 
So the last theorem can be applied to Lipschitz functions.
For such functions we can take any $p$ large enough and $\alpha$ close to 1.
The rate of contraction is then almost equal to $d^{-1/2}$ when $A$
is small enough (i.e., when $\phi$ is close to a constant function).
This rate is likely optimal as it corresponds to known
results obtained in the setting of zero weight, see \cite{dinh2010dynamics}.

\section{Statistical properties of equilibrium states}\label{s:statistic}

In this section we prove Theorem \ref{t:statistic}.
The definitions of the statistical properties that we consider are all recalled in Appendix \ref{a:abstract-result},
as well as criteria ensuring their validity in an abstract setting.
We work under the hypotheses of Theorems \ref{t:goal} and \ref{t:statistic} and
with the equivalent norms $\norm{\cdot}_{\diamond_1}$ and $\norm{\cdot}_{\diamond_2}$
as in Theorem \ref{t:goal}, see
Section \ref{s:proof:t:goal}.
 For simplicity, we define 
 the operator $L$ 
 as $L(g):=(\lam \rho)^{-1}\Ll(\rho g)$.

\subsection{Exponential equidistribution of preimages of points}

The following result gives 
a quantitative version of 
the equidistribution of preimages in Theorem \ref{t:main}.
Because of Lemma \ref{l:alpha-p-gamma-Holder} 
and of the definition of the norm $\norm{\cdot}_{\diamond_1}$,
it applies in 
particular
 to H\"older continuous
test functions, 
 see Remark \ref{rem:holder-pag}.

\begin{theorem}
Under the hypotheses of Theorem \ref{t:goal}, for every $x\in\P^k$, as $n$ tends to infinity
 the points in $f^{-n}(x)$, with
suitable weights, are equidistributed exponentially fast
 with respect to the conformal measure $m_\phi$.
More precisely, we have
\[\Big|\Big\langle \lambda^{-n} \sum_{f^n(a)=x} e^{\phi(a)+\cdots+\phi(f^{n-1}(a))} \delta_a - \rho(x) m_\phi, g \Big\rangle \Big|
\leq c\beta^n \|g\|_{\diamond_1},\]
for all $g \colon \P^k \to \R$,
where $0<\beta<1$ is the constant in Theorem \ref{t:goal} and $c$ is a positive
constant independent of $x$ and  $g$.
\end{theorem}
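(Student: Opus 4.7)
The plan is to recognize that the statement is a pointwise, quantitative version of the convergence in \eqref{e:intro-cv-rho} and to extract it from Theorem \ref{t:goal} by the standard spectral gap argument. First I would unwind the pairing: by the definition \eqref{e:L} of $\Ll$ (iterated $n$ times), for any test function $g$,
\[
\Big\langle \lam^{-n}\sum_{f^n(a)=x} e^{\phi(a)+\cdots+\phi(f^{n-1}(a))}\delta_a,\,g\Big\rangle
=\lam^{-n}\Ll^n g(x).
\]
So the inequality to prove is simply
$\big|\lam^{-n}\Ll^n g(x) - \rho(x)\sca{m_\phi,g}\big|\leq c\beta^n\|g\|_{\diamond_1}$
uniformly in $x\in\P^k$.

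Next I would reduce to a function with zero $m_\phi$-mean. Set $c_g:=\sca{m_\phi,g}$ and $g':=g-c_g\rho$. Since $\mu_\phi=\rho m_\phi$ is a probability measure, $\sca{m_\phi,\rho}=1$, hence $\sca{m_\phi,g'}=0$. Using $\Ll\rho=\lam\rho$ we obtain
\[
\lam^{-n}\Ll^n g(x)-\rho(x)c_g = \lam^{-n}\Ll^n g'(x),
\]
so matters reduce to bounding $\|\lam^{-n}\Ll^n g'\|_\infty$.

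Now I would apply Theorem \ref{t:goal}. Since $\sca{m_\phi,g'}=0$, the contraction $\|\lam^{-1}\Ll g'\|_{\diamond_2}\leq \beta\|g'\|_{\diamond_2}$ iterates to $\|\lam^{-n}\Ll^n g'\|_{\diamond_2}\leq \beta^n\|g'\|_{\diamond_2}$. Using the chain of inequalities in Theorem \ref{t:goal}, namely $\|\cdot\|_\infty\lesssim\|\cdot\|_{\diamond_1}\simeq\|\cdot\|_{\diamond_2}$, together with $\|\rho\|_{\diamond_1}\leq c$ and $|c_g|\leq \|g\|_\infty\leq\|g\|_{\diamond_1}$, we get $\|g'\|_{\diamond_2}\lesssim \|g'\|_{\diamond_1}\lesssim \|g\|_{\diamond_1}$. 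Therefore
\[
\big|\lam^{-n}\Ll^n g'(x)\big|\leq \|\lam^{-n}\Ll^n g'\|_\infty\lesssim \|\lam^{-n}\Ll^n g'\|_{\diamond_2}\leq \beta^n\|g'\|_{\diamond_2}\lesssim \beta^n\|g\|_{\diamond_1},
\]
which is the desired estimate, with $c$ depending only on $f,\phi,\gamma,q$ through the norm equivalence constants.

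There is no real obstacle here: the theorem is essentially a direct corollary of Theorem \ref{t:goal}, and the only care needed is the bookkeeping of the decomposition $g=g'+c_g\rho$ and the use of the two equivalent norms $\norm{\cdot}_{\diamond_1},\norm{\cdot}_{\diamond_2}$ together with the already established bound $\norm{\rho}_{\diamond_1}\leq c$ to keep all constants independent of $x$ and $g$.
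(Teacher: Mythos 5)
Your proof is correct and follows essentially the same route as the paper's: unwind the pairing to $\lam^{-n}\Ll^n g(x)$, subtract $c_g\rho$ to reduce to the zero-mean function $g'$, iterate the $\norm{\cdot}_{\diamond_2}$-contraction from Theorem \ref{t:goal} (using the $\lam^{-1}\Ll^*$-invariance of $m_\phi$ to preserve the condition $\sca{m_\phi,\cdot}=0$ at each step), and finish with the norm equivalences and the bound $\norm{\rho}_{\diamond_1}\leq c$. The only cosmetic difference is that you spell out a few bookkeeping steps that the paper compresses into an appeal to Lemma \ref{l:diamond}.
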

\proof
For simplicity, assume that $\|g\|_{\diamond_1}=1$. 
Define $g':=g-c_g\rho$ with $c_g:=\langle m_\phi,g\rangle$. By Lemma \ref{l:diamond}, both $|c_g|$ and $\|g'\|_{\diamond_1}$ are bounded by a constant.
Since the norms $\norm{\cdot}_{\diamond_1}$ and $\norm{\cdot}_{\diamond_2}$ are equivalent,
we deduce from Theorem \ref{t:goal} that 
$\|\lam^{-n}\Ll^n(g')\|_{\diamond_1}\lesssim \beta^n$. 
Since
\[ \Big \langle \lambda^{-n} \sum_{f^n(a)=x} e^{\phi(a)+\cdots+\phi(f^{n-1}(a))} \delta_a, g \Big \rangle=  \lam^{-n} (\Ll^n g)(x),\]
the LHS of the inequality in the theorem is bounded by
$$\norm{ \lam^{-n}\Ll^n (g) -  c_g \rho }_\infty = \norm{ \lam^{-n}\Ll^n (g - c_g \rho)}_\infty = \norm{ \lam^{-n}\Ll^n (g')}_\infty \lesssim \beta^n.$$
This implies the result.
\endproof

We also have the following useful result. 

\begin{proposition}\label{p:exp-est}
There exists a positive constant $c$ such that,
 for
every $g\colon \P^k \to \R$ with $\norm{g}_{\diamond_1} \leq 1$,
setting
 $c_{\rho g}:=\langle m_\phi,\rho g\rangle=\langle\mu_\phi,g\rangle$ we have
\[
\sca{\mu_\phi, e^{\beta^{-n} 
   \abs{L^n (g) - c_{\rho g} }}} \leq c \quad \mbox{ and } \quad 
\norm{L^n g -  c_{\rho g}}_{\diamond_1} \leq c  \beta^n.
\]
 \end{proposition}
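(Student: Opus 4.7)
The plan is to reduce the proposition to the spectral gap for $\lambda^{-1}\Ll$ established in Theorem \ref{t:goal}, by exploiting the fact that $L$ is the conjugate of $\lambda^{-1}\Ll$ by multiplication by $\rho$. First I would record two algebraic identities. Since $\Ll\rho = \lambda\rho$, induction gives
\[
L^n(g) = \rho^{-1}\lambda^{-n}\Ll^n(\rho g) \qquad \text{for every } n\geq 0,
\]
and in particular $L\1 = \1$. Moreover, $L^*\mu_\phi = \mu_\phi$ because $\Ll^*m_\phi = \lambda m_\phi$ and $\mu_\phi = \rho m_\phi$, so $\langle \mu_\phi, L^ng\rangle = \langle \mu_\phi, g\rangle = c_{\rho g}$. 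Setting $h := g - c_{\rho g}$, we obtain $L^n(g) - c_{\rho g} = L^n(h) = \rho^{-1}\lambda^{-n}\Ll^n(\rho h)$, and the auxiliary function $\rho h$ satisfies $\langle m_\phi, \rho h\rangle = \langle \mu_\phi, h\rangle = 0$.

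Next I would estimate $\norm{\rho h}_{\diamond_2}$. Using the multiplicativity of $\norm{\cdot}_{\diamond_1}$ (Lemma \ref{l:diamond}), the bounds $\norm{\rho}_{\diamond_1}, \norm{1/\rho}_{\diamond_1}\leq c$ from Theorem \ref{t:goal}, and the equivalence $\norm{\cdot}_{\diamond_1}\simeq \norm{\cdot}_{\diamond_2}$, we bound
\[
\norm{\rho h}_{\diamond_2}\lesssim \norm{\rho}_{\diamond_1}\norm{h}_{\diamond_1}\lesssim \norm{g}_{\diamond_1} + |c_{\rho g}|\lesssim 1,
\]
where the last step uses $|c_{\rho g}|\leq \norm{g}_\infty\leq \norm{g}_{\diamond_1}\leq 1$. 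Since the class of functions with zero $m_\phi$-average is invariant under $\lambda^{-1}\Ll$ (as $\Ll^*m_\phi = \lambda m_\phi$), the contraction estimate in Theorem \ref{t:goal} can be iterated to yield $\|\lambda^{-n}\Ll^n(\rho h)\|_{\diamond_2}\leq \beta^n \norm{\rho h}_{\diamond_2}\lesssim \beta^n$. Multiplying by $\rho^{-1}$ and applying multiplicativity once more gives $\norm{L^n g - c_{\rho g}}_{\diamond_1}\lesssim \beta^n$, which is the second inequality of the statement.

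The first inequality then follows essentially for free: since $\norm{\cdot}_\infty\leq \norm{\cdot}_{\diamond_1}$ by the definition of $\norm{\cdot}_{\diamond_1}$, the second inequality implies the pointwise estimate $\beta^{-n}|L^n(g)-c_{\rho g}|\leq c$ uniformly in $n$. Hence $e^{\beta^{-n}|L^n(g)-c_{\rho g}|}\leq e^c$, and integrating against the probability measure $\mu_\phi$ yields the desired exponential moment bound. There is no real obstacle here; the only step that requires a little care is the algebraic identity $L^n(g) = \rho^{-1}\lambda^{-n}\Ll^n(\rho g)$ and the verification that the zero-average condition is preserved under iteration, both of which are straightforward consequences of $\Ll\rho = \lambda\rho$ and $\Ll^*m_\phi = \lambda m_\phi$.
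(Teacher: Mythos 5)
Your argument is correct and is essentially the same as the paper's: both proofs rewrite $L^n g - c_{\rho g}$ as $\rho^{-1}\lambda^{-n}\Ll^n(\tilde g')$ with $\tilde g' = \rho g - c_{\rho g}\rho$ (your $\rho h$ is exactly the paper's $\tilde g'$), check $\langle m_\phi,\tilde g'\rangle = 0$, and apply the spectral gap of Theorem \ref{t:goal} together with the algebra norm inequality of Lemma \ref{l:diamond} and the bounds on $\norm{\rho}_{\diamond_1}$, $\norm{1/\rho}_{\diamond_1}$; the exponential moment bound is then deduced from $\norm{\cdot}_\infty \leq \norm{\cdot}_{\diamond_1}$, just as in the paper.
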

 
\begin{proof}
Clearly, the first inequality follows from
the second one 
 by increasing
  the constant $c$ if necessary.
For the second inequality, define $\tilde g=\rho g$ and $\tilde g':=\tilde g-c_{\tilde g} \rho$. 
We have $\langle m_\phi,\tilde g'\rangle=0$.
By Lemma \ref{l:diamond} and Theorem \ref{t:goal}, we have that
$\norm{\tilde g}_{\diamond_1}$ is bounded by a constant because it is at most equal to
$3\norm{\rho}_{\diamond_1}\norm{g}_{\diamond_1}$.
We deduce that $c_{\tilde g}$ and hence $\norm{\tilde g'}_{\diamond_1}$ are both bounded by a constant.
Using again Lemma \ref{l:diamond} and Theorem \ref{t:goal}, we also obtain
$$\norm{L^n g - c_{\rho g}}_{\diamond_1} = \norm{\lam^{-n} \rho^{-1} {\Ll^n\tilde g} - c_{\tilde g}}_{\diamond_1}
= \norm{\rho^{-1} \lam^{-n} \Ll^n\tilde g'}_{\diamond_1}\leq 3 \norm{\rho^{-1}}_{\diamond_1}\norm{\lam^{-n} \Ll^n\tilde g'}_{\diamond_1}\lesssim \beta^n.$$
The result follows.
\end{proof}

\subsection{Exponential mixing of the equilibrium state}

 The speed of mixing
 in
 Theorem \ref{t:main}
  is not controlled
   for $g_0, \dots, g_r, g, l \in L^2 (\mu_\phi)$.
  We establish here some uniform exponential
bound for the speed of mixing of the system $\pa{\P^k, f,\mu_\phi}$
for more regular observables.
In the case of H\"older continuous
 weight $\phi$ and observable $g$, this was established in
 \cite{haydn1999convergence} for $k=1$
 (see also \cite{denker1996transfer} for a uniform sub-exponential speed) and in
  \cite{szostakiewicz2014stochastics} for $k>1$,
 see also \cite{dinh2010dynamics} for the case  when $\phi$ is constant.

\begin{theorem}
Under the hypotheses of Theorem \ref{t:goal}, for every integer $r\geq 0$, there is a positive constant $c=c(r)$ such that
$$\Big| \sca{\mu_\phi, g_0 \pa{g_1 \circ f^{n_1}} \cdots \pa{g_r \circ f^{n_r}}}
- \prod_{j=0}^r \sca{\mu_\phi, g_j}\Big| \leq c \beta^n
\Big( \prod_{j=0}^{r-1} \norm{g_j}_{\diamond_1}  \Big) \norm{g_r}_{L^1(\mu_\phi)}$$
for $0 = n_0 \leq n_1 \leq \dots \leq n_r$ and $ n:= \min_{0\leq j < r} (n_{j+1} - n_j)$. Here, the constant $0<\beta<1$ is the one from Theorem \ref{t:goal}.
\end{theorem}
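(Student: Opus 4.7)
The plan is to reduce the $r$-fold correlator to an $(r-1)$-fold one by induction on $r$, using the normalized transfer operator $L=(\lam\rho)^{-1}\Ll(\rho\,\cdot\,)$ (which satisfies $L\1=\1$ and $L^*\mu_\phi=\mu_\phi$) together with the exponential convergence $\norm{L^n g_0-\sca{\mu_\phi,g_0}}_{\diamond_1}\leq c\beta^n\norm{g_0}_{\diamond_1}$ provided by Proposition~\ref{p:exp-est}.

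The first ingredient I would record is the transfer duality
\[
\sca{\mu_\phi,h\cdot L^n g}=\sca{\mu_\phi,(h\circ f^n) g}\qquad\text{for all }g,h\text{ and }n\geq 0.
\]
For $n=1$ this follows from the identity $\Ll(u\cdot h\circ f)=h\,\Ll(u)$, the conformality $\Ll^*m_\phi=\lam m_\phi$, and $d\mu_\phi=\rho\,dm_\phi$, via the short computation
$\sca{\mu_\phi,h\cdot Lg}=\lam^{-1}\sca{m_\phi,h\cdot\Ll(\rho g)}=\lam^{-1}\sca{m_\phi,\Ll(\rho g\cdot h\circ f)}=\sca{\mu_\phi,(h\circ f)g}$,
and the general case follows by iteration.

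The case $r=1$ is then immediate: applying the duality with $n=n_1$ and using $\sca{\mu_\phi,L^{n_1}g_0}=\sca{\mu_\phi,g_0}$ reduces the difference to $\sca{\mu_\phi,(L^{n_1}g_0-\sca{\mu_\phi,g_0})g_1}$, which Proposition~\ref{p:exp-est} bounds by $c\beta^{n_1}\norm{g_0}_{\diamond_1}\norm{g_1}_{L^1(\mu_\phi)}$. For $r\geq 2$, I would apply the duality with $n=n_1$ and $h=g_1(g_2\circ f^{n_2-n_1})\cdots(g_r\circ f^{n_r-n_1})$ (so that $h\circ f^{n_1}=(g_1\circ f^{n_1})\cdots(g_r\circ f^{n_r})$) and decompose $L^{n_1}g_0=\sca{\mu_\phi,g_0}+E$ with $\norm{E}_\infty\lesssim\beta^{n_1}\norm{g_0}_{\diamond_1}$. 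The main piece $\sca{\mu_\phi,g_0}\sca{\mu_\phi,h}$ is handled by the induction hypothesis applied to the $(r-1)$-fold correlator of $(g_1,\ldots,g_r)$ at the shifted times $0,n_2-n_1,\ldots,n_r-n_1$ (whose minimum gap is still $\geq n$), together with $|\sca{\mu_\phi,g_0}|\lesssim\norm{g_0}_{\diamond_1}$; the error piece is bounded by
\[
|\sca{\mu_\phi,E\cdot h}|\leq\norm{E}_\infty\prod_{j=1}^{r-1}\norm{g_j}_\infty\,\norm{g_r}_{L^1(\mu_\phi)}\lesssim\beta^n\prod_{j=0}^{r-1}\norm{g_j}_{\diamond_1}\,\norm{g_r}_{L^1(\mu_\phi)},
\]
using the $f$-invariance of $\mu_\phi$ on the last factor $g_r\circ f^{n_r-n_1}$, the trivial estimate $\norm{g_j}_\infty\lesssim\norm{g_j}_{\diamond_1}$, and $n_1\geq n$.

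I do not foresee a serious obstacle: once Proposition~\ref{p:exp-est} and the transfer duality are in hand, the whole argument is a standard induction. The only mild points to monitor are the preservation of the minimum-gap quantity $n$ under the reduction (automatic, since the gaps of the shifted times include all original gaps $n_{j+1}-n_j$ for $j\geq 1$) and the routing of the $L^1(\mu_\phi)$-norm onto the last observable $g_r$, which is achieved by $\mu_\phi$-invariance applied to the shifted product.
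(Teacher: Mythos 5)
Your proof is correct and follows essentially the same route as the paper: induction on $r$, the transfer duality $\sca{\mu_\phi,h\cdot L^n g}=\sca{\mu_\phi,(h\circ f^n)g}$, Proposition~\ref{p:exp-est}, and $f$-invariance of $\mu_\phi$ to transfer the $L^1$-norm back to $g_r$. The only cosmetic difference is that the paper first reduces to $\sca{\mu_\phi,g_0}=0$ by subtracting a constant (handling the constant case via the induction hypothesis), while you decompose $L^{n_1}g_0=\sca{\mu_\phi,g_0}+E$ after applying the duality; the two reductions are equivalent.
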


\begin{proof}
We prove the theorem by induction on $r$. 
 Since when  $r=0$ the assertion  is
 trivial, we can assume  that the theorem holds for $r-1$ and prove it for $r$.

The fact that the statement holds for $r-1$
and the invariance of $\mu_\phi$
 imply the result for $r$ when $g_0$ is constant.
So, by subtracting from $g_0$ the constant $c_{\rho g_0}:  =\sca{m_\phi,\rho g_0}=\sca{\mu_\phi, g_0}$,
 we can assume that $\sca{\mu_\phi, g_0}=0$.
Setting $g' := g_1 \pa{g_2 \circ f^{n_2-n_1}} \cdots \pa{g_r \circ f^{n_r- n_1}}$
and recalling that $L (\cdot) = (\lam \rho)^{-1} \Ll (\rho \cdot)$, computing
as at the end of the proof of Theorem \ref{t:main} we get
\[
\begin{aligned}
\abs{\sca{\mu_\phi, g_0 (g' \circ f^{n_1}) }}
&= \abs{\sca{\mu_\phi,  L^{n_1}(g_0 (g' \circ f^{n_1}))}}
= \abs{\sca{\mu_\phi, L^{n_1}(g_0) g'  } }\\
&\leq \norm{L^{n_1}(g_0)}_{\infty}  \norm{g'}_{L^1 (\mu_\phi)}\\
& \leq \norm{L^{n_1} (g_0)}_{\infty} \norm{g_1}_\infty \dots \norm{g_{r-1}}_\infty \norm{g_r\circ f^{n_r- n_1}}_{L^1 (\mu_\phi)} \\
& = \norm{L^{n_1} (g_0)}_{\infty} \norm{g_1}_\infty \dots \norm{g_{r-1}}_\infty \norm{g_r}_{L^1 (\mu_\phi)}.
\end{aligned}\]
The assertion follows from Proposition \ref{p:exp-est} applied to $g_0$ instead of $g$.
\end{proof}

\subsection{Central Limit Theorem (CLT)}
We prove here the CLT for observables $g$ such that $\norm{g}_{\diamond_1}<\infty$
and which are 
not coboundaries, see Appendix \ref{a:abstract-result} for the definitions.
In the case of H\"older continuous
 weight $\phi$ and observable $g$, this was established in
\cite{denker1996transfer} for $k=1$  and in 
  \cite{szostakiewicz2014stochastics} for $k>1$,
 see also \cite{dinh2010dynamics} for the case  when $\phi$ is constant.
We start with the following version that does not need the introduction of
perturbed operators. More refined versions 
of this theorem will be given later in Sections
\ref{ss:local_clt} and \ref{ss:asip_consequences}.

\begin{theorem}
Under the hypotheses of Theorem \ref{t:goal}, consider a function $g\colon \P^k \to \R$ 
such that $\norm{g}_{\diamond_1}<\infty$ and
$\sca{\mu_\phi, g}=0$.
Assume that $g$ is not a coboundary. Then $g$ satisfies the CLT
with variance $\sigma>0$ given by
\begin{equation}\label{eq_def_sigma}
\sigma^2 :=\sca{\mu_\phi, g^2} +2\sum_{n\geq 1} \sca{\mu_\phi, g \cdot \pa{g \circ f^n}}.
\end{equation}
Moreover, the Berry-Esseen theorem holds for the observable $g$.
\end{theorem}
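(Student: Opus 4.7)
The plan is to reduce the theorem to the abstract CLT and Berry-Esseen criteria recalled in Appendix \ref{a:abstract-result}, applied to the normalized transfer operator $L(\cdot) := (\lam\rho)^{-1}\Ll(\rho\cdot)$ introduced at the beginning of this section. This operator is dual to composition by $f$ on $L^2(\mu_\phi)$, i.e.\ $\sca{\mu_\phi,(\psi\circ f)\cdot g}=\sca{\mu_\phi,\psi\cdot Lg}$, so $L$ plays the role of a conditional expectation with respect to $f^{-1}\Bc$. By Theorem \ref{t:goal} and Lemma \ref{l:diamond}, the space $(\norm{\cdot}_{\diamond_1})$ is a Banach algebra on which $L$ acts continuously; and by Proposition \ref{p:exp-est}, $L$ has a spectral gap there, since $\norm{L^n g}_{\diamond_1}\lesssim\beta^n \norm{g}_{\diamond_1}$ whenever $\sca{\mu_\phi,g}=0$.

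The first main step is the classical Gordin--Liverani martingale-coboundary decomposition. Using the spectral gap, the series $h:=\sum_{n\geq 1} L^n g$ converges absolutely in $\norm{\cdot}_{\diamond_1}$, and one checks directly that $\tilde g:=g+h-h\circ f$ satisfies $L\tilde g=Lg+Lh-h=0$; therefore $(\tilde g\circ f^n)$ is a reverse martingale difference sequence for the filtration $(f^{-n}\Bc)_n$. Applying the CLT for reverse martingale differences to $\tilde g$ (which lies in $L^\infty$ since $\norm{\cdot}_{\diamond_1}$ dominates $\norm{\cdot}_\infty$) and noting that the Birkhoff sum of the coboundary $h-h\circ f$ telescopes to a bounded quantity that is negligible after normalization by $\sqrt{n}$, we obtain the CLT for $g$.

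For the variance: expanding $\sca{\mu_\phi,(S_n g)^2}$ with $S_n g:=\sum_{k=0}^{n-1} g\circ f^k$, each correlation $\sca{\mu_\phi,g\cdot(g\circ f^k)}=\sca{\mu_\phi,g\cdot L^k g}$ is $O(\beta^k\norm{g}_{\diamond_1}^2)$ by Proposition \ref{p:exp-est}; this yields absolute convergence of the series in \eqref{eq_def_sigma} and identifies the Cesàro limit of $n^{-1}\sca{\mu_\phi,(S_n g)^2}$ with $\sigma^2$. Non-degeneracy $\sigma>0$ is equivalent to the non-coboundary hypothesis: if $\sigma=0$ then $\tilde g\equiv 0$ $\mu_\phi$-a.e., whence $g=h\circ f - h$, contradicting the assumption.

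For the Berry--Esseen rate, the plan is to invoke the abstract criterion from Appendix \ref{a:abstract-result}, whose hypotheses are exactly the Banach algebra structure of $(\norm{\cdot}_{\diamond_1})$ (Lemma \ref{l:diamond}), the uniform bound $\norm{L^n}_{\diamond_1}\lesssim 1$ and the spectral gap with rate $\beta^n$ for mean-zero observables (Proposition \ref{p:exp-est}), and the non-degeneracy $\sigma>0$ established above. The main obstacle is that Berry--Esseen traditionally goes through the perturbed operators $L_t g:=L(e^{itg}\cdot)$; the fact that the algebraic Lemma \ref{l:diamond} and the spectral gap of Theorem \ref{t:goal} are enough to make the abstract machinery applicable in our mixed real--complex setting, even though $\norm{\cdot}_{\diamond_1}$ is highly non-standard, is the key point — everything else is a direct verification that the tailored norm plays the role of the classical Banach algebra used in the abstract statement.
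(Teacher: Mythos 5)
Your proof is correct and takes essentially the same route as the paper: the paper invokes Gordin's theorem after identifying $\Ex(g\,|\,f^{-n}\Bc)=(L^ng)\circ f^n$ and bounding the resulting series by Proposition~\ref{p:exp-est}, which is precisely the martingale--coboundary decomposition you carry out explicitly with $h:=\sum_{n\geq 1}L^ng$ and $\tilde g:=g+h-h\circ f$. The Berry--Esseen part is handled identically in both, by deferring to Theorem~\ref{t:clt-spectral} and the perturbed-operator analysis of Section~\ref{ss:perturbed_operators}.
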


We refer to Definition \ref{d:clt} for the statement of the CLT
and to Theorem \ref{t:clt-spectral} for the Berry-Esseen theorem.
Recall that $g$ is a coboundary if there exists $h \in L^2 (\mu_\phi)$ such that 
$g = h\circ f - h$, and this is the case if and only if $\sigma =0$. It is not difficult to check that a coboundary cannot satisfy the CLT as in Definition \ref{d:clt}.

\begin{proof}
Let $\Bc$ denote the Borel $\sigma$-algebra on $\P^k$. 
By Gordin Theorem \ref{t:a_clt}, we only need to show that
\[
\sum_{n\geq 1} \norm{\Ex \pa{ g | f^{-n}\mathcal{B}}}^2_{L^2 (\mu_\phi)}< \infty.
\]
We will transform the above series into a series of the norms $\norm{L^n g}_{L^2 (\mu_\phi)}$, that
we will bound by means of Proposition
\ref{p:exp-est}.
Recall that $\sca{\mu_\phi, (L^n g) h}= \sca{\mu_\phi, g (h\circ f^n)}$ 
for all $g,h\colon \P^k \to \R$.
So
\[
\sca{\mu_\phi, g  \pa{h \circ f^n}}
 = \sca{\mu_\phi, (L^n g)  h}
 = \sca{\mu_\phi, \pa{(L^n g) \circ f^n}  \pa{h \circ f^n} },
\]
where in the last passage we used the $f_*$-invariance of $\mu_\phi$.
This precisely says that $\Ex (g, f^{-n}\Bc) = L^n (g) \circ f^n$.
Note that the norm of $f^*$ on $L^2 (\mu_\phi)$ is  1.
So we have  
\[
\sum_{n\geq 1} \norm{\Ex \pa{ g | f^{-n}\Bc}}^2_{L^2 (\mu_\phi)}
\leq
\sum_{n\geq 1} \norm{ L^n (g)}^2_{L^2 (\mu_\phi)}
{\leq \sum_{n\geq 1} \norm{ L^n (g)}^2_{\infty}.}
\]
The last sum converges because of Proposition \ref{p:exp-est} and the hypothesis $\sca{\mu_\phi, g}=0$.
For the Berry-Esseen theorem, see Theorem \ref{t:clt-spectral} and Section
\ref{ss:perturbed_operators} below.
This completes the proof.
\end{proof}

We have the following characterization of coboundaries
with bounded $\norm{\cdot}_{\diamond_1}$ norm that will be used in Section \ref{ss:local_clt}.

\begin{proposition}\label{p:coboundaries}
Let $g$ be a coboundary and assume
 that $\norm{g}_{\diamond_1}<\infty$.
 Then there exists
a function $\tilde h$ such that  $\|\tilde h\|_{\diamond_1}<\infty$
and $g = \tilde h \circ f - \tilde h$ on the small Julia set of $f$.
\end{proposition}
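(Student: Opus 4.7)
The plan is to construct $\tilde h$ as an explicit series in the normalized transfer operator $L$. First observe that since $\mu_\phi$ is $f$-invariant, the coboundary $g = h\circ f - h$ has zero mean: $\langle \mu_\phi, g\rangle = 0$. The key operator identity is $L(\psi\circ f) = \psi$ for every integrable $\psi$, which follows from the multiplicative rule $\Ll(\varphi\cdot(\psi\circ f)) = \psi\Ll\varphi$ and the eigenvalue equation $\Ll\rho = \lam\rho$. Applying $L$ to the coboundary relation gives
\[
Lg \;=\; L(h\circ f) - Lh \;=\; h - Lh. \qquad (\ast)
\]

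Next, I would set
\[
\tilde h \;:=\; \sum_{k=1}^{\infty} L^k g.
\]
Since $c_{\rho g} = \langle\mu_\phi, g\rangle = 0$, Proposition \ref{p:exp-est} yields $\norm{L^k g}_{\diamond_1} \leq c\beta^k$, so the series converges in the $\norm{\cdot}_{\diamond_1}$ norm and $\norm{\tilde h}_{\diamond_1} < \infty$. By uniform convergence, $L$ may be interchanged with the sum, giving $L\tilde h = \tilde h - Lg$, i.e.\ $Lg = \tilde h - L\tilde h$. Comparing with $(\ast)$, the difference $\psi := h - \tilde h \in L^2(\mu_\phi)$ satisfies $L\psi = \psi$.

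The third step is to show that the only $L$-invariant elements of $L^2(\mu_\phi)$ are constants. This relies on the fact that $L$ is the $L^2(\mu_\phi)$-adjoint of the Koopman operator: a direct calculation using $\Ll\rho = \lam\rho$ gives $\langle Lg, \varphi\rangle_{\mu_\phi} = \langle g, \varphi\circ f\rangle_{\mu_\phi}$ for $g,\varphi \in L^2(\mu_\phi)$. Hence, for every $\varphi\in L^2(\mu_\phi)$,
\[
\langle L^n\psi, \varphi\rangle_{\mu_\phi} \;=\; \langle \psi, \varphi\circ f^n\rangle_{\mu_\phi} \;\xrightarrow[n\to\infty]{}\; \langle\mu_\phi,\psi\rangle\langle\mu_\phi,\varphi\rangle
\]
by the mixing of $\mu_\phi$ from Theorem \ref{t:main}. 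The left-hand side is constant in $n$ because $L\psi = \psi$, so $\psi$ is $\mu_\phi$-a.e.\ equal to some constant $c$. Therefore $h = \tilde h + c$ $\mu_\phi$-a.e., which gives $\tilde h\circ f - \tilde h = h\circ f - h = g$ $\mu_\phi$-a.e.

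To upgrade the a.e.\ identity to a pointwise one on the small Julia set, observe that both $g$ and $\tilde h$ have finite $\norm{\cdot}_{\diamond_1}$ norm and are therefore $\log^q$-continuous (in particular continuous) by Lemma \ref{l:logq-alpha-p-gamma}; hence $\tilde h\circ f - \tilde h - g$ is continuous and vanishes on a full $\mu_\phi$-measure set, so it vanishes on $\supp(\mu_\phi)$, which equals the small Julia set by Theorem \ref{t:main}. The main obstacle is really the opening step: $h$ is only an $L^2(\mu_\phi)$ representative, and one must replace it by a representative in its cohomology class that inherits the regularity of $g$. The series $\tilde h$ is the natural candidate, and the spectral gap for $L$ provided by Theorem \ref{t:goal} (through Proposition \ref{p:exp-est}) is exactly what makes the construction absolutely convergent in the $\norm{\cdot}_{\diamond_1}$ norm.
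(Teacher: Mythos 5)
Your proposal is correct, and the construction of $\tilde h$ is the same as the paper's: $\tilde h = \sum_{k\geq 1} L^k g$, with convergence in $\norm{\cdot}_{\diamond_1}$ provided by Proposition \ref{p:exp-est}. The difference lies in how you identify $\tilde h$ with $h$ up to a constant. The paper telescopes $Lg + \dots + L^n g = h - L^n h$ and shows directly that $L^n h \to 0$ in $L^2(\mu_\phi)$ by approximating the $L^2$-function $h$ by a continuous, zero-mean function $h'$, then invoking the uniform $L^2$-bound $\|L^n\|_{L^2(\mu_\phi)}\leq c$ from \cite{bd-eq-states-part1} together with the uniform convergence $\|L^n h'\|_\infty\to 0$ from Theorem \ref{t:main}. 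You instead derive the fixed-point equation $L\psi=\psi$ for $\psi := h-\tilde h\in L^2(\mu_\phi)$, and then use the duality $\langle L^n\psi,\varphi\rangle_{\mu_\phi}=\langle \psi,\varphi\circ f^n\rangle_{\mu_\phi}$ and the mixing of $\mu_\phi$ to force $\psi$ to be a constant $\mu_\phi$-a.e. Both routes are valid; yours is more conceptual, trading the hands-on approximation step for the standard fact that the adjoint of the Koopman operator of a mixing (or even merely ergodic) system has only constants as fixed points in $L^2$. The paper's route avoids directly appealing to mixing but uses the same underlying convergence results. The passage from the $\mu_\phi$-a.e.\ identity to the pointwise identity on the small Julia set is handled identically in both arguments via continuity and $\supp(\mu_\phi)$ being the small Julia set.
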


\begin{proof}
By the definition of coboundary, there exists a function $h \in L^2 (\mu_\phi)$ such that $g = h \circ f - h$ in
$L^2 (\mu_\phi)$. This identity implies that $\langle \mu_\phi,g\rangle=0$. 
By adding a constant to $h$ we can assume that $\sca{\mu_\phi,h}=0$.
Since
$$L(h\circ f) = \lam^{-1} \rho^{-1} \Ll (\rho (h \circ f)) =  \lam^{-1}\rho^{-1}\Ll (\rho)  h = h$$
a direct computation gives
$$Lg + \dots + L^{n}g= h - L^n h.$$
 We claim that
$L^n h \to 0$ in $L^2 (\mu_\phi)$ when $n$ tends to infinity.
Indeed, fix any $\eps>0$ and choose a continuous function $h'$
with
$\norm{h-h'}_{L^2(\mu_\phi)}<\eps$ and
 $\sca{\mu_\phi,h'}=0$.
 By \cite[Lemma 4.3]{bd-eq-states-part1} 
 there exists a positive
 constant $c$
 such that
$\norm{L^n}_{L^2(\mu_\phi)}\leq c$ 
   for all $n\geq 0$. Moreover, 
$\norm{L^n h'}_\infty =  \norm{L^n h' -  \sca{\mu_\phi, h'}}_\infty \to 0$
   as $n\to \infty$ by Theorem \ref{t:main} and the definition of $L$. Hence,
  \[
  \norm{L^n h}_{L^2 (\mu_\phi)} 
  \leq \norm{L^n (h-h')}_{L^2(\mu_\phi)} +
  \norm{L^n h'}_{L^2 (\mu_\phi)} \lesssim \eps\]
as $n\to \infty$.  By taking $\eps\to 0$, this proves that 
$L^n h \to 0$ in $L^2 (\mu_\phi)$.
Proposition \ref{p:exp-est} and the identity $\langle \mu_\phi,g\rangle=0$ imply that the LHS of the last identity converges to
a function  $\tilde h$ with finite $\norm{\cdot}_{\diamond_1}$ norm. 
We conclude that $\tilde h = h$ $\mu_\phi$-almost everywhere. Therefore, we have $g = \tilde h \circ f - \tilde h$
$\mu_\phi$-almost everywhere, and thus everywhere on $\supp(\mu_\phi)$ as both sides are continuous functions.
The proposition follows because the support of $\mu_\phi$ is equal to the small Julia set, see Theorem \ref{t:main}.
\end{proof}

\subsection{Properties of perturbed Perron-Frobenius operators}\label{ss:perturbed_operators}
 The next statistical
properties will be proved by means of spectral methods, and more precisely by
the introduction of suitable (complex) perturbations of the operator $\Ll= \Ll_\phi$.
This method was originally developed by Nagaev \cite{nagaev1957some} in the context of Markov chains. 
More details are given in Appendix \ref{a:abstract-result}.
We  introduce here some notations and prove some preliminary results. We mainly follow 
the approach as presented in
\cite{broise1996transformations,dinh2007thermodynamics, rousseau1983theoreme}.
 
\begin{definition}\label{d:l:perturbed}
Given functions $\phi,g\colon \P^k \to \R$, $h\colon \P^k \to \C$, and a parameter $\theta \in \C$ we set
\[
\Ll_{\phi +  \theta g} h :=  \Ll_{\phi + \theta g} \Re h + i  \Ll_{\phi + \theta g} \Im h,
\]
where the operator in the RHS is the natural extension of \eqref{e:L} in the case with
complex weight.
\end{definition}

Since from now on we fix
 $\phi$ and $g$, 
we will just denote the above operator by $\Ll_{[\theta]}$ when no
possible confusion arises. In particular, we have
 $\Ll_{[0]}=\Ll_\phi$.
By means of Definition \ref{d:l:perturbed}, we 
 extend
the operator $\Ll$ to complex weights and complex test functions. We naturally
extend the  norms  $\norm{\cdot}_{\diamond_1}$ and  $\norm{\cdot}_{\diamond_2}$
  to these function spaces by setting
\[
\norm{h}_{\diamond_1} := \norm{\Re h}_{\diamond_1} + \norm{\Im h}_{\diamond_1} \quad \text{and} \quad 
\norm{h}_{\diamond_2} := \norm{\Re h}_{\diamond_2} + \norm{\Im h}_{\diamond_2}. 
\]

We will be in particular
interested in
the case where $\theta$ is small or
pure imaginary. 
We may develop our study given in the previous sections
for a complex weight directly, but we prefer to use a shortcut which only requires to treat the real case.
The next lemma collects the main properties of the family of operators 
$\Ll_{[\theta]}$ that we need.

\begin{lemma}\label{l:perturbed}
Assume that
 $ \norm{g}_{\diamond_1}$ is finite.
Then the following assertions hold for both of the
norms $\norm{\cdot}_{\diamond_1}$ and  $\norm{\cdot}_{\diamond_2}$.
\begin{enumerate}
\item[{\rm (i)}] $\Ll_{[0]} = \Ll_\phi$;
\item[{\rm (ii)}] For every $\theta \in \C$,
 $\Ll_{[\theta]}$ is a bounded operator;
\item[{\rm (iii)}] \label{item_analyticity} The map $\theta\mapsto \Ll_{[\theta]}$ is analytic in $\theta$;
\item[{\rm (iv)}] For every $n\in \N$, $\theta \in \C$, and $h\colon \P^k \to \C$, we have
\begin{equation}\label{e:ltn-l0n}
\Ll_{[\theta]}^n h = \Ll_{[0]}^n ( e^{\theta S_n(g) } h),
\end{equation}
where
\begin{equation}\label{e:birkhoff-sum}
S_0 (g) :=0
\quad \mbox{ and  } \quad 
 S_n (g) := \sum_{j=0}^{n-1} g \circ f^j \mbox{ for } n\geq 1.
 \end{equation}
 \end{enumerate}
\end{lemma}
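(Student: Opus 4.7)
My plan is to treat the four items in roughly increasing order of difficulty, but with (i), (iv) as warmups and (ii), (iii) following simultaneously from a single power series argument.

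Item (i) is immediate from Definition \ref{d:l:perturbed}. For item (iv), I would first extend Definition \ref{d:l:perturbed} by complex linearity and observe that, directly from the formula \eqref{e:L} applied with the complex weight $\phi+\theta g$, one has for $n=1$
\[ \Ll_{[\theta]} h(y) = \sum_{f(x)=y} e^{\phi(x)} e^{\theta g(x)} h(x) = \Ll_{[0]}\bigl(e^{\theta g} h\bigr)(y). \]
Iterating and collecting the factors $e^{\theta g\circ f^j}$ along the orbit gives $e^{\theta S_n(g)}$, which yields \eqref{e:ltn-l0n}.

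For items (ii) and (iii), the plan is to exploit (iv) with $n=1$ and expand the exponential as a power series in $\theta$:
\[ \Ll_{[\theta]} h \;=\; \Ll_{[0]}\bigl(e^{\theta g} h\bigr) \;=\; \sum_{n=0}^\infty \frac{\theta^n}{n!}\,\Ll_{[0]}(g^n h). \]
I would then show this series converges in operator norm, uniformly on compact subsets of $\C$, which simultaneously gives boundedness of $\Ll_{[\theta]}$ (so (ii)) and entire analyticity of $\theta\mapsto \Ll_{[\theta]}$ (so (iii)). The two ingredients are: the sub-multiplicativity $\|fh\|_{\diamond_1}\leq 3\|f\|_{\diamond_1}\|h\|_{\diamond_1}$ from Lemma \ref{l:diamond}, and the boundedness of $\Ll_{[0]}$ with respect to $\|\cdot\|_{\diamond_1}$ from Theorem \ref{t:goal}. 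Combining these and iterating the product inequality yields
\[ \bigl\|\Ll_{[0]}(g^n h)\bigr\|_{\diamond_1} \;\leq\; C\,3^n\,\|g\|_{\diamond_1}^n\,\|h\|_{\diamond_1}, \]
and since $\sum_n (3|\theta|\,\|g\|_{\diamond_1})^n/n!$ converges for every $\theta\in\C$, the power series converges in operator norm locally uniformly in $\theta$.

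Two small points need a verification. First, the algebra property must be checked for complex-valued functions with the extended norm $\|h\|_{\diamond_1}=\|\Re h\|_{\diamond_1}+\|\Im h\|_{\diamond_1}$; a direct expansion into real and imaginary parts shows $\|fh\|_{\diamond_1}\leq 3\|f\|_{\diamond_1}\|h\|_{\diamond_1}$ survives with the same constant. Second, everything must hold with $\|\cdot\|_{\diamond_2}$ in place of $\|\cdot\|_{\diamond_1}$; since Lemma \ref{l:diamond} asserts the equivalence $\|\cdot\|_{\diamond_1}\simeq\|\cdot\|_{\diamond_2}$, the estimates and conclusions transfer immediately. The main (mild) obstacle is thus only bookkeeping around the complex extension; there is no analytic difficulty because the $n!$ in the denominator absorbs the exponential growth of the constants $3^n\|g\|_{\diamond_1}^n$.
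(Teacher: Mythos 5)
Your proposal is correct and follows essentially the same route as the paper: the power series expansion $\Ll_{[\theta]} h = \sum_n (\theta^n/n!) \Ll_{[0]}(g^n h)$, controlled by the sub-multiplicativity from Lemma~\ref{l:diamond} and the boundedness of $\Ll_{[0]}$ from Theorem~\ref{t:goal}, is precisely the paper's argument for (ii) and (iii), and your induction for (iv) is the paper's (merely run from $\Ll_{[\theta]}^n$ forward rather than from $\Ll_{[0]}^n(e^{\theta S_n(g)}\,\cdot)$ backward). Reordering so that the $n=1$ case of (iv) is available when proving (ii)/(iii) is a sensible presentational choice, and your two checks (the algebra constant $3$ survives the complex extension $\|h\|_{\diamond_1}=\|\Re h\|_{\diamond_1}+\|\Im h\|_{\diamond_1}$, and equivalence of $\|\cdot\|_{\diamond_1}$ with $\|\cdot\|_{\diamond_2}$ transfers the conclusions) are exactly the points the paper handles implicitly.
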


\begin{proof}
The first item is true by definition. For the second and the third items,
it is enough to prove them for the norm $\norm{\cdot}_{\diamond_1}$
 since the norm
$\norm{\cdot}_{\diamond_2}$ is equivalent, see Lemma \ref{l:diamond}.
By Lemma \ref{l:diamond} we have,
 for all $h\colon \P^k \to \C$ and $n\in \N$,
\[
\norm{\Ll_{[0]} (g^n h)}_{\diamond_1}
 \leq  \norm{\Ll_{[0]}}_{\diamond_1} \norm{g^n h}_{\diamond_1}
\leq  3^n \norm{\Ll_{[0]}}_{\diamond_1} \norm{g}_{\diamond_1}^n \norm{h}_{\diamond_1}.
\]
So, the series $\sum_{n\geq 0} (\theta^n /n!) \Ll_{[0]} (g^n h)$ converges normally in norm $\norm{\cdot}_{\diamond_1}$.
The limit is equal to
$\Ll_{[\theta]} (h)$ and we have
\[
\norm{\Ll_{[\theta]} (h)}_{\diamond_1}
\leq 
e^{3 |\theta| \norm{g}_{\diamond_1}}
\norm{\Ll_{[0]}}_{\diamond_1} \norm{h}_{\diamond_1}.
\]
This proves the second and the third items.

The last item follows by a direct induction. The case $n=0$ is trivial and we have
\[\begin{aligned}
\Ll_{[0]}^n ( e^{\theta S_n(g) } h) & = \Ll_{[0]}
\Big(\Ll_{[0]}^{n-1} \big(e^{\theta  \cdot g \circ f^{n-1} }e^{\theta S_{n-1} (g)} h \big) \Big) \\
& = \Ll_{[0]} \Big(e^{\theta g} \Ll_{[0]}^{n-1} (e^{\theta S_{n-1}(g) } h) \Big) 
= \Ll_{[\theta]} \Ll_{[0]}^{n-1} (e^{\theta S_{n-1}(g) } h) 
=
\Ll_{[\theta]} \Ll_{[\theta]}^{n-1} h .
\end{aligned}\]
The proof is complete.
\end{proof}

Recall that the operator $\lam^{-1}\Ll_{[0]}$
has  $\rho$ as its unique (up to a multiplicative constant)\break
eigenfunction of eigenvalue 1. It 
 is a contraction with respect to the norm $\norm{\cdot}_{\diamond_2}$ (which is equivalent to $\norm{\cdot}_{\diamond_1}$)
on the space of functions whose integrals with respect to $m_\phi$ are zero,
see Theorem \ref{t:goal}. The following is then a consequence of the Rellich
perturbation method described in \cite[Ch.\ VII]{dunford1958linear}, see also
\cite[Proposition 5.2]{broise1996transformations} and \cite{kato2013perturbation,le1982theoremes}.
Note that the last assertion of Theorem \ref{t:statistic} is a
direct consequence of the analyticity of $\alpha$  given by the fourth item.

\begin{proposition}\label{p:decomposition}
Assume that $\norm{g}_{\diamond_1}$ is finite and let 
$0<\beta<1$ be 
the constant in Theorem \ref{t:goal}. Then, for all
$\beta < \beta'<1$,
the following holds  for $\theta$ sufficiently small and all $n\in \N$:
there exists a decomposition 
\[
\lam^{-n}\Ll_{[\theta]}^n =  \alpha(\theta)^n \Phi_\theta + \Psi^n_\theta
\]
as operators on $\{h\ : \ \norm{h}_{\diamond_1}<\infty\}$
  such that
\begin{enumerate}
\item[{\rm(i)}] $\alpha (\theta)$  is the (only) largest eigenvalue
of $\Ll_{[\theta]}$, $\alpha(0)=1$ and $|\alpha (\theta)|>\beta'$;
\item[{\rm(ii)}] $\Phi_\theta$ is the projection on the (one  dimensional) 
eigenspace associated to $\alpha(\theta)$
and we have $\Phi_0 (h) = \sca{m_\phi, h} \rho$;
\item[{\rm(iii)}] $\Psi_\theta$ is a bounded operator on $\{h \ : \ \norm{h}_{\diamond_1}<\infty \}$
whose spectral radius
 is $<\beta'$ and
\[
\Psi_\theta \circ \Phi_\theta = \Phi_\theta \circ \Psi_\theta =0;
\]
\item[{\rm(iv)}] the maps $\theta\mapsto \Psi_\theta, \theta \mapsto \Phi_\theta$, and
$\theta \mapsto \alpha(\theta)$ are analytic.
\end{enumerate}
\end{proposition}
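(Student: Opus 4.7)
The plan is to apply the standard Rellich--Kato analytic perturbation theory for an isolated simple eigenvalue to the one-parameter family $T_\theta:=\lam^{-1}\Ll_{[\theta]}$ of bounded operators on the Banach space $\Bb:=\{h\colon\P^k\to\C\,:\,\norm{h}_{\diamond_1}<\infty\}$. All structural hypotheses are already in place: the map $\theta\mapsto T_\theta$ is analytic and locally uniformly bounded by Lemma \ref{l:perturbed} (ii)--(iii), and the unperturbed operator $T_0=\lam^{-1}\Ll_\phi$ will have a full spectral gap on $\Bb$ thanks to Theorem \ref{t:goal}.

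First I would make the spectral gap of $T_0$ explicit. Since $\Ll\rho=\lam\rho$ and $\sca{m_\phi,\rho}=1$, the rank-one operator $\Phi_0 h:=\sca{m_\phi,h}\rho$ is a projection onto $\R\rho$ that commutes with $T_0$. For $h\in\ker\Phi_0$, i.e.\ with $\sca{m_\phi,h}=0$, Theorem \ref{t:goal} (applied through the equivalent norm $\norm{\cdot}_{\diamond_2}$) gives $\norm{T_0^n h}_{\diamond_1}\lesssim \beta^n\norm{h}_{\diamond_1}$. Setting $\Psi_0:=T_0-\Phi_0=T_0(I-\Phi_0)$, one has $\Phi_0\Psi_0=\Psi_0\Phi_0=0$ and spectral radius $\leq\beta$, so the spectrum of $T_0$ is contained in $\{1\}\cup\overline{\D_\beta}$ with $1$ as a simple isolated eigenvalue.

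Next, fix $\beta<\beta'<1$ and let $\Gamma$ be a small counterclockwise circle around $1$, of radius less than $1-\beta'$, lying in the resolvent set of $T_0$. By continuity of the resolvent in the analytic parameter $\theta$, for $|\theta|$ small enough $\Gamma$ lies in the resolvent set of $T_\theta$, so one may define the Riesz projection
\[
\Phi_\theta := \frac{1}{2\pi i}\oint_\Gamma (z-T_\theta)^{-1}\,dz,
\]
which is analytic in $\theta$ and of constant rank $\mathrm{rk}\,\Phi_0=1$. The unique eigenvalue $\alpha(\theta)$ of $T_\theta$ inside $\Gamma$, characterised by $T_\theta\Phi_\theta=\alpha(\theta)\Phi_\theta$, is then analytic with $\alpha(0)=1$, and shrinking the range of $\theta$ ensures $|\alpha(\theta)|>\beta'$. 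The complementary operator $\Psi_\theta:=T_\theta(I-\Phi_\theta)$ is analytic in $\theta$ on $\Bb$, satisfies $\Phi_\theta\Psi_\theta=\Psi_\theta\Phi_\theta=0$, and by upper semicontinuity of the spectrum its restriction to $\mathrm{Im}(I-\Phi_\theta)$ has spectral radius $<\beta'$ for $|\theta|$ small. Iterating the commuting splitting $T_\theta=\alpha(\theta)\Phi_\theta+\Psi_\theta$ then yields $T_\theta^n=\alpha(\theta)^n\Phi_\theta+\Psi_\theta^n$, which is the claimed decomposition.

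The only real obstacle is obtaining the spectral gap of $T_0$ on a space in which the perturbed family is simultaneously analytic and bounded; both ingredients are exactly what the earlier sections (culminating in Theorem \ref{t:goal} and Lemma \ref{l:perturbed}) were built to provide. Given these, the decomposition and the analyticity of $\alpha,\Phi_\theta,\Psi_\theta$ are a direct application of the classical Riesz projection formalism, as presented for instance in \cite[Ch.\ VII]{dunford1958linear}, \cite{kato2013perturbation}, or \cite[Proposition 5.2]{broise1996transformations}, and no further estimates are needed.
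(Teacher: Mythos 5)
Your proposal is correct and is essentially the same as the paper's: the paper notes the spectral gap for $T_0=\lam^{-1}\Ll_{[0]}$ on the $\norm{\cdot}_{\diamond_1}$-ball (Theorem \ref{t:goal}) together with the analyticity of $\theta\mapsto\Ll_{[\theta]}$ (Lemma \ref{l:perturbed}) and then simply invokes the Rellich/Kato perturbation method from \cite{dunford1958linear,kato2013perturbation,broise1996transformations}, which is exactly the Riesz-projection argument you spell out. The only cosmetic slip is that $\Phi_0$ projects onto $\C\rho$, not $\R\rho$, since one works with the complexified space.
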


The last property that
we will need is the second order expansion of $\alpha (\theta)$ for $\theta$ near $0$. It is a consequence of the above results.

\begin{lemma}\label{l:development}
Assume that $\norm{g}_{\diamond_1}$ is finite and $\langle \mu_\phi,g\rangle=0$. 
Let $\sigma>0$ be
as in \eqref{eq_def_sigma} and $\alpha(\theta)$ be given by Proposition \ref{p:decomposition}. Then we have
\[
\alpha (\theta) = e^{  \frac{\sigma^2 \theta^2}{2}  + o(\theta^2)} = 1 + \frac{\theta^2 \sigma^2}{2} + o(\theta^2).
\] 
\end{lemma}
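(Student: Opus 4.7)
The plan is to relate $\alpha(\theta)$ to the moment generating function of the Birkhoff sum $S_n(g)$ and to read off $\alpha'(0),\alpha''(0)$ from its first two moments. Applying Lemma \ref{l:perturbed}(iv) with $h=\rho$, integrating against $m_\phi$, and using $\Ll_{[0]}^* m_\phi = \lambda m_\phi$ together with $\mu_\phi = \rho m_\phi$ gives
\[
\langle \mu_\phi, e^{\theta S_n(g)}\rangle = \lambda^{-n}\langle m_\phi, \Ll_{[\theta]}^n \rho\rangle.
\]
Substituting the spectral decomposition of Proposition \ref{p:decomposition} and setting $\tilde A(\theta) := \langle m_\phi, \Phi_\theta \rho\rangle$ and $\tilde B_n(\theta) := \langle m_\phi, \Psi_\theta^n \rho\rangle$ yields the master identity
\[
\langle \mu_\phi, e^{\theta S_n(g)}\rangle = \alpha(\theta)^n\,\tilde A(\theta) + \tilde B_n(\theta)
\]
for $\theta$ in a small complex disc $\D_\delta$ about $0$.

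The key observation is that $\rho$ is the top eigenvector at $\theta=0$: $\Phi_0\rho = \rho$ (so $\tilde A(0)=1$) and $\Psi_0\rho = \lambda^{-1}\Ll_{[0]}\rho - \Phi_0\rho = 0$ (so $\tilde B_n(0)=0$ for every $n$). Combined with the uniform bound $\|\Psi_\theta^n\|_{\diamond_1}\lesssim (\beta')^n$ for $\theta\in \D_\delta$ (which follows from the spectral radius estimate in Proposition \ref{p:decomposition}(iii) and the analyticity of $\theta\mapsto\Psi_\theta$), one obtains $|\tilde B_n(\theta)|\lesssim (\beta')^n$ on $\D_\delta$, and hence, by Cauchy's formula on a fixed circle inside $\D_\delta$, $|\tilde B_n^{(k)}(0)|\leq C_k (\beta')^n$ for every $k\geq 0$. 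Using the analyticity from Proposition \ref{p:decomposition}(iv), I would then expand $\alpha(\theta) = 1 + a\theta + b\theta^2 + o(\theta^2)$ and $\tilde A(\theta) = 1 + \tilde a\theta + \tilde b\theta^2 + o(\theta^2)$, and compare the first two Taylor coefficients on both sides of the master identity at $\theta = 0$; the contributions coming from $\tilde B_n$ are $O((\beta')^n)$ and hence negligible as $n\to\infty$.

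The first-derivative identity then reads $na + \tilde a + O((\beta')^n) = \langle \mu_\phi, S_n(g)\rangle = n\langle\mu_\phi, g\rangle = 0$ by $f$-invariance and the hypothesis $\langle\mu_\phi, g\rangle=0$; dividing by $n$ and letting $n\to\infty$ forces $a=0$, whence also $\tilde a = 0$. The second-derivative identity (using $a=\tilde a=0$) reads $2nb + 2\tilde b + O((\beta')^n) = \langle\mu_\phi, S_n(g)^2\rangle$. Using the exponential mixing established earlier in this section, the correlations $c_m := \langle\mu_\phi, g\cdot(g\circ f^m)\rangle$ decay geometrically, and a direct stationary-variance computation (expanding $S_n(g)^2$ as a double sum, applying $f$-invariance to pair indices, and using $\sum_m m|c_m|<\infty$) gives $\langle\mu_\phi, S_n(g)^2\rangle = n\sigma^2 + O(1)$, with $\sigma^2$ as in \eqref{eq_def_sigma}. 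Dividing by $n$ and passing to the limit yields $2b = \sigma^2$, i.e., $\alpha''(0) = \sigma^2$; both stated forms of the expansion then follow from $\alpha = \exp\log\alpha$. The main obstacle is the uniform-in-$n$ control of $\tilde B_n(\theta)$ and of its Taylor coefficients at $0$, which rests on the uniform spectral estimate $\|\Psi_\theta^n\|\lesssim (\beta')^n$ on an entire disc around $0$ rather than at a single point; once that uniformity is in place, the rest is a bookkeeping of the first two moments of $S_n(g)$.
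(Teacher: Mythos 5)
Your proof is correct and takes a genuinely different route from the paper's, so let me compare. The paper proves $\alpha'(0)=0$ by specializing the spectral decomposition at the $n$-dependent parameter $\theta=1/n$, passing to the limit, and invoking Birkhoff's ergodic theorem to evaluate $\lim_n\langle\mu_\phi, e^{S_n(g)/n}\rangle$; it then proves $\alpha''(0)=\sigma^2$ by the analogous scaling $\theta\mapsto\theta/\sqrt{n}$ and differentiating twice in $\theta$ under that scaling. You instead fix $\theta$ small, identify the Taylor coefficients at $\theta=0$ of the two sides of the master identity $\langle\mu_\phi, e^{\theta S_n(g)}\rangle = \alpha(\theta)^n\tilde A(\theta) + \tilde B_n(\theta)$, and divide by $n$ before letting $n\to\infty$. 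Your version buys two things: it replaces the appeal to Birkhoff's ergodic theorem by the elementary observation $\langle\mu_\phi,S_n(g)\rangle = n\langle\mu_\phi,g\rangle = 0$, and it keeps the two derivative computations structurally parallel rather than using two different scalings. What the paper's version buys is that it never needs Cauchy estimates on the derivatives of $\theta\mapsto\tilde B_n(\theta)$; by building the $1/n$ or $1/\sqrt n$ scaling directly into $\theta$, it only evaluates $\tilde B_n$ at a single (shrinking) point, for which the exponential decay of $\Psi_\theta^n$ suffices. In your argument, the uniform estimate $\|\Psi_\theta^n\|\lesssim (\beta')^n$ on a whole disc and the resulting bound $|\tilde B_n^{(k)}(0)|\leq C_k(\beta')^n$ do the corresponding work, and your identification of that uniform spectral control as the crux is the right one; in both proofs it is the load-bearing consequence of the Rellich perturbation theory underlying Proposition \ref{p:decomposition}. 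The final step — $\langle\mu_\phi, S_n(g)^2\rangle = n\sigma^2 + O(1)$ from exponential decay of correlations — is the same ``direct computation'' the paper alludes to, and your sketch of it (double sum, pair by $|j-l|$, use $\sum_m m|c_m|<\infty$) is the standard and correct one.
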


\begin{proof}
We know from Lemma \ref{l:perturbed} 
and Proposition \ref{p:decomposition}
that $\alpha (0)=1$ and that $\theta \mapsto \alpha (\theta)$
is analytic near $0$.
 So, we only need to prove that
 $\alpha'(0)=0$ and $\alpha''(0)= \sigma^2$. 
 
 We start with the first equality. First of all
 notice that, for all $n\in \N$
 and $h\colon \P^k \to \C$, the invariance of $m_\phi$ implies 
 \begin{equation}\label{e:simplify}
\sca{  \mu_\phi,h} = \sca{ m_\phi, \rho h} = \sca{  m_\phi, \lam^{-n}\Ll_{[0]}^n (\rho h)}.
 \end{equation}
 Applying these identities to $e^{S_n (g)/n}$ instead of $h$ (with $S_n (g)$ as in \eqref{e:birkhoff-sum}) 
 and using \eqref{e:ltn-l0n}, we get
\[
\langle  \mu_\phi, e^{S_n (g)/n} \rangle = \langle  m_\phi, \lam^{-n} \Ll^n_{[1/n]} (\rho)\rangle.
\]
Then, the decomposition given in Proposition
 \ref{p:decomposition} (applied with $1/n$ instead of $\theta$) gives
 \begin{equation} \label{e:mu-Sn}
\langle  \mu_\phi, e^{S_n (g)/n}\rangle =\alpha(1/n)^n \sca{  m_\phi, \Phi_{1/n} (\rho) }
+
\big\langle  m_\phi, \Psi^n_{1/n}(\rho)\big\rangle.
\end{equation}
By Proposition \ref{p:decomposition}, the second term in the RHS goes to zero exponentially fast
as $n\to \infty$ .
Using that $\alpha(\theta)=1+\alpha'(0)\theta +o(\theta)$ and
$\Phi_\theta(\rho)=\rho+o(1)$ as $\theta\to 0$
 and the identity $\sca{m_\phi, \rho}=1$, we get
\[
\lim_{n\to \infty} \langle  \mu_\phi, e^{S_n (g)/n} \rangle = e^{ \alpha'(0)}.
\]
Recall that $\mu_\phi$ is mixing and hence ergodic. Moreover, $g$ is continuous and  $\sca{\mu_\phi,g}=0$. 
Birkhoff's ergodic theorem then implies that $S_n (g) /n\to 0$ $\mu_\phi$-almost surely as $n$ goes to infinity. 
We conclude that $\alpha'(0)=0$ as desired.

 Let us now prove that $\alpha'' (0)=\sigma^2$. The $f$-invariance
 of $\mu_\phi$ and a direct computation give 
 \[
\sigma^2 = \lim_{n\to \infty} 
\Big\langle \mu_\phi,
\Big( \frac{S_n g}{\sqrt{n}} \Big)^2
\Big\rangle
 \quad \text{and}\quad
\Big\langle \mu_\phi,
\Big( \frac{S_n g}{\sqrt{n}} \Big)^2
\Big\rangle
= \frac{\partial^2}{\partial \theta^2} \sca{ \mu_\phi, e^{(\theta/\sqrt{n}) S_n g}}\Big|_{\theta=0}.
\]
We use again \eqref{e:ltn-l0n} and
Proposition \ref{p:decomposition} (applied with $\theta/\sqrt{n}$
instead of $\theta$) to get
\[
\sca{\mu_\phi, e^{(\theta/\sqrt{n}) S_n g}}
= \alpha (\theta/\sqrt{n})^n \sca{m_\phi, \Phi_{\theta/\sqrt{n}}(\rho)} +
\sca{m_\phi, \Psi^n_{\theta/\sqrt{n}} (\rho)}.
\]
A direct computation, together with the
identities $\alpha(0)=1$, $\alpha'(0)=0$, $\langle m_\phi,\rho\rangle=1$ and the
properties
 in Proposition \ref{p:decomposition}, gives
$$\frac{\partial^2}{\partial \theta^2} \sca{ \mu_\phi, e^{(\theta/\sqrt{n}) S_n g}}\Big|_{\theta=0} =\alpha''(0) +o(1).$$
It is now clear that $\alpha''(0)=\sigma^2$. This completes the proof of the lemma.
 \end{proof}

\subsection{Local Central Limit Theorem (LCLT)}\label{ss:local_clt}

We establish here  an improvement of the CLT, see Definition \ref{d:lclt}, for observables
satisfying a further cocycle condition. 
Our result is new for $k=1$, $\phi$ non-constant, and for $k> 1$, even when
 $\phi =0$; for $k=1$ and $\phi=0$, see \cite{dinh2007thermodynamics}. We need the following definition.

\begin{definition}  \label{d:cocycle}
Let $g\colon\P^k\to\R$ be a measurable function. We say that $g$ is a {\it multiplicative cocycle} if there exist
$t>0$, $s\in \R$, 
and a measurable function $\xi\colon \P^k \to \C$, not equal to zero $\mu_\phi$-almost everywhere,
 such that $e^{i t g (z)}  \xi (z) = e^{i s} \xi (f(z))$. We say
 that $g$ is a {\it $(\Cc^0,\phi)$-multiplicative cocycle} (resp. 
\emph{$(\norm{\cdot}_{\diamond_1},\phi)$-multiplicative cocycle}) if there exist
$t>0$, $s\in \R$, 
and $\xi\colon \P^k \to \C$, not identically zero
on the small Julia set of $f$, which is continuous (resp. with finite
$\norm{\cdot}_{\diamond_1}$ norm), such that $e^{i t g (z)}  \xi (z) = e^{i s} \xi (f(z))$ on  the small Julia set of $f$.
\end{definition}

Recall that the supports of $\mu_\phi$ and
$m_\phi$ are both equal to the small Julia set of $f$, see Theorem \ref{t:main}.

 \begin{theorem} \label{t:LCLT}
 Under the hypotheses of Theorem \ref{t:goal}, let $g\colon \P^k \to \R$ be such that
 $\norm{g}_{\diamond_1}$ is finite, $\sca{\mu_\phi, g}=0$, and $g$ is not a $(\norm{\cdot}_{\diamond_1},\phi)$-multiplicative cocycle.
 Then $g$ satisfies the LCLT with variance $\sigma$ given by \eqref{eq_def_sigma}.
 \end{theorem}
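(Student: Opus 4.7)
The plan is to deduce the LCLT from the abstract Nagaev-type criterion stated in Appendix \ref{a:abstract-result}. That criterion reduces the LCLT to four ingredients: (a) the spectral gap for $\lam^{-1}\Ll_{[0]}=\lam^{-1}\Ll_\phi$ on $\{h\colon\norm{h}_{\diamond_1}<\infty\}$, which is Theorem \ref{t:goal}; (b) the analyticity of $\theta\mapsto\Ll_{[\theta]}$ together with the spectral decomposition and the expansion $\alpha(\theta)=1+\sigma^2\theta^2/2+o(\theta^2)$ (Lemma \ref{l:perturbed}, Proposition \ref{p:decomposition}, Lemma \ref{l:development}); (c) the positivity $\sigma^2>0$, which follows because any coboundary $g=\tilde h\circ f-\tilde h$ with $\tilde h$ of finite $\norm{\cdot}_{\diamond_1}$ norm (as in Proposition \ref{p:coboundaries}) is automatically a $(\norm{\cdot}_{\diamond_1},\phi)$-multiplicative cocycle by taking $\xi:=e^{it\tilde h}$ and $s:=0$ (with $\xi$ having finite $\norm{\cdot}_{\diamond_1}$ norm by Lemma \ref{l:logq-alpha-p-gamma} applied to the Lipschitz functions $\cos$ and $\sin$), so the non-cocycle hypothesis forces $g$ not to be a coboundary and hence $\sigma^2>0$; and (d) the \emph{non-arithmetic condition}: for every real $t\neq 0$ the spectral radius of $\lam^{-1}\Ll_{[it]}$ is strictly less than $1$, uniformly on compact subsets of $\R\setminus\{0\}$. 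Only (d) requires additional work.

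For (d), I first establish the quasi-compactness of $\lam^{-1}\Ll_{[it]}$ for arbitrary real $t$ by revisiting Sections \ref{s:norm}--\ref{s:speed} with the complex weight $\phi+itg$. The pointwise domination $|\Ll_{[it]}h|\leq\Ll_\phi|h|$ (which holds because $|e^{\phi+itg}|=e^\phi$), together with the observation that Proposition \ref{p:estimate_n_gen} and the proof of Theorem \ref{t:spectral_gap_apg} rely only on moduli and on Cauchy--Schwarz estimates for the positive current $i\partial h\wedge\dbar h$ (valid equally for complex $h$), yields a Lasota--Yorke type inequality of the form
\[
\norm{\lam^{-n}\Ll_{[it]}^n h}_{\diamond_1}\leq c\,\beta^n\norm{h}_{\diamond_1}+c_t\norm{h}_\infty,
\]
with $\beta<1$ independent of $t$ and $c_t$ locally bounded in $t$. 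Combined with the compactness of the embedding $\{\norm{\cdot}_{\diamond_1}<\infty\}\hookrightarrow C^0(\P^k)$ (an Arzel\`a--Ascoli consequence of Lemmas \ref{l:cpt_apmixed} and \ref{l:logq-alpha-p-gamma}), the Ionescu-Tulcea--Marinescu theorem gives quasi-compactness of $\lam^{-1}\Ll_{[it]}$ with essential spectral radius at most $\beta$.

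The heart of the matter is then to rule out peripheral eigenvalues for $t\neq 0$. Suppose for contradiction that $\Ll_{[it]}\xi=\lam e^{is}\xi$ for some $t\neq 0$, $s\in\R$, and a non-zero $\xi$ of finite $\norm{\cdot}_{\diamond_1}$ norm; set $\tilde\xi:=\xi/\rho$, so that $L_{[it]}\tilde\xi=e^{is}\tilde\xi$. Since $L=L_{[0]}$ is Markov ($L\1=\1$, $L^*\mu_\phi=\mu_\phi$), the triangle inequality gives $|\tilde\xi|\leq L|\tilde\xi|$ pointwise, and integrating against $\mu_\phi$ forces equality in $L^1(\mu_\phi)$, hence everywhere on $\supp(\mu_\phi)$ by continuity. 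Ergodicity of $\mu_\phi$ (equivalently, simplicity of the eigenvalue $1$ for $L$) forces $|\tilde\xi|$ to be constant on the small Julia set, and the equality in the triangle inequality forces the unit vectors $e^{itg(x)}\tilde\xi(x)$ with $x\in f^{-1}(y)$ to share a common argument, which the eigenvalue equation identifies with that of $e^{is}\tilde\xi(f(x))$. Hence $e^{itg(x)}\tilde\xi(x)=e^{is}\tilde\xi(f(x))$ on the small Julia set. Since $\tilde\xi$ has finite $\norm{\cdot}_{\diamond_1}$ norm by Lemma \ref{l:diamond} and the bound on $\norm{1/\rho}_{\diamond_1}$ in Theorem \ref{t:goal}, this contradicts the assumption that $g$ is not a $(\norm{\cdot}_{\diamond_1},\phi)$-multiplicative cocycle. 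Uniformity in $t$ on compact subsets of $\R\setminus\{0\}$ then follows by upper semi-continuity of the peripheral spectrum under the analytic perturbation of Lemma \ref{l:perturbed} together with the above quasi-compactness, and the abstract criterion yields the LCLT with variance $\sigma$ given by \eqref{eq_def_sigma}. The main obstacle in this plan is the second step: although the modulus-based estimates of Section \ref{s:speed} survive the complexification, one must carefully replace each positivity argument of Section \ref{s:norm} (in particular those involving $\ddc$ of convex functions of $g$ in Lemma \ref{l:norm-p-compose}) by its appropriate modulus-based counterpart for complex $h$, relying on $i\partial h\wedge\dbar h\geq 0$.
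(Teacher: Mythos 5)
Your plan invokes ``the abstract Nagaev-type criterion stated in Appendix~\ref{a:abstract-result}'' for the LCLT, but no such criterion appears there: Appendix~\ref{a:abstract-result} records abstract theorems only for the CLT (Theorems~\ref{t:a_clt}, \ref{t:clt-spectral}), the ASIP (Theorem~\ref{t:asip-spectral}), the LIL/ASCLT (Theorem~\ref{t:a_lil_asclt}), and the LDP (Theorem~\ref{t:ge_ldp}); for the LCLT only Definition~\ref{d:lclt} is given. The paper does not delegate the LCLT to an appendix result at all --- it carries out the Fourier argument directly, citing \cite[Theorem~10.17]{breiman}: one defines $A_n(x)$ as in \eqref{e:An}, rewrites it via the identity $\langle\mu_\phi,e^{itS_n(g)}\rangle=\langle m_\phi,\lam^{-n}\Ll_{[it]}^n\rho\rangle$, splits the $t$-integral into $|t/(\sigma\sqrt n)|<\delta_0$ (handled by Proposition~\ref{p:decomposition} and Lemma~\ref{l:development} plus dominated convergence) and $\delta_0\le|t/(\sigma\sqrt n)|<\delta$ (handled by the exponential bound in Lemma~\ref{l:non-cocycle}). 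So your reduction to four ``ingredients'' is morally correct --- those are indeed the facts one must establish --- but the citation is wrong, and the actual passage from those facts to the uniform limit $A_n(x)\to0$ still has to be carried out (or a genuine reference such as Hennion--Herv\'e or Breiman supplied).

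Your treatment of the non-arithmetic condition (d) is a genuinely different route from the paper's, and it is the interesting part of the proposal. The paper (Lemmas~\ref{l:cocycle} and~\ref{l:non-cocycle}) avoids complexifying the norm machinery: it works with the pointwise domination $|\Ll_{[it]}^nh|\le\Ll^n|h|$, shows the family $\Fc^K_\infty$ of uniform limits is $\{0\}$ via the maximum principle (Claims 1--3), and then bootstraps this uniform $\Cc^0$-convergence to an $\norm{\cdot}_\sapg$-contraction by re-running the proof of Proposition~\ref{p:G12}. You instead propose the classical route: Lasota--Yorke plus Ionescu-Tulcea--Marinescu to get quasi-compactness of $\lam^{-1}\Ll_{[it]}$, then a modulus argument to exclude peripheral eigenvalues. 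Your peripheral-eigenvalue argument is essentially a reformulation of the paper's Claims 1--3 stated for an actual eigenfunction rather than for limit functions, and it is correct (the key steps --- $|\tilde\xi|\le L|\tilde\xi|$, equality by $\mu_\phi$-invariance, constancy of $|\tilde\xi|$ from $L^n\to\langle\mu_\phi,\cdot\rangle$, equality in the triangle inequality yielding the cocycle relation, and membership of $\tilde\xi$ in the $\norm{\cdot}_{\diamond_1}$-space via Lemma~\ref{l:diamond} and the bound on $\norm{1/\rho}_{\diamond_1}$ --- all check out). The trade-off is that you must establish a Lasota--Yorke inequality for the complex-weight operator, which you only sketch and flag as ``the main obstacle''; this is exactly the work that the paper's approach sidesteps by using the modulus domination to remain in the real-valued framework of Propositions~\ref{p:rho-inverse} and~\ref{p:G12}. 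Concretely, your claimed inequality $\norm{\lam^{-n}\Ll_{[it]}^nh}_{\diamond_1}\le c\beta^n\norm h_{\diamond_1}+c_t\norm h_\infty$ does not follow formally from what is proved in Section~\ref{s:speed}, since the existing contraction is established only on $\ker m_\phi$ and the boundedness statements have $\beta=1$; recovering a Lasota--Yorke with $\beta<1$ for all $h$ requires re-deriving the estimate in the spirit of Proposition~\ref{p:estimate_n_gen} and Proposition~\ref{p:G12} with the complex weight, using that $|e^{\phi+itg}|=e^\phi$. That is doable --- and would indeed give a somewhat more modular statement (quasi-compactness plus absence of peripheral spectrum) --- but it is the crux, not a routine verification, and your proposal leaves it open precisely where the paper does the work.
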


 \begin{remark}\label{r:lclt}
 The LCLT is a refined version of the CLT. Notice that it requires a stronger assumption on the observable $g$.
 \end{remark}
 
We first need some properties of multiplicative cocycles. Note that the following lemma still holds if 
we only assume that $\phi$ and $g$ have bounded $\norm{\cdot}_{\log^q}$ norms for some $q>2$.

\begin{lemma} \label{l:cocycle}
There is a positive 
constant $c$
independent of $g,t$,
and $n$ such that $\|\lam^{-n}\Ll_{[it]}^n\|_\infty\leq c$. 
Let $K$ be a compact subset of $\R\setminus \{0\}$ (e.g., a
singleton in $\R\setminus \{0\}$). Let $\Fc$ be a uniformly bounded
and equicontinuous family of functions on $\P^k$. Then the family
$$\Fc^K_{\N}:=\big\{\lambda^{-n}\Ll_{[it]}^n h \  : \  t\in K, \ h\in\Fc, \ n\in\N\big\}$$
is also uniformly bounded and equicontinuous. Furthermore, if $K\subset (0,\infty)$ and 
$g$ is not a $(\Cc^0,\phi)$-multiplicative cocycle, then
$\|\lambda^{-n}\Ll_{[it]}^n h\|_\infty$ tends to $0$ when $n$
goes to infinity, uniformly in $t\in K$ and $h\in\Fc$.
\end{lemma}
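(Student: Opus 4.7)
My plan is to prove the three claims in order, with the last one being by far the deepest.

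First I will prove the uniform operator bound. By Lemma~\ref{l:perturbed}(iv) we have $\lambda^{-n}\Ll_{[it]}^n h = \lambda^{-n}\Ll_{[0]}^n(e^{itS_n(g)}h)$, and since $|e^{itS_n(g)}|=1$ this gives the pointwise majorization $|\lambda^{-n}\Ll_{[it]}^n h|\leq \lambda^{-n}\Ll_{[0]}^n |h|$. Combined with the uniform bound on $\norm{\lambda^{-n}\Ll_{[0]}^n}_\infty$ from Lemma~\ref{l:dependence}, this gives $\norm{\lambda^{-n}\Ll_{[it]}^n}_\infty \leq c$ with $c$ independent of $g$, $t$, and $n$; the same majorization shows $\Fc^K_\N$ is uniformly bounded.

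Next, for the equicontinuity of $\Fc^K_\N$, I will adapt the proof of Lemma~\ref{l:dependence-bis} from \cite{bd-eq-states-part1} to the complex perturbed operator. The key observation is that since $\norm{g}_{\diamond_1}<\infty$ forces $\norm{g}_{\log^q}<\infty$ for some $q>2$ by Theorem~\ref{t:goal}, the multiplier $e^{itg}$ is $\log^q$-continuous uniformly in $t$ on the compact set $K$, with modulus of continuity bounded by $|t|\,\norm{g}_{\log^q}$. The ingredients of the proof of Lemma~\ref{l:dependence-bis}---Assumption~(A') controlling the stretching of preimages under $f^n$, and the exponential decay of $\lambda^{-n}\Ll_{[0]}^n$ towards the projection $g\mapsto \sca{m_\phi,g}\rho$ on bounded equicontinuous families---adapt to the complex weight family $\phi+itg$, $t\in K$, since the real part is unchanged and the imaginary factor is controlled in modulus of continuity by $g$. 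Writing $\lambda^{-(n+1)}\Ll_{[it]}^{n+1}h=\lambda^{-1}\Ll_{[0]}(e^{itg}\lambda^{-n}\Ll_{[it]}^n h)$ and propagating the modulus through the iteration produces the desired uniform equicontinuity of $\Fc^K_\N$ with a modulus depending on $K$, $\Fc$, $\phi$, $g$, but not on $n$, $t$, or $h$.

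For the final statement I will argue by contradiction. Assume there exist sequences $t_j\in K\subset(0,\infty)$, $h_j\in\Fc$, $n_j\to\infty$ with $\norm{\lambda^{-n_j}\Ll_{[it_j]}^{n_j}h_j}_\infty\geq\eps_0>0$. Passing to subsequences, $t_j\to t^*\in K$ and $h_j\to h^*$ in $C^0$ by Arzel\`a--Ascoli applied to $\Fc$, and by step~(ii) combined with Arzel\`a--Ascoli we may arrange $\lambda^{-n_j}\Ll_{[it_j]}^{n_j}h_j\to \xi$ in $C^0$ with $\norm{\xi}_\infty\geq\eps_0$. Continuity of $\Ll_{[it]}^m$ in both $t$ and its input then yields $\xi_m := \lim_j \lambda^{-(n_j+m)}\Ll_{[it_j]}^{n_j+m}h_j = \lambda^{-m}\Ll_{[it^*]}^m\xi$ for every $m\geq 0$. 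The orbit $\{\xi_m\}$ under $T:=\lambda^{-1}\Ll_{[it^*]}$ lies in the compact closure of $\Fc^K_\N$ in $C^0$, so it admits a nonempty $\omega$-limit set; replacing $\xi$ by a maximizer of $\norm{\cdot}_\infty$ on that set, we may further assume $\xi$ is $\omega$-limit and $\norm{\xi}_\infty$ is maximal there. The pointwise bound $|T\xi|\leq\lambda^{-1}\Ll_{[0]}|\xi|$ integrated against $m_\phi$, combined with the invariance $(\lambda^{-1}\Ll_{[0]})^*m_\phi=m_\phi$ and the maximality of $\xi$, forces equality $m_\phi$-a.e., hence everywhere on $\supp(m_\phi)$ by continuity. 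A Krein--Rutman/mean-ergodic argument inside the compact orbit closure produces an eigenvalue relation $T\xi=e^{is}\xi$ for some $s\in\R$; the equality case of the pointwise bound then forces the phase of $e^{it^*g(x)}\xi(x)$ to be constant along each fiber $f^{-1}(y)$, giving $e^{it^*g(z)}\xi(z)=e^{is}\xi(f(z))$ on the small Julia set. Uniqueness of the leading eigenfunction of $\lambda^{-1}\Ll_{[0]}$ from Theorem~\ref{t:main} shows $|\xi|$ is a positive multiple of $\rho$ on the small Julia set, hence $\xi$ is continuous and non-vanishing there. Therefore $g$ is a $(\Cc^0,\phi)$-multiplicative cocycle with parameter $t^*>0$, contradicting the hypothesis.

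The principal obstacle is the spectral rigidity step in the last paragraph: producing a clean eigenvalue relation $T\xi=e^{is}\xi$ from mere $C^0$-compactness, since $T=\lambda^{-1}\Ll_{[it^*]}$ has no direct spectral gap for $t^*\neq 0$. This is handled by a mean-ergodic argument inside the compact $\omega$-limit set of the orbit, after which the equality case of $|T\xi|\leq\lambda^{-1}\Ll_{[0]}|\xi|$ converts the eigenvalue relation into the cocycle identity via the uniqueness of $\rho$. The equicontinuity adaptation in step~(ii) is more of a bookkeeping exercise but also requires care in tracking how the factor $e^{itg}$ interacts with iterates of the real operator $\lambda^{-1}\Ll_{[0]}$.
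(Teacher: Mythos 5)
The first two parts (uniform bound via $|e^{\phi+itg}|=e^\phi$, equicontinuity by adapting the proof of the analogue of Lemma~\ref{l:dependence-bis} to the perturbed weight) match the paper's approach. The third part, however, contains a genuine gap precisely at the point you yourself flag as the hardest.

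Your plan maximizes $\norm{\xi}_\infty$ over the $\omega$-limit set and then asserts that, after integrating $|T\xi|\leq\lambda^{-1}\Ll_{[0]}|\xi|$ against $m_\phi$ and invoking $(\lambda^{-1}\Ll_{[0]})^*m_\phi=m_\phi$, ``the maximality of $\xi$ forces equality.'' This does not follow: knowing $\norm{T\xi}_\infty\leq\norm{\xi}_\infty$ gives no lower bound on $\int |T\xi|\,dm_\phi$, so the inequality $\int|T\xi|\,dm_\phi\leq\int|\xi|\,dm_\phi$ cannot be upgraded to an equality from sup-norm maximality alone. The paper avoids this by maximizing the ratio $|l/\rho|$ rather than $\norm{l}_\infty$. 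This is the crucial device: because $\lambda^{-1}\Ll_{[0]}\rho=\rho$, the set $\{l:|l|\leq M\rho\}$ is preserved by the unperturbed operator, and a backward chain $l_{-n}\in\Fc^K_\infty$ with $l=\lambda^{-n}\Ll_{[it]}^nl_{-n}$ gives, via the squeeze $M\rho=|l|\leq\lambda^{-n}\Ll_{[0]}^n|l_{-n}|\leq\lambda^{-n}\Ll_{[0]}^n(M\rho)=M\rho$, the modulus identity $|l_{-n}|=M\rho$ on $f^{-n}(a)$, hence (by equidistribution of preimages and passing to a subsequential limit $l_{-\infty}$) everywhere on the small Julia set. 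Your argument has no analogue of the backward chain and no mechanism to force the modulus identity to hold everywhere.

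Beyond that, the proposed ``Krein--Rutman/mean-ergodic argument inside the compact orbit closure'' to produce an eigenvalue relation $T\xi=e^{is}\xi$ is not explained and is not obviously available: $T=\lambda^{-1}\Ll_{[it^*]}$ has no spectral gap for $t^*\neq 0$, the compact orbit closure is not convex, and a mean-ergodic average would require knowing the phase $s$ in advance. The paper never derives an eigenvalue relation as an intermediate step. Instead, once $|l|=M\rho$ holds on the Julia set, it sets $\xi:=l/\rho$, $\vartheta:=e^{itg}\xi/(\xi\circ f)$ with $|\vartheta|=1$, and uses the equality case of the triangle inequality in $M\rho(a)=|\lambda^{-n}\Ll_{[it]}^n l(a)|\leq\lambda^{-n}\Ll_{[0]}^n(M\rho)(a)=M\rho(a)$: this forces the products $\vartheta(b)\cdots\vartheta(f^{n-1}(b))$ to be independent of $b\in f^{-n}(a)$, and comparing $n$ with $n-1$ forces $\vartheta$ itself to be constant on $f^{-n}(a)$, hence constant on the Julia set by density of preimages. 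That directly produces the cocycle identity $e^{itg}\xi=e^{is}\xi\circ f$ with no Krein--Rutman input. If you want to salvage your approach, you should replace sup-norm maximality with maximality of $|l/\rho|$, set up the backward chain $l_{-n}$ inside $\Fc^K_\infty$, and then carry out the combinatorial phase argument rather than appealing to a spectral fixed-point theorem.
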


\proof
Define $\phi_t:=\phi+itg$. Observe that $|e^{\phi_t}|=e^\phi$.
It follows that $\|\Ll_{[it]}^n\|_\infty\leq \|\Ll^n\|_\infty \leq c\lam^n$ for some
positive
constant $c$, according to
 \eqref{e:intro-cv-rho}.
Using this, we can follow the
proof of 
 \cite[Lemma 3.9]{bd-eq-states-part1},
with $\phi_t$ instead of $\phi$, and
obtain that $\Fc^K_{\N}$ is uniformly bounded and equicontinuous. 
It remains to prove the last assertion in the lemma.

Let $\Fc^K_\infty$ denote the family of the limit functions 
of all sequences $\lam^{-n_j}\Ll_{[it_j]}^{n_j}(h_j)$ with 
$t_j\in K$, $h_j\in \Fc$, 
and $n_j$ going to infinity. 
By Arzel\`a-Ascoli theorem, this is a uniformly bounded
and equicontinuous family of functions
which is compact for the uniform topology. 
Define 
$$M:=\max \big\{|l(a)/\rho(a)| \ : \  l\in\Fc^K_\infty,\  a \text{ in the small Julia set} \big\}.$$

\medskip\noindent
{\bf Claim 1.} If $M=0$, then $\Fc^K_\infty$ only contains the zero function.

\proof[Proof of Claim 1]
Assume that $M=0$. Consider a function $l$ in $\Fc^K_\infty$. We show that $l=0$. 
 Consider functions
$l_{-n}\in\Fc^K_\infty$ such that $l=\lam^{-n}\Ll_{[it]}^nl_{-n}$ for some $t\in K$ 
(take $t$ as a limit of $t_j$ above). 
Since $M=0$, the function $l_{-n}$ vanishes on the support of $m_\phi$ (which is the small Julia set) 
and we have $c_{|l_{-n}|}=\langle m_\phi, |l_{-n}|\rangle=0$.
Moreover, we also have
$$|l|=|\lam^{-n}\Ll_{[it]}^nl_{-n}|\leq \lam^{-n} \Ll^n|l_{-n}|.$$
By 
 Lemma \ref{l:dependence-bis}
applied to
the family $\{|h|: \ h\in \Fc^K_\infty\}$, the last function converges to 0
when $n$ tends to infinity. The claim follows.
\endproof

\noindent
{\bf Claim 2.} There is $l\in \Fc^K_\infty$ such that $|l/\rho|=M$ on the small Julia set.

\proof[Proof of Claim 2]
Choose a function $l$ in $\Fc^K_\infty$ such that $\max|l/\rho|=M$.
Consider $l_{-n}$ and $t$ as in the proof of Claim 1. Let $a$ in the small Julia set be 
such that $|l(a)/\rho(a)|=M$. Then we have
\begin{align*}
|l(a)|=|\lam^{-n}\Ll_{[it]}^n l_{-n}(a)| &=\Big|\lam^{-n}\sum_{b\in f^{-n}(a)} e^{\phi_t(b)+\cdots+\phi_t(f^{n-1}(b))} l_{-n}(b)\Big| \\
& \leq \lam^{-n}\sum_{b\in f^{-n}(a)} e^{\phi(b)+\cdots+\phi(f^{n-1}(b))} 
M \rho(b)= M\rho(a).
\end{align*}
So the last inequality is actually an equality. In particular,
we have $|\l_{-n}/\rho|=M$ on $f^{-n}(a)$. Recall that when $n$
goes to infinity, $f^{-n}(a)$ tends to the small Julia set. Therefore, if $l_{-\infty}$
is a limit of $l_{-n}$, we have $|l_{-\infty}/\rho|=M$ on the small Julia set.
Replacing $l$ by $l_{-\infty}$ gives the claim.
\endproof

\noindent
{\bf Claim 3.} There are $t\in K$ and $l\in \Fc^K_\infty$ such that
$|\lam^{-N}\Ll_{[it]}^N l|=M \rho$ on the small Julia set for every $N\geq 0$.

\proof[Proof of Claim 3]
Choose $l$ satisfying Claim 2
and $t$ as in its proof. 
Define $l_{-n}$ and $l_{-\infty}$ as in the proof of that claim.
Fix an $N\geq 0$. For every $n\geq N$, on the small Julia set we have
\begin{align*}
M\rho & = |l| = |\lam^{-n}\Ll_{[it]}^n l_{-n}| =|\lam^{-n+N}\Ll_{[it]}^{n-N}(\lam^{-N}\Ll_{[it]}^{N}l_{-n})| \\
& \leq \lam^{-n+N} \Ll^{n-N} |\lam^{-N}\Ll_{[it]}^{N}l_{-n}| 
\leq \lam^{-n+N} \Ll^{n-N} \lam^{-N}\Ll^{N}(M\rho) = M\rho.
\end{align*}
So the two last inequalities are in fact equalities and we deduce
that $|\lam^{-N}\Ll_{[it]}^{N}l_{-n}| =M\rho$. It follows that 
$|\lam^{-N}\Ll_{[it]}^{N}l_{-\infty}| =M\rho$ for every $N\geq 0$.
Replacing $l$ by $l_{-\infty}$ gives the claim.
\endproof

Assume now that $g$ is not a $(\Cc^0,\phi)$-multiplicative cocycle. By Claim 1, we
only need to show that $M=0$. Assume by contradiction that $M\not=0$. Consider $t$ and $l$ as in Claim 3. 
Define $\xi(a):=l(a)/\rho(a)$ for $a\in\P^k$ and
$\vartheta(a):=e^{itg(a)} \xi(a)/\xi(f(a))$ for $a$ in the small Julia set.
These functions are continuous and we have $|\xi(a)|=M$ 
and $|\vartheta(a)|=1$ on the small Julia set.
We have for $a$ in the small Julia set 
\begin{align*}
M\rho(a) &= |\lam^{-n}\Ll_{[it]}^n l(a)| =\Big|\lam^{-n}\sum_{b\in f^{-n}(a)} e^{\phi_t(b)+\cdots+\phi_t(f^{n-1}(b))} l(b)\Big| \\
& \leq \lam^{-n}\sum_{b\in f^{-n}(a)} e^{\phi(b)+\cdots+\phi(f^{n-1}(b))} 
M \rho(b)= M\rho(a).
\end{align*}
So, the last inequality is an equality. Using the function $\xi$,
we can rewrite this equality as (we remove the factors $\lam^{-n}$ and
also the factors $|\xi(f^n(b))|$ and $M$ as they are both equal to $|\xi(a)|$ and independent 
of $b \in f^{-n}(a)$)
$$\Big|\sum_{b\in f^{-n}(a)} \vartheta(b)\ldots \vartheta(f^{n-1}(b))e^{\phi(b)+\cdots+\phi(f^{n-1}(b))}\rho(b)\Big|
=  \sum_{b\in f^{-n}(a)} e^{\phi(b)+\cdots+\phi(f^{n-1}(b))} \rho(b).$$
As $|\vartheta|=1$, we deduce that if $b$ and $b'$ are two points in $f^{-n}(a)$ then 
$$\vartheta(b)\ldots \vartheta(f^{n-1}(b))=\vartheta(b')\ldots \vartheta(f^{n-1}(b')).$$
This and a similar equality for $f(b),f(b'),n-1$ instead
of $b,b',n$
 imply that $\vartheta(b)=\vartheta(b')$. We conclude
that $\vartheta$ is constant on $f^{-n}(a)$ for every $n$. As
$f^{-n}(a)$ tends to the small Julia set
when $n$ going to infinity,
it follows
that $\vartheta$ is constant.
From the definition of $\vartheta$, and since $\xi$ is continuous,
we obtain that $g$ is a $(\Cc^0,\phi)$-multiplicative
cocycle for a suitable real number $s$ such that $\vartheta=e^{is}$.
This is a contradiction. So we have $M=0$, as desired.
\endproof

Recall that $\phi$ and $g$ have bounded $\norm{\cdot}_{\diamond_1}$ norms.

\begin{lemma} \label{l:non-cocycle}
Let $K$ be a compact subset of $\R$.
There is a positive constant $c$
 such that $\|\lam^{-n}\Ll_{[it]}^n\|_{\diamond_1}\leq c$ 
for every $n\geq 0$ and $t\in K$. 
If $K\subset \R \setminus \{0\}$
 and $g$ is not a $(\norm{\cdot}_{\diamond_1},\phi)$-multiplicative
cocycle,  then there are constants $c>0$ and $0<r<1$ such
that $\|\lam^{-n}\Ll^n_{[it]}\|_{\diamond_1}\leq cr^n$ for every $t\in K$ and $n\geq 0$.
\end{lemma}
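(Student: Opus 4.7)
The plan is to transport the spectral gap machinery of Theorem~\ref{t:spectral_gap_apg} to the complex-weighted operators $\Ll_{[it]} = \Ll_{\phi + itg}$, exploiting the identity $|e^{\phi_t}| = e^\phi$ with $\phi_t := \phi + itg$.

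For the first assertion, I would re-run Propositions~\ref{p:estimate_n_gen}, \ref{p:G12}, and~\ref{p:rho-inverse} taking $\phi^{(j)} := \phi_t$ throughout and $\Ll_{[it]}$ in place of $\Ll$. The derivative $\partial \phi_t = \partial\phi + it\,\partial g$ contributes an extra term whose $\sap$-semi-norm is controlled by $|t|\,\norm{g}_\sap$, uniform in $t\in K$. The Cauchy-Schwarz step in the proof of Proposition~\ref{p:estimate_n_gen} bounding $|(\pi_n)_*(\cdot)|^2$ naturally produces the real positive weight $e^{\phi(x_0)+\dots+\phi(x_{n-1})}$, so the factors $\norm{\Ll_{1,m}\1}_\infty$ are replaced by $\norm{\Ll_\phi^m\1}_\infty \lesssim \lam^m$. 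Running the decomposition/contraction arguments of Section~\ref{ss:proofs_gap} with these modifications yields $\norm{\lam^{-n}\Ll_{[it]}^n}_{\diamond_1}\leq c$ uniformly in $n \geq 0$ and $t \in K$.

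For the second assertion, the case of general $K\subset\R\setminus\{0\}$ reduces to $K\subset(0,\infty)$ by complex conjugation, since $\Ll_{[-it]}\bar h = \overline{\Ll_{[it]}h}$. The crucial ingredient is that $\norm{\lam^{-n+m}\Ll_{[it]}^{n-m}g}_\infty \to 0$ uniformly in $t\in K$---the analogue of the $\Cc^0$-decay used in Proposition~\ref{p:G12}---which is provided by Lemma~\ref{l:cocycle} under the non-$(\Cc^0,\phi)$-cocycle hypothesis. To bridge the present hypothesis of non-$(\norm{\cdot}_{\diamond_1},\phi)$-cocycle with this, I would verify that any $\Cc^0$-cocycle is automatically a $\diamond_1$-cocycle; equivalently, I would reinforce Lemma~\ref{l:cocycle} so that the family $\Fc^K_\infty$ in its proof consists of $\diamond_1$-bounded limits, both approaches being enabled by part~1 above. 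With the $L^\infty$-decay in hand, the adapted Proposition~\ref{p:G12} yields $\norm{\lam^{-N}\Ll_{[it]}^N}_{\diamond_1}\leq 1/2$ uniformly in $t\in K$ for $N$ large enough. Submultiplicativity of the operator norm then gives $\norm{\lam^{-kN}\Ll_{[it]}^{kN}}_{\diamond_1}\leq 2^{-k}$, which combined with the bound from part~1 yields $\norm{\lam^{-n}\Ll_{[it]}^n}_{\diamond_1}\leq cr^n$ with $r = 2^{-1/N}$.

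The main obstacle will be the regularity bootstrap from $\Cc^0$-cocycles to $\diamond_1$-cocycles. The semi-norm $\norm{\cdot}_\sapg$ is defined as an infimum over decompositions, and its lower semicontinuity under uniform limits is not automatic; careful extraction of decompositions along the iterates $\lam^{-n}\Ll_{[it]}^n l_{-\infty}$ appearing in the proof of Lemma~\ref{l:cocycle} (or a direct Arzel\`a-Ascoli argument on the $\diamond_1$-bounded family from part~1) will be needed to ensure that the limiting cocycle $\xi$ has bounded $\diamond_1$-norm.
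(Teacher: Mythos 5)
Your plan is essentially the same as the paper's, and it is correct. For the first assertion, the paper does exactly what you describe: it invokes Lemma~\ref{l:cocycle} (which gives the uniform $L^\infty$ bound, boundedness, and equicontinuity of the family $\Fc^K_\N$, all relying on $|e^{\phi+itg}|=e^\phi$) and then re-runs the proof of Proposition~\ref{p:rho-inverse}, which is your ``re-run Propositions~\ref{p:estimate_n_gen}, \ref{p:G12}, \ref{p:rho-inverse}''. One caveat on your phrasing: ``taking $\phi^{(j)}:=\phi_t$ throughout'' does not literally fit, because in the $\sapg$ setting one still needs the $\eps$-indexed decompositions $\phi_t=\phi_t^{(j)}+\psi_t^{(j)}$ (a constant sequence only works in the limiting $\sap$ setting, as in Theorem~\ref{t:spectral_gap_ap}); but the essential point you extract --- the modulus of the complex weight is the real weight, so the $\norm{\Ll_{1,m}\1}_\infty$ factors in Proposition~\ref{p:estimate_n_gen} are unchanged --- is the correct one.

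For the second assertion, of the two alternatives you offer for bridging the $\Cc^0$/$\diamond_1$-cocycle hypotheses, the paper takes the second and in a slightly more localized form: rather than showing that every $\Cc^0$-cocycle is a $\diamond_1$-cocycle (a claim that would not be obviously true in general), it records that the specific $l\in\Fc^K_\infty$ produced by Claim~3 of Lemma~\ref{l:cocycle} has finite $\norm{\cdot}_{\diamond_1}$-norm, since $\Fc^K_\infty$ is $\diamond_1$-bounded by the first assertion; combined with $\norm{1/\rho}_{\diamond_1}<\infty$ from Theorem~\ref{t:goal} and the algebra property of Lemma~\ref{l:apg_gh}, the specific cocycle $\xi=l/\rho$ inherits finite $\diamond_1$-norm. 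The rest (follow Proposition~\ref{p:G12} with $\beta=1/4$ to get a uniform contraction at time $N$, then iterate) is exactly what you wrote.

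Your closing paragraph flags the one point the paper passes over silently: deducing that the family $\Fc^K_\infty$ of uniform limits is $\diamond_1$-bounded from the bound $\norm{\lam^{-n}\Ll_{[it]}^n}_{\diamond_1}\leq c$ on the iterates tacitly uses lower semicontinuity of $\norm{\cdot}_{\diamond_1}$ (equivalently of $\norm{\cdot}_\sapg$) under uniform limits. This does hold --- one extracts, for each fixed $\eps$, a uniformly convergent subsequence of the good parts $g^{(1)}_{n,\eps}$ by Arzel\`a-Ascoli (they are $\sap$-bounded, hence equicontinuous by Lemma~\ref{l:cpt_apmixed}), and then uses lower semicontinuity of $\norm{\cdot}_\sap$ coming from weak compactness of positive closed currents of bounded mass and the lower semicontinuity of $\norm{\cdot}_{\log^p}$ under uniform convergence of dynamical potentials --- but it is not immediate, and you are right to single it out as the technical step requiring care.
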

\proof
Consider the functional ball
$\Fc:=\{h\ :\  \norm{h}_{\diamond_1}\leq 1\}$.
By Arzel\`a-Ascoli theorem
this ball is compact for the uniform topology. Define
$\Fc^K_\infty$ as in the proof of Lemma \ref{l:cocycle}. Using
that lemma, we can follow the proof of Proposition \ref{p:rho-inverse} and
obtain that $\|\lam^{-n} \Ll_{[it]}^n\|_\sapg$ is bounded uniformly on $n$
and $t\in K$. It follows that a similar property holds for the norm $\norm{\cdot}_{\diamond_1}$. 
This gives the first assertion in the lemma.
We also obtain that the family $\Fc^K_\infty$ is
bounded in the $\norm{\cdot}_{\diamond_1}$ norm and,
 using again
Arzel\`a-Ascoli theorem, we  obtain that it is compact in the uniform topology.

Consider now the second assertion and assume that $g$ is
not a $(\norm{\cdot}_{\diamond_1},\phi)$-multiplicative cocycle. 
We first show that $\Fc^K_\infty$ reduces to $\{0\}$. Assume by contradiction that this is not true. 
Consider $t$ and $l$ as in Claim 3 in the proof of Lemma \ref{l:cocycle}.
Recall that both $\norm{l}_{\diamond_1}$ and  $\norm{1/\rho}_{\diamond_1}$ are
finite, see Theorem \ref{t:goal}.
By Lemma \ref{l:apg_gh}, the function $\xi:=l/\rho$ satisfies the same property and
we conclude, as at the end of the proof of Lemma \ref{l:cocycle},
that $g$ is a $(\norm{\cdot}_{\diamond_1},\phi)$-multiplicative cocycle.
This contradicts the hypothesis.

So $\Fc^K_\infty$ is reduced to $\{0\}$.  By definition of $\Fc^K_\infty$, we obtain that $\lam^{-n}\Ll_{[it]}^nh$ converges to 0 uniformly on $h\in\Fc$ and $t\in K$. Using this property, 
we can follow the proof of Proposition \ref{p:G12} (take $\beta=1/4$) to obtain that $\|\lam^{-N} \Ll_{[it]}^Nh\|_\sapg\leq 1/4$ for $N$ large enough and for all $h\in\Fc$ and $t\in K$. When $N$ is large enough, we also have $\|\lam^{-N}\Ll_{[it]}^N h\|_\infty\leq 1/4$. Therefore, we have 
$\|\lam^{-N} \Ll_{[it]}^N\|_{\diamond_1}\leq 1/2$ which implies the desired property with $r=2^{-1/N}$. 
\endproof

We have the following characterizations of multiplicative cocycles.

\begin{proposition}\label{p:mul_cocycle}
Let $g\colon \P^k \to\R$ be such that $\norm{g}_{\diamond_1}$ is finite.
Then the following properties are equivalent:
\begin{enumerate}
\item[{\rm(ia)}] $g$ is a multiplicative cocycle;
\item[{\rm(ib)}] $g$ is a $(\Cc^0,\phi)$-multiplicative cocycle;
\item[{\rm(ic)}] $g$ is a $(\norm{\cdot}_{\diamond_1},\phi)$-multiplicative cocycle;
\item[{\rm(iia)}] there exists a number $t>0$ such that the spectral radius with respect to the norm $\norm{\cdot}_{\diamond_1}$
of $\lam^{-1}\Ll_{[it]}$ is $\geq 1$.
\item[{\rm(iib)}] there exists a number $t>0$ such that the spectral radius with respect to the norm $\norm{\cdot}_{\diamond_1}$
of $\lam^{-1}\Ll_{[it]}$ is equal to $1$.
\end{enumerate}
Moreover, every coboundary
 with finite $\norm{\cdot}_{\diamond_1}$ norm  is a
$(\norm{\cdot}_{\diamond_1},\phi)$-multiplicative cocycle.
\end{proposition}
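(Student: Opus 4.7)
The plan is to prove a cycle of implications through the groups (ia)–(ic) and to link them to the spectral statements (iia)–(iib) by means of Lemma \ref{l:non-cocycle}. The implications (ic) $\Rightarrow$ (ib) is immediate (finite $\|\cdot\|_{\diamond_1}$ norm entails continuity), and (iib) $\Rightarrow$ (iia) is trivial. To obtain (ib) $\Rightarrow$ (ia), I would observe that the zero set $Z:=\{x\in J^*\colon \xi(x)=0\}$ (where $J^*=\supp(\mu_\phi)$ is the small Julia set) is both forward and backward $f$-invariant inside $J^*$, by the identity $e^{itg(x)}\xi(x)=e^{is}\xi(f(x))$ together with $f(J^*)=f^{-1}(J^*)=J^*$; by the ergodicity of $\mu_\phi$, $\mu_\phi(Z)\in\{0,1\}$; since $\xi$ is continuous, not identically zero on $J^*=\supp\mu_\phi$, the case $\mu_\phi(Z)=1$ is impossible, so $\xi\neq 0$ $\mu_\phi$-a.e.

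For the spectral equivalences, the first part of Lemma \ref{l:non-cocycle} gives $\|\lambda^{-n}\Ll_{[it]}^n\|_{\diamond_1}\leq c$ uniformly in $n$, so the spectral radius is always at most $1$, which together with (iia) gives (iib); the contrapositive of the second part of Lemma \ref{l:non-cocycle} gives (iia) $\Rightarrow$ (ic). For the converse (ic) $\Rightarrow$ (iib), I would introduce the modified operator $L_{[it]}(h):=(\lam\rho)^{-1}\Ll_{[it]}(\rho h)$ and use the intertwining $\lam^{-n}\Ll_{[it]}^n(\rho h)=\rho L_{[it]}^n(h)$. A direct pointwise computation, using that the cocycle equation $e^{itg(x)}\xi(x)=e^{is}\xi(f(x))$ holds on $J^*$ and that all preimages of points in $J^*$ lie in $J^*$, yields $L_{[it]}\xi=e^{is}\xi$ everywhere on $J^*$; iterating, $\|L_{[it]}^n\xi\|_\infty\geq\sup_{J^*}|\xi|>0$, and through the intertwining together with the boundedness of $\rho,1/\rho$ in $\|\cdot\|_{\diamond_1}$ (Theorem \ref{t:goal}) and the multiplicativity of $\|\cdot\|_{\diamond_1}$ (Lemma \ref{l:diamond}), one gets $\|\lam^{-n}\Ll_{[it]}^n\|_{\diamond_1}$ bounded below by a positive constant, so that the spectral radius equals $1$.

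The main obstacle is to close the loop with (ia) $\Rightarrow$ (ic). Given a measurable cocycle $\xi$, replacing $\xi$ by $\xi/|\xi|$ I may assume $|\xi|=1$ $\mu_\phi$-a.e. The operator $W\colon k\mapsto e^{-itg}(k\circ f)$ is an $L^2(\mu_\phi)$-isometry (since $\mu_\phi$ is $f$-invariant and $|e^{-itg}|=1$), and a short computation using $\Ll^*m_\phi=\lam m_\phi$ identifies $L_{[it]}$ as its $L^2(\mu_\phi)$-adjoint through the pairing $\sca{\mu_\phi, L_{[it]}(h)\bar k}=\sca{\mu_\phi, h\, e^{itg}(\bar k\circ f)}$. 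The cocycle equation reads $W\xi=e^{-is}\xi$, and the standard fact that an isometric eigenvector satisfies $W^*\xi=e^{is}\xi$ gives $L_{[it]}\xi=e^{is}\xi$ in $L^2(\mu_\phi)$, so that $\|L_{[it]}^n\xi\|_{L^2(\mu_\phi)}=1$ for every $n$. Suppose by contradiction that $g$ is not a $(\|\cdot\|_{\diamond_1},\phi)$-multiplicative cocycle; from the already proved (iia) $\Leftrightarrow$ (ic) and the intertwining, one obtains $\|L_{[it]}^n\|_{\diamond_1}\leq Cr^n$ for some $r<1$. Approximating $\xi$ in $L^2(\mu_\phi)$ by H\"older continuous functions $\xi_\eps$ (which have finite $\|\cdot\|_{\diamond_1}$ norm by Remark \ref{rem:holder-pag}) with $\|\xi-\xi_\eps\|_{L^2(\mu_\phi)}<\eps$, and using that $L_{[it]}$ has operator norm $1$ on $L^2(\mu_\phi)$ and that $\|\cdot\|_{L^2(\mu_\phi)}\leq\|\cdot\|_\infty\leq\|\cdot\|_{\diamond_1}$, I bound
\[
1=\|L_{[it]}^n\xi\|_{L^2(\mu_\phi)}\leq\|\xi-\xi_\eps\|_{L^2(\mu_\phi)}+\|L_{[it]}^n\xi_\eps\|_\infty\leq\eps+Cr^n\|\xi_\eps\|_{\diamond_1},
\]
and a contradiction is obtained by letting $n\to\infty$ and then $\eps\to 0$.

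Finally, the coboundary claim is direct: if $g$ is a coboundary with $\|g\|_{\diamond_1}<\infty$, Proposition \ref{p:coboundaries} supplies $\tilde h$ with $\|\tilde h\|_{\diamond_1}<\infty$ and $g=\tilde h\circ f-\tilde h$ on $J^*$; then for any $t>0$ the function $\xi:=e^{it\tilde h}$ and the constant $s:=0$ give $e^{itg}\xi=e^{it\tilde h\circ f}=\xi\circ f$ on $J^*$, $|\xi|\equiv 1$ so $\xi$ is nowhere zero, and applying Lemma \ref{l:logq-alpha-p-gamma} to the Lipschitz functions $u\mapsto\cos(tu)$ and $u\mapsto\sin(tu)$ (both with Lipschitz constant $|t|$) yields $\|\xi\|_{\diamond_1}<\infty$. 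I expect that the verification of the $L^2$ identification of $L_{[it]}$ as the adjoint of an isometry, and the passage from the a.e.\ eigenvector $\xi$ to a contradiction via simultaneous control in $L^2(\mu_\phi)$ and in $\|\cdot\|_{\diamond_1}$, will be the delicate step; all other implications reduce to results already proved in the excerpt.
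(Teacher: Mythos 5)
Your proof is correct, and its central implication takes a genuinely different route from the paper's. Both proofs share the easy implications (ic) $\Rightarrow$ (ib) $\Rightarrow$ (ia), (iib) $\Rightarrow$ (iia), the use of the first part of Lemma~\ref{l:non-cocycle} to get (iia) $\Rightarrow$ (iib), the use of its second part (contrapositive) to get (iia) $\Rightarrow$ (ic), and the derivation of the coboundary claim from Proposition~\ref{p:coboundaries}. Where you diverge is in closing the loop from the cocycle side back to the spectral side: the paper proves (ia) $\Rightarrow$ (iia) by showing $h:=\xi\rho$ satisfies $\lam^{-1}\Ll_{[it]}h=e^{is}h$ in an a.e.\ sense, approximating $h$ in $L^1(m_\phi)$ by a smooth function, and letting the (assumed) spectral decay force $\|h\|_{L^1(m_\phi)}=0$, contradicting the nonvanishing of $\xi$; you instead prove (ia) $\Rightarrow$ (ic) directly by recognising that $L_{[it]}$ is the $L^2(\mu_\phi)$-adjoint of the weighted Koopman isometry $W(k)=e^{-itg}(k\circ f)$, so the normalised measurable cocycle $\xi$ (with $|\xi|=1$ a.e.) is an eigenvector of $L_{[it]}$ of unit modulus in $L^2(\mu_\phi)$, and you approximate it in $L^2(\mu_\phi)$ by H\"older functions. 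Your approach buys a cleaner conceptual picture and avoids manipulating the cocycle identity through pointwise preimage sums for a merely measurable $\xi$; the paper's approach is more elementary, stays entirely within the $m_\phi$/$\Ll$ framework already developed, and needs slightly fewer implications overall (you close two separate loops linked by (iia) $\Rightarrow$ (ic) and a direct (ic) $\Rightarrow$ (iib), whereas the paper closes one cycle through (ia) $\Rightarrow$ (iia)).

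Two small remarks. First, your ergodicity argument for (ib) $\Rightarrow$ (ia) proves more than is needed: the definition of a multiplicative cocycle only asks that $\xi$ not be zero $\mu_\phi$-a.e., i.e.\ $\mu_\phi\{\xi\neq 0\}>0$, and this follows immediately from continuity of $\xi$ together with $\supp\mu_\phi$ being the small Julia set — no invariance of the zero set is required. Second, in the (ia) $\Rightarrow$ (ic) step you can invoke the second part of Lemma~\ref{l:non-cocycle} for the singleton $K=\{t\}$ directly to get $\|\lam^{-n}\Ll_{[it]}^n\|_{\diamond_1}\leq cr^n$ from the negation of (ic), rather than passing through the equivalence (iia) $\Leftrightarrow$ (ic); this removes a dependency in your chain and makes the logical order a bit more robust.
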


\begin{proof}
The last assertion is a direct consequence of Proposition \ref{p:coboundaries}.
Indeed, if $g$ is such a coboundary, we have $g=\tilde h\circ f -\tilde h$ on the
small Julia set for some $\tilde h$ with $\|\tilde h\|_{\diamond_1}<\infty$.
So $g$ is a $(\norm{\cdot}_{\diamond_1},\phi)$-multiplicative cocycle as in
Definition \ref{d:cocycle} for $t:=1$, $s:=0$ and $\xi:=e^{i \tilde h}$. We prove now the equivalence of the above five properties.
It is clear that (ic) $\Rightarrow$ (ib) $\Rightarrow$ (ia) and
(iib) $\Rightarrow$ (iia). Lemma \ref{l:non-cocycle} implies that (iia) $\Rightarrow$ (ic).
The following implications complete the proof.

\medskip\noindent
{\bf (iia) $\Rightarrow$ (iib).}   By Lemma \ref{l:non-cocycle}, $\|\lam^{-n}\Ll_{[it]}^n\|_{\diamond_1}$ is bounded uniformly on $n$. 
Thus, the spectral radius of $\lam^{-1}\Ll_{[it]}$ with respect to the norm $\norm{\cdot}_{\diamond_1}$ is at most equal to 1. 
We easily deduce that (iia) $\Rightarrow$ (iib).

\medskip\noindent
{\bf (ia) $\Rightarrow$ (iia).} Assume by contradiction that (ia) is true
and (iia) is not true. Consider $t,s$, and $\xi$ as in Definition \ref{d:cocycle}.
If $\xi(z)\not=0$ we divide it by $|\xi(z)|$. This allows us to assume that
$\xi$ is bounded. Define $h:=\xi\rho$. Using the cocycle property of $g$
and the $\lam^{-1}\Ll$-invariance of $\rho$, we have 
$$\lam^{-1}\Ll_{[it]} h = \lam^{-1}\Ll (e^{itg}\xi\rho)=e^{is} \lam^{-1}\Ll((\xi\circ f)\rho)=e^{is}\xi \lam^{-1} \Ll(\rho) =e^{is} h.$$
It follows that $\lam^{-n}\Ll_{[it]}^nh = e^{ins} h$ for $n\geq 1$.
Fix an arbitrary positive constant $\epsilon$ and choose a smooth function
$\tilde h$ such that $\|h-\tilde h\|_{L^1(m_\phi)}\leq \epsilon$. Since (iia) is
not true, $\lam^{-n} \Ll_{[it]}^n\tilde h$ converges uniformly to 0. Moreover, we have
$$|h|=|\lam^{-n}\Ll_{[it]}^nh|\leq |\lam^{-n}\Ll_{[it]}^n \tilde h| + |\lam^{-n}\Ll_{[it]}^n(h-\tilde h)| \leq 
|\lam^{-n}\Ll_{[it]}^n \tilde h| + \lam^{-n}\Ll^n|h-\tilde h|.$$
This and
 the $\lam^{-1}\Ll^*$-invariance of $m_\phi$ imply that
$$\|h\|_{L^1(m_\phi)} \leq \|\lam^{-n}\Ll_{[it]}^n\tilde h\|_{L^1(m_\phi)} + \|h-\tilde h\|_{L^1(m_\phi)}
\leq \|\lam^{-n}\Ll_{[it]}^n\tilde h\|_{L^1(m_\phi)} + \epsilon.$$
By taking $n$ going to infinity,
we obtain that $\|h\|_{L^1(m_\phi)} \leq\epsilon$, which implies that $h=0$ and hence $\xi=0$, both 
$m_\phi$-almost everywhere. This contradicts the requirement on $\xi$ in
Definition \ref{d:cocycle} and ends the proof of the proposition.
\end{proof}

 \begin{proof}[Proof of Theorem \ref{t:LCLT}] We follow here the proof of
  \cite[Th.\,C]{dinh2007thermodynamics}, which is based on \cite[Theorem 10.17]{breiman}. 
  Let $\psi$ be any real-valued function
 in $L^1 (\R)$ whose Fourier transform 
 $$\hat \psi (x) := \frac{1}{\sqrt{2\pi}}  \int_{\R} \psi (t) e^{-itx} dt$$
is a continuous
function with support contained in some interval $[-\delta, \delta]$. Define 
\begin{equation} \label{e:An}
A_n (x) :=
\sigma\sqrt{2 \pi n}
\sca{\mu_\phi, \psi (x+ S_n (g)}
-
e^{-x^2 / (2\sigma^2 n) } \int_{-\infty}^\infty
\psi (t)  dt.
 \end{equation}
 By
 \cite[Th.\,10.7]{breiman}, it suffices to show that $A_n(x)$ converges to 0 as $n$ goes to infinity, uniformly on $x$.
 
\medskip\noindent 
{\bf Claim.} We have 
\begin{equation} \label{e:An-bis}
A_n (x) =\int_{-\delta \sigma \sqrt{n}}^{\delta \sigma \sqrt{n}} 
\hat \psi \Big(\frac{t}{\sigma \sqrt{n}}\Big)
 e^{itx / (\sigma\sqrt{n})}
\big\langle m_\phi, \lam^{-n}\Ll^n_{[it/(\sigma\sqrt{n})]} \rho\big\rangle dt
-
\hat \psi (0)
\int_{-\infty}^\infty e^{itx/ (\sigma\sqrt{n})} e^{-t^2 /2} dt.
\end{equation}

\proof[Proof of Claim]
The second term in the RHS of \eqref{e:An} is equal to the second term in the RHS of \eqref{e:An-bis} because
$$e^{-x^2/(2\sigma^2 n)}={1\over \sqrt{2\pi}}\int_{-\infty}^\infty e^{itx/(\sigma\sqrt{n})} e^{-t^2/2} dt \qquad \text{and} \qquad \hat\psi(0)={1\over \sqrt{2\pi}} \int_{-\infty}^\infty \psi(t)dt.$$
It remains to compare the first terms. Using the identities 
$$\big \langle m_\phi,e^{itS_n(g)}h \big \rangle
= \big \langle m_\phi,\lam^{-n}\Ll^n(e^{itS_n(g)}h)\big \rangle = \big \langle m_\phi,\lam^{-n}\Ll_{[it]}^nh \big \rangle
\quad \text{and} \quad \psi(z)={1\over \sqrt{2\pi}} \int_{-\delta}^\delta \hat\psi(t) e^{itz}dt$$
and the fact that $\mu_\phi=\rho m_\phi$, we obtain
\begin{eqnarray*}
\sigma\sqrt{2 \pi n}
\sca{\mu_\phi, \psi (x+ S_n (g)} &=& \sigma\sqrt{n}\int_{-\delta}^\delta \hat\psi(t) e^{itx} \big\langle\mu_\phi,e^{itS_n(g)}\big\rangle dt  \\
 &=& \sigma\sqrt{n}\int_{-\delta}^\delta \hat\psi(t) e^{itx} \big\langle m_\phi,\lam^{-n}\Ll^n_{[it]}\rho\big\rangle dt .
\end{eqnarray*}
The last expression is equal to the first term in the RHS of \eqref{e:An-bis}
by the change of variable $t\mapsto t/(\sigma\sqrt{n})$. This ends the proof of the claim.
\endproof 
 
Notice that for any constant $\delta_0 >0$ we have the following partial
estimate for the second term in the RHS of \eqref{e:An-bis}:
\begin{equation}\label{e:a3}
\lim_{n\to \infty}
 \Big| 
\int_{|t|>\delta_0 \sigma\sqrt{n} } e^{itx/ (\sigma\sqrt{n})} e^{-t^2 /2}dt
\Big|
\leq \lim_{n\to \infty} \int_{|t|>\delta_0 \sigma \sqrt{n} }  e^{-t^2 /2} dt =0.
\end{equation}
Moreover, it follows from Proposition \ref{p:decomposition}, Lemma \ref{l:development},
and the identity $\langle m_\phi,\rho\rangle=1$, that
\begin{equation}\label{e:a1_1}
\lim_{n\to \infty}
\hat \psi \Big(\frac{t}{\sigma \sqrt{n}}\Big)
\big\langle m_\phi, \lam^{-n}\Ll^n_{[it/(\sigma\sqrt{n})]} \rho\big\rangle -
\hat \psi (0) e^{-t^2 /2} =0 \quad \mbox{for all } t \in \R,
\end{equation}
and, for $\delta_0<\delta$ sufficiently small,
\begin{equation}\label{e:a1_2}
\Big|
\hat \psi \Big(\frac{t}{\sigma \sqrt{n}}\Big)
\big\langle m_\phi, \lam^{-n}\Ll^n_{[it/(\sigma\sqrt{n})]} \rho\big\rangle
 -
\hat \psi (0) e^{-t^2 /2}\Big|
\lesssim e^{-t^2 /4}
\big\|\hat \psi\big\|_{\infty}
\quad \mbox{when } |t|\leq \delta_0 \sigma \sqrt{n}.
\end{equation}
Since the RHS in \eqref{e:a1_2} defines an integrable
function of $t \in \R$, \eqref{e:a1_1} and \eqref{e:a1_2} imply that
\begin{equation*}
\lim_{n\to \infty}
\int_{-\delta_0 \sigma \sqrt{n}}^{\delta_0 \sigma \sqrt{n}}
 \hat \psi \Big(\frac{t}{\sigma \sqrt{n}}\Big) e^{itx / (\sigma\sqrt{n})}
\big\langle m_\phi, \lam^{-n}\Ll^n_{[it/(\sigma\sqrt{n})]} \rho\big\rangle dt
-
\hat \psi (0)
\int_{-\delta_0\sigma\sqrt{n}}^{\delta_0\sigma\sqrt{n}} e^{itx/ (\sigma\sqrt{n})} e^{-t^2 /2} dt = 0.
\end{equation*}
Using this, \eqref{e:a3}, and the above claim, it is
enough to prove that (compare with the first term in the RHS of \eqref{e:An-bis})
\[
A'_n (x) :=\int_{\delta_0 \leq  |t/(\sigma\sqrt{n})|<\delta} \hat \psi \Big(\frac{t}{\sigma \sqrt{n}}\Big) e^{itx / (\sigma\sqrt{n})}
\big\langle m_\phi, \lam^{-n}\Ll^n_{[it/(\sigma\sqrt{n})]} \rho
\big\rangle dt
\]
converges to 0.
Here is where we use the assumption that $g$ is not a multiplicative
cocycle. By Lemma \ref{l:non-cocycle} and Proposition \ref{p:mul_cocycle},
we can find two constants $c>0$ and $0<r<1$ such that 
$\big\|\lam^{-n}\Ll^n_{[it/(\sigma\sqrt{n})]}\big\|_{\diamond_1}\leq cr^n$
 for $|t/(\sigma \sqrt{n})|\in [\delta_0, \delta]$.
 This shows that the term $\sca{\cdot,\cdot}$ in the definition of $A'_n(x)$  goes to zero exponentially fast, and we deduce 
 that  $A'_n  (x)=O(\sqrt{n} r^n)$ as $n$ goes to infinity. This completes the proof of the theorem.
\end{proof}

\subsection{Almost Sure Invariant Principle (ASIP) and consequences}\label{ss:asip_consequences}
We can now prove the ASIP
for observables which are not coboundaries,
see Definition \ref{d:asip}. 
The ASIP
was proved by
Dupont \cite{dupont2010bernoulli}
 in the case where  $\phi=0$
for observables which are H\"older continuous, or
admit analytic singularities, 
by using \cite{philipp1975almost}, see also
Przytycki-Urba{\'n}ski-Zdunik
 \cite{przytycki1989harmonic}
for $k=1$
and $\phi=0$.

\begin{theorem}
Under the hypotheses of Theorem \ref{t:goal}, let $g\colon\P^k\to\R$ be
such that $\norm{g}_{\diamond_1}$ is finite
and $\sca{\mu_\phi, g}=0$. Assume that
$g$ is not a coboundary. Then $g$ satisfies the
ASIP with error rate $o(n^{q})$ for all $q>1/4$.
\end{theorem}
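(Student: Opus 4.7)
The plan is to invoke an abstract ASIP criterion from Appendix \ref{a:abstract-result}, of the type developed by Gou\"ezel via the characteristic-function/multivariate martingale approach. Such a criterion delivers the ASIP with error rate $o(n^q)$ for every $q>1/4$ as soon as one has, in a suitable Banach space of observables, (i) a quasi-compact transfer operator with a simple leading eigenvalue of modulus one, (ii) an analytic family of perturbations $\Ll_{[\theta]}$ admitting the Rellich-type decomposition $\lam^{-n}\Ll_{[\theta]}^n = \alpha(\theta)^n \Phi_\theta + \Psi_\theta^n$ for $\theta$ in a complex neighbourhood of $0$, and (iii) a strictly positive asymptotic variance $\sigma$.

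I would take the Banach space to be $\{h\colon\P^k\to\C : \norm{h}_{\diamond_1}<\infty\}$ and work with the operator $\lam^{-1}\Ll$ together with its perturbations $\Ll_{[\theta]}$ from Definition \ref{d:l:perturbed}. Condition (i) is exactly the content of Theorem \ref{t:goal} combined with Lemma \ref{l:diamond}. The analyticity of $\theta\mapsto\Ll_{[\theta]}$ on this space is Lemma \ref{l:perturbed}(iii), and Proposition \ref{p:decomposition} supplies the decomposition (ii), including the analyticity of $\alpha(\theta),\Phi_\theta,\Psi_\theta$, the uniform geometric decay of $\Psi_\theta^n$, and the normalisation $\alpha(0)=1$, $\Phi_0(h)=\sca{m_\phi,h}\rho$. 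For condition (iii), Lemma \ref{l:development} gives $\alpha(\theta) = 1 + \sigma^2\theta^2/2 + o(\theta^2)$ with $\sigma^2$ as in \eqref{eq_def_sigma}; the non-coboundary assumption on $g$ forces $\sigma>0$ by the standard fact that a coboundary has vanishing asymptotic variance (as already used implicitly in the proof of the CLT above, via Gordin's theorem).

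The key bridge between the analytic estimates on $\Ll_{[\theta]}$ and the probabilistic conclusion is the identity $\Ll_{[\theta]}^n h = \Ll_{[0]}^n(e^{\theta S_n(g)} h)$ of Lemma \ref{l:perturbed}(iv), which converts moment generating functions of Birkhoff sums under $\mu_\phi$ into iterates of the perturbed operators: for $\theta$ near $0$ we have $\sca{\mu_\phi, e^{\theta S_n(g)}} = \sca{m_\phi, \lam^{-n}\Ll_{[\theta]}^n\rho}$, and the spectral decomposition then controls this uniformly. Plugging all these ingredients into the abstract ASIP theorem yields the desired conclusion with error rate $o(n^q)$ for every $q>1/4$.

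The main substantive obstacle has really already been cleared by the construction of the norms $\norm{\cdot}_{\diamond_1}$ and $\norm{\cdot}_{\diamond_2}$ and the proof of the perturbed spectral gap; once the abstract framework is set up, the verification is essentially a bookkeeping exercise. Note in particular that, unlike the LCLT in Theorem \ref{t:LCLT}, the ASIP only requires the spectral information at the base point $\theta=0$ and therefore needs only the non-coboundary hypothesis rather than the stronger non-cocycle condition; consequently no appeal to Lemma \ref{l:non-cocycle} or Proposition \ref{p:mul_cocycle} is needed here.
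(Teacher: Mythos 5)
Your overall strategy is the same as the paper's: invoke Gou\"ezel's spectral ASIP criterion (Theorem \ref{t:asip-spectral}) on the space $\{h : \norm{h}_{\diamond_1}<\infty\}$ with the operators $\mathcal T_t := \lam^{-1}\Ll_{[it]}$, and feed in the spectral gap from Theorem \ref{t:goal}, the analyticity/regularity from Lemma \ref{l:perturbed}, the expansion of $\alpha(\theta)$ from Lemma \ref{l:development}, and $\sigma>0$ from the non-coboundary hypothesis. Your closing remark that only the information at $\theta=0$ (hence only non-coboundary, not non-cocycle) is needed is correct.

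However, there is a genuine gap: your list ``(i) quasi-compactness, (ii) Rellich decomposition, (iii) $\sigma>0$'' does not match the actual hypotheses of Theorem \ref{t:asip-spectral}, and in particular omits the hypothesis that does the real work. Gou\"ezel's ASIP criterion requires the \emph{strong coding} condition: for all $n$ and all $t_0,\dots,t_{n-1}$,
\[
\big\langle \nu,\, e^{\,i\sum_{j=0}^{n-1} t_j\, g\circ T^j}\big\rangle
\;=\;
\big\langle \nu^*,\, \mathcal T_{t_{n-1}}\circ\cdots\circ\mathcal T_{t_0}\,\xi\big\rangle,
\]
i.e.\ control of the joint characteristic function of \emph{all partial} Birkhoff sums via a composition of \emph{different} perturbed operators. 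This is precisely what lets one approximate blocks of $(g\circ f^j)_j$ by independent random variables and couple them to a Brownian motion; the single-parameter identity $\sca{\mu_\phi, e^{\theta S_n(g)}}=\sca{m_\phi,\lam^{-n}\Ll_{[\theta]}^n\rho}$ that you write down is only the (weaker) coding hypothesis of the CLT/Berry--Esseen criterion (Theorem \ref{t:clt-spectral}) and is not sufficient for the ASIP. The paper's proof spends essentially all its effort verifying the strong coding condition: one shows by induction, exactly as in the proof of Lemma \ref{l:perturbed}(iv) but with $n$ distinct parameters, that
\[
\Ll_{[0]}^n\big(e^{\,i\sum_{l=0}^{n-1} t_l\, g\circ f^l}\rho\big)
=\Ll_{[it_{n-1}]}\circ\cdots\circ\Ll_{[it_0]}\,\rho,
\]
and then combines this with \eqref{e:simplify}. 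You should add this verification; without it, the appeal to the abstract criterion is incomplete. (A minor additional point: Theorem \ref{t:asip-spectral} also has a moment hypothesis, $\norm{g\circ f^n}_{L^p(\mu_\phi)}\leq c$ for some $p>2$; this is trivially satisfied here because $g$ is bounded, but it should be noted.)
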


\begin{proof}
We will prove that the assumptions of Theorem \ref{t:asip-spectral} 
are satisfied by the system $f\colon\P^k\to\P^k$ and the  operators $\mathcal T_t := \lam^{-1} \Ll_{[it]}$
for $t\in\R$. These operators act on the functional
space $\mathcal H :=\{h \ : \ \norm{h}_{\diamond_1}<\infty\}$
which is endowed with the norm 
$\norm{\cdot}_{\diamond_1}$. We consider here the probability measures
$\nu:=\mu_\phi$, $\nu^* := m_\phi$, the function $\xi:= \rho$,
and the constant $p$ (of Theorem \ref{t:asip-spectral})
large enough.
We only need to check the strong coding condition in  Theorem \ref{t:asip-spectral}.
Indeed, the spectral description is a consequence of 
Theorem \ref{t:goal}, 
the weak regularity follows from  Lemma \ref{l:perturbed} or Lemma
\ref{l:non-cocycle}, and the last condition is satisfied since $g$ is continuous.

Let us prove that the strong coding condition holds
for all $t_0,\ldots,t_{n-1} \in \R$. 
As in the proof of \eqref{e:ltn-l0n}, we obtain
\begin{eqnarray*}
\Ll_{[0]}^n ( e^{i \sum_{l=0}^{n-1} t_l  g\circ f^l }\rho) & = & \Ll_{[0]}
\pa{\Ll_{[0]}^{n-1} \big(e^{i t_{n-1} g \circ f^{n-1}} e^{i \sum_{l=0}^{n-2} t_l  g\circ f^l } \rho\big)}\\
& = &
\Ll_{[0]} \pa{e^{i t_{n-1} g} \Ll_{[0]}^{n-1} \big(e^{i \sum_{l=0}^{n-2} t_l  g\circ f^l } \rho\big) } \\
& = & \Ll_{[i t_{n-1}]}  \Ll_{[0]}^{n-1} \big(e^{i \sum_{l=0}^{n-2} t_l  g\circ f^l } \rho\big) .
\end{eqnarray*}
Hence, by induction, we get 
$$\Ll_{[0]}^n ( e^{i \sum_{l=0}^{n-1} t_l  g\circ f^l }\rho) =  \Ll_{[i t_{n-1}]} \circ\cdots \circ  \Ll_{[i t_0]}\rho.$$
The identity \eqref{e:simplify} and the last one imply 
$$\langle \mu_\phi, e^{i \sum_{l=0}^{n-1} t_l  g\circ f^l }
\rangle = \langle m_\phi, \lam^{-n} \Ll_{[0]}^n (e^{i \sum_{l=0}^{n-1} t_l  g\circ f^l }\rho)\rangle
=\sca{m_\phi, \lam^{-n} \Ll_{[i t_{n-1}]}\circ \cdots \circ \Ll_{[i t_0]} \rho}.$$
This is the desired strong coding condition. Applying Theorem \ref{t:asip-spectral} gives the result.
\end{proof}

We also have the following direct consequence
of the ASIP and Theorem \ref{t:clt-spectral}, 
see Definitions
\ref{d:lil}, \ref{d:asclt}, and Theorem
\ref{t:a_lil_asclt}. The LIL was established in \cite{szostakiewicz2014stochastics} in the case
where both the weight $\phi$
and the observable $g$
 are H\"older continuous.

\begin{corollary}\label{c:consequences-asip}
Under the hypotheses of Theorem \ref{t:goal}, let $g$ be such
that $\norm{g}_{\diamond_1}$ is finite and $\sca{\mu_\phi, g}=0$. Assume that
$g$ is not a coboundary. Then $g$ satisfies the ASCLT, the LIL,
and the CLT with error rate $O(n^{-1/2})$.
\end{corollary}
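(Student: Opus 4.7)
The plan is to derive Corollary \ref{c:consequences-asip} as an essentially mechanical consequence of the two tools cited in its statement: the ASIP established in the preceding theorem, and the abstract Berry--Esseen / LIL / ASCLT results recalled in Appendix \ref{a:abstract-result} (Theorems \ref{t:clt-spectral} and \ref{t:a_lil_asclt}). Since $\norm{g}_{\diamond_1}<\infty$, $\langle \mu_\phi,g\rangle=0$, and $g$ is not a coboundary, the hypotheses of the previous theorem are met. Moreover, $g$ not being a coboundary is equivalent to $\sigma>0$ in the variance formula \eqref{eq_def_sigma}, so the limiting Gaussian in all three statements is non-degenerate.

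First, I would invoke the ASIP: there is a probability space on which the Birkhoff sums $S_n(g)$ can be coupled with a Brownian motion $W_t$ of variance $\sigma^2$ so that $|S_n(g)-W_n|=o(n^{q})$ almost surely for every $q>1/4$. The classical laws of the iterated logarithm and the Lacey--Philipp almost sure central limit theorem for Brownian motion transfer directly through this coupling, since the error $o(n^{q})$ with $q<1/2$ is negligible compared to the $\sqrt{n\log\log n}$ scale of the LIL and the $\sqrt{n}$ scale of the ASCLT (this is exactly the content of Theorem \ref{t:a_lil_asclt} in the appendix). This gives both the LIL and the ASCLT for $g$.

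For the CLT with Berry--Esseen error rate $O(n^{-1/2})$, I would instead appeal to Theorem \ref{t:clt-spectral} in the appendix, applied to the family of perturbed transfer operators $\Ll_{[\theta]}=\lam^{-1}\Ll_{\phi+\theta g}$ introduced in Section \ref{ss:perturbed_operators}. The required input for that abstract theorem is precisely the spectral picture already established here: Lemma \ref{l:perturbed} gives the analyticity of $\theta \mapsto \Ll_{[\theta]}$ on $\{h:\norm{h}_{\diamond_1}<\infty\}$, Proposition \ref{p:decomposition} gives the leading eigenvalue $\alpha(\theta)$ with a spectral gap, Lemma \ref{l:development} gives $\alpha(\theta)=1+\tfrac{\sigma^2}{2}\theta^2+o(\theta^2)$ with $\sigma>0$, and Lemma \ref{l:non-cocycle} together with Proposition \ref{p:mul_cocycle} rules out eigenvalues of modulus $1$ on the imaginary axis (a coboundary is a particular $(\norm{\cdot}_{\diamond_1},\phi)$-multiplicative cocycle, hence the spectral radius of $\Ll_{[it]}$ drops below $1$ for every $t\neq 0$). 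Plugging these ingredients into Theorem \ref{t:clt-spectral} yields the Berry--Esseen bound $O(n^{-1/2})$.

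No step here is genuinely difficult: the entire proof is an assembly of already-proved ingredients, and the only small point to keep in mind is making explicit that ``$g$ is not a coboundary'' translates to $\sigma>0$ (so the Gaussian is non-degenerate) and to the absence of $(\norm{\cdot}_{\diamond_1},\phi)$-multiplicative cocycle behaviour on the imaginary axis needed to get strict contraction of $\Ll_{[it]}$ for $t\neq 0$, which is exactly what the abstract Berry--Esseen theorem demands.
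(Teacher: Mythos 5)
Your proof takes exactly the approach the paper intends: the LIL and ASCLT follow from the already-established ASIP via Theorem~\ref{t:a_lil_asclt}, and the Berry--Esseen-type rate follows from Theorem~\ref{t:clt-spectral} using the spectral package (Lemma~\ref{l:perturbed}, Proposition~\ref{p:decomposition}, Lemma~\ref{l:development}). The paper itself gives no more than a pointer to these results, so your assembly is correct and matches it.

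One small point is off, though it is harmless here. In the parenthetical you write that ``a coboundary is a particular $(\norm{\cdot}_{\diamond_1},\phi)$-multiplicative cocycle, hence the spectral radius of $\Ll_{[it]}$ drops below $1$ for every $t\neq 0$.'' This inference runs the wrong way: coboundaries form a subset of multiplicative cocycles (Proposition~\ref{p:mul_cocycle}), so ``$g$ is not a coboundary'' does not imply ``$g$ is not a multiplicative cocycle,'' and you cannot conclude strict contraction of $\Ll_{[it]}$ for $t\neq 0$ from the coboundary hypothesis alone --- that stronger conclusion requires the cocycle hypothesis used for the LCLT, not the one in this corollary. Fortunately, Theorem~\ref{t:clt-spectral} places conditions only on $\mathcal T_0$, not on $\mathcal T_t$ for $t\neq 0$, so the contraction for $t\neq 0$ is not needed to get the $O(n^{-1/2})$ Berry--Esseen rate. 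Removing this parenthetical leaves a correct proof; keeping it suggests a dependency (and an implication) that isn't there.
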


\subsection{Large Deviation Principle (LDP)}
We conclude the statistical study of $(\P^k, f, \mu_\phi)$
with the following property, see Definition \ref{d:ldp}.
This is new in this generality 
for all $k\geq 1$, even for $\phi=0$
(see \cite{comman2011large} for the case when $k=1$ and some kind of weak hyperbolicity is assumed).
The LDP in particular implies the Large Deviation Theorem, which is proved
in \cite{dinh2010exponential} in the case $\phi=0$,  see also \cite{pollicott1996large,dinh2010dynamics}.

\begin{theorem}
Under the hypotheses of Theorem \ref{t:goal}, let $g\colon \P^k \to \R$ be such that $\norm{g}_{\diamond_1}$ is finite and
 $\sca{\mu_\phi, g}=0$. 
Assume that $g$ is not a coboundary.
 Then $g$ satisfies the LDP.
\end{theorem}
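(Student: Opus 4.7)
The plan is to derive the LDP via the Nagaev--Gärtner-Ellis method, using the spectral decomposition of the perturbed operators $\Ll_{[\theta]}$ from Section \ref{ss:perturbed_operators}. The key machinery is already in place; what remains is to compute the logarithmic moment generating function and match the output to the abstract LDP criterion of Appendix \ref{a:abstract-result}.

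First I would establish the limiting log--moment generating function. Combining $\sca{\mu_\phi,h}=\sca{m_\phi,\rho h}$, the $\lam^{-1}\Ll^*$-invariance of $m_\phi$, and the identity \eqref{e:ltn-l0n}, one obtains for every real $\theta$
$$\sca{\mu_\phi, e^{\theta S_n g}} = \sca{m_\phi, \lam^{-n}\Ll_{[0]}^n(e^{\theta S_n g}\rho)} = \sca{m_\phi, \lam^{-n}\Ll_{[\theta]}^n \rho}.$$
Since $\norm{g}_{\diamond_1}<\infty$, Proposition \ref{p:decomposition} applies for $\theta$ in a real neighborhood of $0$ and gives $\lam^{-n}\Ll_{[\theta]}^n=\alpha(\theta)^n\Phi_\theta+\Psi_\theta^n$, with $\alpha(\theta)$ analytic in $\theta$, $\alpha(0)=1$, $\sca{m_\phi,\Phi_\theta(\rho)}\to\sca{m_\phi,\rho}=1$ as $\theta\to 0$, and the spectral radius of $\Psi_\theta$ strictly less than $|\alpha(\theta)|$. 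Consequently
$$\Lambda(\theta):=\lim_{n\to\infty}\frac{1}{n}\log\sca{\mu_\phi,e^{\theta S_n g}}=\log\alpha(\theta)$$
on a neighborhood of $0$.

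Second, I would identify the rate function as the Legendre transform $c(\epsilon):=\sup_{\theta}(\theta\epsilon-\log\alpha(\theta))$ and check that it is strictly positive away from the mean. By Lemma \ref{l:development} we have $\log\alpha(\theta)=\frac{1}{2}\sigma^2\theta^2+o(\theta^2)$ near $0$, and the non-coboundary assumption guarantees $\sigma>0$. Therefore $\log\alpha$ is smooth and strictly convex near $0$, so $c$ satisfies $c(0)=0$, is convex, vanishes only at $0$, and obeys $c(\epsilon)=\epsilon^2/(2\sigma^2)+o(\epsilon^2)$. The LDP then follows by feeding this data into the abstract criterion of Appendix \ref{a:abstract-result}: for every closed $F\subset\R$ not containing $0$ in a neighborhood,
$$\limsup_{n\to\infty}\frac{1}{n}\log\mu_\phi\bigl\{S_ng/n\in F\bigr\}\leq-\inf_{\epsilon\in F} c(\epsilon)<0,$$
with the matching lower bound on open sets supplied by the strict convexity of $\Lambda$ via a standard change of measure using the eigenfunction $\Phi_\theta(\rho)$.

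The main obstacle is a technical matching issue rather than a substantive one: Proposition \ref{p:decomposition} gives $\alpha$ and the decomposition only for $\theta$ small, whereas a naive application of the full Gärtner-Ellis theorem wants $\Lambda$ defined on all of $\R$. This is handled by the standard localisation argument, exploiting that $|S_n g/n|\leq\norm{g}_\infty$ is bounded, so only values of $\epsilon$ in a fixed compact interval are relevant; the local spectral description is enough to produce both the exponential upper bound on deviations and the matching lower bound via the tilted measure with density proportional to $\Phi_\theta(\rho)$. The only place where care is needed is to verify that the abstract criterion in the appendix is formulated precisely in this local form, which is consistent with how the CLT, LCLT, and ASIP are obtained earlier in the section from the same spectral decomposition.
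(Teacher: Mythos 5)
Your proposal is correct and follows essentially the same route as the paper: both identify $\Lambda(\theta)=\log\alpha(\theta)$ via the spectral decomposition of Proposition \ref{p:decomposition}, both use Lemma \ref{l:development} together with the non-coboundary hypothesis to get $\sigma>0$ and hence strict convexity of $\Lambda$ near $0$, and both conclude by invoking the abstract LDP criterion. One small mismatch in presentation: you write the conclusion as a full LDP upper/lower bound over closed/open sets and flag the lower bound via tilted measures as a possible gap, but the paper's Definition \ref{d:ldp} only requires the one-sided level-set limit, and Theorem \ref{t:ge_ldp} (Bougerol--Lacroix, a local version of G\"artner--Ellis formulated exactly on a compact $\theta$-interval) already delivers precisely that limit statement directly, with no separate tilted-measure argument needed. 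So the concern you raise at the end is resolved exactly as you suspect: the appendix criterion is stated in local form, and your localisation discussion (boundedness of $|S_n g/n|$, restriction to small $\eps$) is built into the hypotheses of that theorem.
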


\begin{proof}
We apply Theorem \ref{t:ge_ldp} for the random variables
$\mathcal S_n:=S_n(g)$ (see \eqref{e:birkhoff-sum})
with respect to the probability $\mu_\phi$, and for $\Lambda (\theta) := \log \alpha(\theta)$, where
$\alpha (\theta)$ is defined in Proposition \ref{p:decomposition}. 
So, $\Lambda(\theta)$ is analytic and well defined for $\theta$ small enough. 
Since $g$ is not a coboundary, we have $\sigma>0$ and
the convexity 
of $\Lambda$
follows
from the expansion
 of $\alpha(\theta)$ given in Lemma \ref{l:development}.
Finally, as in \eqref{e:mu-Sn}, using $\theta$ instead of $1/n$, we get 
 \[
\langle  \mu_\phi, e^{\theta S_n (g)}\rangle =\alpha(\theta)^n \sca{  m_\phi, \Phi_{\theta} (\rho) }
+
\big\langle  m_\phi, \Psi^n_{\theta}(\rho)\big\rangle
\] 
which implies (note that $\sca{  m_\phi, \Phi_{\theta} (\rho) }$ does not vanish for $\theta$ small as it is equal to 1 when $\theta=0$)
$$\lim_{n\to\infty} {1\over n} \log \langle  \mu_\phi, e^{\theta S_n (g)}\rangle =\log \alpha(\theta)=\Lambda(\theta).$$
Thus, Condition
 \eqref{e:for_ldp} in Theorem \ref{t:ge_ldp} is satisfied. Applying that theorem gives the result.
\end{proof}

\appendix

\section{Abstract statistical theorems} \label{a:abstract-result}

We provide here
the precise definitions for 
the statistical properties studied in Section \ref{s:statistic}, 
as well as
the criteria, used in the previous section, 
ensuring their validity in an abstract setting.
We will consider in what follows a dynamical system $T\colon (X,\mathcal F)\to (X,\mathcal F)$, where
$\mathcal F$ is a given $\sigma$-algebra,
and $\nu$ a
probability measure which is invariant with respect to $T$. Given any observable
(real-valued measurable function) 
 $g \in L^1 (\nu)$ and $n\geq 1$
 we denote by $S_n (g)$
 the Birkhoff sum $S_n (g):= \sum_{j=0}^{n-1} g \circ T^j$.
 We are interested in comparing the behaviour of the sequence $S_n (g)$ with that
 of a sum of independent identically distributed random variables $Z_n$ 
 of mean $\sca{\nu,g}$. 
Notice that the invariance of $\nu$ precisely implies
that the functions $g\circ T^j$, considered as random variables, are identically distributed.
 The goal is to prove that, under suitable assumptions, the sequence
  of \emph{weakly dependent} random variables $g\circ T^j$ and 
 their sums $S_n (g)$
 enjoy many of the statistical properties of the sequences $Z_n$ and $\sum_{j=0}^{n-1}Z_j$.
 For simplicity, we only consider observables $g$ of zero mean, i.e., such that
 $\langle \nu,g\rangle=0$. 
 It is easy to extend the discussion to the case of non-zero mean.
 
 In what follows, we will denote by $\Ex(h | \mathcal G)$ the conditional expectation of 
 an $\mathcal F$-measurable function $h$
 with respect to a $\sigma$-algebra $\mathcal G\subseteq \mathcal F$. 
 The $\sigma$-algebra 
$T^{-n}\mathcal F$ 
is generated by the sets of the  form  $T^{-n} (B)$ with $B$ in $\mathcal F$.
 We will say that $h$ is
 a \emph{coboundary} if there exists $\tilde h \in L^2 (\nu)$ such that
 $h = \tilde h \circ T  - \tilde h$. A coboundary belongs to $L^2(\nu)$ and has zero mean.

\begin{definition}[CLT - Central Limit Theorem]\label{d:clt}
Let $T\colon(X,\mathcal F)\to (X,\mathcal F)$ be a dynamical system
and $\nu$ an invariant probability measure. 
Let $g$ be a real-valued integrable function with $\langle\nu,g\rangle=0$.
We say that $g$ satisfies the Central Limit Theorem (CLT)
with variance $\sigma>0$ if
 for any interval $I \subset \R$ we have
\begin{equation}\label{e:clt}
\lim_{n\to \infty} \nu \Big\{
\frac{1}{\sqrt{n}} S_n (g) \in I \Big\}
=
\frac{1}{\sqrt{2\pi} \sigma} \int_I e^{-\frac{t^2}{2\sigma^2}} dt.
\end{equation}
\end{definition}

We give two criteria to ensure
that an observable $g$ satisfies the CLT. The first one, due to Gordin (see also \cite{liverani1996central}),
is more classical and easier to use.

\begin{theorem}[Gordin \cite{gordin1969central}]\label{t:a_clt}
Let $T\colon(X,\mathcal F)\to (X,\mathcal F)$ be a dynamical system
and $\nu$
an invariant probability measure. 
Let $g$ be a real-valued function in $L^2(\nu)$ which
 is not a coboundary.
Assume that
\begin{equation} \label{e:Gordin-cond}
\sum_{n\geq 1} \norm{\Ex \pa{ g | T^{-n} \mathcal F}}^2_{L^2 (\nu)}< \infty.
\end{equation}
Then $g$ satisfies the CLT
with variance $\sigma>0$ given by
 \begin{equation}\label{eq_sigma}
\sigma^2 := \sca{\nu, g^2} +2\sum_{n\geq 1} \sca{\nu, g \cdot \pa{g \circ T^n}}.
\end{equation}
\end{theorem}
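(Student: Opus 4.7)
The plan is to use Gordin's martingale approximation method: I will decompose $g = m + \phi - \phi\circ T$ in $L^2(\nu)$, where $(m\circ T^j)_{j\geq 0}$ is a stationary martingale-difference sequence and $\phi \in L^2(\nu)$, so that the Birkhoff sum splits as $S_n(g) = M_n + \phi - \phi\circ T^n$ with $M_n := \sum_{j=0}^{n-1} m\circ T^j$ a martingale. To set this up, I first introduce the Koopman isometry $U\colon L^2(\nu)\to L^2(\nu)$, $Uh := h\circ T$. Its adjoint $U^*$ satisfies $U^*U = I$, while $U^n U^{*n}$ is the orthogonal projection onto $L^2(T^{-n}\mathcal F)$; in particular $\Ex(h\mid T^{-n}\mathcal F) = U^n U^{*n} h$ and $\norm{\Ex(g\mid T^{-n}\mathcal F)}_{L^2(\nu)} = \norm{U^{*n} g}_{L^2(\nu)}$, so the Gordin hypothesis reads $\sum_{n\geq 1}\norm{U^{*n} g}_{L^2(\nu)}^2 <\infty$. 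Seeking $m,\phi$ with $g = m + \phi - U\phi$ and $U^* m = 0$, an application of $U^*$ yields the formal identity $\phi = -\sum_{n\geq 1} U^{*n} g$. To realize $\phi\in L^2(\nu)$ from the squared summability alone, I will decompose $g$ along the orthogonal reverse-martingale increments $P_n g := U^n U^{*n} g - U^{n+1} U^{*(n+1)} g$ and use an Abel-summation argument; the residue $m := g - \phi + U\phi$ then lies in $L^2(\nu)$ and satisfies $U^* m = 0$ by construction.

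The relation $U^* m = 0$ translates, via $\Ex(\,\cdot\, | T^{-1}\mathcal F) = U U^*$, to $\Ex(m \mid T^{-1}\mathcal F) = 0$; combined with the $T$-invariance of $\nu$, this shows that $(m\circ T^j)_{j\geq 0}$ is a stationary martingale-difference sequence with respect to the decreasing filtration $\mathcal F_j := T^{-j}\mathcal F$. The martingale CLT then yields $n^{-1/2} M_n \to \mathcal N(0, \norm{m}_{L^2(\nu)}^2)$ in distribution under $\nu$. The coboundary remainder telescopes, $S_n(g) - M_n = \phi - \phi\circ T^n$, with $L^2(\nu)$-norm at most $2\norm{\phi}_{L^2(\nu)}$ by invariance, so $n^{-1/2}(S_n(g) - M_n)\to 0$ in $L^2(\nu)$ and hence in probability; Slutsky's lemma upgrades the convergence to $n^{-1/2} S_n(g)\to \mathcal N(0,\sigma^2)$ with $\sigma^2 := \norm{m}_{L^2(\nu)}^2 = \lim_n n^{-1}\norm{S_n(g)}_{L^2(\nu)}^2$. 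Expanding $\norm{S_n(g)}_{L^2(\nu)}^2 = n\sca{\nu, g^2} + 2\sum_{j=1}^{n-1}(n-j)\sca{\nu, g\cdot(g\circ T^j)}$ and Cesaro-summing identifies $\sigma^2$ with the Green--Kubo formula \eqref{eq_sigma}; convergence of that series follows from the Gordin hypothesis. Finally, $\sigma = 0$ would force $m = 0$ and hence $g = \phi - \phi \circ T$ a coboundary, contradicting the assumption, so $\sigma > 0$.

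The main obstacle is the construction of $\phi$: the hypothesis is \emph{strictly weaker} than $\sum_n\norm{U^{*n} g}_{L^2(\nu)} < \infty$, under which the series $\sum_n U^{*n}g$ would converge in $L^2(\nu)$ directly by the triangle inequality. With only squared summability, the successive terms $U^{*n}g$ need not be orthogonal, so one cannot simply sum them. The standard remedy (going back to Heyde's refinement of Gordin) is to pass to the orthogonal reverse-martingale increments $P_n g$ and reconstruct $\phi$ and $m$ from them; this is the technical heart of the proof, after which the martingale CLT and the telescoping estimate deliver the conclusion essentially for free.
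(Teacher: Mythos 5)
The paper does not prove Theorem \ref{t:a_clt}; it is quoted as a black-box criterion with attribution to Gordin and is used in Section 5 without an in-paper argument, so there is nothing internal to compare your proposal against. Judged on its own terms, your plan — martingale–coboundary decomposition via the Koopman operator $U$, then the martingale CLT, Slutsky, and a Ces\`aro computation for $\sigma^2$ — is the standard route, and your second paragraph is correct once the decomposition $g = m + \phi - \phi\circ T$ with $\phi\in L^2(\nu)$ and $\Ex\pa{m \mid T^{-1}\mathcal F}=0$ is actually secured.

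The gap is exactly where you flag it, and it is larger than you acknowledge. Under the classical Gordin condition $\sum_{n\geq 1}\norm{\Ex\pa{g\mid T^{-n}\mathcal F}}_{L^2(\nu)} < \infty$ (no square, as in \cite{liverani1996central}), the series $\phi = -\sum_{n\geq 1}U^{*n}g$ converges absolutely in $L^2(\nu)$ and your construction closes. Under the squared condition \eqref{e:Gordin-cond} as printed it does not, and your proposed fix via the orthogonal increments $P_kg$ and Abel summation does not obviously rescue it: from the Pythagorean identity one gets $\sum_{n\geq 1}\norm{\Ex\pa{g\mid T^{-n}\mathcal F}}^2 = \sum_{k\geq 1}k\norm{P_kg}^2$, which controls only a weighted square sum of the $\norm{P_kg}$ and does \emph{not} give $\sum_k\norm{P_kg}<\infty$ (take $\norm{P_kg}\sim k^{-1}(\log k)^{-1}$); yet the rearranged series for $\phi$ picks up roughly $k$ equal-norm contributions from each $P_kg$, so there is no convergence mechanism in sight. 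Worse, the absolute convergence of the Green--Kubo series \eqref{eq_sigma}, which your last paragraph takes for granted, already requires the non-squared hypothesis: $\abs{\sca{\nu,\, g\cdot(g\circ T^n)}} = \abs{\sca{\nu,\, (U^{*n}g)\, g}} \leq \norm{U^{*n}g}_{L^2(\nu)}\norm{g}_{L^2(\nu)} = \norm{\Ex\pa{g\mid T^{-n}\mathcal F}}_{L^2(\nu)}\norm{g}_{L^2(\nu)}$, and summability of the right-hand side is precisely the non-squared condition. The most plausible reading is that \eqref{e:Gordin-cond} is meant to carry no exponent $2$ — a distinction that is immaterial for the paper's application, where these norms decay geometrically — and under that reading your argument is essentially complete (modulo the standing ergodicity assumption needed to invoke the Billingsley--Ibragimov martingale CLT, which you should state). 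With the exponent $2$ as printed, the step you defer is the entire content of the theorem, and the outline you give does not carry it out.
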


Note that under the hypothesis \eqref{e:Gordin-cond}, the observable 
$g$ is not a coboundary if and only if the constant $\sigma$ defined in \eqref{eq_sigma} is non-zero.
Note also that this hypothesis implies that $g$ has zero mean because
each term in the infinite sum in \eqref{e:Gordin-cond} is larger than or equal to $|\langle \nu,g\rangle|^2$
thanks to Cauchy-Schwarz's inequality and the invariance of $\nu$.

The second criterion is 
based on the theory of perturbed operators, which
also allows to study the speed
of convergence in \eqref{e:clt}, i.e., to get 
a version of Berry-Esseen theorem.
Recall that the spectrum 
of a linear operator $\mathcal T$, from a complex Banach space to itself, 
is the closure of the set of  $z\in \C$
such that
the operator $zI- \mathcal T$ is not invertible. The essential spectrum
is obtained from the spectrum by removing its isolated points corresponding to eigenvalues
of finite multiplicity. 
The spectral radius $r(\mathcal T)$
and the essential spectral radius $\ress (\mathcal T)$
are the
radii of the smallest disks centered at the origin which contain the spectrum
and the essential spectrum, respectively.
The following criterion, based on ideas on 
Nagaev and Guivarc'h \cite{nagaev1957some,guivarc1988theoremes},
is a translation in our
setting of \cite[Th.\,3.7]{gouezel2015limit}.

\begin{theorem}[Nagaev-Guivarc'h-Gou\"ezel]\label{t:clt-spectral}
Let $T\colon(X,\mathcal F)\to (X,\mathcal F)$ be a dynamical system
 and $\nu$
an invariant probability measure. 
Let $g$ be a real-valued integrable function.
Assume that there exist a complex Banach space
$\Hh$, a family of operators $\mathcal T_t$
acting on $\mathcal H$, for  $t\in [-\delta,\delta]$ with some constant $\delta>0$, and elements
$\xi \in \Hh$ and $\nu^* \in \Hh^*$ such that
\begin{enumerate}
\item[{\rm(i)}] coding: for all $n\in \N$ and all
$t\in [-\delta,\delta]$, we have $\sca{\nu, e^{i t S_n (g)}} = \sca{\nu^*, \mathcal T_t^n \xi}$;
\item[{\rm(ii)}] spectral description: we have $\ress (\mathcal T_0) <1$, and $\mathcal T_0$ has a unique
eigenvalue of modulus $\geq 1$; moreover, this eigenvalue has multiplicity $1$ 
and is located at $1$;
\item[{\rm(iii)}] regularity: the family $t\mapsto\mathcal T_t$ is of class $C^3$. 
\end{enumerate} 
Assume also that the largest eigenvalue $\alpha(t)$ of $\mathcal T_t$ for $t$ near $0$ admits the order $2$ expansion 
$\alpha(t)=e^{iAt-\sigma^2 t^2/2+o(t^2)}$ for some $A,\sigma\in\R$ with $\sigma>0$.
Then $g-A$ satisfies the CLT with variance $\sigma$
and
the Berry-Esseen theorem holds, i.e.,
the speed of convergence in \eqref{e:clt} is of order $O(n^{-1/2})$. 
\end{theorem}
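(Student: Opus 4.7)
The plan is to apply the standard Nagaev-Guivarc'h method: translate the coding identity into an expression for the characteristic function of $S_n(g)$ in terms of the iterates $\mathcal{T}_t^n$, then use perturbation theory to isolate the contribution of the leading eigenvalue, and finally combine with the Taylor expansion of $\alpha(t)$ to obtain both qualitative (CLT) and quantitative (Berry-Esseen) information.

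First I would set up the spectral decomposition of $\mathcal{T}_t$ for $t$ near $0$. By the spectral description assumption, $1$ is an isolated simple eigenvalue of $\mathcal{T}_0$ and the rest of the spectrum lies in a disk of radius $r_0 < 1$. By Kato-Rellich analytic (here $C^3$) perturbation theory (see \cite{dunford1958linear,kato2013perturbation}, already invoked for Proposition \ref{p:decomposition}), after shrinking $\delta$ we obtain a decomposition
\[
\mathcal{T}_t = \alpha(t)\Pi_t + N_t, \qquad \Pi_t N_t = N_t \Pi_t = 0,
\]
where $\Pi_t$ is the rank-one spectral projector onto the eigenspace of $\alpha(t)$, the operators $N_t$ have spectral radius bounded by some $r < 1$ uniformly in $t$, and $t\mapsto (\alpha(t),\Pi_t,N_t)$ is of class $C^3$. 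Consequently $\mathcal{T}_t^n = \alpha(t)^n \Pi_t + N_t^n$ with $\|N_t^n\|_{\HH}\lesssim r^n$ uniformly. Substituting into the coding identity yields, for $|t|\leq\delta$,
\[
\sca{\nu, e^{itS_n(g)}} = \alpha(t)^n \sca{\nu^*, \Pi_t\xi} + \sca{\nu^*, N_t^n\xi}.
\]
Taking $n=0,t=0$ in the coding identity gives $\sca{\nu^*,\xi}=1$, and by continuity $\sca{\nu^*,\Pi_0\xi}=1$.

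Next I would prove the CLT for $g-A$ via L\'evy's continuity theorem. Replacing $t$ by $t/\sqrt{n}$ (which lies in $[-\delta,\delta]$ for $n$ large) and multiplying by $e^{-itA\sqrt{n}}$, the characteristic function of $(S_n(g)-nA)/\sqrt{n}$ equals
\[
\varphi_n(t) = e^{-itA\sqrt{n}}\alpha(t/\sqrt{n})^n \sca{\nu^*,\Pi_{t/\sqrt{n}}\xi} + e^{-itA\sqrt{n}}\sca{\nu^*, N_{t/\sqrt{n}}^n \xi}.
\]
The second term is $O(r^n)$. For the first, the assumed expansion $\alpha(t)= e^{iAt-\sigma^2 t^2/2 + o(t^2)}$ gives $\alpha(t/\sqrt n)^n = e^{iAt\sqrt n - \sigma^2 t^2/2 + o(1)}$, while $\sca{\nu^*,\Pi_{t/\sqrt n}\xi}\to \sca{\nu^*,\Pi_0\xi}=1$ by the $C^3$ dependence. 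Hence $\varphi_n(t)\to e^{-\sigma^2 t^2/2}$ for every $t\in\R$, which is the characteristic function of $\mathcal{N}(0,\sigma^2)$. L\'evy's theorem then yields the CLT for $g-A$ with variance $\sigma$.

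Finally, for the Berry-Esseen rate, the tool is Esseen's smoothing inequality: for any $T>0$,
\[
\sup_{x\in\R}\Big| \nu\{S_n(g)-nA\leq x\sqrt n\} - \tfrac{1}{\sqrt{2\pi}\sigma}\int_{-\infty}^{x} e^{-u^2/(2\sigma^2)}du \Big|
\lesssim \int_{-T}^{T} \Big|\frac{\varphi_n(t)-e^{-\sigma^2 t^2/2}}{t}\Big|\,dt + \frac{1}{T}.
\]
Taking $T = c\sqrt n$ for a small constant $c$, the contribution of the tail beyond the perturbative range $|t/\sqrt{n}|\leq\delta$ is controlled by the Gaussian decay of $e^{-\sigma^2 t^2/2}$ together with the exponential bound $O(r^n)$ on the $N_t^n$ term. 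Inside the perturbative range, the $C^3$ regularity of $\alpha$ yields the quantitative Taylor estimate
\[
\big|\alpha(t/\sqrt n)^n e^{-iAt\sqrt n} - e^{-\sigma^2 t^2/2}\big| \lesssim \frac{|t|^3}{\sqrt n}\, e^{-\sigma^2 t^2/4},
\]
and the $C^1$ regularity of $\Pi_t$ contributes a $|t|/\sqrt n$ factor. Integrating in $t$ produces a bound of order $n^{-1/2}$, giving the Berry-Esseen conclusion. The main obstacle, and the reason one needs $C^3$ rather than merely $C^2$, is obtaining the cubic-order Taylor remainder uniformly in $t$; fortunately this is exactly the regularity furnished by hypothesis (iii) combined with the analytic projector formula of Kato.
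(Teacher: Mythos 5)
The paper does not actually prove Theorem~\ref{t:clt-spectral}: it is imported verbatim (as ``a translation in our setting'') from Gou\"ezel's survey \cite{gouezel2015limit}, which in turn gives the standard Nagaev--Guivarc'h argument. Your write-up reconstructs exactly that argument --- $C^3$ spectral perturbation, substitution into the coding identity, L\'evy continuity for the CLT, Esseen smoothing for Berry--Esseen --- so there is no divergence of method to report. Two small points, however, should be tightened.

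First, the step ``Taking $n=0,t=0$ \dots gives $\sca{\nu^*,\xi}=1$, and by continuity $\sca{\nu^*,\Pi_0\xi}=1$'' is not a continuity argument: $\sca{\nu^*,\xi}$ and $\sca{\nu^*,\Pi_0\xi}$ are just two numbers and the first equalling $1$ does not force the second to. The correct way is to use the coding identity at $t=0$ for \emph{all} $n$: $1=\sca{\nu^*,\mathcal T_0^n\xi}=\sca{\nu^*,\Pi_0\xi}+\sca{\nu^*,N_0^n\xi}$, and since $\norm{N_0^n}\to 0$ one gets $\sca{\nu^*,\Pi_0\xi}=1$ and hence $\sca{\nu^*,N_0^n\xi}=0$ for \emph{every} $n$. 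This second identity is not cosmetic --- you need it in the next point.

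Second, in the Esseen step the contribution of the term $e^{-itA\sqrt n}\sca{\nu^*,N_{t/\sqrt n}^n\xi}$ is only estimated as $O(r^n)$, but divided by $|t|$ this is not integrable near $t=0$, so ``controlled by \dots the exponential bound $O(r^n)$'' is not sufficient as stated. The fix is precisely the vanishing $\sca{\nu^*,N_0^n\xi}=0$: for $|t|\leq 1$ write $\sca{\nu^*,N_{t/\sqrt n}^n\xi}=\sca{\nu^*,N_{t/\sqrt n}^n\xi}-\sca{\nu^*,N_0^n\xi}$ and bound it by $\frac{|t|}{\sqrt n}\sup_{|s|\leq\delta}\norm{\partial_s(N_s^n)}\lesssim \frac{|t|}{\sqrt n}\,n r_1^{n}$ (with $r<r_1<1$), which after dividing by $|t|$ and integrating gives $O(\sqrt n\, r_1^n)=O(n^{-1/2})$. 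For $1\leq|t|\leq c\sqrt n$ the raw bound $O(r^n/|t|)$ integrates to $O(r^n\log n)$, again $O(n^{-1/2})$. With these two adjustments the argument is complete and matches the one in the cited reference.
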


The following
is an improvement of the CLT, see also Remark \ref{r:lclt}.

\begin{definition}[LCLT - Local Central Limit Theorem]\label{d:lclt}
Let $T\colon(X,\mathcal F)\to (X,\mathcal F)$ be
a dynamical system, and $\nu$
an invariant probability measure. 
Let $g$ be a real-valued integrable function with $\langle\nu,g\rangle=0$. We say that
$g$
satisfies 
the Local Central Limit Theorem (LCLT)
with variance $\sigma >0$
 if for every bounded interval $I\subset \R$
the convergence
\[
\lim_{n\to \infty} \Big|
\sigma \sqrt{n} \nu \{x+S_n(g) \in I \} - \frac{1}{\sqrt{2\pi}} e^{-x^2 / (2\sigma^2 n)} |I|
\Big|=0
\]
holds uniformly in $x \in \R$.
Here $|I|$ denotes the length of $I$.
\end{definition}

Partial sums of independent and identically distributed random variables 
can be compared with Brownian motions of suitable variance, and one can get precise estimates
for the fluctuations around the mean value. In the context of dynamical
systems, we have the following analogous definition.

\begin{definition}[ASIP - Almost Sure Invariant Principle]\label{d:asip}
Let $T\colon(X,\mathcal F)\to (X,\mathcal F)$ be a dynamical system
 and $\nu$
an invariant probability measure. 
Let $g$ be a real-valued integrable function with $\langle\nu,g\rangle=0$. We say that
$g$
satisfies 
the Almost Sure Invariance Principle (ASIP) of variance $\sigma >0$
 and error rate $n^\gamma$
if there exist, on some probability space $\mathcal{X}$, a sequence of random variables $(\mathcal S_n)_{n\geq0}$ and a 
Brownian motion $\mathcal W$ of variance $\sigma$
such that
\begin{enumerate}
\item $\mathcal S_n =\mathcal W(n)+ o(n^{\gamma})$   almost everywhere on $\mathcal{X}$;
\item $S_n(g)$ and $\mathcal S_n$ 
have the same distribution for any $n \geq 0$. 
\end{enumerate}
\end{definition}

General criteria that allow one to establish the ASIP in various contexts
are given in \cite{philipp1975almost}.
We state here a criterion by
Gou\"ezel, see \cite[Th.\ 1.2]{gouezel2010almost} and \cite[Th.\ 5.2]{gouezel2015limit}.
Notice that  Gou\"ezel's
result actually holds in the more general case
of random variables with values in $\R^d$.

\begin{theorem}[Gou\"ezel]\label{t:asip-spectral}
Let $T\colon(X,\mathcal F)\to (X,\mathcal F)$ be a dynamical system
and $\nu$
an invariant probability measure. 
Let $g$ be a real-valued integrable function.
Assume that there exist a complex Banach space
$\Hh$,  a family of operators $\mathcal T_t$
acting on $\mathcal H$, for $t\in [-\delta,\delta]$ with some constant $\delta>0$, 
and elements
$\xi \in \Hh$
 and $\nu^* \in \Hh^*$ such that
\begin{enumerate}
\item[{\rm(i)}] strong coding: for all $n\in \N$ and  all $t_0, \dots, t_{n-1} \in [-\delta,\delta]$, we have
\[\big\langle
\nu, e^{i \sum_{j=0}^{n-1} t_j  g\circ T^j }
\big\rangle 
= \sca{\nu^*, \mathcal T_{t_{n-1}}\circ \cdots\circ  \mathcal T_{t_0} \xi};\]
\item[{\rm(ii)}] spectral description: we have $\ress (\mathcal T_0) <1$, and $\mathcal T_0$
has a unique eigenvalue of modulus $\geq 1$; moreover, this eigenvalue has multiplicity $1$ and is
located at $1$;
\item[{\rm(iii)}]  weak regularity: either $t\mapsto {\mathcal T}_t$ is continuous, or $\norm{\mathcal T_t^n}_{\Hh\to\Hh}\leq c$ for some
constant $c>0$ and for all $t\in [-\delta,\delta]$ and $n\geq 0$;
\item[{\rm(iv)}] there exist $p>2$ and $c>0$ such that $\norm{g\circ T^n}_{L^p (\nu)}\leq c$
for all $n\geq 0$.
\end{enumerate}
Assume also that the largest eigenvalue $\alpha(t)$ of $\mathcal T_t$ for $t$ near $0$ admits the order $2$ expansion 
$\alpha(t)=e^{iAt-\sigma^2 t^2/2+o(t^2)}$ for some $A,\sigma\in\R$ with $\sigma>0$.
Then $g-A$ satisfies the ASIP
of variance $\sigma$ and 
error rate $o(n^q)$ for all $q$ such that
\[
q> \frac{p}{4p-4} = \frac{1}{4} + \frac{1}{4p-4} \cdot
\]
\end{theorem}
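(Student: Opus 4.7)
The plan is to follow the strategy of Gou\"ezel, who reduces the ASIP to a verification that the joint characteristic functions of the Birkhoff sums are close to those of i.i.d.\ Gaussian increments, and then invokes a Berkes--Philipp type coupling theorem to produce the Brownian motion. So the argument splits into a spectral/analytic part (estimate of characteristic functions) and an abstract probabilistic part (the coupling).

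First I would use hypotheses (ii) and (iii) to set up perturbation theory for $\mathcal T_t$. The spectral description of $\mathcal T_0$ together with either continuity or uniform power-boundedness of $t\mapsto\mathcal T_t$ yields, for $|t|\leq\delta_0$ with some $\delta_0>0$, a Rellich-type decomposition $\mathcal T_t=\alpha(t)\Phi_t+\Psi_t$ with $\Phi_t$ a rank one projection, $\Phi_t\Psi_t=\Psi_t\Phi_t=0$, and the spectral radius of $\Psi_t$ bounded uniformly away from $1$. The hypothesis on the expansion of $\alpha$ supplies $\alpha(t)=e^{iAt-\sigma^2t^2/2+o(t^2)}$. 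Iterating the decomposition along a sequence $t_0,\ldots,t_{n-1}$ and using the strong coding (i), one obtains
\[
\bigl\langle\nu,e^{i\sum_{j=0}^{n-1}t_j\,g\circ T^j}\bigr\rangle
=\bigl\langle\nu^\ast,\mathcal T_{t_{n-1}}\circ\cdots\circ\mathcal T_{t_0}\xi\bigr\rangle
=\prod_{j=0}^{n-1}\alpha(t_j)\cdot\bigl\langle\nu^\ast,\Phi_{t_{n-1}}\cdots\Phi_{t_0}\xi\bigr\rangle+\mathrm{Err},
\]
where $\mathrm{Err}$ collects all terms containing at least one $\Psi_{t_j}$ factor and is small thanks to the spectral gap. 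Substituting the expansion of $\alpha$ and noting that the projector factor is close to $1$ for small $t_j$, one gets the key estimate: for small $|t_j|$ the joint characteristic function differs from $e^{iA\sum t_j-(\sigma^2/2)\sum t_j^2}$ by a quantitatively controlled error depending on $\max_j|t_j|$ and on the spectral gap.

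Next, the moment condition (iv) lets one handle the regime where some $t_j$ are not small by a truncation/Markov argument: large values of $g\circ T^j$ contribute negligibly to $S_n(g)$ on sets of controlled measure, and the exponent $p>2$ determines the strength of this truncation, which eventually fixes the range of admissible error rates $q>p/(4p-4)$. One then organizes the increments of $S_n(g)$ into dyadic blocks whose lengths grow geometrically, and uses the characteristic function estimate block by block: the Gaussian comparison within each block uses the small-$t$ regime, while the independence-like structure between blocks uses the spectral gap factor $\mathrm{Err}$ to decouple.

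With these ingredients the main obstacle, and the heart of the proof, is the application of an abstract coupling result of Berkes--Philipp--Gou\"ezel: a sequence whose joint characteristic functions approximate those of a Gaussian process with quantitative error can be realized on a common probability space together with a Brownian motion $\mathcal W$ of variance $\sigma$ so that $\mathcal S_n-\mathcal W(n)=o(n^q)$ almost surely. Feeding the block-wise characteristic function estimates into this abstract theorem (in the form of \cite[Th.~1.2]{gouezel2010almost}) yields the random variables $\mathcal S_n$ with the law of $S_n(g)$ and the Brownian motion $\mathcal W$ required, with the precise exponent $q>p/(4p-4)$ dictated by the interplay between the characteristic function error and the truncation level coming from (iv). Finally, applying this construction to the observable $g-A$ instead of $g$ absorbs the linear drift $iAt$ in $\alpha(t)$, so that the Brownian motion one couples to has mean zero, completing the ASIP.
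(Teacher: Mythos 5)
The paper does not prove this statement. Theorem~\ref{t:asip-spectral} is presented in the appendix purely as a quoted criterion, and the paper explicitly refers the reader to Gou\"ezel's work for its proof (``We state here a criterion by Gou\"ezel, see [Th.~1.2] of \emph{gouezel2010almost} and [Th.~5.2] of \emph{gouezel2015limit}''). So there is no in-paper argument to compare your write-up against.

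Taken as a sketch of Gou\"ezel's own argument, your outline identifies the right ingredients: Rellich perturbation of $\mathcal T_t$ giving $\mathcal T_t=\alpha(t)\Phi_t+\Psi_t$ with a uniform spectral gap, iterating the strong coding to obtain quantitative estimates on joint characteristic functions of block increments, truncation controlled by the $L^p$ bound (iv), and an abstract Berkes--Philipp-type coupling yielding the Brownian approximation. However, what you wrote is a roadmap rather than a proof. Two concrete gaps: first, the main technical work in Gou\"ezel's argument is verifying his condition (H) (an exponential-in-the-gap, at most polynomial-in-the-blocks bound on the defect of factorization of joint characteristic functions); your outline asserts that the $\Psi_{t_j}$ terms ``are small thanks to the spectral gap'' without establishing the uniformity in the number of blocks and the block lengths that is essential for the ASIP, and it is precisely this uniform estimate, optimized against the $L^p$ truncation level and the block sizes, that produces the exponent $q>p/(4p-4)$. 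Second, you close by ``feeding'' the estimates into ``\cite[Th.~1.2]{gouezel2010almost}'', but that result is exactly the statement being proved, so the argument as written is circular; the coupling input should instead be Gou\"ezel's abstract theorem on processes satisfying condition (H) (and ultimately the Berkes--Philipp theorem), neither of which you prove or precisely invoke. As a summary of the strategy your sketch is serviceable; as a blind proof it is not complete, and in any case it does not correspond to any argument contained in the paper under review.
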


The following properties are general 
consequences of the ASIP, see for instance \cite{philipp1975almost,lacey1989note,chazottes2007almost}.

\begin{definition}[LIL - Law of iterated logarithms]\label{d:lil}
Let $T\colon(X,\mathcal F)\to (X,\mathcal F)$ be a dynamical system
 and  $\nu$
an invariant probability measure. 
Let $g$ be a real-valued integrable function such that $\langle\nu,g\rangle=0$. We say that
$g$
satisfies 
the Law of Iterated Logarithms (LIL) with variance $\sigma>0$
if
\[
\limsup_{n\to \infty} \frac{S_n(g)}{\sigma\sqrt{2n \log \log n}}
=1 \quad \nu\mbox{-almost everywhere}.
\]
\end{definition}

\begin{definition}[ASCLT - Almost sure Central Limit Theorem]\label{d:asclt}
Let $T\colon(X,\mathcal F)\to (X,\mathcal F)$ be a dynamical system
 and  $\nu$ an invariant probability measure. 
Let $g$ be a real-valued integrable function such that $\langle\nu,g\rangle=0$. We say that
$g$
satisfies 
the Almost Sure Central Limit Theorem (ASCLT) with variance $\sigma$
$>0$
if, for $\nu$-almost every point  $x\in X$,
\[
\frac{1}{\log n} \sum_{j=1}^n \frac{1}{j} \delta_{j^{-1/2} S_j (g)(x)} \to \mathcal N (0,\sigma).
\]
In particular, $\nu$-almost surely 
\[
\frac{1}{\log n} \sum_{j=1}^n \frac{1}{j} \1_{\{ j^{-1/2} S_j (g) \leq t_0\}} 
\to \frac{1}{\sqrt{2\pi} \sigma} \int_{-\infty}^{t_0} e^{- \frac{t^2}{2\sigma^2}} dt
\]
for all $t_0 \in \mathbb R$.
\end{definition}

\begin{theorem}\label{t:a_lil_asclt}
Let $T\colon(X,\mathcal F)\to (X,\mathcal F)$ be a dynamical system
and $\nu$
 an invariant probability measure. 
Let $g$ be a real-valued integrable function with $\langle\nu,g\rangle=0$. Assume that
$g$ satisfies the ASIP with variance $\sigma>0$
and error rate $n^\gamma$ for some
 $\gamma < 1/2$. Then $g$
satisfies the LIL and the ASCLT, both with the same variance $\sigma$.
\end{theorem}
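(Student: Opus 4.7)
The plan is to use the ASIP as a bridge: transfer the classical LIL and ASCLT for Brownian motion to $S_n(g)$ through the almost sure approximation $\mathcal{S}_n = \mathcal{W}(n)+o(n^\gamma)$ with $\gamma<1/2$. By the ASIP, we have on some probability space $\mathcal{X}$ random variables $(\mathcal{S}_n)_{n\geq 0}$ whose (joint) distribution is that of $(S_n(g))_{n\geq 0}$, together with a Brownian motion $\mathcal{W}$ of variance $\sigma$ satisfying $\mathcal{S}_n=\mathcal{W}(n)+o(n^\gamma)$ almost surely on $\mathcal{X}$. Both the LIL and the ASCLT are measurable properties of the joint distribution of the sequence, so once they are established on $\mathcal{X}$ for $(\mathcal{S}_n)$, they automatically transfer to $(S_n(g))$ under $\nu$.

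For the LIL, I would invoke Khintchine's classical LIL for Brownian motion, which gives
\[
\limsup_{n\to\infty}\frac{\mathcal{W}(n)}{\sigma\sqrt{2n\log\log n}}=1 \quad \text{a.s. on } \mathcal{X}.
\]
Because $\gamma<1/2$, we have $o(n^\gamma)/\sqrt{2n\log\log n}\to 0$, so the error term in the ASIP approximation is negligible at this scale and the same $\limsup$ holds for $\mathcal{S}_n$. Transferring by distributional equivalence yields Definition \ref{d:lil} for $S_n(g)$.

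For the ASCLT, I would start from the known ASCLT for Brownian motion: the empirical measures $\frac{1}{\log n}\sum_{j=1}^n \frac{1}{j}\delta_{j^{-1/2}\mathcal{W}(j)}$ converge weakly to $\mathcal{N}(0,\sigma)$ a.s. (a standard consequence of Brownian scaling and the classical ASCLT; see e.g.\ Lacey--Philipp). Testing against an arbitrary bounded continuous function $\varphi$, the difference between the empirical measures built from $\mathcal{S}_j$ and from $\mathcal{W}(j)$ is bounded by
\[
\frac{1}{\log n}\sum_{j=1}^n\frac{1}{j}\bigl|\varphi(j^{-1/2}\mathcal{S}_j)-\varphi(j^{-1/2}\mathcal{W}(j))\bigr|,
\]
and since $j^{-1/2}(\mathcal{S}_j-\mathcal{W}(j))=o(j^{\gamma-1/2})\to 0$ a.s. with $\gamma-1/2<0$, a Cesàro-type argument (using uniform continuity of $\varphi$ on bounded sets together with tightness to control the non-compact tail) shows that this difference tends to $0$ a.s. Hence the ASCLT holds for $\mathcal{S}_n$, and then for $S_n(g)$ by the distributional equivalence.

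No step is genuinely difficult: the only subtlety is to confirm that the ASIP coupling preserves enough of the joint law for an event involving the whole trajectory (tail events for LIL, weak limits of empirical measures for ASCLT) to transfer, and this is built into the standard formulation of the ASIP. The Cesàro cutoff for the ASCLT must be handled with a bit of care because $\varphi$ is only bounded (not uniformly continuous on all of $\mathbb{R}$), but splitting the sum according to a large radius together with tightness of $\{j^{-1/2}\mathcal{W}(j)\}$ under the empirical average removes this issue.
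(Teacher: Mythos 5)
The paper does not give a proof of Theorem \ref{t:a_lil_asclt}: it is stated as a ``general consequence of the ASIP'' with a pointer to Philipp--Stout, Lacey--Philipp, and Chazottes--Gou\"ezel, and the appendix simply records the statement. Your sketch is precisely the standard transfer argument one finds in those references, so it is correct in substance. Two small remarks. First, you are right that the key point to verify is that the ASIP coupling preserves the \emph{joint} law of the sequence $(\mathcal S_n)_n$, not just the marginals; the paper's phrasing of Definition \ref{d:asip} (``$S_n(g)$ and $\mathcal S_n$ have the same distribution for any $n\geq 0$'') is ambiguous on this, but the intended and standard reading is equality in law of the full processes, and your proof correctly relies on that. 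Second, for the ASCLT transfer you do not actually need the cutoff/tightness argument for merely bounded continuous $\varphi$: weak convergence of the empirical measures is determined by bounded Lipschitz test functions, for which $\bigl|\varphi(j^{-1/2}\mathcal S_j)-\varphi(j^{-1/2}\mathcal W(j))\bigr|\leq \mathrm{Lip}(\varphi)\,j^{-1/2}\,|\mathcal S_j-\mathcal W(j)|\to 0$ a.s.\ directly, and then the logarithmic Ces\`aro average of a null sequence tends to zero. Your version works but is slightly more labored than necessary.
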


The last property that
we recall is the Large Deviation Principle. 
It gives very precise estimates on the measure of the set where the partial sums
are far from the mean value. It implies in particular the Large Deviation Theorem,
which only requires an upper bound
for the measure in \eqref{e:ldp} below.

\begin{definition}[LDP - Large Deviation Principle]\label{d:ldp}
Let $T\colon(X,\mathcal F)\to (X,\mathcal F)$ be a dynamical system
 and $\nu$
an invariant probability measure. 
Let $g$ be a real-valued integrable function such that $\langle\nu,g\rangle=0$. We say that
$g$
satisfies 
the Large Deviation Principle (LDP)
if there exists a non-negative, strictly convex function $c$ which is defined on a neighbourhood of $0\in \R$,
vanishes only at $0$, and such that, for all $\eps>0$ sufficiently small,
\begin{equation}\label{e:ldp}
\lim_{n\to \infty}
 \frac{1}{n}\log \nu 
\Big\{
x\in X \colon
\frac{ S_n (g)(x)}{n}  >\epsilon \Big\}=-c(\eps).
\end{equation}
\end{definition}

The following result, established in \cite{bougerol1985products}, 
is
a local version of G\"artner-Ellis
 Theorem \cite{gartner1977large, ellis1984large, dembo1998large} which 
 can be used to prove the LDP, see \cite[Lem.\ XIII.2]{hennion2001limit}.

\begin{theorem}[Bougerol-Lacroix]\label{t:ge_ldp}
For all $n\geq 1$ denote by $\P_n$ a probability distribution, by $\Ex_n$ the
corresponding expectation operator and by $\mathcal S_n$ a real-valued random variable. Assume that on some interval
$[-\theta_0, \theta_0],$ with $\theta_0 >0$, we have
\begin{equation}	\label{e:for_ldp}
\lim_{n\to \infty} \frac{1}{n} \log \Ex_n (e^{\theta \mathcal S_n}) = \Lambda (\theta),
\end{equation}
where $\Lambda$ is a $\Cc^1$, strictly
convex function
satisfying $\Lambda'(0)=0$.
Then,  for all  $0 < \eps < \Lambda (\theta_0)/\theta_0$, we have
\[
\lim_{n\to\infty} \frac{1}{n} \log \P_n (\mathcal S_n > n\eps) = - c(\eps)<0,
\]
where $c (\eps) :=\sup \{\theta \eps - \Lambda (\eps)\colon 
|\theta | \leq \theta_0\}$
is the Legendre transform of $\Lambda$,
which is non-negative, strictly
convex, and vanishes only at $0$.
\end{theorem}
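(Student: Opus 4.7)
The plan is to proceed along the classical G\"artner--Ellis scheme, with the two sides of the estimate corresponding to a Markov--Chebyshev upper bound and a Cram\'er-style tilted-measure lower bound. Together they identify the rate as the Legendre transform $c(\eps)$. For the upper bound, for any $\theta\in[0,\theta_0]$ Markov's inequality gives $\P_n(\mathcal S_n>n\eps)\leq e^{-n\theta\eps}\Ex_n(e^{\theta\mathcal S_n})$; taking logarithms, dividing by $n$, and passing to the limit via \eqref{e:for_ldp} yields $\limsup_n \frac{1}{n}\log\P_n(\mathcal S_n>n\eps)\leq -\theta\eps+\Lambda(\theta)$. Optimizing over $\theta\in[0,\theta_0]$ produces the bound $-c(\eps)$, once one checks that the supremum defining $c(\eps)$ is achieved in $(0,\theta_0]$: since $\eps>0=\Lambda'(0)$ and $\Lambda$ is strictly convex, the derivative $\eps-\Lambda'(\theta)$ of the objective is positive at $0$, and the maximum lies strictly inside the interval.

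For the lower bound, I would first locate $\theta^{\star}\in(0,\theta_0)$ with $\Lambda'(\theta^{\star})=\eps$. Taking $\theta=0$ in \eqref{e:for_ldp} gives $\Lambda(0)=0$; since $\Lambda$ is $\Cc^1$, strictly convex with $\Lambda'(0)=0$, convexity implies $\Lambda(\theta_0)/\theta_0\leq\Lambda'(\theta_0)$, so the hypothesis $\eps<\Lambda(\theta_0)/\theta_0$ guarantees $\eps<\Lambda'(\theta_0)$ and the existence of $\theta^{\star}$ by the intermediate value theorem. Next, I would introduce the tilted measure $\tilde\P_n$ with density $e^{\theta^{\star}\mathcal S_n}/\Ex_n(e^{\theta^{\star}\mathcal S_n})$ with respect to $\P_n$. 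For $\delta>0$, undoing the tilt on the event $\{n\eps<\mathcal S_n\leq n(\eps+\delta)\}$ gives
\[
\P_n(\mathcal S_n>n\eps) \;\geq\; e^{-n\theta^{\star}(\eps+\delta)}\,\Ex_n(e^{\theta^{\star}\mathcal S_n})\,\tilde\P_n\bigl(n\eps<\mathcal S_n\leq n(\eps+\delta)\bigr).
\]

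The hard part will be showing that $\tilde\P_n(n\eps<\mathcal S_n\leq n(\eps+\delta))$ does not decay exponentially, or equivalently, that $\mathcal S_n/n$ concentrates around $\Lambda'(\theta^{\star})=\eps$ under $\tilde\P_n$. The key input is that for $u$ in a neighborhood of $0$,
\[
\frac{1}{n}\log\Ex_{\tilde\P_n}(e^{u\mathcal S_n})=\frac{1}{n}\log\Ex_n(e^{(\theta^{\star}+u)\mathcal S_n})-\frac{1}{n}\log\Ex_n(e^{\theta^{\star}\mathcal S_n})\longrightarrow \Lambda(\theta^{\star}+u)-\Lambda(\theta^{\star}),
\]
so a further Markov inequality under $\tilde\P_n$ with $u$ small and positive bounds $\tilde\P_n(\mathcal S_n>n(\eps+\delta))$ by $\exp\bigl(n[-u(\eps+\delta)+\Lambda(\theta^{\star}+u)-\Lambda(\theta^{\star})+o(1)]\bigr)$. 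A Taylor expansion at $\theta^{\star}$ using $\Lambda'(\theta^{\star})=\eps$ makes the bracketed quantity strictly negative for $u$ sufficiently small; the symmetric argument with $u<0$ controls $\tilde\P_n(\mathcal S_n\leq n\eps)$. Hence both tail probabilities under $\tilde\P_n$ decay exponentially, and $\tilde\P_n(n\eps<\mathcal S_n\leq n(\eps+\delta))\to 1$.

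Plugging this into the displayed lower bound and invoking \eqref{e:for_ldp} once more gives $\liminf_n \frac{1}{n}\log\P_n(\mathcal S_n>n\eps)\geq \Lambda(\theta^{\star})-\theta^{\star}(\eps+\delta)$; sending $\delta\to 0$ and using that $\theta^{\star}$ is the maximizer yields $\geq -c(\eps)$, matching the upper bound. Finally, the advertised properties of $c$ are standard consequences of the Legendre duality: $c$ is a supremum of affine functions of $\eps$, hence convex; $c(0)=\sup_\theta(-\Lambda(\theta))=-\Lambda(0)=0$ since $\Lambda\geq 0$ near $0$ by $\Lambda(0)=\Lambda'(0)=0$ and convexity; and strict convexity of $c$ on the range of $\Lambda'$ follows from the strict monotonicity of $\Lambda'$ combined with the envelope formula $c'(\eps)=\theta^{\star}(\eps)$.
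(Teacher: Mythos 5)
The paper gives no proof of this statement---it cites \cite{bougerol1985products} and Hennion--Herv\'e---so there is no argument of the paper's own to compare yours against; I will only assess correctness. The overall strategy (Chernoff upper bound plus a tilted-measure lower bound) is the right one, and the upper bound, the existence of $\theta^{\star}\in(0,\theta_0)$ with $\Lambda'(\theta^{\star})=\eps$, and the final optimization and properties of $c$ are all handled correctly. However, there is a genuine gap in the lower bound.

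You claim that centering the tilt at $\theta^{\star}$ with $\Lambda'(\theta^{\star})=\eps$, and then bounding the two tails of $\mathcal S_n$ under $\tilde\P_n$ by Markov, shows $\tilde\P_n\big(n\eps<\mathcal S_n\leq n(\eps+\delta)\big)\to 1$. The right tail works exactly as you say, but ``the symmetric argument with $u<0$'' does \emph{not} control $\tilde\P_n(\mathcal S_n\leq n\eps)$. For $u<0$ the Markov exponent is $-u\eps+\Lambda(\theta^{\star}+u)-\Lambda(\theta^{\star})$, and convexity gives $\Lambda(\theta^{\star}+u)-\Lambda(\theta^{\star})\geq u\Lambda'(\theta^{\star})=u\eps$, so this exponent is always $\geq 0$: the bound is vacuous. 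This is not a technicality---under $\tilde\P_n$ the scaled sum is ``centered'' at $\eps$, so in the i.i.d. model case $\tilde\P_n(\mathcal S_n\leq n\eps)\to 1/2$ by the CLT; it does not decay, and in general nothing in the hypotheses prevents it from tending to $1$. Thus the claimed convergence to $1$, and more importantly the needed subexponential lower bound $\liminf_n\frac1n\log\tilde\P_n\big(n\eps<\mathcal S_n\leq n(\eps+\delta)\big)\geq 0$, do not follow from what you wrote. The concentration you do obtain is symmetric around $\eps$ and says nothing about a half-open interval whose left endpoint sits exactly at the centering point.

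The fix is the standard one: choose the tilt so that the centering point lies strictly inside the target interval. For small $\delta>0$ take $\eps'=\eps+\delta/2$ (which is still $<\Lambda(\theta_0)/\theta_0$ for $\delta$ small) and pick $\theta^{\star}_\delta\in(0,\theta_0)$ with $\Lambda'(\theta^{\star}_\delta)=\eps'$. Then both one-sided Markov bounds under the corresponding $\tilde\P_n$ give genuinely negative exponents for $|u|$ small (the exponents are $-u\,\delta/2+o(u)$), so $\tilde\P_n\big(|\mathcal S_n/n-\eps'|\geq \delta/2\big)\to 0$ exponentially, hence $\tilde\P_n\big(n\eps<\mathcal S_n< n(\eps+\delta)\big)\to 1$. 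Undoing the tilt and using \eqref{e:for_ldp} yields $\liminf_n\frac1n\log\P_n(\mathcal S_n>n\eps)\geq \Lambda(\theta^{\star}_\delta)-\theta^{\star}_\delta(\eps+\delta)=-c(\eps')-\theta^{\star}_\delta\delta/2$; letting $\delta\to 0$ and using continuity of $c$ (which you have, since $\Lambda$ is $\Cc^1$) gives the matching lower bound $-c(\eps)$. With that correction, the rest of your argument goes through.
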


Notice that the symmetric statement for the measure of the set where $n^{-1}S_n (g) <-\eps$ 
in \eqref{e:ldp} 
can be obtained by applying the result above to the sequence of random variables $-\mathcal S_n$.

\printbibliography

\end{document}